\definecolor{darkolivegreen}{rgb}{0.33, 0.42, 0.18} 
\definecolor{cobalt}{rgb}{0.0, 0.24, 0.43}
\newcounter{comments}
\newenvironment{displaycomment}{\begin{list}{}{\rightmargin=1cm\leftmargin=1cm}\item\sf\begin{small}}{\end{small}\end{list}}
\newcommand{\C}{\mathbb{C}}
\newcommand{\R}{\mathbb{R}}
\newcommand{\mc}[1]{\mathcal{#1}}
\def\smooth{\infty} 
\newcommand{\Clsm}{\Cl(V)^{\smooth}} 
\newcommand*{\defeq}{\mathrel{\vcenter{\baselineskip0.5ex \lineskiplimit0pt
                        \hbox{\scriptsize.}\hbox{\scriptsize.}}}%
        =}
\newcommand{\ph}{\varphi}
\newcommand{\eps}{\varepsilon}
\newcommand{\Aut}{\operatorname{Aut}}
\newcommand{\SO}{\operatorname{SO}}
\newcommand{\Spin}{\operatorname{Spin}}
\newcommand{\U}{\operatorname{U}}
\newcommand{\Imp}{\operatorname{Imp}}
\newcommand{\Fr}{\operatorname{Fr}}
\newcommand{\Cl}{\operatorname{Cl}}
\newcommand{\lie}[1]{\mathfrak{#1}}
\newcommand{\opp}{\operatorname{opp}}
\newcommand{\pr}{\operatorname{pr}}
\newcommand{\lact}{\triangleright}
\newcommand{\Diff}{\operatorname{Diff}}
\newcommand{\VBdl}{\mathcal{RHB}dl}
\def\twist{\mathrm{tw}}
\newcommand{\BCE}{\smash{\widetilde{L\Spin}}(d)}
\renewcommand{\O}{\operatorname{O}}
\renewcommand{\d}{\operatorname{d}}
\newcommand{\der}[2]{\frac{\d #1}{\d #2}} 
\newcommand{\Triv}{\mathcal{T}\!riv}
\newcommand{\Lift}{\mathcal{L}i\!ft}
\newcommand{\Mod}[1]{#1\text{-}\hspace{-0.04em}\VBdl}
\newcommand{\EqImp}{\Imp}
\newcommand{\Lag}{\text{Lag}}
\newcommand{\D}{\slashed{D}} 
\newcommand{\dd}[2]{\frac{\text{d} #1}{\text{d} #2}}
\def\res{L}
\def\quot#1{,,#1``}
\def\cstar{C$^{*}\!$}
\def\pol#1{^{\infty}(#1)^{\mathrm{pol}}}
\def\frm#1{^{\infty}(#1)^{\mathrm{frm}}}
\theoremstyle{definition}
\newtheorem{definition}{Definition}[subsection]
\newtheorem{remark}[definition]{Remark}
\newtheorem{example}[definition]{Example}
\theoremstyle{plain}
\newtheorem{theorem}[definition]{Theorem}
\newtheorem{proposition}[definition]{Proposition}
\newtheorem{lemma}[definition]{Lemma}
\newtheorem{corollary}[definition]{Corollary}
\setlist{topsep=-0.2em,itemsep=0em}
\crefname{enumi}{\unskip}{\unskip}
\crefname{equation}{}{}
\title{Smooth Fock bundles, and spinor bundles on loop space}
\author{Peter Kristel and Konrad Waldorf}
\date{}
\begin{document}

\maketitle

\begin{abstract}
\noindent
We address the construction of smooth bundles of fermionic Fock spaces, a problem that appears frequently in fermionic gauge theories. Our  main motivation is the spinor bundle on the free loop space of a string manifold, a structure anticipated by Killingback, with a construction outlined by Stolz-Teichner. We develop a general framework for constructing smooth Fock bundles, and obtain as an application a complete and well-founded construction of  spinor bundles on loop space.
         
\medskip

\noindent
MSC 2020: Primary
22E66, 
Secondary
53C27, 
46N50, 
81R10 
\end{abstract}

\listoftodos

     \setlength{\parskip}{0ex}
     \begingroup
     \tableofcontents
     \endgroup
     \setlength{\parskip}{1.5ex}

\section{Introduction}

\thispagestyle{empty}

A fermionic Fock space $\mathcal{F}_L$ is the Hilbert completion of the exterior algebra of a Lagrangian subspace $L$ of a   Hilbert space $\mathcal{H}$ (complex, but equipped with a real structure),
and it carries an irreducible representation of a Clifford \cstar-algebra $\Cl(\mathcal{H})$. A typical situation that appears in fermionic gauge theories is that the Hilbert space is the fibre $\mathcal{H}_p$ of a continuous bundle of Hilbert spaces over a parameter space $\mathcal{M}$, but Lagrangians $L_p \subset \mathcal{H}_p$ cannot be chosen to depend continuously on the parameter $p \in \mathcal{M}$.
Consequently, the Fock spaces $\mathcal{F}_{L_p}$ do not automatically form a continuous bundle of Hilbert spaces over $\mathcal{M}$. It is well-understood that in general only a projective bundle can be defined, or, in more modern terms, a  bundle twisted by a bundle gerbe over $\mathcal{M}$. 

In case of an anticipated spinor bundle on loop space, the parameter space $\mathcal{M}$ is the loop space $LM$ of a spin manifold $M$, and a proposal of Stolz-Teichner \cite{stolz3} is to consider for the Hilbert space $\mathcal{H}_{\gamma}$ over a loop $\gamma\in LM$  a certain space of  sections of the pullback bundle $\gamma^{*}TM \to S^1$; further, to consider as a typical Lagrangian, roughly speaking, a space of sections that \quot{extend to anti-holomorphic functions on the disk}. A bundle gerbe over $LM$ that obstructs the existence of a continuous Fock bundle on $LM$, together with a twisted Fock bundle has been constructed by Ambler \cite{Ambler2012}.  

In this article, we set up a general theory for constructing bundles of Fock spaces, which reproduces, completes, and generalizes the treatment of Stolz-Teichner and Ambler. However, we go one step beyond the construction of \emph{continuous} Fock bundles, and construct \emph{smooth} ones. Our main motivation for this is that at some point one may want to study differential operators acting on sections of such bundles.
Let us point out that the difference between \quot{continuous} and \quot{smooth}  is more than a mere technicality.
In infinite-dimensional representation theory, one typically encounters
unitary representations $\mathcal{G} \to \U(\mathcal{H})$ of perfectly well-behaved Banach Lie groups $\mathcal{G}$ on a Hilbert space $\mathcal{H}$ which are continuous with respect to the strong topology on $\U(\mc{H})$, but not with respect to its norm topology.
Consequently, the  action map
$\mathcal{G} \times \mathcal{H} \to \mathcal{H}$
is continuous, but not smooth.
This means that to such representations and, say, a principal $\mathcal{G}$-bundle, one can easily associate Hilbert space bundles with continuous transition functions, but not with smooth ones. Our conclusion is that smooth bundles of Fock spaces cannot be defined in any naive way, and require a more elaborate framework, in particular, concerning the representation-theoretical aspects. In fact, it was remarked by Henriques in a Mathoverflow question \cite{Henriques2015} that it is actually not at all clear what a sensible concept of a smooth Hilbert space bundle is. In a comment to this question, Tao proposed to use the concept of a \emph{rigged Hilbert space}, i.e., to equip Hilbert spaces with  Fr\'echet subspaces with continuous inclusion maps with dense image. 
In this article we follow  this idea and we show how well this fits into the situation of bundles of Fock spaces. 

Let us start with describing the representation-theoretical part of our work. Let $V$ be a complex Hilbert space with a real structure, and let $L\subset V$ be a Lagrangian subspace. The work of Araki \cite{Ara74,Ara85} and Segal \cite{PS86} introduced a  central extension
\begin{equation}
\label{intro:ce}
1\to \U(1) \to \Imp_L(V) \to \O_L(V) \to 1
\end{equation}
of Banach Lie groups. The \emph{restricted orthogonal group} $\O_L(V)$ acts on the Clifford algebra $\Cl(V)$ by Bogoliubov automorphisms, and the group of \emph{implementers} $\Imp_L(V)$ acts on the Fock space $\mathcal{F}_L$ by unitary transformations (however, the action  $\Imp_L(V) \times \mathcal{F}_L \to \mathcal{F}_L$ is not smooth). These two actions are compatible with the Clifford multiplication in the sense that the famous \emph{implementability condition}
\begin{equation*}
U(\theta_g(a)\lact v )= a\lact Uv
\end{equation*} 
is satisfied for all $a\in \Cl(V)$, $v\in \mathcal{F}_L$, and $U \in \Imp_L(V)$ projecting to $g\in \O_L(V)$, where $\theta_g$ is the Bogoliubov automorphism associated to $g$, and $\lact$ denotes the Clifford multiplication. The first set of new results of this article is a complete upgrade of  these representations to a \emph{rigged} setting:
\begin{itemize}

\item 
We construct a subspace $\mathcal{F}_L^{\infty} \subset \mathcal{F}_L$ that equips the Fock space $\mathcal{F}_L$ with the structure  of a rigged Hilbert space, such that the action $\Imp_L(V) \times \mathcal{F}_L^{\infty} \to \mathcal{F}_L^{\infty}$ is smooth (\cref{prop:smoothFockSpace}). 

\item
We construct a Fr\'echet subalgebra $\Cl(V)^{\infty}\subset \Cl(V)$, in such a way  that the action by Bogoliubov automorphisms restricts to a smooth action $\O_L(V) \times \Cl(V)^{\infty} \to \Cl(V)^{\infty}$  (\cref{prop:CliffordFrechetAlgebra}).
We will say that $\Cl(V)^{\infty}$ is a \emph{rigged \cstar-algebra}, a concept that apparently as not been found before, but turns out to be very useful for our purposes.

\item
We prove that Clifford multiplication restricts to a smooth action $\Cl(V)^{\infty} \times \mathcal{F}_L^{\infty} \to \mathcal{F}_L^{\infty}$ (\cref{prop:SmoothCliffordActionOnF}). 
\end{itemize}

Above representation-theoretic results describe the situation in the typical fibre of the rigged Fock bundles we are going to construct:  $\mathcal{F}_L^{\infty}\subset \mathcal{F}_L$ and $\Cl(V)^{\infty}\subset \Cl(V)$ are the typical fibres of a rigged Fock bundle, and a rigged \cstar-algebra bundle, respectively.
Taking care of the bundle-theoretic, global aspects is the second set of results of this article. 

\begin{itemize}

\item 
We develop a theory of \emph{rigged Hilbert space bundles} over Fr\'echet manifolds (\cref{def:HilbertBundle}). In short, this is a continuous Hilbert space bundle $\mathcal{H}$ together with a Fr\'echet vector bundle $\mathcal{E}$ with a continuous dense inclusion $\mathcal{E} \to \mathcal{H}$.  We prove that rigged Hilbert space bundles can be constructed by associating a  unitary representation with smooth action map (like the one of $\Imp_L(V)$ on $\mathcal{F}_L^{\infty}$) to a principal Fr\'echet bundle (\cref{lem:AssociatedHilbertBundle}). 
\item
In order to properly keep track of the Clifford algebra action, we develop a corresponding theory of \emph{rigged \cstar-algebra bundles} (\cref{def:cstarBundle}), and show 
how to construct examples by associating a   representation with smooth action map (like the one of $\O_L(V)$ on $\Cl(V)^{\infty}$) to Fr\'echet principal bundles (\cref{lem:RiggedCStarBundle}). 

\end{itemize}

Now, the geometric basis of our general framework is a Fr\'echet Lie group $\mathcal{G}$ and a Fr\'echet principal $\mathcal{G}$-bundle $\mathcal{E}$ over a Fr\'echet manifold $\mathcal{M}$. The relation between geometry and representation theory is established by requiring a smooth group homomorphism $\omega: \mathcal{G} \to \O_L(V)$. We call the triple $(V,L,\omega)$ a \emph{Fock extension} for the Fr\'echet Lie group $\mathcal{G}$ (\cref{def:fockextension}). The reason for this terminology is that the central extension \cref{intro:ce} pulls back along $\omega$ to a central extension
\begin{equation}
\label{intro:cepullback}
1\to \U(1) \to \widetilde{\mathcal{G}} \to \mathcal{G} \to 1\text{,}
\end{equation}  
with induced smooth representations of $\mathcal{G}$ on $\Cl(V)^{\infty}$ and of $\widetilde{\mathcal{G}}$ on $\mathcal{F}_L^{\infty}$. Using our results listed above, we form the associated rigged \cstar-algebra bundle $\Cl^{\infty}_V(\mathcal{E})\defeq(\mathcal{E} \times \Cl(V)^{\infty})/\mathcal{G}$ over $\mathcal{M}$ with typical fibre $\Cl(V)^{\infty}$. Our main result concerning the construction of smooth Fock bundles (see \cref{sec:smoothfockbundles}) is the following:

\def\thedefinition{\Alph{definition}}
\begin{theorem}
\label{intro:A}
Let $\mathcal{E}$ be a Fr\'echet principal $\mathcal{G}$-bundle over $\mathcal{M}$, and let $(V,L,\omega)$ be a Fock extension of $\mathcal{G}$.
Suppose that $\widetilde{\mathcal{E}}$ is a lift of the structure group of $\mathcal{E}$ from $\mathcal{G}$ to the central extension $\widetilde{\mathcal{G}}$. Then, the associated bundle 
\begin{equation*}
\mathcal{F}^{\infty}_L(\widetilde{\mathcal{E}})\defeq(\widetilde{\mathcal{E}} \times \mathcal{F}_L^{\infty})/\widetilde{\mathcal{G}}
\end{equation*}
is rigged Hilbert space bundle over $\mathcal{M}$ with typical fibre the rigged Fock space $\mathcal{F}_L^{\infty}$. Moreover, it is a rigged $\Cl^{\infty}_V(\mathcal{E})$-module bundle.  
\end{theorem}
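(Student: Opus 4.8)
The plan is to assemble Theorem~\ref{intro:A} from the three representation-theoretic results and the two bundle-construction lemmas listed above, treating everything functorially. First I would pull back the central extension \cref{intro:ce} along the smooth homomorphism $\omega:\mathcal{G}\to\O_L(V)$ to obtain the central extension $\widetilde{\mathcal{G}}$ of \cref{intro:cepullback}, and observe that composing the action maps of $\Imp_L(V)$ and $\O_L(V)$ with $\omega$ (resp.\ its lift) produces a smooth action of $\widetilde{\mathcal{G}}$ on $\mathcal{F}_L^{\infty}$ and a smooth action of $\mathcal{G}$ on $\Cl(V)^{\infty}$; smoothness here is immediate from \cref{prop:smoothFockSpace} and \cref{prop:CliffordFrechetAlgebra} together with smoothness of $\omega$. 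One subtlety to flag explicitly: the lift $\widetilde{\mathcal{E}}$ is a principal $\widetilde{\mathcal{G}}$-bundle, and I should check that it is again a Fr\'echet principal bundle (the total space is a Fr\'echet manifold, local trivializations and transition functions are smooth), which follows because $\widetilde{\mathcal{G}}\to\mathcal{G}$ is a Fr\'echet-Lie-group extension and the lift condition is precisely a reduction/extension of structure group along it.

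Next I would invoke \cref{lem:AssociatedHilbertBundle} with the principal $\widetilde{\mathcal{G}}$-bundle $\widetilde{\mathcal{E}}$ and the unitary representation of $\widetilde{\mathcal{G}}$ on $\mathcal{F}_L$ with smooth action on the rigged subspace $\mathcal{F}_L^{\infty}$; this directly yields that $\mathcal{F}^{\infty}_L(\widetilde{\mathcal{E}})=(\widetilde{\mathcal{E}}\times\mathcal{F}_L^{\infty})/\widetilde{\mathcal{G}}$ is a rigged Hilbert space bundle over $\mathcal{M}$ with typical fibre $\mathcal{F}_L^{\infty}$, the underlying continuous Hilbert bundle being $(\widetilde{\mathcal{E}}\times\mathcal{F}_L)/\widetilde{\mathcal{G}}$. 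Here one must check that the $\U(1)$-part of $\widetilde{\mathcal{G}}$ acts by scalars consistently with the unitary structure — it does, since \cref{intro:ce} is by construction a central $\U(1)$-extension acting by phases — so the associated bundle is well defined and the Hermitian metric descends. Symmetrically, \cref{lem:RiggedCStarBundle} applied to $\mathcal{E}$ and the smooth $\mathcal{G}$-representation on $\Cl(V)^{\infty}$ gives the rigged \cstar-algebra bundle $\Cl^{\infty}_V(\mathcal{E})$ with fibre $\Cl(V)^{\infty}$.

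It remains to produce the rigged module structure, i.e.\ a fibrewise smooth Clifford action $\Cl^{\infty}_V(\mathcal{E})\times_{\mathcal{M}}\mathcal{F}^{\infty}_L(\widetilde{\mathcal{E}})\to\mathcal{F}^{\infty}_L(\widetilde{\mathcal{E}})$. I would define it on representatives by $[\,\tilde e, a\,]\cdot[\,\tilde e, v\,]\defeq[\,\tilde e,\, a\lact v\,]$, using a common representative $\tilde e$ over a point of $\mathcal{M}$ (pushing $\tilde e$ down to $\mathcal{G}$ for the left factor). Well-definedness under the $\widetilde{\mathcal{G}}$-action is exactly the implementability condition $U(\theta_g(a)\lact v)=a\lact Uv$ quoted in the introduction: changing the representative by $\tilde g$ over $g\in\mathcal{G}$ transforms $(a,v)$ to $(\theta_g^{-1}(a), U^{-1}v)$, and the formula compensates precisely because of that identity (the central $\U(1)$ drops out since it acts on $a$ trivially and on $v$ by a phase that cancels). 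Fibrewise smoothness of this action map is \cref{prop:SmoothCliffordActionOnF}, transported to the associated bundles via local trivializations, and local triviality compatibility of the two bundles comes from using the same open cover of $\mathcal{M}$ and compatible cocycles (the $\widetilde{\mathcal{G}}$-cocycle of $\widetilde{\mathcal{E}}$ projecting to the $\mathcal{G}$-cocycle of $\mathcal{E}$).

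The main obstacle I anticipate is not any single deep estimate — those are all packaged in \cref{prop:smoothFockSpace,prop:CliffordFrechetAlgebra,prop:SmoothCliffordActionOnF,lem:AssociatedHilbertBundle,lem:RiggedCStarBundle} — but rather the bookkeeping needed to make the rigged module structure genuinely \emph{smooth} as a morphism of Fr\'echet vector bundles over $\mathcal{M}$, and to verify that the rigged structures on the two bundles are compatible (same Fr\'echet topology on overlaps, the inclusion $\mathcal{F}^{\infty}_L(\widetilde{\mathcal{E}})\hookrightarrow$ continuous Hilbert bundle intertwining the module actions). Concretely, one has to descend the fibrewise-smooth but not norm-continuous action through the associated-bundle quotient while respecting two different structure groups simultaneously, which requires choosing local sections of $\widetilde{\mathcal{E}}$ (hence of $\mathcal{E}$) coherently and checking the transition functions land in the right smooth mapping groups; I expect this to be the part demanding genuine care rather than a one-line citation.
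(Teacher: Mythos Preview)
Your proposal is correct and follows essentially the same route as the paper: pull back the central extension along $\omega$, induce the smooth representations, apply \cref{lem:AssociatedHilbertBundle} and \cref{lem:RiggedCStarBundle} for the two bundles, and then define Clifford multiplication on representatives and check well-definedness via the implementability identity. The paper packages your final step (and the bookkeeping you anticipate) into a general statement, \cref{prop:InducedBundleRep}, about smooth representations of central extensions on rigged modules (\cref{def:repofce}), and then simply observes via \cref{thm:repofimp} that the Fock situation satisfies its hypotheses; this absorbs the compatibility-of-local-trivializations check you flag, since one uses a local section of $\widetilde{\mathcal{E}}$ and its projection to $\mathcal{E}$ simultaneously.
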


In our application to spinor bundles on loop space, we consider a real vector bundle $E$ over a smooth manifold $M$, equipped with a spin structure $\Spin(E)$. For instance, $E=TM$ and $\Spin(E)$ is a spin structure on $M$. We consider the free loop space $\mathcal{M}\defeq LM$ and the looped bundle $\mathcal{E} := L\Spin(E)$, which is a Fr\'echet principal bundle over $\mathcal{M}$ for the loop group $\mathcal{G}=L\Spin(d)$, where $d$ is the dimension of $M$. We provide two possible Fock extensions $(V_{ev},L_{ev},\omega_{ev})$ and $(V_{odd},L_{odd},\omega_{odd})$ of $\mathcal{G}$, based on spaces of even resp.~odd spinors on the circle. We show in both cases (\cref{prop:ceodd,prop:EvenBasicCE}) that the  extension \cref{intro:cepullback} is the basic central extension
\begin{equation*}
1\to \U(1) \to \widetilde{L\Spin}(d) \to L\Spin(d) \to 1
\end{equation*}
of the loop group studied, e.g., by Pressley-Segal \cite{PS86}. This way, a lift $\widetilde{L\Spin}(E)$ as required in \cref{intro:A} is precisely what Killingback called a \emph{string structure} on $E$ \cite{killingback1}.
Applying \cref{intro:A} for our two Fock extensions now leads to two rigged Fock bundles
\begin{equation*}
\mathcal{F}_{L_{ev}}^{\infty}(\widetilde{L\Spin}(E))
\quad\text{ and }\quad 
\mathcal{F}_{L_{odd}}^{\infty}(\widetilde{L\Spin}(E))
\text{,}
\end{equation*}  
which we call the \emph{even} and the \emph{odd loop spinor bundle} of the string vector bundle $E$. Further, these loop spinor bundles are equipped with a Clifford multiplication by the rigged Clifford algebra bundles $\Cl^{\infty}_{V_{ev}}(L\Spin(E))$ and $\Cl^{\infty}_{V_{odd}}(L\Spin(E))$, respectively. We believe that our construction achieves, for the first time, a rigorous construction of loop spinor bundles in a continuous (and now even smooth) setting.
We describe below  that our construction is consistent with the work of Stolz-Teichner \cite{stolz3} and Ambler \cite{Ambler2012}, and in how far it improves these.

The last big group of results of this article is concerned with the situation where  lifts $\widetilde{\mathcal{E}}$  as required in \cref{intro:A} are not available, so that the construction of rigged Fock bundles (already of continuous bundles) fails. In our application to loop spaces this corresponds to the situation that $E$ does not admit  string structures.  A common slogan one often finds in the literature is that one should then aim at constructing \emph{projective} bundles. Instead, we use the finer notion of \emph{twisted} bundles, where the twist appears as a  separate geometric object,  a bundle gerbe. Twisted bundles have been introduced by Lupercio-Uribe \cite{Lupercio2004}, Mackaay \cite{Mackaay2003}, and by Bouwknegt-Carey-Mathai-Murray-Stevenson  under the name of \quot{bundle gerbe modules} \cite{Bouwknegt2002}. In this article, we generalize this theory to Fr\'echet bundle gerbes, and \emph{twisted rigged Hilbert space bundles} (\cref{def:trhsb}), and also take an additional action of a rigged \cstar-algebra bundle into account (\cref{def:trmb}).  We prove a number of useful results;
most importantly, we show that twisted rigged space bundles can be untwisted by trivializations of the twisting bundle gerbe (\cref{lem:twistingmod}).  

The principal $\mathcal{G}$-bundle $\mathcal{E}$  and the central extension \cref{intro:cepullback} induce a \emph{lifting bundle gerbe} $\mathscr{L}_{\mathcal{E}}$ in the sense of Murray \cite{Murray1996}, whose trivializations correspond precisely to the lifts $\widetilde{\mathcal{E}}$. We construct a canonical $\mathscr{L}_{\mathcal{E}}$-twisted rigged Hilbert space bundle $\mathcal{F}^{\infty}(\mathcal{E})^{\twist}$ which we call the \emph{twisted Fock bundle} (\cref{def:twistedfock}), which exists no matter if lifts $\widetilde{\mathcal{E}}$ exist or not. The terminology comes from the fact that the twisted Fock bundle is a twisted version of the Fock bundle  of \cref{intro:A}, as our next main result shows (see \cref{th:twistedspinor}):

\begin{theorem}
\label{th:B}
Suppose $\mathcal{T}$ is a trivialization of the lifting gerbe $\mathscr{L}_{\mathcal{E}}$, corresponding  to a lift $\widetilde{\mathcal{E}}$ of the structure group of $\mathcal{E}$ along the central extension \cref{intro:cepullback}. Then,  untwisting the twisted Fock bundle $\mathcal{F}^{\infty}(\mathcal{E})^{\twist}$ using the trivialization $\mathcal{T}$ yields precisely the Fock bundle $\mathcal{F}^{\infty}_L(\widetilde{\mathcal{E}})$.
\end{theorem}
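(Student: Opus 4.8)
The plan is to reduce the assertion to an identity of descent data over the surjective submersion $\mathcal{E}\to\mathcal{M}$ underlying the lifting gerbe $\mathscr{L}_{\mathcal{E}}$. First I would write out, in explicit models, the three objects involved. The lifting gerbe $\mathscr{L}_{\mathcal{E}}$ is presented over $\mathcal{E}\to\mathcal{M}$, with its line bundle over $\mathcal{E}^{[2]}\cong\mathcal{E}\times\mathcal{G}$ the pullback of the line bundle of the central extension \cref{intro:cepullback}; since this extension is $\widetilde{\mathcal{G}}=\mathcal{G}\times_{\O_L(V)}\Imp_L(V)$, the fibre of $\mathscr{L}_{\mathcal{E}}$ over $(e,eg)$ is the line associated with the $\U(1)$-torsor of implementers $U\in\Imp_L(V)$ lifting $\omega(g)$. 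By \cref{def:twistedfock}, over this cover the twisted Fock bundle $\mathcal{F}^{\infty}(\mathcal{E})^{\twist}$ is the trivial rigged bundle $\mathcal{E}\times\mathcal{F}_L^{\infty}$, with twisted gluing over $(e,eg)$ given by $v\otimes U\mapsto U^{-1}v$ for a lift $U$ of $\omega(g)$ (the inversion being forced by the gerbe-multiplication convention), using the smooth $\Imp_L(V)$-action of \cref{prop:smoothFockSpace}. Finally, the trivialization $\mathcal{T}$ corresponding to the lift $\widetilde{\mathcal{E}}$ is the line bundle $\widetilde{\mathcal{E}}\times_{\U(1)}\C$ over $\mathcal{E}$, together with the tautological isomorphism $\mathscr{L}_{\mathcal{E}}\cong\pr_2^{*}\mathcal{T}\otimes\pr_1^{*}\mathcal{T}^{*}$ that, over $(e_1,e_2)$ with $e_2=e_1g$, sends the class of a pair $(\tilde e_1,\tilde e_2)\in\widetilde{\mathcal{E}}\times_{\mathcal{M}}\widetilde{\mathcal{E}}$ to the unique $\tilde g\in\widetilde{\mathcal{G}}$ with $\tilde e_2=\tilde e_1\tilde g$, regarded through $\widetilde{\mathcal{G}}\to\Imp_L(V)$ as a lift of $\omega(g)$.

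Next I would run the untwisting of \cref{lem:twistingmod}: it produces the rigged Hilbert space bundle over $\mathcal{M}$ obtained by descending $(\mathcal{E}\times\mathcal{F}_L^{\infty})\otimes\mathcal{T}^{*}$ along $\mathcal{E}\to\mathcal{M}$, with descent isomorphism the twisted gluing composed with $\mathscr{L}_{\mathcal{E}}\cong\pr_2^{*}\mathcal{T}\otimes\pr_1^{*}\mathcal{T}^{*}$. To recognise this bundle as $\mathcal{F}^{\infty}_L(\widetilde{\mathcal{E}})=(\widetilde{\mathcal{E}}\times\mathcal{F}_L^{\infty})/\widetilde{\mathcal{G}}$ I would pull everything back along the principal $\U(1)$-bundle $\pi\colon\widetilde{\mathcal{E}}\to\mathcal{E}$: the tautological nowhere-vanishing section of $\pi^{*}\mathcal{T}$ furnishes a canonical trivialization $\pi^{*}\big((\mathcal{E}\times\mathcal{F}_L^{\infty})\otimes\mathcal{T}^{*}\big)\cong\widetilde{\mathcal{E}}\times\mathcal{F}_L^{\infty}$. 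Under it, and using that the $\Imp_L(V)$-component of $\tilde g$ is exactly the implementer delivered by $\mathscr{L}_{\mathcal{E}}\cong\pr_2^{*}\mathcal{T}\otimes\pr_1^{*}\mathcal{T}^{*}$, the pulled-back descent isomorphism over $\widetilde{\mathcal{E}}\times_{\mathcal{M}}\widetilde{\mathcal{E}}$ unwinds to $(\tilde e_1,v)\mapsto(\tilde e_2,\tilde g^{-1}v)$ whenever $\tilde e_2=\tilde e_1\tilde g$ — verbatim the descent datum defining the associated bundle $(\widetilde{\mathcal{E}}\times\mathcal{F}_L^{\infty})/\widetilde{\mathcal{G}}$. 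Hence both constructions return the same bundle over $\mathcal{M}$, and this coincidence is the asserted isomorphism.

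It then remains to check that this is an isomorphism of rigged Hilbert space bundles, and of $\Cl^{\infty}_V(\mathcal{E})$-module bundles, as in \cref{intro:A}. For the rigged structure, both sides are built — by \cref{lem:twistingmod} and \cref{lem:AssociatedHilbertBundle} respectively — from the same fibre datum, namely the dense inclusion $\mathcal{F}_L^{\infty}\subset\mathcal{F}_L$ together with the smooth $\Imp_L(V)$-action, and both lemmas produce their bundles compatibly with this datum, so the comparison map is fibrewise unitary and restricts to an isomorphism of the Fréchet subbundles. For the module structure, $\Cl^{\infty}_V(\mathcal{E})$ is associated to $\mathcal{E}$ through the $\mathcal{G}$-action, on which the central $\U(1)$ of \cref{intro:cepullback} acts trivially, so Clifford multiplication does not interact with the twist; that the comparison map intertwines the two module structures is then precisely the implementability condition $U(\theta_g(a)\lact v)=a\lact Uv$ read off fibrewise for $U$ a lift of $\omega(g)$.

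The step I expect to be the main obstacle is purely organizational: fixing mutually consistent orientation, inversion, and left-versus-right conventions across the definition of $\mathscr{L}_{\mathcal{E}}$, the twisted gluing of \cref{def:twistedfock}, the trivialization-lift correspondence, and the untwisting of \cref{lem:twistingmod}, so that the two descent data coincide on the nose rather than merely up to an a priori nontrivial (though canonical) automorphism; and, secondarily, verifying that the identification is not only fibrewise but globally smooth for the relevant Fréchet topologies, which is where one leans on the smoothness built into \cref{lem:twistingmod} and \cref{lem:AssociatedHilbertBundle}.
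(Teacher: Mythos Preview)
Your approach is correct and arrives at the same conclusion, but it runs the argument in the opposite direction from the paper. The paper does not execute the untwisting procedure at all: instead it writes down, directly over $\mathcal{E}$, an explicit isometric isomorphism
\[
\phi\colon \twist_{\mathcal{T}}\bigl(\mathcal{F}^{\infty}_L(\widetilde{\mathcal{E}})\bigr)=\mathcal{T}_{\C}\otimes\pi^{*}\mathcal{F}^{\infty}_L(\widetilde{\mathcal{E}})\longrightarrow \mathcal{E}\times\mathcal{F}_L^{\infty}=\mathcal{F}^{\infty}(\mathcal{E})^{\twist},\qquad [t,z]\otimes[t,v]\mapsto(p,zv),
\]
checks in one line that it intertwines $\kappa_{\mathcal{T}}$ with $\nu$ (this is Proposition~\ref{lem:reptwistedvb}), and then separately verifies $\pi^{*}\Cl^{\infty}_V(\mathcal{E})$-linearity (Theorem~\ref{lem:reptwistedmb}). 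The statement about untwisting then falls out because $\twist_{\mathcal{T}}$ is an equivalence (Proposition~\ref{lem:twistingmod}). By contrast, you form $\mathcal{T}^{*}_{\C}\otimes(\mathcal{E}\times\mathcal{F}_L^{\infty})$, pull back to $\widetilde{\mathcal{E}}$ to trivialize $\mathcal{T}^{*}$, and identify the resulting descent datum with the associated-bundle quotient. The content is the same---in both cases the crux is that the $\widetilde{\mathcal{G}}$-action on $\mathcal{F}_L^{\infty}$ simultaneously governs the twisted gluing $\nu$ and the quotient defining $\mathcal{F}^{\infty}_L(\widetilde{\mathcal{E}})$---but the paper's ``twist first, then compare'' avoids precisely the convention-chasing you flag as the main obstacle, and it also makes smoothness and the module structure immediate rather than something to be argued separately after descent.
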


\Cref{th:B} allows to work with the twisted Fock bundle $\mathcal{F}^{\infty}(\mathcal{E})^{\twist}$ in situations where no lift $\widetilde{\mathcal{E}}$ exists, or one wishes to leave the choice of a lift open. In case of the spinor bundle on loop space, the lifts $\widetilde{\mathcal{E}}$ are  the string structures on $\mathcal{E}$, and not to chose one a priori   may be useful in applications to M-theory (see \cref{sec:loopspin}). 
It also allows to compare our construction of the spinor bundle on loop space with existing proposals, as these end with projective bundles and do not ultimately perform the untwisting.

In the general frame work, we consider -- for the purpose of comparison -- the  Hilbert space bundle $\mathcal{H}$ with fibre $V$ obtained by associating to $\mathcal{E}$ the representation $\mathcal{G} \to \O_L(V)$. This brings us into the situation mentioned at the very beginning: a continuous Hilbert space bundle with the problem that a continuous choice of Lagrangians $L_p \subset \mathcal{H}_p$ is obstructed. We parameterize this problem in two ways: 
\begin{enumerate}[(a)]

\item 
In a direct way by examining  the Fr\'echet fibre bundle of equivalence classes of Lagrangians in $\mathcal{H}$. The obstruction is then represented by a Fr\'echet bundle gerbe $\mathscr{G}_{\mathcal{H}}$ over $\mathcal{M}$, which we call the \emph{Lagrangian Gra{\ss}mannian gerbe}. We equip it with a twisted rigged Hilbert space bundle $\mathcal{F}\pol{\mathcal{H}}$, which we call the \emph{polarization-dependent Fock bundle} (\cref{sec:LagrangianTwistedSpinorBundle}).

\item
In an indirect way by considering certain restricted orthogonal frames in $\mathcal{H}$. The obstruction is then represented by a lifting bundle gerbe $\mathscr{L}_{\mathcal{H}}$ over $\mathcal{M}$, which we call the \emph{implementer lifting gerbe}. We equip it with a twisted rigged Hilbert space bundle $\mathcal{F}\frm{\mathcal{H}}$, which we call the \emph{frame-dependent Fock bundle} (\cref{sec:impLiftingGerbe}).

\end{enumerate}
We prove that these structures related to the bundle $\mathcal{H}$ are completely equivalent to the lifting gerbe $\mathscr{L}_{\mathcal{E}}$ and the twisted Fock bundle $\mathcal{F}^{\infty}(\mathcal{E})^{\twist}$. More precisely, we construct a sequence of canonical isomorphisms of Fr\'echet  bundle gerbes
\begin{equation*}
\mathscr{L}_{\mathcal{E}} \to \mathscr{L}_{\mathcal{H}} \to \mathscr{G}_{\mathcal{H}}
\end{equation*}
over $\mathcal{M}$. Under these isomorphisms, the polarization-dependent Fock bundle $\mathcal{F}\pol{\mathcal{H}}$ of $\mathscr{G}_{\mathcal{H}}$ pulls back to the frame-dependent Fock bundle $\mathcal{F}\frm{\mathcal{H}}$ of $\mathscr{L}_{\mathcal{H}}$ (\cref{prop:equivambler}), which in turn pulls back to the twisted Fock bundle  $\mathcal{F}^{\infty}(\mathcal{E})^{\twist}$ of $\mathscr{L}_{\mathcal{E}}$ (\cref{prop:refinement}). In particular, untwisting any of these twisted rigged Hilbert space bundles yields the the ordinary Fock bundle $\mathcal{F}^{\infty}(\widetilde{\mathcal{E}})$ as rigged $\Cl_V^{\infty}(\mathcal{E})$-module bundles (\cref{co:untwisting1,co:untwisting2}).

In the following we describe how these equivalent structures -- in the application to spinor bundles on loop space -- make  contact to the existing discussions of Stolz-Teichner \cite{stolz3} and Ambler \cite{Ambler2012}. First of all, both approaches can be subsumed in our general framework, in such a way that they correspond to  the two Fock extensions $(V_{odd},L_{odd},\omega_{odd})$ and $(V_{ev},L_{ev},\omega_{ev})$ mentioned above. The Hilbert space bundle $\mathcal{H}$ obtains concrete interpretations in terms of spinors on the circle. In case of the odd Fock extension, its fibre over a loop $\gamma\in LM$ is the Hilbert space sections  $L^2(S^1,\mathbb{S} \otimes \gamma^{*}E)$ of the tensor product of the odd spinor bundle $\mathbb{S}$ on $S^1$ and the pullback of the given bundle $E$ along the loop $\gamma:S^1 \to M$  (\cref{lem:fibresoddspinor}). This Hilbert space is the basis of the proposal of Stolz-Teichner. They consider a canonical equivalence class of Lagrangians in each fibre $\mathcal{H}_p$, given by the positive Eigenspaces of the twisted Dirac operator $\D_{\gamma}$ acting on $\mathcal{H}_{\gamma}$, and form the corresponding projective bundle of Fock spaces. We prove (\cref{prop:equivalenceclasslag}) that our construction of the frame-dependent Fock bundle $\mathcal{F}\frm{\mathcal{H}}$ leads to the same equivalence class of Lagrangians (though we never used $\D_{\gamma}$). As a result, our frame-dependent Fock bundle is a non-projective, smooth version of the projective Fock bundle of Stolz-Teichner.

In case of the even Fock extension, the fibre of $\mathcal{H}$ over $\gamma$ is the Hilbert space $L^2(S^1,\gamma^{*}E)$, which is the basis of the proposal of Ambler  (\cref{lem:fibresevenspinor}). It turns further out that Ambler's work is parallel to our discussion of the polarization-dependent Fock bundle. Thus, our work provides an upgrade of Ambler's from a purely topological version to a smooth version in the setting of rigged Hilbert spaces.   

In both cases, our treatment provides a neat explanation how string structures on $E$ untwist these twisted  bundles to yield true Fock bundles, via \cref{th:B}. This link was missing so far in the literature.
The following table summarizes the matches between the structures we introduced here, and existing structures:
\begin{center}
\setlength\tabcolsep{4pt}
\begin{tabular}{|l|c|c|}\hline
\hspace{\tabcolsep}\textbf{General framework}\begin{tabular}{c}\strut\\\strut\end{tabular} & \textbf{Odd spinor bundle} & \textbf{Even spinor bundle}
\\\hline\hline
\hspace{\tabcolsep}Central extension\begin{tabular}{c}\strut\\\strut\end{tabular} & \multicolumn{2}{|c|}{The basic central extension of $L\Spin(d)$} \\\hline
\hspace{\tabcolsep}Lifting gerbe $\mathscr{L}_{\mathcal{E}}\begin{tabular}{c}\strut\\\strut\end{tabular}$&  \multicolumn{2}{|c|}{The spin lifting gerbe on $LM$}\\\hline
\hspace{\tabcolsep}Hilbert space bundle $\mathcal{H}\begin{tabular}{c}\strut\\\strut\end{tabular}$ & \begin{tabular}{c}Twisted odd spinors on the circle\\$\mathcal{H}_{\gamma}=L^2(S^1,\mathbb{S} \otimes \gamma^{*}E)$\end{tabular} & \begin{tabular}{c}Twisted spinors on the circle\\$\mathcal{H}_{\gamma}=L^2(S^1,\gamma^{*}E_{\C})$\end{tabular} \\\hline
\begin{tabular}{l}Implementer lifting\\gerbe $\mathscr{L}_{\mathcal{H}}$\end{tabular} & \begin{tabular}{c}Represents the projectivity\\of Stolz-Teichner's Fock bundle\end{tabular} & -- \\\hline
\begin{tabular}{l}Frame-dependent\\Fock bundle $\mathcal{F}\frm{\mathcal{H}}$\end{tabular} & \begin{tabular}{c}A non-projective, twisted version\\of Stolz-Teichner's Fock bundle\end{tabular} & -- \\\hline
\begin{tabular}{l}Lagrangian Gra\ss mannian\\gerbe $\mathscr{G}_{\mathcal{H}}$\end{tabular} & -- & \begin{tabular}{c}
A Fr\'echet version of\\Ambler's bundle gerbe\end{tabular} \\\hline
\begin{tabular}{l}Polarization-dependent\\Fock bundle $\mathcal{F}\pol{\mathcal{H}}$\end{tabular} & -- & \begin{tabular}{c}
A smooth version of\\Ambler's Fock space bundle\end{tabular} \\\hline
\end{tabular}
\end{center}

Concerning the organization of this paper, we set up our theory of rigged (twisted) Hilbert space bundles in \cref{sec:HilbertBundle},  and address the representation-theoretic problems in \cref{sec:freefermions}. The general construction of Fock bundles is located in \cref{sec:smoothfockbundles}, and the twisted versions in \cref{sec:twistedspinorbundles}. The application of these structures to spinor bundles on loop space is performed in \cref{sec:spinorbundles}.

In a future paper we intend to apply our general framework to the construction of Fock bundles over the moduli space of gauge connections, which has been described by Segal \cite{Segal1984}, Carey-Murray \cite{Carey1996}, and Mickelsson \cite{Carey1997,Mickelsson1989}. In this context, the bundle gerbe that obstructs the existence of non-projective Fock bundles is called the \emph{Faddeev-Mickelsson-Shatashvili bundle gerbe}, and it represent a field-theoretical anomaly called the \emph{Hamiltonian anomaly}.  
It turns out that in order to apply our framework to this situation,  a slight generalization of the notion of Fock extension is necessary to do this, and this is why we decided to move this into another paper.

\paragraph{Acknowledgements. } This project was funded by the German Research Foundation (DFG) under project code WA 3300/1-1. We would like to thank Matthias Ludewig, Jouko Mickelsson, and Peter Teichner for helpful discussions.

\section{Smooth bundles of Hilbert spaces and \texorpdfstring{\cstar}{C*}-algebras} 
\def\thedefinition{\thesubsection.\arabic{definition}}

\label{sec:HilbertBundle}

In this section we set up our framework for smooth bundles of Hilbert spaces and \cstar-algebras over Fr\'echet manifolds. All  Fock bundles we construct will be placed in this framework.
The main idea is to use  the well-known notion of a rigged Hilbert space, which provides a neat combination of the smooth and the functional analytical setting. We introduce a new, analogous definition of a rigged \cstar-algebra, and  introduce corresponding notions  of locally trivial smooth bundles of those. Though these notions seem to be very natural ones, we have not seen them written up anywhere.
We show how to construct examples by associating certain representations to Fr\'echet principal bundles. We also discuss how rigged \cstar-algebra bundles act on rigged Hilbert space bundles; this will be our framework for Clifford multiplication. Finally, we set up the framework for twisted bundles, where the twist is represented by a bundle gerbe. This will be used in \cref{sec:twistedspinorbundles} for the discussion of various versions of twisted Fock bundles.

\subsection{Rigged Hilbert space bundles}
\label{sec:HilbertBundle:1}

The notion of a rigged Hilbert space originates from quantum mechanics, where it provides Dirac's bra-ket formalism with a rigorous mathematical basis, see \cite{Roberts1966,Antoine1969}. Our Fr\'echet spaces will always be over $\R$ or over $\C$, and by \emph{morphism of Fr\'{e}chet spaces} we mean a continuous linear map.

 \begin{definition}\label{def:RiggedHilbertSpace}
 A \emph{rigged Hilbert space}  is a Fr\'echet space equipped with a continuous (sesquilinear) inner product.
 A \emph{morphism of rigged Hilbert spaces} is simply a morphism of Fr\'echet spaces.
 A morphism of rigged Hilbert spaces is called \emph{bounded}/\emph{isometric} if it is bounded/isometric with respect to the  inner products.
\end{definition}

Explicitly, a rigged Hilbert space is a pair $(F, \langle \cdot, \cdot \rangle)$; however, we shall usually suppress the inner product $\langle \cdot, \cdot \rangle$ from the notation.
 Given a rigged Hilbert space $F$ one obtains a Hilbert space $F^{\langle \cdot, \cdot \rangle}$ by taking the completion with respect to the inner product.
 A  morphism $f: F_{1} \rightarrow F_{2}$ can be seen as an unbounded linear map $F_{1}^{\langle \cdot, \cdot \rangle} \rightarrow F_{2}^{\langle \cdot, \cdot \rangle}$ with  domain $F_1$, and when  $f$ is bounded/isometric, that map is bounded/isometric. 
 In this way, a rigged Hilbert space is the same thing as pair $(F,H)$, where $H$ is a Hilbert space, and $F$ is a Fr\'echet space equipped with a continuous linear injection $\iota: F \rightarrow H$ such that $\iota(F)$ is dense in $H$.
 Likewise, a morphism $(F_{1},H_{1}) \rightarrow (F_{2},H_{2})$ of rigged Hilbert spaces is an unbounded linear map $H_{1} \rightarrow H_{2}$ with domain $F_{1}$, that restricts to a morphism of Fr\'{e}chet spaces $F_{1} \rightarrow F_{2}$.

\begin{example}\label{ex:SchwartzFrechet}
The Schwartz space of the real line, $\mc{S}(\R)$, is a Fr\'echet space, and dense in $L^{2}(\R)$. The restriction of the $L^{2}$-inner product is continuous on $\mc{S}(\R)$, and hence turns $\mc{S}(\R)$ into a rigged Hilbert space.
\end{example}
One of the reasons we work with rigged Hilbert spaces is that they give good control over differential operators.
Consider \cref{ex:SchwartzFrechet}, there we see that differential operators are extremely well-behaved on $\mc{S}(\R)$, even though they behave very poorly on $L^{2}(\R)$, i.e.~they are merely densely defined and unbounded.

Let $\mc{M}$ be a Fr\'echet manifold. We shall consider several different notions of bundles over $\mc{M}$.
We start by recalling the definition of Fr\'echet principal bundles from \cite[Definition 4.6.5]{Hamilton82}.
Let $\mc{G}$ be a Fr\'echet Lie group.

\begin{definition}
 A \emph{Fr\'echet principal $\mc{G}$-bundle} over $\mc{M}$ is a Fr\'echet manifold $\mathcal{P}$, equipped with
a smooth surjection $\pi: \mathcal{P} \rightarrow \mc{M}$ and a  fibre-preserving  right action of $\mc{G}$ on $\mathcal{P}$, such that for each $x \in \mc{M}$ there exists an open neighbourhood $U$ of $x$, and a fibre-preserving diffeomorphism $\Phi_U: \mathcal{P}|_{U}\defeq \pi^{-1}(U) \rightarrow U \times \mc{G}$, which intertwines the  action of $\mc{G}$ on $\mathcal{P}$ with  multiplication from the right.
\end{definition}

Morphisms between Fr\'echet principal bundles are defined in the usual way, and 
it is clear that Fr\'echet principal bundles can be pulled back along smooth maps of Fr\'echet manifolds. 
In the following \lcnamecref{lem:bundleext} (required in \cref{sec:AnotherBundle}) we note that Fr\'echet principal bundles can be extended along  Fr\'echet Lie group homomorphisms, just like ordinary, finite-dimensional principal bundles. 

\begin{lemma}
\label{lem:bundleext}
Let $\pi:\mathcal{P} \to \mathcal{M}$ be a Fr\'echet principal $\mathcal{G}$-bundle over $\mathcal{M}$, and let $\phi:\mathcal{G} \to \mathcal{H}$ be a homomorphism of Fr\'echet Lie groups. Then, the quotient $\mathcal{P} \times_{\mathcal{G}} \mathcal{H} \defeq(\mathcal{P} \times \mathcal{H})/\mathcal{G}$, where $\mathcal{G}$ acts by $(p,h) g \defeq (pg,\phi(g)^{-1}h)$, carries a unique Fr\'echet manifold structure for which the projection $\mathcal{P} \times \mathcal{H} \to (\mathcal{P}\times \mathcal{H})/\mathcal{G}$ is a submersion. Moreover, equipped with the $\mathcal{H}$-action $(p,h)h'\defeq (p,hh')$ and the projection $(p,h) \mapsto \pi(p)$, $\mathcal{P} \times_{\mathcal{G}} \mathcal{H}$ is a Fr\'echet principal $\mathcal{H}$-bundle over $\mathcal{M}$.  
\end{lemma}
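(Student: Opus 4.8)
The statement is the standard ``extension of structure group'' construction, transferred to the Fr\'echet setting, so the plan is to carry out the classical argument while checking at each step that the constructions are compatible with the Fr\'echet manifold structures. First I would exhibit local data: since $\mathcal{P}$ is a Fr\'echet principal $\mathcal{G}$-bundle, cover $\mathcal{M}$ by open sets $U_i$ with trivializations $\Phi_i\colon \mathcal{P}|_{U_i} \to U_i \times \mathcal{G}$ and corresponding smooth transition functions $g_{ij}\colon U_i \cap U_j \to \mathcal{G}$. Then $\phi \circ g_{ij}\colon U_i \cap U_j \to \mathcal{H}$ is smooth and satisfies the cocycle condition, so it defines a Fr\'echet principal $\mathcal{H}$-bundle $\mathcal{Q}$ by the usual clutching construction (one should note in passing that the clutching construction works over Fr\'echet manifolds, since $\mathcal{H}$ is a Fr\'echet Lie group and the gluing maps $U_i \cap U_j \times \mathcal{H} \to U_i \cap U_j \times \mathcal{H}$, $(x,h)\mapsto (x,\phi(g_{ij}(x))h)$, are diffeomorphisms).

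Next I would produce a canonical bijection $\mathcal{P} \times_{\mathcal{G}} \mathcal{H} \to \mathcal{Q}$ covering the identity on $\mathcal{M}$ and use it to transport the Fr\'echet manifold structure onto $\mathcal{P}\times_{\mathcal{G}}\mathcal{H}$; concretely, over $U_i$ one sends the class $[(p,h)]$ to $(\pi(p), \pr_{\mathcal{G}}(\Phi_i(p))\cdot h) \in U_i \times \mathcal{H}$, and checks this is well-defined on $\mathcal{G}$-orbits and that on overlaps the two descriptions differ precisely by the gluing map of $\mathcal{Q}$. This simultaneously shows that the quotient map $q\colon \mathcal{P}\times\mathcal{H} \to \mathcal{P}\times_{\mathcal{G}}\mathcal{H}$ becomes, in these charts, the smooth surjective submersion $(p,h)\mapsto (\pi(p), \pr_{\mathcal{G}}(\Phi_i(p))h)$, and that the $\mathcal{H}$-action $(p,h)h' = (p,hh')$ descends to the free, fibre-preserving $\mathcal{H}$-action that is locally just right translation on $U_i \times \mathcal{H}$. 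Uniqueness of the Fr\'echet structure making $q$ a submersion follows because a submersion has local smooth sections, so the smooth structure is forced on the quotient.

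Finally I would record that $\mathcal{P}\times_\mathcal{G}\mathcal{H}$ with the projection $[(p,h)]\mapsto \pi(p)$ and the $\mathcal{H}$-action is a Fr\'echet principal $\mathcal{H}$-bundle, which is immediate from the local description as $U_i \times \mathcal{H}$ with $\mathcal{H}$ acting by right multiplication. The main obstacle, and the only place where one must be slightly careful rather than purely formal, is the verification that the quotient $\mathcal{P}\times_\mathcal{G}\mathcal{H}$ genuinely inherits a Fr\'echet manifold structure with $q$ a submersion: in infinite dimensions quotients by group actions need not be manifolds, so the argument must proceed via the explicit local trivializations (as above) rather than by invoking an abstract quotient theorem, and one should remark that all the transition and gluing maps involved are smooth precisely because $\phi$ is a smooth homomorphism of Fr\'echet Lie groups and composition and multiplication in $\mathcal{H}$ are smooth. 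Everything else — well-definedness on orbits, the cocycle identity, equivariance, freeness — is a routine check.
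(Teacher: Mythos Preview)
Your proposal is correct and follows the standard approach. The paper does not actually supply a proof of this lemma; it states it as a fact, noting that the extension of structure group works ``just like ordinary, finite-dimensional principal bundles.'' Your argument via local trivializations and the cocycle $\phi\circ g_{ij}$ is exactly the expected one, and it mirrors closely the proof the paper \emph{does} give for the analogous associated-bundle construction (\cref{lem:AssociatedFrechetBundle}): build charts from local trivializations of $\mathcal{P}$, check smoothness of transitions, and verify that the quotient map is a submersion in these charts. The only minor difference is that for uniqueness the paper (in that analogous proof) invokes \cite[Lemma~1.9]{Glockner2015} rather than the local-section argument you sketch, but these amount to the same thing.
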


Next we let $E$ be a Fr\'echet space, and define the notion of a Fr\'echet vector bundle with fibre $E$, following \cite[Definition 4.3.1]{Hamilton82}.
\begin{definition}
    A \emph{Fr\'echet vector bundle} over $\mc{M}$ with fibre $E$ is a Fr\'echet manifold $\mc{E}$ equipped with  a smooth surjection $\pi : \mc{E} \rightarrow \mc{M}$ and vector space structures on the fibres $\mc{E}_{x} \defeq \pi^{-1}(x)$,  for each $x \in \mc{M}$, such that for each $x \in \mc{M}$ there exists an open neighbourhood $U$ of $x$, and a diffeomorphism $\Phi_{U}:\mc{E}|_{U} \defeq \pi^{-1}(U) \rightarrow U \times E$ which is fibre-preserving and fibrewise linear. Morphisms of Fr\'echet vector bundles are smooth maps that are fibre-preserving and fibrewise linear. 
\end{definition}

As usual, the maps $\Phi_{U}$ are called \emph{local trivializations}.  
Using a local trivialization $\Phi_U$ defined around $x$, we equip the fibre $\mc{E}_{x}$ with the structure of Fr\'echet space, such that $\Phi_{U}|_{x}: \mc{E}_{x} \rightarrow E$ is an isomorphism of Fr\'echet spaces.
This Fr\'echet space structure is independent of the choice of $\Phi_U$.
The notions of a morphism between Fr\'echet vector bundles and of the pullback of a Fr\'echet vector bundle are the usual ones.

\begin{example}
The tangent bundle of a Fr\'echet manifold is a Fr\'echet vector bundle over that manifold, see \cite[Example 4.3.2]{Hamilton82}. The cotangent bundle, however, is not. 
\end{example}
Next, we discuss the operation of taking associated bundles in the setting of  Fr\'echet spaces.
Since this is the basis of our central definitions (\cref{def:fockbundle,def:CliffordBundle}), and we have been unable to find a reference, we will be explicit.
Let $\mathcal{P}$ be a Fr\'echet principal $\mc{G}$-bundle over $\mathcal{M}$, and suppose that $\mc{G}$ acts on a Fr\'echet space $E$ such that the map $\mc{G} \times E \rightarrow E$ is smooth.
We consider the quotient $(\mathcal{P} \times E)/\mc{G}$, where $\mathcal{G}$ acts on $\mathcal{P} \times E$ by $(p,v)g\defeq(pg,g^{-1}v)$, equipped with the projection map to $\mc{M}$ inherited from $\mathcal{P}$.
For each $x \in \mc{M}$ we equip $((\mathcal{P} \times E)/\mc{G})_{x}$ with the structure of vector space as follows.
 The map
    \begin{align*}
     ((\mathcal{P} \times E)/\mc{G})_{x} \times ((\mathcal{P} \times E)/\mc{G})_{x} \rightarrow ((\mathcal{P} \times E)/\mc{G})_{x}, \quad ([p,v],[p,w]) \mapsto [p,v+w],
    \end{align*}
    defines an addition. Scalar multiplication is defined by $(\lambda, [p,v]) \mapsto [p, \lambda v]$ for all $\lambda \in \C$.

\begin{lemma}\label{lem:AssociatedFrechetBundle}
    The quotient $(\mathcal{P} \times E)/\mc{G}$ has a unique structure of Fr\'echet manifold, such that the map $\mathcal{P} \times E \rightarrow (\mathcal{P} \times E)/\mc{G}$ is a surjective submersion.
    When the fibres over $\mathcal{M}$ are equipped with the vector space structure as above, the Fr\'echet manifold $(\mathcal{P} \times E)/\mc{G}$ is a Fr\'echet vector bundle over $\mc{M}$.
\end{lemma}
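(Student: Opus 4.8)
The plan is to construct the Fréchet manifold structure on $Q \defeq (\mathcal{P} \times E)/\mathcal{G}$ locally, using local trivializations of $\mathcal{P}$, and then check that the transition functions glue everything into a Fréchet vector bundle. First I would fix a trivializing open cover $\{U_i\}$ of $\mathcal{M}$ with diffeomorphisms $\Phi_i : \mathcal{P}|_{U_i} \to U_i \times \mathcal{G}$ intertwining the $\mathcal{G}$-action with right multiplication. Over each $U_i$ one gets a bijection $\mathcal{P}|_{U_i} \times E \to U_i \times \mathcal{G} \times E$, and quotienting by the $\mathcal{G}$-action $(p,v)g = (pg, g^{-1}v)$ one identifies $Q|_{U_i}$ with $U_i \times E$ via the map sending the class of $(p,v)$ to $(\pi(p), g \cdot v)$ where $g \in \mathcal{G}$ is determined by $\Phi_i(p) = (\pi(p), g)$; call this $\psi_i : Q|_{U_i} \to U_i \times E$. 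This is well-defined precisely because the $\mathcal{G}$-action on the second factor absorbs the ambiguity in $g$. I would take the $\psi_i$ as defining charts: declare $Q|_{U_i}$ to carry the Fréchet manifold structure pulled back from $U_i \times E$.

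Next, compatibility of the charts. On overlaps $U_i \cap U_j$, the transition function $\Phi_j \circ \Phi_i^{-1}$ of $\mathcal{P}$ has the form $(x,g) \mapsto (x, \tau_{ij}(x) g)$ for a smooth map $\tau_{ij} : U_i \cap U_j \to \mathcal{G}$; consequently $\psi_j \circ \psi_i^{-1} : (U_i \cap U_j) \times E \to (U_i \cap U_j) \times E$ is $(x,v) \mapsto (x, \tau_{ij}(x)\cdot v)$. Since the action map $\mathcal{G} \times E \to E$ is assumed smooth and $\tau_{ij}$ is smooth, this composite is a smooth, fibrewise-linear diffeomorphism of $(U_i\cap U_j)\times E$; the inverse is given by $\tau_{ji} = \tau_{ij}^{-1}$, which is likewise smooth because inversion in $\mathcal{G}$ is smooth and composition of smooth maps is smooth. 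Hence the charts $\psi_i$ form a smooth atlas and $Q$ acquires a well-defined Fréchet manifold structure, independent of the choice of cover by the usual refinement argument. By construction the $\psi_i$ are fibre-preserving and fibrewise linear for the vector space structure described before the lemma (one checks on representatives that $\psi_i$ carries $[p,v]+[p,w]$ to $\psi_i[p,v]+\psi_i[p,w]$ and $\lambda[p,v]$ to $\lambda\,\psi_i[p,v]$), so $Q$ together with the inherited projection to $\mathcal{M}$ is a Fréchet vector bundle with typical fibre $E$.

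For the uniqueness and the submersion statement: the quotient map $q : \mathcal{P}\times E \to Q$ reads, in the charts above, as $(p,v) \mapsto (\pi(p), g\cdot v)$ with $\Phi_i(p)=(\pi(p),g)$, i.e.\ $\psi_i \circ q \circ (\Phi_i^{-1}\times \mathrm{id})$ is the map $(x,g,v)\mapsto (x, g\cdot v)$ on $U_i\times\mathcal{G}\times E$. This has a smooth local section $(x,w)\mapsto (x,e,w)$, hence is a surjective submersion (it admits local sections through every point), so $q$ is a surjective submersion. Uniqueness follows from the general fact that if $q : X \to Q$ is a surjective submersion then the smooth structure on $Q$ is determined: a map out of $Q$ is smooth iff its precomposition with $q$ is, and a map into $Q$ is smooth iff it lifts locally through the sections of $q$; two Fréchet manifold structures on $Q$ making $q$ a surjective submersion therefore have the same smooth maps in and out, hence coincide.

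The main obstacle I anticipate is purely the infinite-dimensional bookkeeping: one must be careful that "surjective submersion" in the Fréchet category is used only in the weak sense of admitting local smooth sections (as is standard, cf.\ the cited Hamilton reference), since the inverse and implicit function theorems are unavailable, and that the local sections of $q$ built above are genuinely smooth — but these follow immediately from smoothness of $\Phi_i^{-1}$ and of the action map $\mathcal{G}\times E \to E$. Everything else is a routine transcription of the finite-dimensional associated-bundle construction, the one genuine input being the assumed smoothness of $\mathcal{G} \times E \to E$, which is exactly what makes the transition maps $(x,v)\mapsto (x,\tau_{ij}(x)\cdot v)$ smooth.
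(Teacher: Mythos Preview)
Your proposal is correct and follows essentially the same route as the paper: construct local trivializations $Q|_{U_i}\cong U_i\times E$ from local trivializations of $\mathcal{P}$, check that the transition maps $(x,v)\mapsto (x,\tau_{ij}(x)\cdot v)$ are smooth using smoothness of the action $\mathcal{G}\times E\to E$, and verify that the quotient map is a submersion by exhibiting it locally as $(x,g,v)\mapsto(x,g\cdot v)$. The only cosmetic differences are that the paper composes with charts of $\mathcal{M}$ to land in $F\times E$ rather than $U_i\times E$, and cites \cite[Lemma~1.9]{Glockner2015} for uniqueness where you spell out the standard universal-property argument for surjective submersions.
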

\begin{proof}
    There is at most one structure of Fr\'echet manifold on $(\mathcal{P} \times E)/\mc{G}$ such that the projection $\mathcal{P} \times E \rightarrow (\mathcal{P} \times E)/\mc{G}$ is a surjective submersion (see \cite[Lemma 1.9]{Glockner2015}).
    Hence, all we need to do is to construct one.
    Let us write $F$ for the Fr\'echet space on which $\mc{M}$ is modelled.
    Let $\{ U_{\alpha}, \psi_{\alpha} \}_{\alpha \in I}$ be an atlas of $\mc{M}$ that trivializes $\mathcal{P}$. Let $\alpha \in I$ be arbitrary, let $\ph_{\alpha} : \mathcal{P}|_{U_{\alpha}} \rightarrow U_{\alpha} \times \mc{G}$ be the corresponding trivialization.
    Let us write $g_{\alpha}: \mathcal{P}|_{U_{\alpha}} \rightarrow \mc{G}$ for $\ph_{\alpha}$ followed by projection onto $\mc{G}$.
    We have
        \begin{equation*}
                \left( ( \mathcal{P} \times E) / \mc{G})\right)|_{U_{\alpha}} = \{ [p,v] \mid p \in \mathcal{P}|_{U_{\alpha}}, v \in E \}.
        \end{equation*}
        Define the local chart
        \begin{align}\label{eq:LocalTrivializations}
                \Psi_{\alpha}: \left( ( \mathcal{P} \times E) / \mc{G})\right)|_{U_{\alpha}}  \rightarrow F \times E, \quad
                [p,v]  \mapsto (\psi_{\alpha}(\pi(p)),\, g_{\alpha}(p) v ).
        \end{align}
        Using the right $\mc{G}$-equivariance of the map $g_{\alpha}$, it is easy to check that this map is well-defined, fibrewise linear and bijective.
        A standard argument then proves that the transition functions are smooth.
Finally, a computation shows that the projection map $\mathcal{P} \times E \rightarrow (\mathcal{P} \times E)/\mc{G}$ locally becomes the projection map $F \times \mathcal{G} \times E \to F \times E$ onto the first and third component; hence, it is a submersion.
\end{proof}

The following result shows that taking associated bundles is compatible with Fr\'echet Lie group homomorphisms; the proof is completely analogous to the finite-dimensional case.

\begin{lemma}\label{prop:InvarianceUnderLifts}
Let $\mathcal{G}_1$ and $\mathcal{G}_2$ be Fr\'echet Lie groups acting on a Fr\'echet space $E$ in such a way that the maps $\mathcal{G}_i \times E \to E$ are smooth for $i=1,2$.   
 Let $q: \mc{G}_{1} \rightarrow \mc{G}_{2}$ be a smooth group homomorphism such that $g v = q(g) v$ for all $g \in \mc{G}_{1}$ and all $v \in E$. Let $\mathcal{P}_i$ be a principal $\mathcal{G}_i$-bundle over $\mathcal{M}$, for $i=1,2$, and let $\hat q: \mathcal{P}_1 \to \mathcal{P}_2$ be a $q$-equivariant smooth bundle morphism. 
  Then, there exists a unique isomorphism of Fr\'{e}chet vector bundles
  \begin{equation*}
   f: (\mathcal{P}_{1} \times E)/\mc{G}_{1} \rightarrow (\mathcal{P}_{2} \times E)/\mc{G}_{2}
  \end{equation*}
  such that $f([p,v]) = [\hat{q}(p),v]$ for all $p \in \mathcal{P}_{1}$ and all $v \in E$.
 \end{lemma}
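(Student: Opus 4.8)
\textbf{Proof plan for \cref{prop:InvarianceUnderLifts}.}

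The plan is to construct the map $f$ explicitly on the level of the total spaces and then check that it descends, is a bijection, and is smooth, using \cref{lem:AssociatedFrechetBundle} to identify the Fr\'echet manifold structures via local trivializations. First I would define $\tilde f : \mathcal{P}_1 \times E \to (\mathcal{P}_2 \times E)/\mathcal{G}_2$ by $\tilde f(p,v) \defeq [\hat q(p), v]$; this is smooth because $\hat q$ is smooth and the quotient projection $\mathcal{P}_2 \times E \to (\mathcal{P}_2 \times E)/\mathcal{G}_2$ is smooth. To see that $\tilde f$ is $\mathcal{G}_1$-invariant, I would compute $\tilde f((p,v)g) = \tilde f(pg, g^{-1}v) = [\hat q(pg), g^{-1}v] = [\hat q(p) q(g), g^{-1}v]$, and then use the hypothesis $g^{-1}v = q(g)^{-1}v$ together with the definition of the $\mathcal{G}_2$-action on $\mathcal{P}_2 \times E$ to rewrite this as $[\hat q(p) q(g), q(g)^{-1}v] = [\hat q(p), v] = \tilde f(p,v)$. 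Since the quotient map $\mathcal{P}_1 \times E \to (\mathcal{P}_1 \times E)/\mathcal{G}_1$ is a surjective submersion (\cref{lem:AssociatedFrechetBundle}), the invariant smooth map $\tilde f$ factors uniquely through a smooth map $f : (\mathcal{P}_1 \times E)/\mathcal{G}_1 \to (\mathcal{P}_2 \times E)/\mathcal{G}_2$ with $f([p,v]) = [\hat q(p), v]$, which also establishes uniqueness of $f$ among maps satisfying this formula.

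It remains to check that $f$ is an isomorphism of Fr\'echet vector bundles, i.e.\ fibre-preserving, fibrewise linear, and a diffeomorphism. Fibre-preservation over $\mathcal{M}$ is immediate since $\hat q$ covers the identity on $\mathcal{M}$ (it is a bundle morphism over $\mathcal{M}$). Fibrewise linearity follows directly from the description of the vector space structure on the fibres of an associated bundle given before \cref{lem:AssociatedFrechetBundle}: fixing a representative $p$ in a fibre of $\mathcal{P}_1$, we have $f([p,v] + [p,w]) = f([p,v+w]) = [\hat q(p), v+w] = [\hat q(p),v] + [\hat q(p),w]$, and similarly for scalars. For the diffeomorphism property I would work in the local trivializations of \cref{eq:LocalTrivializations}. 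Over a chart domain $U_\alpha$ trivializing $\mathcal{P}_1$ (hence, after shrinking, also $\mathcal{P}_2$ via $\hat q$), the bundle $(\mathcal{P}_i \times E)/\mathcal{G}_i$ is trivialized by $\Psi_\alpha^{(i)}([p,v]) = (\psi_\alpha(\pi(p)), g_\alpha^{(i)}(p)v)$; in these coordinates $f$ becomes $(x,v) \mapsto (x, \kappa_\alpha(x) v)$ where $\kappa_\alpha : U_\alpha \to \mathcal{G}_2$ is a smooth transition-type map built from $g_\alpha^{(1)}$, $\hat q$, $g_\alpha^{(2)}$ and the section of $\mathcal{P}_1|_{U_\alpha}$ dual to $\varphi_\alpha$. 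Since $\mathcal{G}_2 \times E \to E$ is smooth by hypothesis, this local expression is smooth with smooth inverse $(x,v)\mapsto(x,\kappa_\alpha(x)^{-1}v)$, so $f$ is a local diffeomorphism; being also a bijection (its inverse on representatives is $[p',v] \mapsto$ the class of any $\hat q$-preimage, well-defined by equivariance of $\hat q$), it is a diffeomorphism.

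I do not expect any genuine obstacle here: the proof is the standard associated-bundle functoriality argument, and indeed the paper says so (``the proof is completely analogous to the finite-dimensional case''). The only points requiring a little care in the Fr\'echet setting are (i) invoking \cref{lem:AssociatedFrechetBundle} and \cite[Lemma 1.9]{Glockner2015} to get smoothness of $f$ from smoothness of $\tilde f$ via the submersion property, and (ii) ensuring that the smoothness of the local expression $(x,v) \mapsto (x, \kappa_\alpha(x)v)$ really does follow from the assumed smoothness of the action map $\mathcal{G}_2 \times E \to E$ — this is exactly the point where one cannot argue naively, and it is why the hypothesis that $\mathcal{G}_2 \times E \to E$ is smooth (not merely separately continuous) is essential. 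Given these, surjectivity and injectivity of $f$ are formal consequences of the surjectivity and equivariance of $\hat q$, and the proof concludes.
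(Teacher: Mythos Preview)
Your proposal is correct and follows essentially the same standard associated-bundle functoriality argument that the paper has in mind; the paper itself only says ``the proof is completely analogous to the finite-dimensional case'' and, in a suppressed comment, constructs the inverse via a local section $\sigma:U\to\mathcal{P}_1$ by $[\hat q\sigma(x),v]\mapsto[\sigma(x),v]$, which is the same local computation you do in trivialization form. One small imprecision: your parenthetical about the inverse being given by ``any $\hat q$-preimage'' of $p'$ is misleading, since $\hat q$ need not be surjective when $q$ is not; the correct statement (which your local-trivialization argument already establishes) is that any $p\in(\mathcal{P}_1)_x$ gives a representative via the $\mathcal{G}_2$-action on $\mathcal{P}_2$.
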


Now we are in position to discuss bundles of rigged Hilbert spaces.

\begin{definition}\label{def:HilbertBundle}
Let $E$ be a rigged Hilbert space. A \emph{rigged Hilbert space bundle} over $\mc{M}$ with fibre $E$ is a Fr\'echet vector bundle  $\mc{E} \to \mathcal{M}$  with fibre $E$ equipped with a map $g:\mc{E} \times_{\mathcal{M}} \mc{E} \rightarrow \C$, such that the following conditions hold for each $x \in \mc{M}$:
        \begin{itemize}
         \item The map $g_{x}: \mc{E}_{x} \times \mc{E}_{x} \rightarrow \C$ is an inner product.
         \item There exists an open neighbourhood $U \subset \mc{M}$ of $x$ and a diffeomorphism $\Phi_{U}: \mc{E}|_{U} \rightarrow U \times E$, which is fibre-preserving, fibrewise linear, and fibrewise an isometry.
        \end{itemize}
        The map $g$ is called the \emph{Hermitian metric} of $\mc{E}$.
        The map $\Phi_{U}$ is called a \emph{local trivialization of $\mc{E}$ over $U$}.
        A \emph{morphism of rigged Hilbert space bundles} is simply a morphism of the underlying Fr\'{e}chet vector bundles. A morphism is called \emph{locally bounded} if it is bounded with respect to the Hermitian metrics over small enough open subsets of $\mathcal{M}$, and it is called \emph{isometric} if it is isometric with respect to the Hermitian metrics.
\end{definition}

\begin{remark}\label{rem:HermitianMetricIsSmooth}
 Let $\mc{E}$ be a rigged Hilbert space bundle over $\mc{M}$ with fibre $E$ and with Hermitian metric $g$. Let $\Phi_{U}:~ \mc{E}|_{U} \rightarrow U \times E$ be a local trivialization, in particular fibrewise linear and an isometry. The diagram
 \begin{equation*}
  \xymatrix{
   \mc{E}|_{U} \times_{U} \mc{E}|_{U} \ar[r]^-{g} \ar[d]_{\Phi_{U} \times \Phi_{U}} & \C \\
   U \times E \times E \ar[ur] &
  }
 \end{equation*}
 then commutes, which proves that $g$ is smooth. In particular, it is fibrewise continuous; this means that each fibre $\mc{E}_{x}$ is a rigged Hilbert space, and each local trivialization is fibrewise an isometric isomorphism of rigged Hilbert spaces.
 Finally, a (locally bounded/isometric) morphism of rigged Hilbert space bundles is fibrewise a (bounded/isometric) morphism of rigged Hilbert spaces.
 \end{remark}

\begin{remark}
 If the rigged Hilbert space $E$ is finite-dimensional, then a rigged Hilbert space bundle with fibre $E$ is the same as a Hermitian vector bundle.  
 If the base manifold $\mc{M}$ is a point, then a rigged Hilbert space bundle over $\mc{M}$ is a rigged Hilbert space. If $E$ is a rigged Hilbert space, then the trivial bundle $\mathcal{M} \times E$ is a rigged Hilbert space bundle.
\end{remark}

\begin{remark}
\label{re:presheafstack}
Rigged Hilbert space bundles over $\mathcal{M}$ form a category $\VBdl(\mathcal{M})$, whose morphisms we define to be the locally bounded morphisms.
Together with the natural pullback operation, the assignment
\begin{equation*}
\mathcal{M} \mapsto \VBdl(\mathcal{M})
\end{equation*}
is a presheaf of categories on the category of Fr\'echet manifolds; this just means that one can consistently pull back rigged Hilbert space bundles along smooth maps. In \cref{sec:twistedriggedhb} it will be important at one point that the presheaf  $\VBdl$ is a \emph{sheaf} of categories, a.k.a. a stack (when the gluing maps are  isometric isomorphisms). This means that one can glue locally defined rigged Hilbert space bundle along isometric isomorphisms on the overlaps, if these satisfy the cocycle condition on triple overlaps. This is just a small modification of the fact that Fr\'echet vector bundles form a stack, which is well known, though we have not been able to find a reference for this. 
\end{remark}

 Just as one can pass from a rigged Hilbert space $E$ to the Hilbert space $E^{\left \langle \cdot,\cdot  \right \rangle}$, one can pass from a rigged Hilbert space bundle to a continuous Hilbert space bundle. Here, a continuous Hilbert space bundle $\mathcal{H}$ with fibre $H$ means that it has  continuous local trivializations
\begin{equation*}
\mathcal{H}|_U \cong U \times H\text{,}
\end{equation*} 
which are fibrewise isometric isomorphisms. 
Equivalently, its transition functions are continuous with values in $\U(\mathcal{H})$, equipped with the strong operator topology. We remark that the strong operator topology coincides with the compact open topology on $\U(\mathcal{H})$, and that $\U(\mathcal{H})$ is contractible in these topologies \cite{Schottenloher,Espinoza}.
 
 \begin{lemma}\label{rem:ContinuousHilbertBundle}
Let $\mc{E} \rightarrow \mc{M}$ be a rigged Hilbert space bundle.
 Then, the fibrewise completion of $\mc{E}$, denoted $\mc{E}^{g}$, has a unique structure of  locally trivial strongly continuous Hilbert space bundle over $\mc{M}$, such that the inclusion $\mc{E} \rightarrow \mc{E}^{g}$ is continuous. Any locally bounded morphism $\phi: \mathcal{E}_1 \to \mathcal{E}_2$ of rigged Hilbert space bundles extends uniquely to a continuous morphism $\mathcal{E}_1^{g} \to \mathcal{E}_2^{g}$ of continuous Hilbert space bundles.

\end{lemma}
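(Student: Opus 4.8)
The plan is to construct the fibrewise completion $\mathcal{E}^g$ by gluing together local models. First I would fix a trivializing cover $\{U_\alpha\}$ for $\mathcal{E}$ with isometric trivializations $\Phi_\alpha: \mathcal{E}|_{U_\alpha} \to U_\alpha \times E$; on overlaps these give transition functions $g_{\alpha\beta}: U_\alpha \cap U_\beta \to \mathrm{Isom}(E)$, the isometric Fréchet-space automorphisms of $E$, which are continuous by \cref{rem:HermitianMetricIsSmooth} (indeed smooth, as bundle morphisms). Since each such isometry is bounded with respect to the inner product, it extends uniquely to a unitary $\hat g_{\alpha\beta}(x) \in \U(H)$, where $H = E^{\langle\cdot,\cdot\rangle}$. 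The key analytic point is that $x \mapsto \hat g_{\alpha\beta}(x)$ is continuous into $\U(H)$ equipped with the strong operator topology: for $v \in H$ and a sequence $x_n \to x$, one approximates $v$ by $w \in E$ (dense), estimates $\|\hat g_{\alpha\beta}(x_n)v - \hat g_{\alpha\beta}(x)v\|$ by $\|\hat g_{\alpha\beta}(x_n)(v-w)\| + \|g_{\alpha\beta}(x_n)w - g_{\alpha\beta}(x)w\| + \|\hat g_{\alpha\beta}(x)(w-v)\|$, uses that the outer terms are each $\|v-w\|$ by unitarity, and uses continuity of $x \mapsto g_{\alpha\beta}(x)w$ in $H$ (which follows from continuity in $E$ plus continuity of the inclusion $E \to H$) for the middle term. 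These $\hat g_{\alpha\beta}$ inherit the cocycle condition from the $g_{\alpha\beta}$, so one defines $\mathcal{E}^g$ as the strongly continuous Hilbert space bundle assembled from the clutching data $(\{U_\alpha\}, \hat g_{\alpha\beta})$ with fibre $H$.

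Next I would produce the inclusion $\mathcal{E} \to \mathcal{E}^g$: over each $U_\alpha$ it is $\Phi_\alpha^{-1}$ followed by $\mathrm{id}_{U_\alpha} \times \iota$ followed by the local chart of $\mathcal{E}^g$, where $\iota: E \to H$ is the dense continuous inclusion; these local definitions agree on overlaps precisely because $\hat g_{\alpha\beta}$ extends $g_{\alpha\beta}$, so they glue to a globally defined fibrewise-dense continuous injection. For uniqueness of the bundle structure: if $\mathcal{E}^{g,\prime}$ is another strongly continuous Hilbert bundle with continuous $\mathcal{E} \to \mathcal{E}^{g,\prime}$, then over each $U_\alpha$, composing with a trivialization of $\mathcal{E}^{g,\prime}$ gives a map $\mathcal{E}|_{U_\alpha} \to U_\alpha \times H$ which is fibrewise continuous with dense image, hence on the dense subspace $E$ agrees with the chart of $\mathcal{E}^g$ up to a continuous family of unitaries; density forces the identification to extend to a bundle isomorphism $\mathcal{E}^g \cong \mathcal{E}^{g,\prime}$, and it is the identity on $\mathcal{E}$.

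For the statement about morphisms: a locally bounded $\phi: \mathcal{E}_1 \to \mathcal{E}_2$ is fibrewise a bounded operator $E_1 \to E_2$, with locally uniformly bounded operator norm (this is what "locally bounded" should give — I would unpack the definition to confirm the bound is locally uniform in $x$, or argue it suffices to have it on a shrunken cover). Each fibre map extends uniquely to a bounded operator $H_{1,x} \to H_{2,x}$; in local trivializations this is a family $x \mapsto \hat\phi_\alpha(x) \in B(H_1, H_2)$ extending the continuous $E$-valued family $\phi_\alpha$, and the same $\eps/3$ density argument as above — using the local uniform norm bound for the term $\|\hat\phi_\alpha(x_n)(v-w)\|$ — shows strong continuity. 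Compatibility on overlaps is automatic since extensions of $E$-maps are unique. Uniqueness of the extended morphism follows from density of $\mathcal{E}$ in $\mathcal{E}^g$ fibrewise.

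The main obstacle I anticipate is the strong-continuity argument for the transition functions and for the extended morphisms — specifically, making sure "locally bounded" truly yields a locally \emph{uniform} operator-norm bound, since a pointwise bound alone would not let the $\eps/3$ estimate go through. I expect one either reads this out of \cref{def:HilbertBundle} directly (boundedness "over small enough open subsets" suggesting a uniform bound on such a subset) or shrinks the cover. The rest — gluing via a clutching construction, checking the cocycle condition transfers under taking unitary extensions, and uniqueness — is routine given that strongly continuous Hilbert bundles are classified by strong-topology-valued transition data and that $\U(H)$-valued cocycles assemble into bundles.
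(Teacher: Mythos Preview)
Your proposal is correct and follows essentially the same route as the paper: extend local isometric trivializations to the Hilbert completion and verify strong continuity of the resulting transition maps via a density-plus-triangle-inequality estimate (the paper's hidden argument uses an $\eps/4$ split, yours an $\eps/3$, but the idea is identical). The only cosmetic difference is that the paper completes each fibre first and then extends the trivializations, whereas you build $\mathcal{E}^g$ by clutching from the extended transition data; these are equivalent, and your treatment of uniqueness and of the morphism extension is in fact more detailed than what the paper writes out.
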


\begin{proof}
It is straightforward to turn a local trivialization $\Phi_U$ of $\mathcal{E}$ into a fibre-preserving, fibrewise linear, and fibrewise isometric bijection $\Phi_{U}^{g}: \mc{E}|_{U}^{g} \rightarrow U \times E^{g}$. A standard exercise shows that the corresponding transition functions $U \times E^{g} \to E^{g}$ are continuous. 
\end{proof}

Next we discuss representations of Fr\'echet Lie groups on rigged Hilbert spaces.

\begin{definition}
\label{def:smoothreprhs}
A \emph{smooth representation} of a Fr\'echet Lie group $\mathcal{G}$ on a rigged Hilbert space $E$ is an action  of $\mathcal{G}$ on $E$ by isometric morphisms of rigged Hilbert spaces, such that the map $\mathcal{G} \times E \to E$ is smooth. 
\end{definition}

 Let $\mathcal{P}$ be a Fr\'echet principal $\mc{G}$-bundle over $\mathcal{M}$, and consider a smooth  representation of $\mathcal{G}$ on a rigged Hilbert space $E$.  We define a Hermitian metric $g$ on the associated vector bundle $(\mathcal{P} \times E)/\mc{G}$ as follows.
 Let
 \begin{equation*}
    (x,y) \in (\mathcal{P} \times E)/\mc{G} \times_{\mathcal{M}} (\mathcal{P} \times E)/\mc{G}.
 \end{equation*}
 Then, there exist a $p \in \mathcal{P}$ and $v,w \in E$ such that $x = [p,v]$ and $y = [p,w]$. We set $g(x,y) = \langle v, w \rangle_{E}$. Using the fact that $\mc{G}$ acts by unitary transformations it becomes easy to check that  $g$ is well-defined and that the local trivializations $\Psi_{\alpha}$ of $(\mathcal{P} \times E)/\mathcal{G}$ defined in Equation \cref{eq:LocalTrivializations} are fibrewise unitary.
 Hence, we conclude that the following result holds.

\begin{proposition}\label{lem:AssociatedHilbertBundle}
Let $\mathcal{P}$ be a Fr\'echet principal $\mc{G}$-bundle over $\mathcal{M}$, and consider a smooth  representation of $\mathcal{G}$ on a rigged Hilbert space $E$. 
Then,  the associated bundle $(\mathcal{P} \times E)/\mc{G}$ together with the Hermitian metric constructed above is a rigged Hilbert space bundle with  fibre $E$.
\end{proposition}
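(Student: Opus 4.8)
The plan is to assemble \cref{lem:AssociatedFrechetBundle} with the explicit Hermitian metric $g$ described in the paragraph preceding the statement, so I will only indicate the steps. First, a smooth representation of $\mathcal{G}$ on $E$ in particular provides a smooth action map $\mathcal{G} \times E \to E$; hence \cref{lem:AssociatedFrechetBundle} applies and equips $(\mathcal{P} \times E)/\mathcal{G}$ with its unique structure of a Fr\'echet vector bundle over $\mathcal{M}$ with fibre $E$, together with the local trivializations $\Psi_{\alpha}$ of \cref{eq:LocalTrivializations}.

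Next I would verify the two conditions of \cref{def:HilbertBundle}. Well-definedness of $g$ and the fact that each $g_{x}$ is an inner product follow at once: for any $p \in \mathcal{P}_{x}$ the map $E \to ((\mathcal{P} \times E)/\mathcal{G})_{x}$, $v \mapsto [p,v]$, is a linear isomorphism, and $g_{x}$ is by construction the pushforward of $\langle \cdot , \cdot \rangle_{E}$ along it; replacing the representative $p$ by $pg$ replaces $v$ by $g^{-1} v$, under which $\langle \cdot , \cdot \rangle_{E}$ is invariant since $\mathcal{G}$ acts isometrically. For the second condition, fix a trivializing chart $U_{\alpha}$ with the associated map $g_{\alpha} : \mathcal{P}|_{U_{\alpha}} \to \mathcal{G}$; by \cref{lem:AssociatedFrechetBundle} the map $\Psi_{\alpha}$ is a fibre-preserving, fibrewise linear diffeomorphism onto $U_{\alpha} \times E$, and the identity $g([p,v],[p,w]) = \langle g_{\alpha}(p) v , g_{\alpha}(p) w \rangle_{E} = \langle v , w \rangle_{E}$ --- again using that $\mathcal{G}$ acts by isometries --- shows that $\Psi_{\alpha}$ is fibrewise an isometry. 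Hence $\Psi_{\alpha}$ is a local trivialization of $(\mathcal{P} \times E)/\mathcal{G}$ in the sense of \cref{def:HilbertBundle}, and the claim follows.

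Since the genuinely analytic input --- the Fr\'echet vector bundle structure and smoothness of the transition functions --- has already been settled in \cref{lem:AssociatedFrechetBundle}, I do not expect a real obstacle here. The only points that require a little care are that \cref{def:HilbertBundle} asks the local trivializations to be \emph{fibrewise isometries} rather than merely bounded, which is exactly what unitarity of the representation delivers, and that $g$ need only be checked to be fibrewise a continuous inner product, its global smoothness being then automatic, as recorded in \cref{rem:HermitianMetricIsSmooth}.
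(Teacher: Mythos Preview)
Your proposal is correct and follows essentially the same approach as the paper: the paper also argues, in the paragraph immediately preceding the statement, that the $\mathcal{G}$-unitarity of the action makes $g$ well-defined and renders the trivializations $\Psi_{\alpha}$ of \cref{eq:LocalTrivializations} fibrewise isometries, which together with \cref{lem:AssociatedFrechetBundle} yields the result. Your write-up is a slightly more explicit version of precisely this reasoning.
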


Finally, we observe that the Fr\'echet vector bundle isomorphism of \cref{prop:InvarianceUnderLifts} is isometric with respect to the Hermitian metrics on associated rigged Hilbert space bundles, and hence obtain the following result.
 
\begin{proposition}\label{prop:InvarianceUnderLiftsHilbert}
Let $\mathcal{G}_1$ and $\mathcal{G}_2$ be Fr\'echet Lie groups with smooth representations on a rigged Hilbert space $E$.    
 Let $q: \mc{G}_{1} \rightarrow \mc{G}_{2}$ be a smooth group homomorphism such that $g v = q(g) v$ for all $g \in \mc{G}_{1}$ and all $v \in E$. Let $\mathcal{P}_i$ be a principal $\mathcal{G}_i$-bundle over $\mathcal{M}$, for $i=1,2$, and let $\hat q: \mathcal{P}_1 \to \mathcal{P}_2$ be a $q$-equivariant smooth bundle morphism. 
  Then, there exists a unique isometric isomorphism of rigged Hilbert space bundles
  \begin{equation*}
   f: (\mathcal{P}_{1} \times E)/\mc{G}_{1} \rightarrow (\mathcal{P}_{2} \times E)/\mc{G}_{2}
  \end{equation*}
  such that $f([p,v]) = [\hat{  q}(p),v]$ for all $p \in \mathcal{P}_{1}$ and all $v \in E$.
 \end{proposition}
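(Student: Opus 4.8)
The plan is to build on \cref{prop:InvarianceUnderLifts}, which already supplies the unique Fr\'echet vector bundle isomorphism $f$ with $f([p,v]) = [\hat q(p),v]$; it remains only to check that $f$ is isometric with respect to the Hermitian metrics constructed just before \cref{lem:AssociatedHilbertBundle}. Since the underlying map $f$ and its uniqueness are already established, the only new content is a fibrewise computation of compatibility with the metrics, so the proof will be short.

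First I would recall how the metric $g_i$ on $(\mathcal{P}_i \times E)/\mathcal{G}_i$ is defined: for $x,y$ in the same fibre, write $x = [p,v]$, $y = [p,w]$ with a common $p \in \mathcal{P}_i$ and set $g_i(x,y) = \langle v,w\rangle_E$; well-definedness uses that $\mathcal{G}_i$ acts isometrically on $E$. Fix a point $m \in \mathcal{M}$ and two elements $x = [p,v]$, $y = [p,w]$ of the fibre of $(\mathcal{P}_1\times E)/\mathcal{G}_1$ over $m$, with a common $p \in \mathcal{P}_1$. Because $\hat q$ is a bundle morphism over $\mathcal{M}$, $\hat q(p)$ lies in the fibre of $\mathcal{P}_2$ over $m$, and $f(x) = [\hat q(p), v]$, $f(y) = [\hat q(p), w]$ share the common representative $\hat q(p) \in \mathcal{P}_2$. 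Hence $g_2(f(x),f(y)) = \langle v,w\rangle_E = g_1(x,y)$, so $f$ is fibrewise isometric, i.e. isometric as a morphism of rigged Hilbert space bundles. Combined with \cref{prop:InvarianceUnderLifts} this gives an isometric isomorphism of rigged Hilbert space bundles, and its uniqueness (even as a Fr\'echet vector bundle isomorphism with the stated property) is already part of \cref{prop:InvarianceUnderLifts}.

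There is essentially no obstacle here; the only point requiring a line of care is that the two representatives used to evaluate the metrics on the target side can be chosen with the \emph{same} element $\hat q(p)$ of $\mathcal{P}_2$, which is immediate from the formula $f([p,v]) = [\hat q(p),v]$. One should also note in passing that $f$ automatically sends the fibrewise completion to the fibrewise completion compatibly, by \cref{rem:ContinuousHilbertBundle}, although that is not needed for the statement as phrased.
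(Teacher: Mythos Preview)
Your proposal is correct and follows exactly the approach of the paper: the paper simply observes that the Fr\'echet vector bundle isomorphism of \cref{prop:InvarianceUnderLifts} is isometric with respect to the Hermitian metrics constructed before \cref{lem:AssociatedHilbertBundle}, which is precisely the fibrewise computation you carry out.
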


\subsection{Rigged \texorpdfstring{\cstar}{C*}-algebra bundles}
\label{sec:HilbertBundle:2}

In this section we discuss bundles of infinite-dimensional algebras, in close analogy to rigged Hilbert space bundles.
First, we recall that a \emph{Fr\'{e}chet algebra} is a Fr\'{e}chet space, which is at the same time an algebra, with the property that multiplication is continuous (and thus smooth).
A \emph{morphism of Fr\'{e}chet algebras} is a  morphism of the underlying Fr\'{e}chet spaces and at the same time an algebra homomorphism.

\begin{definition}
\label{def:riggedcstaralgebra}
 A \emph{rigged \cstar-algebra} is a Fr\'echet algebra $A$, equipped with a continuous norm $\| \cdot \|: A \rightarrow \R_{\geqslant 0}$ and a continuous complex anti-linear involution $*:A \rightarrow A$ such that its completion  with respect to the norm is a \cstar-algebra.
 A \emph{morphism of rigged \cstar-algebras} is a morphism of Fr\'echet algebras that is bounded with respect to the norms and intertwines the involutions. A morphism of rigged \cstar-algebras is called \emph{isometric}, if it is an isometry with respect to the norms.
\end{definition}

\begin{example}
 We equip $C^{\infty}(S^{1},\C)$ with its usual Fr\'echet structure. Then, equipped with the supremum norm and with pointwise complex conjugation, $C^{\infty}(S^{1},\C)$ becomes a rigged \cstar-algebra.
\end{example}

\begin{remark}
 If $A$ is a rigged \cstar-algebra, then its opposite algebra, $A^{\mathrm{opp}}$, is a rigged \cstar-algebra in a natural way.
\end{remark}

\begin{definition}\label{def:cstarBundle}
Let $A$ be a rigged \cstar-algebra.
 A \emph{rigged \cstar-algebra bundle} over $\mc{M}$ with fibre $A$ is a Fr\'echet vector bundle $\mc{A} \rightarrow \mc{M}$, equipped with the following structure:
 \begin{itemize}
 \item A map $\| \cdot \|: \mc{A} \rightarrow \R_{\geqslant 0}$.
 \item A fibre-preserving map $m: \mc{A} \times_{\mathcal{M}} \mc{A} \rightarrow \mc{A}$.
 \item A fibre-preserving map $*: \mc{A} \rightarrow \mc{A}$.
 \end{itemize}
 such that the following conditions hold for each $x \in \mc{M}$
 \begin{itemize}
    \item The map $\| \cdot \|_{x} : \mc{A}_{x} \rightarrow \R_{\geqslant 0}$ is a norm.
    \item The map $m_{x} : \mc{A}_{x} \times \mc{A}_{x} \rightarrow \mc{A}_{x}$ turns the vector space $\mc{A}_{x}$ into an associative algebra.
    \item The map $*_{x}: \mc{A}_{x} \rightarrow \mc{A}_{x}$ turns the algebra $\mc{A}_{x}$ into a $*$-algebra.
    \item There exists an open neighbourhood $U \subset \mc{M}$ of $x$ and a diffeomorphism $\Phi_{U}: \mc{A}|_{U} \rightarrow U \times A$ which is fibrewise an isometric isomorphism of $*$-algebras.
 \end{itemize}
 The map $\Phi_{U}$ is called a \emph{local trivialization of $\mc{A}$ over $U$}. A \emph{morphism of rigged \cstar-algebra bundles} over $\mathcal{M}$ is a morphism $\phi: \mathcal{A}_1 \to \mathcal{A}_2$ of Fr\'echet vector bundles that is fibrewise a morphism of $\ast$-algebras and locally bounded with respect to the norms.
A morphism is called \emph{isometric} if it is isometric with respect to the norms.\end{definition}
\begin{remark}
 Let $\mc{A}$ be a rigged \cstar-algebra bundle with fibre $A$.
 A standard argument, similar to  \cref{rem:HermitianMetricIsSmooth}, proves that the maps $*$ and $m$ are smooth and that $\| \cdot \|$ is smooth away from the image of the zero section.
 Further, it follows that on each fibre $\mathcal{A}_x$, the involution is continuous and an isometry, and the norm is continuous, submultiplicative, and satisfies the \cstar-identity; in other words, all fibres $\mathcal{A}_x$ are rigged \cstar-algebras. Likewise, an (isometric) morphism of rigged \cstar-algebra bundles restricts on each fibre to an (isometric) morphism of rigged \cstar-algebras.   

It is possible to pass from a rigged \cstar-algebra bundle to a locally trivial continuous \cstar-algebra bundle, just like it is possible to pass from a rigged Hilbert space bundle to a continuous Hilbert space bundle.
Here, a continuous bundle of \cstar-algebras with fibre $A$ has continuous local transition functions $U \times A \rightarrow A$, or equivalently, the induced maps $U \rightarrow \mathrm{Aut}(A)$ are continuous when $\mathrm{Aut}(A)$ is equipped with the strong operator topology.
\end{remark}

This follows in a straightforward way, analogously to \cref{rem:ContinuousHilbertBundle}. 

\begin{lemma}\label{rem:fibrewiseCstar}
 If $\mc{A}$ is a rigged \cstar-algebra bundle over $\mathcal{M}$, then its fibrewise completion with respect to the norm, denoted $\mathcal{A}^{\|\cdot\|}$, carries a unique structure of a continuous bundle of \cstar-algebras, such that the inclusion $\mathcal{A} \to \mathcal{A}^{\|\cdot\|}$ is continuous.  Likewise, any morphism of rigged \cstar-algebra bundles extends uniquely to a continuous morphism of continuous bundles of \cstar-algebras.
\end{lemma}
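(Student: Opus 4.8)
The plan is to mimic, essentially verbatim, the proof of \cref{rem:ContinuousHilbertBundle}, since the statement is the exact \cstar-algebra analogue of the rigged Hilbert space statement. First I would fix a local trivialization $\Phi_U:\mc{A}|_U \to U \times A$ of the rigged \cstar-algebra bundle $\mc{A}$. Since $\Phi_U$ is fibrewise an isometric $*$-isomorphism, for each $x \in U$ the fibre map $\Phi_x:\mc{A}_x \to A$ extends uniquely to an isometric $*$-isomorphism $\Phi_x^{\|\cdot\|}:\mc{A}_x^{\|\cdot\|} \to A^{\|\cdot\|}$ of the completions, and these assemble into a fibre-preserving bijection $\Phi_U^{\|\cdot\|}:\mc{A}|_U^{\|\cdot\|} \to U \times A^{\|\cdot\|}$. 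I would take these maps as the candidate local trivializations of $\mc{A}^{\|\cdot\|}$ as a continuous \cstar-algebra bundle.

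The key step is then to verify that the transition functions $U \times A^{\|\cdot\|} \to A^{\|\cdot\|}$ induced by two overlapping trivializations are continuous. Here one uses precisely the same $\varepsilon/4$-argument as in the Hilbert space case: a transition function $\Psi$ of $\mc{A}$ is a map $U \times A \to A$ which is fibrewise an isometric $*$-isomorphism, and it extends fibrewise to $\Psi^{\|\cdot\|}:U \times A^{\|\cdot\|} \to A^{\|\cdot\|}$. To prove continuity of $\Psi^{\|\cdot\|}$ at a point $(f,a)$, one may assume $U$ is (identified with) an open subset of a Fr\'echet space, so $U \times A^{\|\cdot\|}$ is metrizable and sequential continuity suffices. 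Given a sequence $(f_n,a_n) \to (f,a)$ and $\varepsilon>0$, approximate $a$ within $\varepsilon/4$ by some $b \in A$ (density of $A$ in $A^{\|\cdot\|}$), use continuity of the original $\Psi$ on $U \times A$ to get $\|\Psi(f_n,b) - \Psi(f,b)\| < \varepsilon/4$ for large $n$, and use the fibrewise isometry property $\|\Psi^{\|\cdot\|}(f_n,x) - \Psi^{\|\cdot\|}(f_n,y)\| = \|x - y\|$ to control the remaining terms by $\|a_n - a\| + 2\|a - b\|$. A triangle inequality then gives $\|\Psi^{\|\cdot\|}(f_n,a_n) - \Psi^{\|\cdot\|}(f,a)\| < \varepsilon$ for $n$ large. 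Uniqueness of the continuous \cstar-bundle structure follows because any two such structures share the inclusion $\mc{A} \to \mc{A}^{\|\cdot\|}$ with dense image, forcing the transition functions to agree; and the inclusion is continuous by construction since on each fibre $\mc{A}_x \hookrightarrow \mc{A}_x^{\|\cdot\|}$ is the canonical (continuous) inclusion of a rigged \cstar-algebra into its completion.

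For the morphism statement: given an (automatically locally bounded) morphism $\phi:\mc{A}_1 \to \mc{A}_2$ of rigged \cstar-algebra bundles, it is fibrewise a bounded $*$-homomorphism of rigged \cstar-algebras, which extends uniquely and continuously to a $*$-homomorphism $\mc{A}_{1,x}^{\|\cdot\|} \to \mc{A}_{2,x}^{\|\cdot\|}$ (indeed $*$-homomorphisms of \cstar-algebras are automatically norm-decreasing). These fibre maps assemble into a map $\mc{A}_1^{\|\cdot\|} \to \mc{A}_2^{\|\cdot\|}$; continuity is checked in local trivializations by the same density-plus-triangle-inequality argument, and uniqueness of the extension is immediate from density of $\mc{A}_1$ in $\mc{A}_1^{\|\cdot\|}$ together with continuity.

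I do not expect any serious obstacle: the only thing to watch is that one genuinely uses the \cstar-structure (isometry of transition functions, automatic continuity of $*$-homomorphisms) rather than anything special to inner products, but these are exactly the properties built into \cref{def:riggedcstaralgebra} and \cref{def:cstarBundle}. The proof is therefore, as the text already anticipates, "straightforward, analogously to \cref{rem:ContinuousHilbertBundle}", and I would simply present the trivialization construction and the transition-function continuity estimate, relegating the detailed $\varepsilon$-chase to a parenthetical remark or omitting it as routine.
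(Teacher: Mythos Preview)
Your proposal is correct and matches the paper's approach exactly: the paper does not even give a separate proof, but simply states that the lemma ``follows in a straightforward way, analogously to \cref{rem:ContinuousHilbertBundle}'', and the argument you outline (extend local trivializations to the completions, verify continuity of transition functions via the same density-and-$\varepsilon/4$ estimate) is precisely that analogy spelled out.
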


The next step is to introduce representations of Fr\'echet Lie groups on rigged \cstar-algebras, which leads to a construction of rigged \cstar-algebra bundles we will use later.

\begin{definition}
\label{def:smoothrepcstar}
A \emph{smooth representation} of a Fr\'echet Lie group $\mathcal{G}$ on a rigged \cstar-algebra $A$ is an action of $\mathcal{G}$ on $A$ by isometric morphisms of rigged \cstar-algebras, such that the map $\mathcal{G} \times A \to A$ is smooth. 
\end{definition}

Now, let $\mathcal{P}$ be a Fr\'echet principal $\mc{G}$-bundle, and consider a smooth representation of  $\mc{G}$  on a rigged \cstar-algebra $A$. 
 We consider the Fr\'echet vector bundle $\mc{E} \defeq (\mathcal{P} \times A)/ \mc{G}$. The maps 
\begin{align*}
&\| \cdot \|: \mathcal{P} \times A \rightarrow \R_{\geqslant 0},\quad (p,v) \mapsto \| v\|
\\
&m: (\mathcal{P} \times A) \times_{\pi} (\mathcal{P} \times A) \rightarrow \mc{E}, \quad ([p,v_{1}],[p,v_{2}]) \mapsto [p, v_{1}v_{2}]
\\
&*: \mathcal{P} \times A \rightarrow \mathcal{P} \times A, \quad (p,v) \mapsto (p,v^{*})
\end{align*}
descend to well-defined maps on $\mathcal{E}$, providing on $\mathcal{E}$  the data for a rigged \cstar -algebra bundle. It is a routine exercise to check that these maps satisfy the properties laid out in \cref{def:cstarBundle}.
 Moreover, it follows from a routine verification that the trivializations $\Psi_{\alpha}$ from the proof of \cref{lem:AssociatedFrechetBundle} are fibrewise isometric $*$-homomorphisms.
 We conclude that the following result holds.

\begin{proposition}\label{lem:RiggedCStarBundle}
Let $\mathcal{P}$ be a Fr\'echet principal $\mc{G}$-bundle, and consider a smooth representation of  $\mc{G}$  on a rigged \cstar-algebra $A$. 
  Then, the associated bundle  $(\mathcal{P} \times A)/\mc{G}$ is a rigged \cstar-algebra bundle over $\mc{M}$ with fibre $A$.
\end{proposition}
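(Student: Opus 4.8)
The plan is to obtain the underlying Fr\'echet vector bundle from \cref{lem:AssociatedFrechetBundle} and then transport the rigged \cstar-structure of $A$ fibrewise along the canonical trivializations, using that $\mc{G}$ acts by isometric \cstar-morphisms. Since a smooth representation has smooth action map $\mc{G}\times A\to A$, \cref{lem:AssociatedFrechetBundle} already makes $\mc{E}\defeq(\mathcal{P}\times A)/\mc{G}$ a Fr\'echet vector bundle over $\mc{M}$ with fibre $A$, and supplies the explicit local trivializations $\Psi_\alpha$ of \cref{eq:LocalTrivializations}; it remains to produce the maps $\|\cdot\|$, $m$, $*$ and to verify the axioms of \cref{def:cstarBundle}.

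First I would check that the three maps $(p,v)\mapsto\|v\|$ on $\mathcal{P}\times A$, $([p,v_1],[p,v_2])\mapsto[p,v_1v_2]$ on $(\mathcal{P}\times A)\times_\mathcal{M}(\mathcal{P}\times A)$, and $(p,v)\mapsto(p,v^{*})$ on $\mathcal{P}\times A$ descend to $\mc{E}$. This is immediate from the three compatibilities of the $\mc{G}$-action: $\|gv\|=\|v\|$, $g(v_1v_2)=(gv_1)(gv_2)$, and $g(v^{*})=(gv)^{*}$ for $g\in\mc{G}$, $v,v_1,v_2\in A$ (for $m$ one first rewrites any pair of points in a common fibre with a single representative $p\in\mathcal{P}$, as usual). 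The descended maps are plainly fibre-preserving.

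Next I would record the fibrewise axioms. For $x\in\mc{M}$, choosing any $p\in\mathcal{P}$ over $x$, the bijection $A\to\mc{E}_x$, $v\mapsto[p,v]$, is compatible with the vector space structure on $\mc{E}_x$ fixed in \cref{lem:AssociatedFrechetBundle}, and by construction it carries the norm, the associative product, and the $*$-algebra structure of $A$ to the maps $\|\cdot\|_x$, $m_x$, $*_x$ just defined; it is independent of the choice of $p$ by the same equivariance used above. Hence $\|\cdot\|_x$ is a norm, $m_x$ makes $\mc{E}_x$ an associative algebra, and $*_x$ makes it a $*$-algebra.

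Finally I would verify that the $\Psi_\alpha$ are fibrewise isometric $*$-isomorphisms. Over $x\in U_\alpha$, the restriction $\Psi_\alpha|_x$ is, after discarding the base coordinate, the map $[p,v]\mapsto g_\alpha(p)v$, and $g_\alpha(p)\in\mc{G}$ acts on $A$ as an isometric $*$-isomorphism; so $\Psi_\alpha|_x$ is an isometric $*$-isomorphism onto $A$. Well-definedness, bijectivity and smoothness of the transition maps are already in \cref{lem:AssociatedFrechetBundle}; the only extra point is that the transition maps are fibrewise isometric $*$-isomorphisms, which holds since fibrewise they are composites of $\Psi_\alpha|_x$ and $\Psi_\beta|_x^{-1}$. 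This checks all data and axioms of \cref{def:cstarBundle}. I do not expect a genuine obstacle here: the argument is a fibrewise transport along equivariant trivializations, entirely parallel to the Hilbert space case of \cref{lem:AssociatedHilbertBundle}; the only mildly delicate book-keeping is ensuring that the descended operations are read off consistently via a common representative $p$.
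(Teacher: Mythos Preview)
Your proposal is correct and follows essentially the same approach as the paper: define the norm, multiplication, and involution by descent from $\mathcal{P}\times A$ using that $\mc{G}$ acts by isometric $*$-automorphisms, then check that the trivializations $\Psi_\alpha$ from \cref{lem:AssociatedFrechetBundle} are fibrewise isometric $*$-isomorphisms. In fact you supply more detail than the paper, which treats these verifications as routine and states the proposition as a consequence.
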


One may now check that the Fr\'echet vector bundle isomorphism of
\cref{prop:InvarianceUnderLifts} is isometric with respect to the norms on associated rigged \cstar-algebra bundles, and that it is a $\ast$-homomorphism. Thus, the following result holds.
 
\begin{proposition}\label{prop:InvarianceUnderLiftsCStar}
Let $\mathcal{G}_1$ and $\mathcal{G}_2$ be Fr\'echet Lie groups with smooth representations on a rigged \cstar-algebra  $A$.    
 Let $q: \mc{G}_{1} \rightarrow \mc{G}_{2}$ be a smooth group homomorphism such that $g a = q(g) a$ for all $g \in \mc{G}_{1}$ and all $a \in A$. Let $\mathcal{P}_i$ be a principal $\mathcal{G}_i$-bundle over $\mathcal{M}$, for $i=1,2$, and let $\hat q: \mathcal{P}_1 \to \mathcal{P}_2$ be a $q$-equivariant smooth bundle morphism. 
  Then, there exists a unique isometric isomorphism of rigged \cstar-algebra bundles
  \begin{equation*}
   f: (\mathcal{P}_{1} \times A)/\mc{G}_{1} \rightarrow (\mathcal{P}_{2} \times A)/\mc{G}_{2}
  \end{equation*}
  such that $f([p,a]) = [\hat{q}(p),a]$ for all $p \in \mathcal{P}_{1}$ and all $a \in A$.
 \end{proposition}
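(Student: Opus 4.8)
The plan is to reduce \cref{prop:InvarianceUnderLiftsCStar} to \cref{prop:InvarianceUnderLifts}, which already supplies the underlying Fr\'echet vector bundle isomorphism $f$, and then simply check that $f$ respects the three additional pieces of structure (norm, multiplication, involution) that a rigged \cstar-algebra bundle carries beyond its Fr\'echet vector bundle structure. Concretely, the hypotheses of \cref{prop:InvarianceUnderLifts} are met: both $\mathcal{G}_1$ and $\mathcal{G}_2$ act smoothly on the Fr\'echet space $A$ (underlying the rigged \cstar-algebra), $q$ intertwines the actions, and $\hat q$ is $q$-equivariant. So there is a unique isomorphism of Fr\'echet vector bundles $f: (\mathcal{P}_{1} \times A)/\mathcal{G}_{1} \to (\mathcal{P}_{2} \times A)/\mathcal{G}_{2}$ with $f([p,a]) = [\hat q(p), a]$. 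It remains to show $f$ is fibrewise an isometric $\ast$-isomorphism.

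First I would fix $x \in \mathcal{M}$ and a point $p \in (\mathcal{P}_1)_x$, and use the representative description of the fibres: every element of $((\mathcal{P}_1 \times A)/\mathcal{G}_1)_x$ is of the form $[p,a]$ for a \emph{fixed} such $p$ and varying $a \in A$, and similarly elements of $((\mathcal{P}_2 \times A)/\mathcal{G}_2)_x$ are $[\hat q(p), a']$. By the construction of the rigged \cstar-algebra bundle structure on an associated bundle (the paragraph preceding \cref{lem:RiggedCStarBundle}), the norm, multiplication and involution on the fibre over $x$ read $\|[p,a]\| = \|a\|_A$, $m([p,a_1],[p,a_2]) = [p, a_1 a_2]$, and $([p,a])^{*} = [p, a^{*}]$, using the representative $p$; likewise on the target side using the representative $\hat q(p)$. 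Since $f([p,a]) = [\hat q(p), a]$ for all $a$ (this is where it is essential that $\hat q(p)$ serves as a common representative), we get immediately $\|f([p,a])\| = \|a\|_A = \|[p,a]\|$, $f(m([p,a_1],[p,a_2])) = f([p,a_1 a_2]) = [\hat q(p), a_1 a_2] = m(f([p,a_1]), f([p,a_2]))$, and $f(([p,a])^{*}) = f([p,a^{*}]) = [\hat q(p), a^{*}] = (f([p,a]))^{*}$. Thus $f$ is fibrewise an isometric $\ast$-isomorphism, and being already an isomorphism of Fr\'echet vector bundles, it is an isometric isomorphism of rigged \cstar-algebra bundles. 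Uniqueness is inherited verbatim from \cref{prop:InvarianceUnderLifts}.

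The only point requiring a little care — and the closest thing to an obstacle — is checking that the formulas $\|[p,a]\| = \|a\|$, etc., are well-defined independently of the choice of representative $p$ and of $a$; but this is exactly the "routine verification" already invoked before \cref{lem:RiggedCStarBundle}, relying on the fact that $\mathcal{G}_i$ acts by isometric $\ast$-automorphisms, so $\|g^{-1}a\| = \|a\|$, $(g^{-1}a_1)(g^{-1}a_2) = g^{-1}(a_1 a_2)$, and $(g^{-1}a)^{*} = g^{-1}(a^{*})$. Given that, and given that one can always choose the \emph{same} $p$ to represent two elements in the same fibre of $(\mathcal{P}_1 \times A)/\mathcal{G}_1$ (the $\mathcal{G}_1$-action on the fibre is transitive on representatives), the computation above goes through without incident. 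I would present this as a short proof, likely just a sentence or two noting that \cref{prop:InvarianceUnderLifts} provides $f$ and that the displayed identities show it intertwines all the structure maps, mirroring the remark made just before the statement that "one may now check..."; alternatively the authors may simply leave it as a one-line observation, as the analogous \cref{prop:InvarianceUnderLiftsHilbert} was handled for Hilbert space bundles.
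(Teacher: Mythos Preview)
Your proposal is correct and follows exactly the approach the paper takes: the paper does not give a formal proof but simply observes (in the sentence immediately preceding the proposition) that one checks the Fr\'echet vector bundle isomorphism of \cref{prop:InvarianceUnderLifts} is isometric with respect to the norms and is a $\ast$-homomorphism. Your detailed verification of the norm, multiplication, and involution is precisely what that sentence leaves to the reader, and your final paragraph correctly anticipates that the paper handles it this way.
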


Finally,  we  discuss modules for rigged \cstar-algebras,  and module bundles for rigged \cstar-algebra bundles.

\begin{definition}\label{def:RiggedRepresentation}
Let $A$ be a rigged \cstar-algebra. A \emph{rigged $A$-module} is a rigged Hilbert space $E$ together with a  representation $\rho$
 of (the underlying algebra of) $A$ on (the underlying vector space of) $E$, such that the map $\rho:A \times E \to E$ is smooth and the following conditions hold for all $a \in A$ and all $v,w \in E$
 \begin{equation}\label{eq:BoundednessCriteria}
  \langle \rho(a,v), \rho(a,v) \rangle \leqslant \| a \|^{2} \langle v, v \rangle, \quad \text{ and } \quad \langle \rho(a,v),w \rangle = \langle v, \rho(a^{*},w) \rangle.
 \end{equation}
 
\end{definition}

\begin{remark}
\label{re:repriggedcstar}
In order to explain the conditions \cref{eq:BoundednessCriteria}, let $E$ be a rigged $A$-module with representation $\rho$.
For each $a \in A$ we write $\rho_a:E \rightarrow E$ for the morphism of rigged Hilbert spaces defined by $\rho_a(v) \defeq \rho(a,v)$. The inequality in \cref{eq:BoundednessCriteria} then implies that $\rho_a$ is   bounded. 
 In fact, more is true, a standard argument using  \cref{eq:BoundednessCriteria} shows that  $\rho$ extends to a representation of the \cstar-algebra $A^{\| \cdot \|}$ on the Hilbert space $E^{\langle \cdot, \cdot \rangle}$.
 We thus obtain a morphism of \cstar-algebras $\rho^{\vee}: A^{\| \cdot \|} \rightarrow \mc{B}(E^{\langle \cdot, \cdot \rangle})$.
Conversely, suppose we are given a representation of the \cstar-algebra $A^{\| \cdot \|}$ on the Hilbert space $E^{\langle \cdot, \cdot \rangle}$ that restricts to a continuous map $A \times E \to E$. Then, $E$ is automatically a rigged $A$-module, i.e.~the conditions \cref{eq:BoundednessCriteria} are automatically satisfied. 
\end{remark}

\begin{remark}
\label{re:inducedcstarrep}
Rigged modules can be induced along morphisms of rigged \cstar-algebras. More precisely, if $E$ is a rigged module for a \cstar-algebra $A'$, representation $\rho':A' \times E \to E$,  and $f:A \to A'$ is a morphism of rigged \cstar-algebras, then $\rho(a,v) := \rho'(f(a),v)$ turns $E$ into a rigged $A$-module.

\end{remark}

\begin{remark}\label{ex:OppositeRep}
 Let $A$ be a rigged \cstar-algebra, and $E$ a rigged $A$-module.
 The inner product on $E$ gives us a complex anti-linear injection $\iota: E \rightarrow E^{*}$ mapping $E$ into its continuous linear dual $E^{*}$.
 Denote the image of $\iota$ by $E^{\sharp}$.
 We turn $E^{\sharp}$ into a rigged Fr\'{e}chet space using the identification with $E$.
 If $\rho$ denotes the representation of $E$, the map 
 \begin{align*}
  \rho^{\sharp}: A^{\mathrm{opp}} \times E^{\sharp} \rightarrow E^{\sharp},
  (a,\ph) &\mapsto \ph \circ \rho^{\vee}(a),
 \end{align*}
 is a  representation of  $A^{\mathrm{opp}}$ on  $E^{\sharp}$, turning $E^{\sharp}$ into a rigged $A^{\opp}$-module.
\end{remark}

We recall that a central extension of Fr\'echet Lie groups is a sequence
$\U(1) \to \widetilde{\mathcal{G}} \to \mathcal{G}$
of Fr\'echet Lie groups and smooth group homomorphisms, such that $\widetilde{\mathcal{G}}$ is a principal $\U(1)$-bundle over $\mathcal{G}$.

\begin{definition}
\label{def:repofce}
Let $E$ be a rigged module for a rigged \cstar-algebra $A$. Then, a \emph{smooth representation of a central extension $\U(1) \to \widetilde{\mathcal{G}} \stackrel{q}{\to} \mathcal{G}$ on $E$} consists of smooth representations of $\mathcal{G}$ on $A$ and $\widetilde{\mathcal{G}}$ on $E$ such that the following two conditions are satisfied:
\begin{enumerate}[(a)]

\item 
The central subgroup $\U(1) \subset \widetilde{\mathcal{G}}$ acts by scalar multiplication.

\item
For all $\tilde{g} \in \widetilde{\mathcal{G}}$, $a\in A$, and $v\in E$ we have:
$\tilde g \cdot (a\cdot v) = (q(\tilde g) \cdot a)\cdot(\tilde g \cdot v)$.

\end{enumerate}
\end{definition}

The main example of a smooth representation of a central extension on a rigged module is the rigged version of the Fock space representation we introduce in \cref{sec:SmoothRepresentations}, see \cref{thm:repofimp}. In the general context, \cref{def:repofce} will appear later in \cref{prop:InducedBundleRep} and in the twisted context in \cref{lem:reptwistedmb}. 
Next we introduce the bundle version of a rigged module.

\begin{definition}
\label{def:repcstarbundle}
Let $A$ be a rigged \cstar-algebra and let $E$ be a rigged $A$-module, with representation $\rho_0$. Let $\mathcal{A}$ be a rigged \cstar-algebra bundle over $\mathcal{M}$ with typical fibre $A$. A \emph{rigged $\mathcal{A}$-module bundle with typical fibre $E$} is a rigged Hilbert space bundle $\mathcal{E}$ with typical fibre $E$, and  a fibre-preserving map
 \begin{equation*}
  \rho: \mc{A} \times_{\mathcal{M}} \mc{E} \rightarrow \mc{E}
 \end{equation*}
 with the property that for each $x \in \mc{M}$, there exist an open neighbourhood $U$ of $x$ and local trivializations $\Phi$ of $\mc{A}$ and $\Psi$ of $\mc{E}$ over $U$ such that the following diagram commutes:
 \begin{equation*}
  \xymatrix{
  \mc{A}_U \times_{\mathcal{M}} \mc{E}_{U} \ar[r]^-{\rho} \ar[d]_{\Phi \times \Psi} & \mc{E}|_{U} \ar[d]^{\Psi} \\
  (A \times U)\times_U (E \times U) \ar[d]_{\pr_A \times \pr_E} & E \times U \ar[d]^{\pr_E}\\A \times E \ar[r]_-{\rho_0} & E\text{.} 
  }
 \end{equation*}
\end{definition}

\begin{remark}
\label{re:repcstarbundle}
Let $\mc{E}$ be a rigged $\mathcal{A}$-module.  It is straightforward to see that its map $\rho: \mathcal{A} \times_{\mathcal{M}} \mathcal{E} \to \mathcal{E}$ is  a smooth morphism of Fr\'{e}chet vector bundles, and that its restriction over every point $x\in U$ turns $\mathcal{E}_x$ into a rigged $\mathcal{A}_x$-module. 
\end{remark}

A morphism between rigged $\mathcal{A}$-module bundles is a morphism of rigged Hilbert space bundles that commutes with the representations. We denote by $\VBdl^{\mathcal{A}}(\mathcal{M})$ the category of rigged $\mathcal{A}$-module bundles, with morphisms  the locally bounded ones.

We recall that the fibrewise norm completion $\mathcal{A}^{\|\cdot\|}$ of $\mathcal{A}$ is a continuous bundle of \cstar-algebras (see \cref{rem:fibrewiseCstar}), and that the fibrewise Hilbert completion $\mathcal{E}^{g}$ of $\mathcal{E}$ is a continuous Hilbert space bundle (see \cref{rem:ContinuousHilbertBundle}). \cref{re:repcstarbundle,re:repriggedcstar} provide a fibrewise representation of the \cstar-algebra $\mathcal{A}_x^{\|\cdot\|}$ on the Hilbert space $\mathcal{E}_x^g$. Again by \cref{re:repriggedcstar}, the rigged $A$-module $E$ extends to a representation of $A^{\|\cdot\|}$ on $E^{\left \langle  \cdot,\cdot \right \rangle}$. We note the following result, which uses all this structure and is straightforward to prove.

\begin{lemma}
\label{lem:continuous smoothrep}
Let $\mathcal{A}$ be a rigged \cstar-algebra bundle over $\mathcal{M}$ and let $\mathcal{E}$ be a rigged $\mathcal{A}$-module. Then, the fibrewise representations of the \cstar-algebras $\mathcal{A}^{\|\cdot\|}_x$ on the Hilbert spaces $\mathcal{E}_x^g$ form a continuous bundle homomorphism $\mathcal{A}^{\|\cdot \|} \times_{\mathcal{M}} \mathcal{E}^{g} \to \mathcal{E}^{g}$ which satisfies  a local triviality condition like in \cref{def:repcstarbundle} with typical fibre the representation of $A^{\|\cdot\|}$ on $E^{\left \langle  \cdot,\cdot \right \rangle}$.
\end{lemma}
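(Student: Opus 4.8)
The plan is to check the asserted local triviality by unwinding the definitions and propagating the fibrewise constructions of \cref{rem:ContinuousHilbertBundle,rem:fibrewiseCstar} through the commuting diagram of \cref{def:repcstarbundle}. Fix $x\in\mathcal{M}$, and choose an open neighbourhood $U$ of $x$ over which $\mathcal{A}$, $\mathcal{E}$, and the representation $\rho$ are simultaneously trivialized as in \cref{def:repcstarbundle}, with local trivializations $\Phi:\mathcal{A}|_U\to A\times U$ and $\Psi:\mathcal{E}|_U\to E\times U$. By \cref{rem:fibrewiseCstar} the trivialization $\Phi$ induces a fibrewise isometric bijection $\Phi^{\|\cdot\|}:\mathcal{A}^{\|\cdot\|}|_U\to A^{\|\cdot\|}\times U$ whose transition functions are strongly continuous, and by \cref{rem:ContinuousHilbertBundle} the trivialization $\Psi$ induces a fibrewise isometric bijection $\Psi^{g}:\mathcal{E}^g|_U\to E^{\left\langle\cdot,\cdot\right\rangle}\times U$ with strongly continuous transition functions. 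The fibrewise representation $\rho^\vee_x:\mathcal{A}^{\|\cdot\|}_x\to\mathcal{B}(\mathcal{E}^g_x)$ provided by \cref{re:repcstarbundle} (applied pointwise) and \cref{re:repriggedcstar} is, by uniqueness of the extension in \cref{re:repriggedcstar}, the unique bounded extension of $\rho_x:\mathcal{A}_x\times\mathcal{E}_x\to\mathcal{E}_x$; so in the chosen trivializations it is transported to the extension $\rho_0^\vee:A^{\|\cdot\|}\times E^{\left\langle\cdot,\cdot\right\rangle}\to E^{\left\langle\cdot,\cdot\right\rangle}$ of $\rho_0$. This gives the desired local triviality diagram on the nose; what remains is continuity of the total map $\rho^\vee:\mathcal{A}^{\|\cdot\|}\times_{\mathcal{M}}\mathcal{E}^g\to\mathcal{E}^g$.

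For the continuity, first I would reduce to the local picture: over $U$, after applying $\Phi^{\|\cdot\|}$ and $\Psi^g$, the map $\rho^\vee$ becomes $(a,v,y)\mapsto(\rho_0^\vee(a,v),y)$, which is continuous because $\rho_0^\vee$ is a continuous bilinear (indeed jointly continuous, by the norm estimate in \cref{eq:BoundednessCriteria}) map $A^{\|\cdot\|}\times E^{\left\langle\cdot,\cdot\right\rangle}\to E^{\left\langle\cdot,\cdot\right\rangle}$. Since continuity is local on the source and the local trivializations $\Phi^{\|\cdot\|},\Psi^g$ are homeomorphisms onto their images, continuity of $\rho^\vee$ over each $U$ follows, and these glue to give global continuity. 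A small care point is the joint continuity of $\rho_0^\vee$: it follows from $\|\rho_0^\vee(a,v)-\rho_0^\vee(a',v')\|\leqslant\|a\|\,\|v-v'\|+\|a-a'\|\,\|v'\|$, exactly the estimate already used in \cref{re:repriggedcstar}.

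Finally I would note that the transported map is fibrewise a $*$-representation of $A^{\|\cdot\|}$ on $E^{\left\langle\cdot,\cdot\right\rangle}$ because this is exactly the content of \cref{re:repriggedcstar}, and that the local-triviality data $(\Phi^{\|\cdot\|},\Psi^g)$ witness the required compatibility in the sense of \cref{def:repcstarbundle} (with $A$, $E$, $\rho_0$ replaced by their completions $A^{\|\cdot\|}$, $E^{\left\langle\cdot,\cdot\right\rangle}$, $\rho_0^\vee$). I do not expect any genuine obstacle here; the only mildly delicate point — and the one I would write out with some care — is checking that the strongly continuous transition functions of $\mathcal{A}^{\|\cdot\|}$ and $\mathcal{E}^g$ interact well with $\rho_0^\vee$, i.e.\ that on overlaps $U\cap U'$ the two local expressions for $\rho^\vee$ agree; but this is forced by fibrewise uniqueness of the bounded extension and needs no new estimate. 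The proof is therefore essentially a matter of assembling \cref{rem:ContinuousHilbertBundle}, \cref{rem:fibrewiseCstar}, \cref{re:repriggedcstar}, and \cref{re:repcstarbundle}, as the statement itself advertises.
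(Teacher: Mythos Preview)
Your proposal is correct. The paper does not actually prove this lemma---it merely states that the result ``uses all this structure and is straightforward to prove''---and your argument is precisely the natural unpacking of that remark: extend the local trivializations of \cref{def:repcstarbundle} to the completions via \cref{rem:ContinuousHilbertBundle} and \cref{rem:fibrewiseCstar}, invoke uniqueness of the bounded extension from \cref{re:repriggedcstar} to identify the fibrewise representation with $\rho_0^\vee$ in trivializations, and verify joint continuity from the norm estimate.
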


In the following we explain an associated bundle construction for rigged module bundles.
This method is an important tool in \cref{sec:SpinorBundleOnLoopSpaceI}, where we use it to obtain the Clifford multiplication on Fock bundles.
We recall the following terminology. 

\begin{definition}
\label{def:lifts}
If $\U(1) \to \widetilde{\mathcal{G}} \stackrel{q}{\to} \mathcal{G}$ is a central extension of Fr\'echet Lie groups, and $\mathcal{P}$ is a principal $\mathcal{G}$-bundle over $\mathcal{M}$, then a \emph{lift} of the structure group of $\mathcal{P}$ to $\widetilde{\mathcal{G}}$ is a principal $\widetilde{\mathcal{G}}$-bundle $\widetilde{\mathcal{P}}$ over $\mathcal{M}$ together with a smooth  bundle map $\phi:\widetilde{\mathcal{P}} \to \mathcal{P}$ that  is $q$-equivariant, i.e.,  $\phi(\tilde p \cdot \tilde g) = \phi(\tilde p)\cdot q(\tilde g)$ for all $\tilde p \in \widetilde{\mathcal{P}}$ and $\tilde g \in \widetilde{\mathcal{G}}$.
\end{definition}  

\begin{proposition}\label{prop:InducedBundleRep}
Let $\U(1) \to \widetilde{\mathcal{G}} \stackrel{q}{\to} \mathcal{G}$ be a central extension of Fr\'echet Lie groups, equipped with a smooth representation on a rigged module $E$ for a rigged \cstar-algebra $A$. Let $\mathcal{P}$ be a principal $\mathcal{G}$-bundle over $\mathcal{M}$ and let $\phi:\widetilde{\mathcal{P}} \to \mathcal{P}$ be a lift of $\mc{G}$ to $\widetilde{\mathcal{G}}$.
Let  $\mc{A} \defeq (\mathcal{P} \times A)/\mc{G}$ and $\mc{E} \defeq (\widetilde{\mathcal{P}} \times E)/\widetilde{\mc{G}}$ be the associated bundles.
Then, there is a unique map $\rho: \mc{A} \times_{\mc{M}} \mc{E} \rightarrow \mc{E}$ satisfying   
 \begin{equation}\label{eq:FibreWiseRep}
  \rho([\phi(\tilde p),a],[\tilde p,v]) = [\tilde p, av]
 \end{equation}
 for all $\tilde p \in \widetilde{\mathcal{P}}$, all $a \in A$ and $v \in E$.
Moreover, $\rho$ equips $\mathcal{E}$ with the structure of a rigged $\mathcal{A}$-module bundle with typical fibre $E$. 
\end{proposition}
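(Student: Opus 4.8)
The plan is to define $\rho$ via formula \cref{eq:FibreWiseRep}, check it is well-defined using the compatibility condition (b) of \cref{def:repofce}, and then establish the local triviality condition of \cref{def:repcstarbundle} using the explicit trivializations $\Psi_\alpha$ from \cref{lem:AssociatedFrechetBundle}. The starting observation is that every element of $\mathcal{A}_x$ can be written as $[\phi(\tilde p),a]$ for a suitable $\tilde p\in\widetilde{\mathcal{P}}$ lying over $x$ (since $\phi$ is fibrewise surjective, being $q$-equivariant with $q$ surjective) and $a\in A$, and every element of $\mathcal{E}_x$ over the \emph{same} point $x$ can then be written as $[\tilde p, v]$ with the \emph{same} $\tilde p$; so the right-hand side of \cref{eq:FibreWiseRep} makes sense on all of $\mathcal{A}_x\times_{\mathcal{M}}\mathcal{E}_x$.

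First I would verify well-definedness. Suppose $[\phi(\tilde p),a]=[\phi(\tilde p'),a']$ and $[\tilde p,v]=[\tilde p',v']$. Writing $\tilde p' = \tilde p\cdot\tilde g$ for some $\tilde g\in\widetilde{\mathcal{G}}$, the second equation forces $v'=\tilde g^{-1}v$, and the first, after applying $\phi$ and using $q$-equivariance $\phi(\tilde p')=\phi(\tilde p)\cdot q(\tilde g)$, forces $a' = q(\tilde g)^{-1}a$ (here I use that the $\widetilde{\mathcal{G}}$-action on the fibre of $\mathcal{A}$ factors through $q$, which is exactly how $\mathcal{A}=(\mathcal{P}\times A)/\mathcal{G}$ is built, together with the $\U(1)$-freeness making the representative essentially unique up to the $\widetilde{\mathcal{G}}$-action). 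Then
\[
[\tilde p', a'v'] = [\tilde p\tilde g,\, q(\tilde g)^{-1}a\,\tilde g^{-1}v] = [\tilde p,\, \tilde g\cdot(q(\tilde g)^{-1}a\cdot \tilde g^{-1}v)] = [\tilde p,\, a\cdot v],
\]
where the last step is precisely condition (b) of \cref{def:repofce} applied with $\tilde g$, $q(\tilde g)^{-1}a$, $\tilde g^{-1}v$. Uniqueness of $\rho$ is immediate from \cref{eq:FibreWiseRep} since every pair of points in a common fibre has a representative of the required form. Fibre-preservation is clear, and the fact that $\rho$ restricts on each fibre to the representation making $\mathcal{E}_x$ a rigged $\mathcal{A}_x$-module follows because \cref{eq:FibreWiseRep} transports the representation $\rho_0$ of $A$ on $E$ along the fibrewise isomorphisms $A\to\mathcal{A}_x$, $E\to\mathcal{E}_x$ induced by a choice of $\tilde p$.

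For the local triviality, over a trivializing open $U$ for $\mathcal{P}$ (which we may also assume trivializes $\widetilde{\mathcal{P}}$, shrinking if necessary) pick a local section $\tilde s:U\to\widetilde{\mathcal{P}}$ and let $s=\phi\circ\tilde s:U\to\mathcal{P}$. These induce trivializations $\Phi$ of $\mathcal{A}$ and $\Psi$ of $\mathcal{E}$ of the form $[s(x),a]\mapsto(a,x)$, $[\tilde s(x),v]\mapsto(v,x)$ (these are the maps $\Psi_\alpha$ of \cref{eq:LocalTrivializations} read off from the sections). In these trivializations $\rho([s(x),a],[\tilde s(x),v]) = [\tilde s(x),av]$, so the diagram in \cref{def:repcstarbundle} commutes on the nose with typical fibre $\rho_0$. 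Since $\rho_0$ is smooth (part of the rigged $A$-module structure), this shows $\rho$ is a smooth fibre-preserving map with the required local form, i.e.\ $\mathcal{E}$ is a rigged $\mathcal{A}$-module bundle with typical fibre $E$, invoking \cref{lem:AssociatedHilbertBundle} and \cref{lem:RiggedCStarBundle} for the underlying rigged Hilbert space bundle and rigged \cstar-algebra bundle structures. The only mildly delicate point — and the place I would be most careful — is the well-definedness argument: one must use the $\U(1)$-principality of $\widetilde{\mathcal{G}}\to\mathcal{G}$ together with condition (a) to see that the ambiguity in the representative $[\phi(\tilde p),a]$ of an element of $\mathcal{A}_x$ is \emph{exactly} by the $\widetilde{\mathcal{G}}$-action (the central $\U(1)$ acting trivially on $\mathcal{A}$ but by scalars on $\mathcal{E}$, which is consistent because scalars commute with the $A$-action), so that matching the representative on the $\mathcal{E}$-side pins down $\tilde g$ up to the irrelevant central circle. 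Everything else is the routine associated-bundle bookkeeping already carried out in \cref{lem:AssociatedFrechetBundle} and \cref{prop:InvarianceUnderLifts}.
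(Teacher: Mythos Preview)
Your proof is correct and follows essentially the same approach as the paper: well-definedness via condition (b) of \cref{def:repofce}, then local triviality by pulling back a section $\tilde s$ of $\widetilde{\mathcal{P}}$ to $s=\phi\circ\tilde s$ and checking the diagram of \cref{def:repcstarbundle}. One small point: your closing remark about condition (a) is superfluous and slightly misleading. Once you write $\tilde p'=\tilde p\tilde g$, the element $\tilde g\in\widetilde{\mathcal{G}}$ is uniquely determined by the free action on $\widetilde{\mathcal{P}}$, and the rest of your computation goes through without ever invoking that $\U(1)$ acts by scalars. The paper in fact explicitly notes that condition (a) is \emph{not} used in this proof and only becomes relevant later (in the twisted setting of \cref{sec:twistedriggedhb}).
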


\begin{proof}
It is clear that \cref{eq:FibreWiseRep}  determines $\rho$ uniquely. In order to define $\rho$ as a map by \cref{eq:FibreWiseRep}, we infer from condition (b) of \cref{def:repofce} that
\begin{equation*}
\rho([\phi(\tilde p)g,g^{-1}a],[\tilde p\tilde g,\tilde g^{-1}v])=[\tilde p \tilde g,(q(\tilde g)^{-1}a)\cdot (\tilde g^{-1}v)]=[\tilde p \tilde g,q(\tilde g)^{-1}\cdot (av)]
\end{equation*}
holds for all $\tilde p\in \widetilde{\mathcal{P}}$, $\tilde g\in \widetilde{\mathcal{G}}$, $v\in E$ and $a\in A$.  This implies that defining $\rho$ by \cref{eq:FibreWiseRep} does not depend on the choice of representatives.
Now let $U\subset \mathcal{M}$ be an open set over which $\widetilde{\mathcal{P}}$ trivializes and let $\tilde \sigma: U \rightarrow \widetilde{\mathcal{P}}$ be a local section. Then, $\sigma\defeq \phi\circ \tilde\sigma$ is a local section of $\mathcal{P}$. 
 Let $\Phi$ and $\Psi$ be the corresponding local trivializations of $\mc{A}$ and $\mc{E}$ over $U$ respectively.
 We have $\mathrm{pr}_{A} (\Phi ([\sigma(x),a])) = a$ and $\mathrm{pr}_{E} (\Psi ([\sigma(x),v])) = v$ for all $x \in U$, all $a \in A$ and all $v \in E$.
 It then immediately follows that the diagram in \cref{def:repcstarbundle} commutes. We remark that condition (a) of \cref{def:repofce} has not been used here; it will become relevant later. 
\end{proof}

\subsection{Twists and twisted rigged Hilbert space bundles}

\label{sec:twistedriggedhb}

Twisted bundles will be used in \cref{sec:twisted} of this article, and the  present section establishes the basic definitions and results about twisted bundles in the infinite-dimensional rigged setting. 
A suitable   framework to discuss twisted vector bundles is provided by the theory of  bundle gerbes initiated in \cite{Murray1996}, also see \cite{johnson1,gomi3,waldorf13,Waldorfb}.  In this framework, bundle gerbes represent the twist. We recall this theory here, and generalize it at the same time to twisted rigged Hilbert space bundles.

In order to generalize the usual definition of a bundle gerbe to Fr\'echet manifolds, one  has to note two facts. Firstly, fibre products of surjective submersions exist (they are closed submanifolds of the direct product \cite[Thm. 4.4.10]{Hamilton82}). Secondly, tensor products between principal Fr\'echet $\U(1)$-bundles exist and are defined in the usual way. 

\begin{definition} 
A \emph{bundle gerbe} $\mathscr{G}=(\mathcal{Y},\mathcal{Q},\mu)$ over a Fr\'echet manifold $\mathcal{M}$ consists of a smooth surjective submersion $\pi:\mathcal{Y} \to \mathcal{M}$, of a Fr\'echet principal $\U(1)$-bundle $\mathcal{Q}$ over the 2-fold fibre product $\mathcal{Y}^{[2]}\defeq \mathcal{Y} \times_\mathcal{M} \mathcal{Y}$, and of a bundle isomorphism $\mu: \pr_{12}^{*}\mathcal{Q} \otimes \pr_{23}^{*}\mathcal{Q} \to \pr_{13}^{*}\mathcal{Q}$ over the 3-fold fibre product $\mathcal{Y}^{[3]}$, satisfying the following associativity condition over $\mathcal{Y}^{[4]}$,
\begin{equation*}
\mu(q_{12} \otimes \mu(q_{23} \otimes q_{34}) ) = \mu(\mu(q_{12} \otimes q_{23}) \otimes q_{34})
\end{equation*}
for all $q_{ij}\in \mathcal{Q}$ for which this expression is defined.
 A \emph{trivialization} of $\mathscr{G}$ is a  Fr\'echet principal $\U(1)$-bundle $\mathcal{T}$ over $\mathcal{Y}$ together with a bundle isomorphism $\tau: \mathcal{Q} \otimes \pr_2^{*}\mathcal{T} \to \pr_1^{*}\mathcal{T}$ over $\mathcal{Y}^{[2]}$ that is compatible with $\mu$ in the sense that 
\begin{equation*}
\tau(\mu(q_{12} \otimes q_{23}) \otimes t)=\tau(q_{12} \otimes \tau(q_{23} \otimes t))
\end{equation*}
for all $q_{ij}\in \mathcal{Q}$ and $t\in \mathcal{T}$ for which this expression is defined.
\end{definition}

Trivializations of a bundle gerbe $\mathscr{G}$ form a category, $\Triv(\mathscr{G})$, whose morphisms are bundle morphisms $\mathcal{T} \to \mathcal{T}'$ that are compatible with the isomorphisms $\tau$ and $\tau'$ in the obvious sense.

One source of examples of bundle gerbes are lifting problems. Suppose $\U(1) \to \widetilde {\mathcal{G}} \stackrel{q}{\to} \mathcal{G}$ is a central extension of Fr\'echet Lie groups and $\mathcal{P}$ is a principal $\mathcal{G}$-bundle over $\mathcal{M}$. Lifts of the structure group of $\mathcal{P}$ to $\widetilde{\mathcal{G}}$ (see \cref{def:lifts}) form a  category $\Lift(\mathcal{P})$ whose morphisms are bundle morphisms $\widetilde{\mathcal{P}}\to \widetilde{\mathcal{P}}'$ that preserve the maps to $\mathcal{P}$. Associated to any lifting problem is the \emph{lifting  gerbe} $\mathscr{L}_\mathcal{P}$. Its surjective submersion is the bundle projection $\mathcal{P} \to \mathcal{M}$. The 2-fold fibre product comes equipped with a smooth map $\delta:\mathcal{P}^{[2]} \to \mathcal{G}$ defined by $p'=p\delta(p,p')$. Now, $\mathcal{Q} \defeq\delta^{*}\widetilde {\mathcal{G}}$, where $\widetilde {\mathcal{G}}$ is considered as a principal $\U(1)$-bundle over $\mathcal{G}$, and the isomorphism $\mu$ is the multiplication of group elements.

As the name implies, a lifting gerbe is closely related to the corresponding lifting problem.
For any lift $\widetilde{\mathcal{P}}$, the map $\phi:\widetilde{\mathcal{P}} \to \mathcal{P}$ has the structure of a principal $\U(1)$-bundle $\mathcal{T}:=\widetilde{\mathcal{P}}$ over $\mathcal{P}$, with the action induced along $\U(1) \subset \widetilde{\mathcal{G}}$. The remaining  $\widetilde{\mathcal{G}}$-action on $\mathcal{T}$ determines a bundle morphism $\tau: \mathcal{Q} \otimes \pr_2^{*}\mathcal{T} \to \pr_1^{*}\mathcal{T}$ over $\mathcal{P}^{[2]}$, turning $\mathcal{T}$ into a trivialization of $\mathscr{L}_\mathcal{P}$. 
Conversely, if $\mathcal{T}$ is a trivialization of $\mathscr{L}_\mathcal{P}$, with isomorphism $\tau$, then the formula $t \cdot \tilde g \defeq \tau(\tilde g^{-1} \otimes t)$  determines a $\widetilde{\mathcal{G}}$-action on $\mathcal{T}$ such that the composite $\mathcal{T} \to \mathcal{P} \to \mathcal{M}$ has the structure of  a principal $\widetilde{\mathcal{G}}$-bundle $\widetilde{\mathcal{P}}$ over $\mathcal{M}$. 
The former bundle projection $\mathcal{T} \to \mathcal{P}$ gives the  map $\phi: \widetilde{\mathcal{P}} \to \mathcal{P}$ turning $\widetilde{\mathcal{P}}$ into a lift.
We now have the following fundamental result regarding lifting gerbes (see the references above).
\begin{proposition}
\label{prop:lifting}
The two constructions described above extend to functors 
\begin{equation*}
\xymatrix{\Triv(\mathscr{L}_\mathcal{P}) \ar@<2pt>[r]  & \Lift(\mathcal{P})  \ar@<2pt>[l]}
\end{equation*}
and establish an isomorphism of categories.
\end{proposition}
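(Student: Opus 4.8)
The plan is to verify that the two constructions described in the paragraph preceding the statement are mutually inverse functors, and that they are actually functorial. First I would spell out the action of the two constructions on morphisms. Given a morphism $\widetilde{\mathcal{P}} \to \widetilde{\mathcal{P}}'$ in $\Lift(\mathcal{P})$ — i.e.\ a $\widetilde{\mathcal{G}}$-equivariant bundle map over $\mathcal{M}$ commuting with the maps $\phi,\phi'$ to $\mathcal{P}$ — one notes that it is in particular $\U(1)$-equivariant, hence a morphism of the underlying $\U(1)$-bundles $\mathcal{T}\to\mathcal{T}'$ over $\mathcal{P}$; compatibility with $\tau,\tau'$ follows because $\tau$ was built out of the $\widetilde{\mathcal{G}}$-action, which is preserved. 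Conversely, a morphism $\mathcal{T}\to\mathcal{T}'$ of trivializations is $\U(1)$-equivariant and compatible with $\tau,\tau'$; since the reconstructed $\widetilde{\mathcal{G}}$-action is $t\cdot\tilde g = \tau(\tilde g^{-1}\otimes t)$, compatibility with $\tau$ forces the map to be $\widetilde{\mathcal{G}}$-equivariant, and it visibly commutes with the projections to $\mathcal{P}$. Functoriality (preservation of identities and composites) is then immediate since on underlying maps of total spaces nothing changes.

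Next I would check that the two functors are mutually inverse. Starting from a lift $\widetilde{\mathcal{P}}$ with map $\phi$: one forms $\mathcal{T}=\widetilde{\mathcal{P}}$ with its $\U(1)$-action and the isomorphism $\tau(\tilde g, t) = t\cdot \tilde g^{-1}$; then reconstructing the $\widetilde{\mathcal{G}}$-action via $t\cdot\tilde g := \tau(\tilde g^{-1}\otimes t) = t\cdot(\tilde g^{-1})^{-1} = t\cdot\tilde g$ recovers exactly the original action, and the projection to $\mathcal{P}$ is unchanged, so we get $\widetilde{\mathcal{P}}$ back on the nose. Conversely, starting from a trivialization $(\mathcal{T},\tau)$, reconstruct the $\widetilde{\mathcal{G}}$-action and then re-extract $\tau'$ from it; one checks $\tau'(\tilde g\otimes t) = t\cdot\tilde g^{-1} = \tau((\tilde g^{-1})^{-1}\otimes t) = \tau(\tilde g\otimes t)$, so $\tau'=\tau$. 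These computations show the composites are the identity functors, not merely naturally isomorphic to them, giving a genuine isomorphism of categories.

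There are, however, a few things that need real (if routine) verification rather than just symbol-pushing, and these I regard as the main obstacles. First, one must confirm that the reconstructed objects genuinely live in the claimed categories: that $\mathcal{T}\to\mathcal{P}\to\mathcal{M}$, with the reconstructed $\widetilde{\mathcal{G}}$-action, really is a Fr\'echet principal $\widetilde{\mathcal{G}}$-bundle — local triviality has to be produced from local trivializations of $\mathcal{T}$ as a $\U(1)$-bundle together with the $\tau$-isomorphism, using that $\widetilde{\mathcal{G}}\to\mathcal{G}$ is a $\U(1)$-bundle — and conversely that $\phi:\widetilde{\mathcal{P}}\to\mathcal{P}$ really is a $\U(1)$-bundle with the stated action. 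Second, one must check that $\tau$ (resp.\ the $\widetilde{\mathcal{G}}$-action) satisfies the required compatibility with $\mu$ (resp.\ associativity of the action), which is where the cocycle $\delta$ and the group law of $\widetilde{\mathcal{G}}$ enter: the identity $\delta(p,p')\delta(p',p'')=\delta(p,p'')$ and the definition $\mathcal{Q}=\delta^{*}\widetilde{\mathcal{G}}$ with $\mu$ the group multiplication translate precisely into the pentagon-type conditions in the definition of trivialization. I would organize this as: (1) the object-level bijection and the two structural checks just mentioned; (2) the morphism-level assignments and their well-definedness; (3) the two inverse-composite computations. Since the excerpt explicitly grants this as a known result ("see the references above"), I would keep the write-up brief, citing \cite{Murray1996,waldorf13} for the finite-dimensional prototype and remarking that every step goes through verbatim over Fr\'echet manifolds because fibre products of surjective submersions and tensor products of Fr\'echet principal $\U(1)$-bundles behave as in the finite-dimensional case.
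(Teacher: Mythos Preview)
Your proposal is correct and is exactly the standard argument. Note, however, that the paper does not actually give a proof of this proposition: it is stated as a ``fundamental result regarding lifting gerbes (see the references above)'' and left to the literature (e.g.\ \cite{Murray1996}). So there is no paper proof to compare against; your outline simply fills in what the references would contain, and your closing remark that the finite-dimensional argument carries over verbatim to the Fr\'echet setting is precisely the point the paper is taking for granted.
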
 

Next we recall the notion of a twisted vector bundle, which in our infinite-dimensional setting will be a twisted rigged Hilbert space bundle. We will write $\mathcal{Q}_{\C} \defeq (\mathcal{Q} \times \C)/\U(1)$ for the hermitian line bundle associated to a Fr\'echet principal $\U(1)$-bundle $\mathcal{Q}$, which may be seen as an application of \cref{lem:AssociatedHilbertBundle} to the smooth representation $\U(1) \times \C \to \C$. We further write $\mu_{\C}$ for the unitary line bundle isomorphism induced from a principal bundle morphism $\mu$. 

\begin{definition}
\label{def:trhsb}
Let $\mathscr{G}=(\mathcal{Y},\mathcal{Q},\mu)$ be a bundle gerbe over a Fr\'echet manifold $\mathcal{M}$. A \emph{$\mathscr{G}$-twisted rigged Hilbert space bundle} is a rigged Hilbert space bundle $\mathcal{K}$ over $\mathcal{Y}$ together with an isometric isomorphism
\begin{equation*}
\kappa : \mathcal{Q}_{\C} \otimes \pr_2^{*}\mathcal{K} \to \pr_1^{*}\mathcal{K}
\end{equation*} 
of rigged Hilbert space bundles over $\mathcal{Y}^{[2]}$, such that
\begin{equation*}
\kappa (\mu_{\C}(\ell_{12}\otimes \ell_{23}) \otimes v)=\kappa(\ell_{12} \otimes \kappa(\ell_{23} \otimes v))
\end{equation*}
holds for all $\ell_{ij}\in \mathcal{Q}_{\C}$ and $v\in \mathcal{K}$ for which above expression is well-defined. 
\end{definition}

Morphisms between $\mathscr{G}$-twisted rigged Hilbert space bundles are locally bounded  rigged Hilbert space bundle morphisms over $\mathcal{Y}$ that commute with the isomorphisms over $\mathcal{Y}^{[2]}$ in the obvious way. Thus, $\mathscr{G}$-twisted rigged Hilbert space bundles form a category $\Mod{\mathscr{G}}$.

\begin{remark}
We recall that rigged Hilbert space bundles extend to continuous Hilbert space bundles, with transition functions in $\U(\mathcal{H})$ equipped with the strong operator topology. Any $\mathscr{G}$-twisted rigged Hilbert space bundle gives rise to a \emph{projective} Hilbert space bundle over $\mathcal{M}$, with transition functions in the group $P\U(\mathcal{H})$. Besides regularity aspects, the advantage of twisted Hilbert space bundles over projective Hilbert space bundles  is that the twist is treated separately as an object in its own right, and the bundle is considered relative to a fixed twist. 
\end{remark}

We observe that a rigged Hilbert space bundle $\mathcal{K}$ over $\mathcal{M}$ and a trivialization $\mathcal{T}$ of $\mathscr{G}$ with bundle isomorphism $\tau$ can be turned into a $\mathscr{G}$-twisted rigged Hilbert space bundle  $\mathcal{K}_{\mathcal{T}} \defeq \mathcal{T}_{\C} \otimes \pi^{*}\mathcal{K}$ with isometric  isomorphism  ix $\kappa_{\mathcal{T}} \defeq \tau_{\C} \otimes 1$ (using implicitly the canonical isomorphism $\pr_2^{*}\pi^{*}\mathcal{K}\cong \pr_1^{*}\pi^{*}\mathcal{K}$ over $\mathcal{Y}^{[2]}$). For fixed                                                     trivialization $\mathcal{T}$, this defines a \quot{twisting} functor
\begin{equation}
\label{eq:twisting}
\twist_{\mathcal{T}}:\VBdl(\mathcal{M}) \to \Mod{\mathscr{G}}\quad\text{;}\quad \mathcal{K} \mapsto \mathcal{K}_{\mathcal{T}}\text{.}
\end{equation}
We have the following result, see \cite{Bouwknegt2002,waldorf1} for finite-dimensional analogues. 

\begin{lemma}
\label{lem:twisting}
The functor $\twist_{\mathcal{T}}$ establishes an equivalence between the category $\VBdl(\mathcal{M})$ of rigged Hilbert space bundles over $\mathcal{M}$ and the category $\Mod{\mathscr{G}}$ of  $\mathscr{G}$-twisted rigged Hilbert space bundles.
\end{lemma}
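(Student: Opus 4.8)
The plan is to exhibit an explicit quasi-inverse functor to $\twist_{\mathcal{T}}$, and then check that both composites are naturally isomorphic to the respective identity functors. The natural candidate for the untwisting functor is $\mathcal{L} \mapsto \mathcal{T}_{\C}^{*} \otimes \mathcal{L}$ followed by descent along $\pi:\mathcal{Y} \to \mathcal{M}$: given a $\mathscr{G}$-twisted rigged Hilbert space bundle $(\mathcal{L},\lambda)$ over $\mathcal{Y}$, the bundle $\mathcal{T}_{\C}^{*} \otimes \mathcal{L}$ over $\mathcal{Y}$ carries, by combining $\tau$ and $\lambda$, a canonical descent isomorphism $\pr_2^{*}(\mathcal{T}_{\C}^{*}\otimes\mathcal{L}) \to \pr_1^{*}(\mathcal{T}_{\C}^{*}\otimes\mathcal{L})$ over $\mathcal{Y}^{[2]}$ satisfying the cocycle condition over $\mathcal{Y}^{[3]}$; invoking the stack property of $\VBdl$ (\cref{re:presheafstack}), this descends to a rigged Hilbert space bundle $\mathcal{U}_{\mathcal{T}}(\mathcal{L})$ over $\mathcal{M}$ with $\pi^{*}\mathcal{U}_{\mathcal{T}}(\mathcal{L}) \cong \mathcal{T}_{\C}^{*}\otimes\mathcal{L}$. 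One checks this is functorial in $\mathcal{L}$ for locally bounded twisted morphisms.

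First I would spell out the descent datum precisely: the relevant isomorphism over $\mathcal{Y}^{[2]}$ is built from $\tau_{\C}^{*}:\pr_1^{*}\mathcal{T}_{\C}^{*} \to \mathcal{Q}_{\C}\otimes \pr_2^{*}\mathcal{T}_{\C}^{*}$ (the dual/inverse of $\tau_{\C}$) tensored with $\lambda:\mathcal{Q}_{\C}\otimes \pr_2^{*}\mathcal{L}\to \pr_1^{*}\mathcal{L}$, contracting the $\mathcal{Q}_{\C}$ factors using the canonical pairing $\mathcal{Q}_{\C}^{*}\otimes\mathcal{Q}_{\C}\to \underline{\C}$; since $\mathcal{Q}_{\C}$ is a hermitian line bundle this pairing is unitary, so the resulting isomorphism $\pr_2^{*}(\mathcal{T}_{\C}^{*}\otimes\mathcal{L})\to\pr_1^{*}(\mathcal{T}_{\C}^{*}\otimes\mathcal{L})$ is isometric. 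The cocycle condition over $\mathcal{Y}^{[3]}$ follows by combining the compatibility of $\tau$ with $\mu$ and the compatibility of $\lambda$ with $\mu_{\C}$ — the two $\mu_{\C}$-contributions cancel. Then $\mathcal{U}_{\mathcal{T}}(\mathcal{L})$ exists by the stack property, as a rigged Hilbert space bundle since the gluing is isometric.

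Next I would construct the two natural isomorphisms. For $\mathcal{U}_{\mathcal{T}}\circ\twist_{\mathcal{T}}\cong \mathrm{id}$: starting from $\mathcal{K}\in\VBdl(\mathcal{M})$ one has $\mathcal{K}_{\mathcal{T}}=\mathcal{T}_{\C}\otimes\pi^{*}\mathcal{K}$, so $\mathcal{T}_{\C}^{*}\otimes\mathcal{K}_{\mathcal{T}} \cong (\mathcal{T}_{\C}^{*}\otimes\mathcal{T}_{\C})\otimes\pi^{*}\mathcal{K}\cong \pi^{*}\mathcal{K}$ canonically and isometrically, and one verifies that under this identification the descent datum on $\mathcal{T}_{\C}^{*}\otimes\mathcal{K}_{\mathcal{T}}$ becomes the trivial descent datum $\pr_2^{*}\pi^{*}\mathcal{K}\cong\pr_1^{*}\pi^{*}\mathcal{K}$, so by uniqueness of descent $\mathcal{U}_{\mathcal{T}}(\mathcal{K}_{\mathcal{T}})\cong\mathcal{K}$. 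For $\twist_{\mathcal{T}}\circ\mathcal{U}_{\mathcal{T}}\cong\mathrm{id}$: given $(\mathcal{L},\lambda)$, we have $\twist_{\mathcal{T}}(\mathcal{U}_{\mathcal{T}}(\mathcal{L}))=\mathcal{T}_{\C}\otimes\pi^{*}\mathcal{U}_{\mathcal{T}}(\mathcal{L})\cong \mathcal{T}_{\C}\otimes(\mathcal{T}_{\C}^{*}\otimes\mathcal{L})\cong\mathcal{L}$, and one checks this isometric isomorphism over $\mathcal{Y}$ intertwines $\kappa_{\mathcal{T}(\mathcal{U}_{\mathcal{T}}(\mathcal{L}))} = \tau_{\C}\otimes 1$ (composed with the descent datum) with $\lambda$, i.e.\ it is a morphism of $\mathscr{G}$-twisted rigged Hilbert space bundles; this last compatibility is exactly the definition of the descent isomorphism built above, traced back through the tensor identifications.

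The main obstacle I anticipate is bookkeeping rather than any deep point: one must be careful that all the canonical isomorphisms ($\mathcal{T}_{\C}^{*}\otimes\mathcal{T}_{\C}\cong\underline{\C}$, associativity and symmetry of $\otimes$, pullback compatibilities over $\mathcal{Y}^{[2]}$ and $\mathcal{Y}^{[3]}$) are simultaneously isometric \emph{and} compatible with the various $\tau$, $\lambda$, $\mu_{\C}$, so that the descended objects genuinely land in $\VBdl(\mathcal{M})$ and $\Mod{\mathscr{G}}$ with the right structure maps, and that everything is natural for locally bounded morphisms. The one genuinely non-formal input is the stack property of $\VBdl$ asserted in \cref{re:presheafstack} (gluing rigged Hilbert space bundles along isometric isomorphisms satisfying the cocycle condition), which supplies the existence of $\mathcal{U}_{\mathcal{T}}(\mathcal{L})$; granting that, the proof is a diagram chase. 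In the write-up I would state the quasi-inverse explicitly and then say the verification of the two natural isomorphisms is a routine diagram chase using the stack property, exactly as in the finite-dimensional references \cite{Bouwknegt2002,waldorf1}.
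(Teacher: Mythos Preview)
Your proposal is correct and follows essentially the same approach as the paper: the paper also untwists a $\mathscr{G}$-twisted bundle $\mathcal{K}'$ by forming $\mathcal{T}_{\C}^{*}\otimes\mathcal{K}'$, equipping it with the descent isomorphism built from $\tau_{\C}^{*-1}\otimes\kappa'$ over $\mathcal{Y}^{[2]}$, and invoking the stack property of \cref{re:presheafstack} to obtain the descended bundle over $\mathcal{M}$. The only cosmetic difference is that the paper phrases this as a proof of essential surjectivity (leaving full faithfulness implicit), whereas you package the same construction as an explicit quasi-inverse functor and verify both natural isomorphisms; the content is the same.
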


The interesting point here is that the functor $\twist_{\mathcal{T}}$ is essentially surjective, which allows  to \quot{untwist} twisted rigged Hilbert space bundles using trivializations of the twist.
Let us recall the  argument for essential surjectivity.
If $\mathcal{K}'$ is a $\mathscr{G}$-twisted rigged Hilbert space bundle with isometric isomorphism $\kappa'$, then we consider the rigged Hilbert space bundle $\mathcal{T}_{\C}^{*} \otimes \mathcal{K}'$ over $\mathcal{Y}$, and the  isometric isomorphism
\begin{equation}
\label{eq:descent}
\xymatrix@C=4em{\pr_2^{*}(\mathcal{T}_{\C}^{*} \otimes \mathcal{K}') \cong \pr_2^{*}\mathcal{T}_{\C}^{*} \otimes \mathcal{Q}_{\C}^{*} \otimes \mathcal{Q}_{\C} \otimes \pr_2^{*}\mathcal{K}'\ar[r]^-{\tau_{\C}^{*-1} \otimes \kappa'} & \pr_1^{*}(\mathcal{T}_{\C}^{*}\otimes \mathcal{K}')\text{,}} 
\end{equation}
of rigged Hilbert space bundles over $\mathcal{Y}^{[2]}$, where $\tau_{\C}^{*}$ denotes the transposed map between dual bundles. It is straightforward to check that this map satisfies the cocycle condition over $\mathcal{Y}^{[3]}$, and hence equips $\mathcal{T}_{\C}^{*} \otimes \mathcal{K}'$ with a descent structure for the surjective submersion $\pi:\mathcal{Y} \to \mathcal{M}$. Since rigged Hilbert space bundles together with isometric isomorphisms form a stack (see \cref{re:presheafstack}), there exists a rigged Hilbert space bundle $\mathcal{K}$ over $\mathcal{M}$ together with an isometric isomorphism $\phi:\pi^{*}\mathcal{K} \to \mathcal{T}_{\C}^{*}\otimes \mathcal{K}'$ over $\mathcal{Y}$ such that $\pr_1^{*}\phi\circ  \pr_2^{*}\phi^{-1}$ is the map  \cref{eq:descent}. Tensoring with $\mathcal{T}_{\C}$ from the left, and using the canonical pairing between  between a hermitian line bundle and its dual, we obtain  an isometric isomorphism $\phi': \mathcal{K}_{\mathcal{T}} \to \mathcal{K}'$ of rigged Hilbert space bundles, which is in fact an isomorphism of $\mathscr{G}$-twisted rigged Hilbert space bundles; thus, $\mathcal{K}$ is an essential preimage for $\mathcal{K}'$.

After this introduction to twisted rigged Hilbert space bundles for general bundle gerbes, we shall turn to lifting bundle gerbes, which have particular twisted bundles. We consider again a central extension $\U(1) \to \widetilde{\mathcal{G}} \stackrel{q}{\to} \mathcal{G}$ of Fr\'echet Lie groups, a principal $\mathcal{G}$-bundle $\mathcal{\mathcal{P}}$ over a Fr\'echet manifold $\mathcal{M}$, and the lifting gerbe $\mathscr{L}_\mathcal{P}$. We recall that the principal $\U(1)$-bundle $\mathcal{Q}$ of $\mathscr{L}_\mathcal{P}$ is the pullback $\mathcal{Q} \defeq \delta^{*}\widetilde{\mathcal{G}}$, where $p_1\delta(p_1,p_2)=p_2$.  Suppose $\widetilde{\mathcal{G}} \times E \to E$ is a smooth representation on a rigged Hilbert space $E$, with the  property that the subgroup $\U(1)\subset \widetilde{\mathcal{G}}$ acts by scalar multiplication. 
Then, we consider the trivial rigged Hilbert space bundle $E(\mathcal{P})^{\twist} \defeq \mathcal{P} \times E$ over $\mathcal{P}$, and have the following result.

\begin{lemma}
\label{lem:universal}
 There exists a unique isometric  isomorphism 
\begin{equation*} 
\nu: \mathcal{Q}_{\C} \otimes \pr_2^{*}E(\mathcal{P})^{\twist} \to \pr_1^{*}E(\mathcal{P})^{\twist}
\end{equation*}
of rigged Hilbert space bundles over $\mathcal{P}^{[2]}$, such that $\nu([\tilde g,z],v) = z(\tilde g v)$ for all $\tilde g\in \widetilde{\mathcal{G}}$, $z\in\C$ and $v\in E$.
Moreover, $\nu$ turns $E(\mathcal{P})^{\twist}$ into an $\mathscr{L}_{\mathcal{P}}$-twisted rigged Hilbert space bundle. 
\end{lemma}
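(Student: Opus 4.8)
The plan is to define $\nu$ directly by the prescribed formula on each fibre, check that it is well-defined, fibrewise isometric and invertible, then verify smoothness locally, and finally check compatibility with $\mu_{\C}$. Fix $(p_1,p_2)\in\mathcal{P}^{[2]}$. Since $\mathcal{Q}=\delta^{*}\widetilde{\mathcal{G}}$, the fibre of $\mathcal{Q}_{\C}$ over $(p_1,p_2)$ is the line $(\widetilde{\mathcal{G}}_{\delta(p_1,p_2)}\times\C)/\U(1)$, while the fibres of $\pr_1^{*}E(\mathcal{P})^{\twist}$ and $\pr_2^{*}E(\mathcal{P})^{\twist}$ over $(p_1,p_2)$ are both canonically $E$, because $E(\mathcal{P})^{\twist}=\mathcal{P}\times E$ is trivial. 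I would declare $\nu([\tilde g,z]\otimes v)\defeq z(\tilde gv)$. For well-definedness, note that for $u\in\U(1)\subset\widetilde{\mathcal{G}}$ one has $(\tilde gu)v=\tilde g(uv)=u(\tilde gv)$, using that $\U(1)$ acts on $E$ by scalars and that each $\tilde g$ acts by a $\C$-linear isometry; hence the two representatives of $[\tilde gu,z]\otimes v=[\tilde g,uz]\otimes v$ are both sent to $uz(\tilde gv)$. As the line $(\mathcal{Q}_{\C})_{(p_1,p_2)}$ is spanned by $[\tilde g,1]$ for any fixed $\tilde g$ over $\delta(p_1,p_2)$, the formula determines $\nu$ on the whole fibre, which also yields the asserted uniqueness. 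The map is isometric since $\|[\tilde g,z]\otimes v\|^{2}=|z|^{2}\|v\|^{2}=|z|^{2}\|\tilde gv\|^{2}=\|z(\tilde gv)\|^{2}$, and it has the two-sided inverse $v\mapsto[\tilde g,1]\otimes\tilde g^{-1}v$ (well-defined by the same argument), so each $\nu_{(p_1,p_2)}$ is an isometric isomorphism of rigged Hilbert spaces.

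Next I would show that $\nu$ and $\nu^{-1}$ are smooth, hence isometric isomorphisms of rigged Hilbert space bundles over $\mathcal{P}^{[2]}$; this is a local statement. Over an open $U\subseteq\mathcal{M}$ with a section $\sigma\colon U\to\mathcal{P}$, the restriction $\mathcal{P}^{[2]}|_{U}$ is parametrized by $(x,g_1,g_2)\mapsto(\sigma(x)g_1,\sigma(x)g_2)$ with $\delta=g_1^{-1}g_2$. Since $q\colon\widetilde{\mathcal{G}}\to\mathcal{G}$ is a Fr\'echet principal $\U(1)$-bundle, it has smooth local sections $s\colon W\to\widetilde{\mathcal{G}}$; over the locus where $g_1^{-1}g_2\in W$, the assignment $(x,g_1,g_2)\mapsto[(\sigma(x)g_1,\sigma(x)g_2),s(g_1^{-1}g_2)]$ is a smooth trivializing section of $\mathcal{Q}$, hence of $\mathcal{Q}_{\C}$. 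Relative to these trivializations and the global trivialization of $E(\mathcal{P})^{\twist}$, the map $\nu$ reads $(x,g_1,g_2,z,v)\mapsto(x,g_1,g_2,z\,(s(g_1^{-1}g_2)v))$, which is smooth because the representation $\widetilde{\mathcal{G}}\times E\to E$ and $s$ are smooth; and $\nu^{-1}$ reads $(x,g_1,g_2,v)\mapsto(x,g_1,g_2,1,s(g_1^{-1}g_2)^{-1}v)$, smooth because inversion in $\widetilde{\mathcal{G}}$ is smooth and the representation is smooth.

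It remains to verify compatibility with $\mu_{\C}$. Over $\mathcal{P}^{[3]}$ write $\ell_{12}=[\tilde g_{12},z_{12}]$ and $\ell_{23}=[\tilde g_{23},z_{23}]$ with $\tilde g_{12}$, $\tilde g_{23}$ over $\delta(p_1,p_2)$ and $\delta(p_2,p_3)$. Since $\mu$ is group multiplication, $\mu_{\C}(\ell_{12}\otimes\ell_{23})=[\tilde g_{12}\tilde g_{23},z_{12}z_{23}]$, which lies over $\delta(p_1,p_3)$ by the cocycle identity $\delta(p_1,p_2)\delta(p_2,p_3)=\delta(p_1,p_3)$. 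Then
\begin{equation*}
\nu(\mu_{\C}(\ell_{12}\otimes\ell_{23})\otimes v)=z_{12}z_{23}\bigl((\tilde g_{12}\tilde g_{23})v\bigr)=z_{12}\bigl(\tilde g_{12}(z_{23}(\tilde g_{23}v))\bigr)=\nu(\ell_{12}\otimes\nu(\ell_{23}\otimes v)),
\end{equation*}
the middle equality using that the $\widetilde{\mathcal{G}}$-action on $E$ is an action. This is exactly the cocycle condition of \cref{def:trhsb}, so $\nu$ turns $E(\mathcal{P})^{\twist}$ into an $\mathscr{L}_{\mathcal{P}}$-twisted rigged Hilbert space bundle.

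I expect the only genuinely technical point to be the smoothness of $\nu$ and $\nu^{-1}$: it is resolved entirely by the existence of smooth local sections of the central extension $q$ (automatic, since $\widetilde{\mathcal{G}}\to\mathcal{G}$ is a Fr\'echet principal $\U(1)$-bundle) together with the standing smoothness hypothesis on the representation, and everything else is bookkeeping on one-dimensional fibres.
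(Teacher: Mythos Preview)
Your proof is correct and follows essentially the same approach as the paper: well-definedness from the scalar $\U(1)$-action, the explicit inverse $v\mapsto[\tilde g,1]\otimes\tilde g^{-1}v$, smoothness via local sections of $\widetilde{\mathcal{G}}\to\mathcal{G}$, and a direct check of the cocycle condition. You have simply made explicit several steps that the paper leaves as remarks (notably the local-trivialization argument for smoothness and the full computation of compatibility with $\mu_{\C}$).
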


\begin{proof}
Uniqueness is clear, and that the given equation can be used to define $\nu$ as a bundle map follows since $\U(1)$ acts by scalar multiplication. That $\nu$ is smooth and isometric follows because the representation of $\widetilde{\mathcal{G}}$ on $E$ is smooth. In order to see that $\nu$ is an isomorphism, we notice that the inverse map  in the fibre over a point $(p_1,p_2)\in \mathcal{P}^{[2]}$ is given by $v \mapsto [\tilde g,1] \otimes \tilde g^{-1}v$, where $\tilde g\in \widetilde{\mathcal{G}}$ is an element projecting to $\delta(p_1,p_2)$. Since $\widetilde{\mathcal{G}} \to \mathcal{G}$ has local sections along $\delta:\mathcal{P}^{[2]} \to \mathcal{G}$, it is clear that this inverse map is smooth, and it is also isometric. This shows that $\nu$ is an isometric isomorphism. The compatibility condition with the isomorphism $\mu$ of $\mathscr{L}_{\mathcal{P}}$ is straightforward to prove. 
\end{proof}

The twisted rigged Hilbert space bundle $E(\mathcal{P})^{\twist}$ will be of importance below and in \cref{sec:twisted}, so that we shall summarize and give it a name. 
 
\begin{definition}
\label{def:canonicaltvb}
Let $\U(1) \to \widetilde{\mathcal{G}} \to \mathcal{G}$ be a central extension of Fr\'echet  Lie groups, let $\mathcal{P}$ be a Fr\'echet\ principal $\mathcal{G}$-bundle over a Fr\'echet manifold $\mathcal{M}$, and let $\mathscr{L}_\mathcal{P}$ be the associated lifting gerbe. For a smooth representation $\widetilde{\mathcal{G}} \times E \to E$ with $\U(1)$ acting by scalar multiplication,  $E(\mathcal{P})^{\twist}$ is called the \emph{universal $\mathscr{L}_\mathcal{P}$-twisted rigged Hilbert space bundle} with typical fibre  $E$. 
\end{definition}

The following result motivates this terminology, as the universal $\mathscr{L}_{\mathcal{P}}$-twisted rigged Hilbert space bundle $E(\mathcal{P})^{\twist}$ is a twisted version of a rigged Hilbert space bundle associated to \emph{any} solution of the lifting problem. 

\begin{proposition}
\label{lem:reptwistedvb}
Suppose $\mathcal{T}$ is a trivialization of $\mathscr{L}_\mathcal{P}$, corresponding to a lift $\widetilde{\mathcal{P}}$ under the isomorphism of \cref{prop:lifting}. 
Let $E(\widetilde{\mathcal{P}}) \defeq (\widetilde{\mathcal{P}} \times E)/\widetilde{\mathcal{G}}$ be the associated rigged Hilbert space bundle.
Then, there exists a canonical isometric isomorphism 
\begin{equation*}
\twist_{\mathcal{T}}(E(\widetilde{\mathcal{P}})) \cong E(\mathcal{P})^{\twist}
\end{equation*}
of $\mathscr{   L}_\mathcal{P}$-twisted rigged Hilbert space bundles.
\end{proposition}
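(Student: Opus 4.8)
The plan is to unwind both sides of the claimed isomorphism down to concrete fibre formulas and exhibit an explicit isometric bundle map over $\mathcal{P}$, then check it intertwines the two twisting isomorphisms over $\mathcal{P}^{[2]}$.

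First I would recall from \cref{prop:lifting} precisely how the trivialization $\mathcal{T}$ and the lift $\widetilde{\mathcal{P}}$ are related: one may take $\mathcal{T} = \widetilde{\mathcal{P}}$ as a space, with the $\U(1)$-action induced along $\U(1) \subset \widetilde{\mathcal{G}}$, with the bundle projection $\phi:\widetilde{\mathcal{P}} \to \mathcal{P}$, and with $\tau:\mathcal{Q}_{\C}\otimes\pr_2^{*}\mathcal{T} \to \pr_1^{*}\mathcal{T}$ encoding the residual $\widetilde{\mathcal{G}}$-action via $t\cdot\tilde g = \tau(\tilde g^{-1}\otimes t)$. Then $\twist_{\mathcal{T}}(E(\widetilde{\mathcal{P}})) = \mathcal{T}_{\C}\otimes \pi^{*}E(\widetilde{\mathcal{P}})$ as a rigged Hilbert space bundle over $\mathcal{P}$, whose fibre over $p\in\mathcal{P}$ is $(\widetilde{\mathcal{P}}_p)_{\C}\otimes E(\widetilde{\mathcal{P}})_{\pi(p)}$, while $E(\mathcal{P})^{\twist}_p = E$. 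So the task is to produce, fibrewise and smoothly in $p$, an isometric isomorphism $(\widetilde{\mathcal{P}}_p)_{\C}\otimes (\widetilde{\mathcal{P}} \times E)/\widetilde{\mathcal{G}}\big|_{\pi(p)} \to E$.

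The natural candidate: send $[\tilde p, z]\otimes[\tilde p', v]$, where $\tilde p,\tilde p'\in\widetilde{\mathcal{P}}$ with $\phi(\tilde p)=p$ and $\pi(\phi(\tilde p'))=\pi(p)$, to $z\,(\delta(\tilde p,\tilde p')\cdot v)$ — or more symmetrically, normalize using $\tilde p$ itself, i.e. first move $[\tilde p',v]$ to the representative over $\tilde p$ and read off the $E$-component. Concretely define $f([\tilde p,z]\otimes[\tilde p',v]) = z\,\tilde g\cdot v$ where $\tilde g\in\widetilde{\mathcal{G}}$ is the unique element with $\tilde p' \tilde g = \tilde p$ (this uses that $\widetilde{\mathcal{P}}$ is a principal $\widetilde{\mathcal{G}}$-bundle). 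I would check: (i) well-definedness under the $\U(1)$-action on the first factor and the $\widetilde{\mathcal{G}}$-action on the second — here condition (a) of \cref{def:repofce} ($\U(1)$ acts by scalars) is exactly what makes the two scalar ambiguities cancel; (ii) well-definedness under changing $\tilde p$, which follows from the $\widetilde{\mathcal{G}}$-equivariance built into $\mathcal{T}_{\C}$'s structure; (iii) smoothness and isometry — isometry because the representation is unitary and $z$ gets $|z|$; smoothness because $q:\widetilde{\mathcal{G}}\to\mathcal{G}$ and the principal bundle projections admit smooth local sections, exactly as in the proof of \cref{lem:universal}; (iv) invertibility, with inverse $v\mapsto [\tilde p,1]\otimes[\tilde p,v]$ for any local choice of $\tilde p$ over $p$.

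Finally I would verify that $f$ is a morphism of $\mathscr{L}_{\mathcal{P}}$-twisted bundles, i.e. that over $\mathcal{P}^{[2]}$ the square relating $\kappa_{\mathcal{T}} = \tau_{\C}\otimes 1$ (on the domain) and $\nu$ (on the codomain, from \cref{lem:universal}, $\nu([\tilde g,z],v)=z(\tilde g v)$) commutes: $\pr_1^{*}f\circ (\mathrm{id}_{\mathcal{Q}_{\C}}\otimes \pr_2^{*}f)^{-1}$... more precisely $\nu\circ(\mathrm{id}\otimes \pr_2^{*}f) = \pr_1^{*}f\circ\kappa_{\mathcal{T}}$. This reduces to a bookkeeping identity relating $\tau$'s definition of the residual $\widetilde{\mathcal{G}}$-action to $\delta:\mathcal{P}^{[2]}\to\mathcal{G}$ and the multiplication in $\widetilde{\mathcal{G}}$, together with condition (b) of \cref{def:repofce} once a rigged module action is also present — though for the bare Hilbert-space statement here only the bare compatibility is needed. \textbf{The main obstacle} I anticipate is purely organizational: keeping the three different scalar/group ambiguities (the $\U(1)$ in $\mathcal{T}_{\C}$, the $\widetilde{\mathcal{G}}$ in $E(\widetilde{\mathcal{P}})$, and the $\widetilde{\mathcal{G}}$-translation hidden in $\delta$) consistently normalized so that well-definedness and the cocycle/compatibility square both come out right — there is genuine risk of an inverse or a $q(\tilde g)$ versus $\tilde g$ slipping in; the cleanest route is to fix once and for all the identifications from \cref{prop:lifting} and never deviate. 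Everything else is routine given \cref{lem:universal,lem:twisting,prop:lifting}.
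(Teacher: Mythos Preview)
Your approach is essentially the paper's: write down an explicit fibrewise formula for the map $\mathcal T_{\C}\otimes\pi^{*}E(\widetilde{\mathcal P})\to\mathcal P\times E$ and then verify by direct calculation that it intertwines $\kappa_{\mathcal T}=\tau_{\C}\otimes 1$ with $\nu$. The one simplification the paper makes is to observe that every element of the domain can be written with the \emph{same} $t$ in both tensor factors, so the formula collapses to $\phi([t,z]\otimes[t,v])=(p,zv)$ with no auxiliary group element $\tilde g$ at all; this sidesteps exactly the inverse bookkeeping you flagged as the main obstacle --- and indeed your two candidate formulas already disagree by an inverse, since $\tilde p'\tilde g=\tilde p$ gives $\tilde g=\delta(\tilde p,\tilde p')^{-1}$ rather than $\delta(\tilde p,\tilde p')$. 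Your first formula $z\,\delta(\tilde p,\tilde p')\cdot v$ is the correct one and reduces to the paper's when $\tilde p'=\tilde p$.
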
  

\begin{proof}
Employing the relation between $\mathcal{T}$ and $\widetilde{\mathcal{P}}$ we have have $E(\widetilde{\mathcal{P}})=(\mathcal{T} \times E)/\widetilde{\mathcal{G}}$ with $\tilde g \in \widetilde{\mathcal{G}}$ acting on $\mathcal{T} \times E$ by $(t,v) \mapsto (\tau(\tilde g\otimes t),\tilde g v)$, and the bundle projection is induced by $\mathcal{T} \to \mathcal{P} \to \mathcal{M}$. We claim that there exists a unique isometric isomorphism 
\begin{equation*}
E(\widetilde{\mathcal{P}})_{\mathcal{T}}=\mathcal{T}_{\C} \otimes \pi^{*}\mathcal{H} \stackrel{\phi}{\to} \mathcal{P}\times E=E(\mathcal{P})^{\twist}
\end{equation*}
of rigged Hilbert space bundles over $\mathcal{P}$ such that 
\begin{equation*}
\phi([t,z] \otimes [t,v]) = (p,zv)\text{,} 
\end{equation*}
where $t\mapsto p$ under the projection $\mathcal{T} \to \mathcal{P}$. Indeed, it is clear that every element of $\mathcal{T}_{\C} \otimes \pi^{*}E(\widetilde{\mathcal{P}})$ can be written as on the left hand side; this shows uniqueness. For existence, if $[t',z'] \otimes [t',v']$ represents the same element, then $t$ and $t'$ project to the same $p\in \mathcal{P}$, and thus $t'=tu$ for $u\in \U(1)$. Then, $[t',z']=[t,z]$ implies $z=uz'$ and $[t',v']=[t,v]$ implies $v'=uv$.
Thus, $zv=z'v'$, which shows the  existence of $\phi$. It is clear that $\phi$ is isometric and smooth, and a smooth isometric inverse bundle morphism can be constructed using the fact that $\mathcal{T}\to \mathcal{P}$ has smooth local sections.  
 It remains to prove that $\phi$ is compatible with the bundle isomorphisms $\kappa_{\mathcal{T}}$ of $\twist_{\mathcal{T}}(E(\widetilde{\mathcal{P}}))$ and $\nu$ of $E(\mathcal{P})^{\twist}$. The following calculation only using the definitions shows this:
\begin{align*}
\nu([\hat g,\hat z] \otimes \phi([t,z] \otimes [t,v])) &= \nu([\hat g,\hat z] \otimes  (p_2,zv))
\\&= (p_1,z\hat z\hat\rho(\hat g,v)) 
\\&=\phi([\tau(\hat g,t),z\hat z] \otimes [\tau(\hat g,t),\hat\rho(\hat g,v)] )
\\&=\phi(\kappa_{\mathcal{T}}([\hat g,\hat z] \otimes[t,z] \otimes [t,v] ))\text{.}\qedhere
\end{align*}
\end{proof}

Next we generalize the previous results to twisted rigged Hilbert space bundles that are  rigged module bundles for a rigged \cstar-algebra bundle $\mathcal{A}$ over $\mathcal{M}$, as introduced in \cref{def:repcstarbundle}.

\begin{definition}
\label{def:trmb}
For a bundle gerbe  $\mathscr{G}$ over $\mathcal{M}$ with surjective submersion $\pi:\mathcal{Y} \to \mathcal{M}$,  a \emph{$\mathscr{G}$-twisted rigged $\mathcal{A}$-module bundle} is a $\mathscr{G}$-twisted rigged Hilbert space bundle $\mathcal{K}$ that  is at the same time a rigged $\pi^{*}\mathcal{A}$-module bundle, and whose isomorphism $\kappa$ is $\mathcal{A}$-linear in the sense that $\kappa(\ell,a v)=a \kappa(\ell,v)$ for all $a\in \mathcal{A}_x$, $e\in \mathcal{K}_{y'}$ and $\ell \in \mathcal{Q}_{y,y'}$ with $\pi(y)=\pi(y')=x$. \end{definition}

In \cref{sec:HilbertBundle:2} we wrote $\VBdl^\mathcal{A}(\mathcal{M})$ for the category of rigged $\mathcal{A}$-module bundles over $\mathcal{M}$. Now we denote by $\Mod{\mathscr{G}}^{\mathcal{A}}$ the category of $\mathscr{G}$-twisted rigged $\mathcal{A}$-module bundles. 
The twisting functor of \cref{eq:twisting} extends to a functor
\begin{equation}
\label{eq:Atwisting}
\twist_{\mathcal{T}}:\VBdl^\mathcal{A}(\mathcal{M}) \to \Mod{\mathscr{G}}^{\mathcal{A}}\text{,}
\end{equation}
and \cref{lem:twisting} generalizes in a straightforward way to the following statement.

\begin{proposition}
\label{lem:twistingmod}
The functor $\twist_{\mathcal{T}}$ establishes an equivalence between the category $\VBdl^{\mathcal{A}}(\mathcal{M})$ of rigged $\mathcal{A}$-module bundles over $\mathcal{M}$ and the category $\Mod{\mathscr{G}}^{\mathcal{A}}$ of  $\mathscr{G}$-twisted rigged $\mathcal{A}$-module bundles.
\end{proposition}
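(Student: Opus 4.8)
The plan is to extend the proof of \cref{lem:twisting} by carrying along the extra $\mathcal{A}$-module structure at every step, checking that all the isometric isomorphisms involved are in addition $\mathcal{A}$-linear. First I would observe that the twisting functor in \cref{eq:Atwisting} is well-defined: given a rigged $\mathcal{A}$-module bundle $\mathcal{K}$ over $\mathcal{M}$ with representation $\rho$, the twisted bundle $\mathcal{K}_{\mathcal{T}}=\mathcal{T}_{\C}\otimes\pi^{*}\mathcal{K}$ becomes a rigged $\pi^{*}\mathcal{A}$-module bundle via $\rho_{\mathcal{T}}(a,\ell\otimes v)\defeq \ell\otimes\rho(a,v)$ (the line-bundle factor is untouched), and the isomorphism $\kappa_{\mathcal{T}}=\tau_{\C}\otimes 1$ is visibly $\mathcal{A}$-linear in the sense of \cref{def:trmb}, since the $\mathcal{A}$-action only affects the $\pi^{*}\mathcal{K}$ factor on which $\tau_{\C}\otimes 1$ acts trivially. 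On morphisms, a morphism of rigged $\mathcal{A}$-module bundles $\mathcal{K}\to\mathcal{K}'$ induces $1\otimes\phi$ on the twisted bundles, which again commutes with the $\mathcal{A}$-action.

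Next I would verify \emph{full faithfulness}. Here the key point is that the functor $\mathcal{K}\mapsto\mathcal{K}_{\mathcal{T}}$ from \cref{lem:twisting} is already known to be an equivalence of categories $\VBdl(\mathcal{M})\to\Mod{\mathscr{G}}$; so it suffices to check that a morphism $\mathcal{K}_{\mathcal{T}}\to\mathcal{K}'_{\mathcal{T}}$ of $\mathscr{G}$-twisted bundles is $\pi^{*}\mathcal{A}$-linear if and only if the corresponding morphism $\mathcal{K}\to\mathcal{K}'$ of untwisted bundles is $\mathcal{A}$-linear. This follows by tensoring with $\mathcal{T}_{\C}$ and $\mathcal{T}_{\C}^{*}$ and using that these operations do not interact with the $\mathcal{A}$-action; the inverse correspondence uses the canonical pairing $\mathcal{T}_{\C}\otimes\mathcal{T}_{\C}^{*}\cong\underline{\C}$, which is an $\mathcal{A}$-linear isomorphism because $\mathcal{A}$ acts trivially on the line bundle factors.

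The main obstacle, and the part requiring genuine care, is \emph{essential surjectivity}: given a $\mathscr{G}$-twisted rigged $\mathcal{A}$-module bundle $\mathcal{K}'$ with $\mathcal{A}$-action $\rho'$ and isomorphism $\kappa'$, I need to produce a rigged $\mathcal{A}$-module bundle $\mathcal{K}$ over $\mathcal{M}$ with $\mathcal{K}_{\mathcal{T}}\cong\mathcal{K}'$ as twisted $\mathcal{A}$-module bundles. The construction of \cref{lem:twisting} forms $\mathcal{T}_{\C}^{*}\otimes\mathcal{K}'$ over $\mathcal{Y}$, equips it with the descent isomorphism \cref{eq:descent}, and appeals to the stack property of $\VBdl$ to descend it to a bundle $\mathcal{K}$ over $\mathcal{M}$. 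I would upgrade this by noting that $\mathcal{T}_{\C}^{*}\otimes\mathcal{K}'$ carries the $\pi^{*}\mathcal{A}$-module structure $a\cdot(\ell^{*}\otimes v)\defeq \ell^{*}\otimes\rho'(a,v)$, that the descent isomorphism \cref{eq:descent} is $\mathcal{A}$-linear (since $\tau_{\C}^{*-1}$ acts only on line-bundle factors and $\kappa'$ is $\mathcal{A}$-linear by hypothesis), and that the stack $\VBdl^{\mathcal{A}}$ — i.e.\ rigged $\mathcal{A}$-module bundles also form a sheaf of categories along surjective submersions, which is essentially the stack property of $\VBdl$ from \cref{re:presheafstack} together with the fact that the module-structure maps glue — produces a descended rigged $\mathcal{A}$-module bundle $\mathcal{K}$ over $\mathcal{M}$ whose pullback is $\mathcal{A}$-linearly isomorphic to $\mathcal{T}_{\C}^{*}\otimes\mathcal{K}'$. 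Tensoring with $\mathcal{T}_{\C}$ and using the $\mathcal{A}$-linear pairing then yields an isomorphism $\mathcal{K}_{\mathcal{T}}\to\mathcal{K}'$ that respects both the twist isomorphisms and the $\mathcal{A}$-actions, exactly as in the proof of \cref{lem:twisting} but now in the category $\Mod{\mathscr{G}}^{\mathcal{A}}$. The only thing that needs a remark is that $\VBdl^{\mathcal{A}}$ is indeed a stack; this is a routine consequence of \cref{re:presheafstack} and \cref{re:repcstarbundle}, since the local triviality condition of \cref{def:repcstarbundle} is preserved under gluing.
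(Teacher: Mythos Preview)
Your proposal is correct and follows exactly the approach the paper intends: the paper does not give a separate proof but simply states that \cref{lem:twisting} ``generalizes in a straightforward way,'' and your argument is precisely the straightforward generalization, carrying the $\mathcal{A}$-module structure through the descent argument of \cref{lem:twisting} and noting that $\VBdl^{\mathcal{A}}$ is again a stack. Your write-up is in fact more detailed than what the paper provides.
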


In order to generalize the universal twisted rigged Hilbert space bundle of \cref{def:canonicaltvb,lem:reptwistedvb} to rigged module bundles we 
now consider a smooth representation of a central extension $\U(1) \to \widetilde{\mathcal{G}} \stackrel{q}{\to} \mathcal{G}$ on a rigged module $E$ for a rigged \cstar-algebra $A$, in the sense of \cref{def:repofce}. If then $\mathcal{P}$ is a Fr\'echet principal $\mathcal{G}$-bundle over $\mathcal{M}$, we have the associated rigged \cstar -algebra bundle $\mathcal{A} \defeq (\mathcal{P} \times A)/\mathcal{G}$ and the universal $\mathscr{L}_\mathcal{P}$-twisted rigged Hilbert space bundle $E(\mathcal{P})^{\twist}$ with typical fibre $E$.

\begin{lemma}
\label{lem:twistedmodulebundle}
The universal $\mathscr{L}_{\mathcal{P}}$-twisted rigged Hilbert space bundle $E(\mathcal{P})^{\twist}$ is an $\mathscr{L}_{\mathcal{P}}$-twisted rigged $\mathcal{A}$-module bundle. 
\end{lemma}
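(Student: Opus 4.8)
The plan is to produce the $\pi^{*}\mathcal{A}$-module structure on $E(\mathcal{P})^{\twist}=\mathcal{P}\times E$ from the given data, and then check the two compatibility requirements of \cref{def:trmb}: that the module structure is a rigged module bundle structure as in \cref{def:repcstarbundle}, and that the twisting isomorphism $\nu$ of \cref{lem:universal} is $\mathcal{A}$-linear. The surjective submersion of $\mathscr{L}_{\mathcal{P}}$ is the bundle projection $\mathcal{P}\to \mathcal{M}$ itself, so $\pi^{*}\mathcal{A}$ is canonically $(\mathcal{P}\times A)/\mathcal{G}$ pulled back along $\mathcal{P}\to\mathcal{M}$; concretely a point of $(\pi^{*}\mathcal{A})_{p}$ over $p\in\mathcal{P}$ is an equivalence class $[p',a]$ with $p'$ in the same $\mathcal{M}$-fibre as $p$, but there is a distinguished representative using $p$ itself, so $(\pi^{*}\mathcal{A})_{p}\cong A$ canonically. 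Using this identification I would define the action map
\begin{equation*}
\rho:\pi^{*}\mathcal{A}\times_{\mathcal{P}}E(\mathcal{P})^{\twist}\to E(\mathcal{P})^{\twist}\text{,}\qquad \rho(([p,a]),(p,v))\defeq (p,a v)\text{,}
\end{equation*}
where $a v$ is the given $A$-action on $E$. One checks this is well-defined (the only ambiguity is replacing $p$ by $pg$, and then $a$ and $v$ transform by $g^{-1}$, so $av$ transforms to $g^{-1}(av)=(g^{-1}a)(g^{-1}v)$ by the intertwining relation $\mc{G}$ acts on $A$ by \cstar-morphisms compatibly — but since here the submersion is $\mathcal{P}$ itself we do not even need to quotient, the representative $p$ is literally fixed, so well-definedness is automatic).

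Next I would verify the local triviality condition of \cref{def:repcstarbundle}. Over an open $U\subset\mathcal{M}$ trivializing $\mathcal{P}$ with section $\sigma:U\to\mathcal{P}$, the bundle $E(\mathcal{P})^{\twist}$ restricted to $\pi^{-1}(U)\cong U\times\mathcal{G}$ is just $(U\times\mathcal{G})\times E$, and $\pi^{*}\mathcal{A}$ there is $(U\times\mathcal{G})\times A$; in these trivializations $\rho$ becomes the fibrewise constant map $\rho_{0}:A\times E\to E$, so the diagram in \cref{def:repcstarbundle} commutes on the nose. Smoothness of $\rho$ follows since it is locally $\mathrm{id}\times\rho_{0}$ and $\rho_{0}$ is smooth by hypothesis; the fibrewise bound and adjointness in \cref{eq:BoundednessCriteria} hold fibrewise because $E$ is a rigged $A$-module and the fibre norm on $\pi^{*}\mathcal{A}$ agrees with $\|\cdot\|_{A}$ under the local trivialization. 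Hence $E(\mathcal{P})^{\twist}$ is a rigged $\pi^{*}\mathcal{A}$-module bundle.

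Finally I would check $\mathcal{A}$-linearity of $\nu$. By \cref{lem:universal}, $\nu([\tilde g,z]\otimes v)=z(\tilde g v)$ over $(p_{1},p_{2})\in\mathcal{P}^{[2]}$, where $\tilde g$ projects to $\delta(p_{1},p_{2})\in\mathcal{G}$. On the $\mathcal{A}$-side, an element of $\mathcal{Q}_{p_{1},p_{2}}$ is also (a class of) a $\tilde g$, and one must compare $\nu(\ell,a v)$ with $a\,\nu(\ell,v)$, where the $a$ acting on the $p_{2}$-fibre and the $a$ acting on the $p_{1}$-fibre are identified via the descent/pullback structure — and this identification is precisely transport by $\delta(p_{1},p_{2})$, i.e.\ $a\mapsto q(\tilde g)\cdot a$. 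So the desired identity $\nu([\tilde g,z]\otimes (q(\tilde g)^{-1}\cdot a) v)=a\cdot\nu([\tilde g,z]\otimes v)$, i.e.\ $z\,\tilde g\big((q(\tilde g)^{-1} a)v\big)=a\,z(\tilde g v)$, is exactly condition (b) of \cref{def:repofce}: $\tilde g\cdot(a'\cdot v)=(q(\tilde g)\cdot a')\cdot(\tilde g\cdot v)$ with $a'=q(\tilde g)^{-1}a$. This is the main conceptual point of the proof — making sure the two sides of the $\mathcal{A}$-linearity condition are set up with the correct identification of fibres of $\pi^{*}\mathcal{A}$ over $p_{1}$ and $p_{2}$ — but once that bookkeeping is in place it reduces to a one-line application of \cref{def:repofce}(b). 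Condition (a) of \cref{def:repofce}, that $\U(1)$ acts by scalars, is what made $\nu$ well-defined in the first place and plays no further role here. I expect the only real obstacle is the index bookkeeping in this last step; everything else is routine and parallels \cref{lem:reptwistedvb}.
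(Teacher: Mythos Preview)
Your proposal is correct and follows essentially the same approach as the paper. The paper's proof is terser: it observes the canonical global trivialization $\pi^{*}\mathcal{A}\cong\mathcal{P}\times A$ (your identification $(\pi^{*}\mathcal{A})_{p}\cong A$ via the distinguished representative $[p,a]$), notes that the module structure is then the trivially constant one with fibre $\rho_{0}$, and reduces the $\mathcal{A}$-linearity of $\nu$ to condition~(b) of \cref{def:repofce} exactly as you do; your version simply spells out the local-triviality verification and the fibre bookkeeping in more detail.
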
 

\begin{proof}
We have a canonical global trivialization $\pi^{*}\mathcal{A}\to  \mathcal{P} \times A$ over $\mathcal{P}$, given in the fibre over $p\in \mathcal{P}$ by  $[p,a] \mapsto (p,a)$. It is clear that  $E(\mathcal{P})^{\twist}=\mathcal{P} \times E$ is a rigged module for  $\mathcal{P} \times A$, with typical fibre the rigged $A$-module $E$. Then, it is straightforward to verify using the relation between the involved actions of \cref{def:repofce} that the bundle isomorphism $\nu$ of \cref{lem:universal} is $\mathcal{A}$-linear; this shows the claim.
\end{proof}

In order to generalize \cref{lem:reptwistedvb}, we assume that  $\widetilde{\mathcal{P}}$ is a lift of the structure group of $\mathcal{P}$ along the central extension $\U(1) \to \widetilde{\mathcal{G}} \to \mathcal{G}$; then $(\widetilde{\mc{P}} \times E)/\widetilde{\mc{G}}$ is a rigged $\mathcal{A}$-module bundle by \cref{prop:InducedBundleRep}.

\begin{theorem}
\label{lem:reptwistedmb}
Suppose $\mathcal{T}$ is a trivialization of $\mathscr{L}_\mathcal{P}$, corresponding to a lift $\widetilde{\mathcal{P}}$ under the isomorphism of \cref{prop:lifting}. 
Let $E(\widetilde{\mathcal{P}}) \defeq (\widetilde {\mathcal{P}} \times E)/\widetilde{\mathcal{G}}$ be the associated rigged $\mathcal{A}$-module bundle. 
Then, the isometric isomorphism $\twist_{\mathcal{T}}(E(\widetilde{\mathcal{P}})) \cong E(\mathcal{P})^{\twist}$ of \cref{lem:reptwistedvb} is an isometric isomorphism of $\mathscr{L}_\mathcal{P}$-twisted rigged $\mathcal{A}$-module bundles.
\end{theorem}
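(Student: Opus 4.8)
The plan is to build on \cref{lem:reptwistedvb}, which already provides the isometric isomorphism $\phi:\twist_{\mathcal{T}}(E(\widetilde{\mathcal{P}}))\to E(\mathcal{P})^{\twist}$ of $\mathscr{L}_\mathcal{P}$-twisted rigged Hilbert space bundles. Since \cref{lem:twistingmod} tells us that the twisting functor $\twist_{\mathcal{T}}$ lands in $\Mod{\mathscr{L}_\mathcal{P}}^{\mathcal{A}}$, both sides are already $\mathscr{L}_\mathcal{P}$-twisted rigged $\mathcal{A}$-module bundles (the left one by applying $\twist_{\mathcal{T}}$ to the $\mathcal{A}$-module bundle structure of \cref{prop:InducedBundleRep}, the right one by \cref{lem:twistedmodulebundle}). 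So nothing new needs to be constructed: the only thing left to verify is that the already-constructed map $\phi$ intertwines the two $\pi^{*}\mathcal{A}$-actions. First I would recall the explicit description of $\phi$ from the proof of \cref{lem:reptwistedvb}, namely $\phi([t,z]\otimes[t,v])=(p,zv)$ where $t\mapsto p$ under $\mathcal{T}\to\mathcal{P}$, and the fact that $E(\widetilde{\mathcal{P}})=(\mathcal{T}\times E)/\widetilde{\mathcal{G}}$ with $\tilde g$ acting by $(t,v)\mapsto(\tau(\tilde g\otimes t),\tilde g v)$.

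Next I would spell out the two $\mathcal{A}$-module structures on the bundle $\mathcal{P}\times E = E(\mathcal{P})^{\twist}$ over $\mathcal{P}$. On the target side, \cref{lem:twistedmodulebundle} uses the canonical trivialization $\pi^{*}\mathcal{A}\cong\mathcal{P}\times A$, $[p,a]\mapsto(p,a)$, so the action is $(p,[p,a])\cdot(p,v)=(p,a\cdot v)$ with $a\cdot v$ the $A$-module structure $\rho_0$ on $E$. On the source side, the $\pi^{*}\mathcal{A}$-module structure on $\twist_{\mathcal{T}}(E(\widetilde{\mathcal{P}}))=\mathcal{T}_{\C}\otimes\pi^{*}E(\widetilde{\mathcal{P}})$ comes from the module structure $\rho$ of \cref{prop:InducedBundleRep} on $E(\widetilde{\mathcal{P}})=(\widetilde{\mathcal{P}}\times E)/\widetilde{\mathcal{G}}$, which is characterized by $\rho([\phi(\tilde p),a],[\tilde p,v])=[\tilde p,av]$; transported through the identification $\widetilde{\mathcal{P}}=\mathcal{T}$ this reads $[p,a]\cdot[t,v]=[t,av]$ for $t\mapsto p$, and then the tensor factor $\mathcal{T}_{\C}$ is carried along untouched.

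Then the verification is a short direct computation: for $[p,a]\in\pi^{*}\mathcal{A}$ and $[t,z]\otimes[t,v]\in\mathcal{T}_{\C}\otimes\pi^{*}E(\widetilde{\mathcal{P}})$ with $t\mapsto p$,
\begin{align*}
\phi\bigl([p,a]\cdot([t,z]\otimes[t,v])\bigr)&=\phi\bigl([t,z]\otimes[t,av]\bigr)=(p,z(av))\\&=(p,a\cdot(zv))=(p,[p,a])\cdot(p,zv)=(p,[p,a])\cdot\phi([t,z]\otimes[t,v])\text{,}
\end{align*}
using bilinearity of $\rho_0$ in the middle step. This shows $\phi$ is $\pi^{*}\mathcal{A}$-linear, and combined with the fact (from \cref{lem:reptwistedvb}) that it is already an isometric isomorphism of $\mathscr{L}_\mathcal{P}$-twisted rigged Hilbert space bundles, we conclude it is an isometric isomorphism in $\Mod{\mathscr{L}_\mathcal{P}}^{\mathcal{A}}$.

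I do not expect any genuine obstacle here; this is a bookkeeping lemma whose content is entirely contained in \cref{lem:reptwistedvb,lem:twistedmodulebundle,prop:InducedBundleRep}. The one point requiring a little care is making sure the two $\mathcal{A}$-actions are transported consistently through the two different identifications in play — the isomorphism $\Triv(\mathscr{L}_\mathcal{P})\cong\Lift(\mathcal{P})$ of \cref{prop:lifting} identifying $\mathcal{T}$ with $\widetilde{\mathcal{P}}$, and the canonical trivialization $\pi^{*}\mathcal{A}\cong\mathcal{P}\times A$ — so that the defining formula $\rho([\phi(\tilde p),a],[\tilde p,v])=[\tilde p,av]$ of \cref{prop:InducedBundleRep} matches up with the $(p,a)\cdot(p,v)=(p,av)$ used in \cref{lem:twistedmodulebundle}; once both are written out against the same representative $t\in\mathcal{T}$ projecting to $p\in\mathcal{P}$, they visibly agree. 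It is also worth remarking explicitly that condition (b) of \cref{def:repofce} is what guarantees $\rho$ in \cref{prop:InducedBundleRep} is well defined in the first place, but we need not re-prove that here.
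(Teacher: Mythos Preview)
Your proposal is correct and takes essentially the same approach as the paper: the paper's proof simply states that one need only check that the isomorphism $\phi$ from \cref{lem:reptwistedvb} is $\pi^{*}\mathcal{A}$-linear, and calls this a straightforward calculation. Your explicit computation is exactly that calculation, and your careful remarks about matching the two identifications (of $\mathcal{T}$ with $\widetilde{\mathcal{P}}$ and of $\pi^{*}\mathcal{A}$ with $\mathcal{P}\times A$) are the right points of care.
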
  

\begin{proof}
We only need to check that the isomorphism $\phi$ described in \cref{lem:reptwistedvb} is $\pi^{*}\mathcal{A}$-linear, which is a straightforward calculation.
\end{proof}

In the remainder of this section, we discuss isomorphisms between bundle gerbes and the induced relation between twisted rigged Hilbert space bundles. Such relations will be constructed in \cref{sec:LagrangianTwistedSpinorBundle,sec:lagrangiangerbe}.
For the purpose of this article, we can restrict our attention to the simplest kind of isomorphisms between bundle gerbes, called \emph{refinements}. 
\begin{definition}
If $\mathscr{G}=(\mathcal{Y},\mathcal{Q},\mu)$ and $\mathscr{G}'=(\mathcal{Y}',\mathcal{Q}',\mu')$ are bundle gerbes over a Fr\'echet manifold $\mathcal{M}$, then a \emph{refinement} $\mathscr{R}:\mathscr{G} \to \mathscr{G}'$ is a pair, $(r,\rho)$, consisting of a smooth map $r:\mathcal{Y} \to \mathcal{Y}'$, commuting with the projections to $\mathcal{M}$, and a smooth bundle morphism $\rho:\mathcal{Q} \to \mathcal{Q}'$ covering the induced map $r^{[2]}:\mathcal{Y}^{[2]} \to \mathcal{Y}'^{[2]}$, such that the diagram
\begin{equation*}
\xymatrix{\pr_{12}^{*}\mathcal{Q} \otimes \pr_{23}^{*}\mathcal{Q} \ar[d]_{\pr_{12}^{*}\rho \otimes \pr_{23}^{*}\rho} \ar[r]^-{\mu} & \pr_{13}^{*}\mathcal{Q} \ar[d]^{\pr_{13}^{*}\rho} \\ \pr_{12}^{*}\mathcal{Q}' \otimes \pr_{23}^{*}\mathcal{Q}' \ar[r]_-{\mu'} & \pr_{13}^{*}\mathcal{Q}'}
\end{equation*}
commutes.
\end{definition}

Any refinement between bundle gerbes induces an isomorphism in the usual bicategory of bundle gerbes (in fact, that bicategory can be obtained by formally inverting refinements). 
It is straightforward to check that trivializations can be pulled back along refinements, resulting into a functor
\begin{equation*}
\mathscr{R}^{*}:\Triv(\mathscr{G}') \to \Triv(\mathscr{G})\quad\text{;}\quad \mathcal{T} \mapsto r^{*}\mathcal{T}\text{.} 
\end{equation*}
One can show that this functor is an equivalence of categories and it has in fact a canonical inverse functor, though this will not be important here. 
Similarly, a refinement induces a functor between twisted rigged module bundles for a rigged \cstar-algebra bundle $\mathcal{A}$ over $\mathcal{M}$,
\begin{equation*}
\mathscr{R}^{*}: \mathscr{G}'\text{-}\VBdl^{\mathcal{A}} \to \mathscr{G}\text{-}\VBdl^{\mathcal{A}}\quad\text{;}\quad \mathcal{K} \mapsto r^{*}\mathcal{K}\text{.}
\end{equation*}
This functor is again an equivalence of categories. It allows us to compare twisted rigged module bundles for isomorphic bundle gerbes.

We have also been interested in twisting $\mathcal{A}$-module bundles (or rather, in untwisting twisted ones) using the functor $\twist_{\mathcal{T}}$ of \cref{eq:Atwisting}.
We have the following compatibility result between twisting and refining.

\begin{lemma}
\label{lem:refinementtriv}
Let $\mathscr{R}:\mathscr{G} \to \mathscr{G}'$ be a refinement between bundle gerbes over $\mathcal{M}$, and let $\mathcal{T}$ be a trivialization of $\mathscr{G}'$. Let $\mathcal{A}$ be a rigged \cstar-algebra bundle over $\mathcal{M}$. Then, the  diagram
\begin{equation*}
\xymatrix@R=1.3em@C=4em{& \Mod{\mathscr{G}'}^{\mathcal{A}} \ar[dd]^{\mathscr{R}^{*}}  \\\VBdl^\mathcal{A}(\mathcal{M}) \ar[ur]^-{\twist_{\mathcal{T}}}\ar[dr]_-{\twist_{\mathscr{R}^{*}\mathcal{T}}} & \\ & \Mod{\mathscr{G}}^{\mathcal{A}}}
\end{equation*}
 of functors commutes up to a canonical natural isometric isomorphism.
\end{lemma}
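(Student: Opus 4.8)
The plan is to construct the required natural isometric isomorphism out of canonical identifications of pullback bundles, and then to check in turn that it is $\mathcal{A}$-linear, compatible with the twist isomorphisms, and natural in $\mathcal{K}$; each of these will be a bookkeeping matter rather than a genuine computation.

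Write $\pi:\mathcal{Y}\to\mathcal{M}$ and $\pi':\mathcal{Y}'\to\mathcal{M}$ for the surjective submersions of $\mathscr{G}$ and $\mathscr{G}'$, and let $\mathscr{R}=(r,\rho)$. By the way refinements act on trivializations, $\mathscr{R}^{*}\mathcal{T}=r^{*}\mathcal{T}$ as a Fr\'echet principal $\U(1)$-bundle over $\mathcal{Y}$, with structure morphism $\tau^{r}$ obtained from $\tau$ by precomposing with $\rho$ and pulling back along $r^{[2]}$; in particular $\tau^{r}_{\C}=(r^{[2]})^{*}\tau_{\C}\circ(\rho_{\C}\otimes\mathrm{id})$. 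For $\mathcal{K}\in\VBdl^{\mathcal{A}}(\mathcal{M})$ we then have, straight from the definition of $\twist$ in \cref{eq:Atwisting},
\[
\mathscr{R}^{*}(\twist_{\mathcal{T}}(\mathcal{K}))=r^{*}(\mathcal{T}_{\C}\otimes\pi'^{*}\mathcal{K})\cong r^{*}(\mathcal{T}_{\C})\otimes r^{*}\pi'^{*}\mathcal{K},\qquad \twist_{\mathscr{R}^{*}\mathcal{T}}(\mathcal{K})=(r^{*}\mathcal{T})_{\C}\otimes\pi^{*}\mathcal{K}.
\]
First I would observe that, since $r$ commutes with the projections to $\mathcal{M}$, we have $\pi'\circ r=\pi$, giving a canonical isometric isomorphism $r^{*}\pi'^{*}\mathcal{K}\cong\pi^{*}\mathcal{K}$ of rigged Hilbert space bundles over $\mathcal{Y}$, and that pullback commutes with the associated hermitian line bundle functor, giving a canonical isometric isomorphism $r^{*}(\mathcal{T}_{\C})\cong(r^{*}\mathcal{T})_{\C}$. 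Tensoring these produces the desired isomorphism $\eta_{\mathcal{K}}\colon\mathscr{R}^{*}(\twist_{\mathcal{T}}(\mathcal{K}))\to\twist_{\mathscr{R}^{*}\mathcal{T}}(\mathcal{K})$, which is isometric because a tensor product of isometric isomorphisms is isometric.

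It then remains to check three things. For $\mathcal{A}$-linearity, on both sides the $\pi^{*}\mathcal{A}$-module structure of \cref{def:repcstarbundle} is carried entirely by the $\mathcal{K}$-tensor factor (with $\mathcal{T}_{\C}$ and $r^{*}(\mathcal{T}_{\C})$ acting trivially), and on that factor $\eta_{\mathcal{K}}$ is the canonical identification induced by $\pi'\circ r=\pi$, which intertwines the module structures since $r^{*}\pi'^{*}\mathcal{A}=\pi^{*}\mathcal{A}$. For compatibility with the twist isomorphisms $\kappa$, I would unwind both sides: by the definition of $\mathscr{R}^{*}$ on twisted bundles, the twist datum of $\mathscr{R}^{*}(\twist_{\mathcal{T}}(\mathcal{K}))$ is $(r^{[2]})^{*}(\tau_{\C}\otimes\mathrm{id})\circ(\rho_{\C}\otimes\mathrm{id})=\bigl((r^{[2]})^{*}\tau_{\C}\circ(\rho_{\C}\otimes\mathrm{id})\bigr)\otimes\mathrm{id}=\tau^{r}_{\C}\otimes\mathrm{id}$, which is precisely the twist datum of $\twist_{\mathscr{R}^{*}\mathcal{T}}(\mathcal{K})$; hence $\eta_{\mathcal{K}}$ is a morphism of $\mathscr{G}$-twisted bundles. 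Finally, naturality in $\mathcal{K}$ holds because for a morphism $f\colon\mathcal{K}_1\to\mathcal{K}_2$ in $\VBdl^{\mathcal{A}}(\mathcal{M})$ both $\mathscr{R}^{*}(\twist_{\mathcal{T}}(f))$ and $\twist_{\mathscr{R}^{*}\mathcal{T}}(f)$ are $\mathrm{id}$ tensored with the pullback of $f$, and these correspond under $\eta$ because $r^{*}\pi'^{*}f=\pi^{*}f$. I do not expect a real obstacle here: the statement is a coherence assertion, and the only care needed is to keep the canonical identifications straight — in particular, to use the description of $r^{*}\mathcal{T}$ as a trivialization of $\mathscr{G}$ and of $\mathscr{R}^{*}$ as a functor on twisted bundles, so that the twist-compatibility step becomes a tautology rather than a fresh calculation.
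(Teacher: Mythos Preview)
Your proposal is correct and follows essentially the same approach as the paper: identify the underlying bundles $r^{*}(\mathcal{T}_{\C}\otimes\pi'^{*}\mathcal{K})$ and $(r^{*}\mathcal{T})_{\C}\otimes\pi^{*}\mathcal{K}$ via the canonical isomorphisms coming from $\pi'\circ r=\pi$ and compatibility of pullback with associated line bundles, then check $\pi^{*}\mathcal{A}$-linearity and compatibility of the twist data. The paper's proof is in fact less detailed than yours, merely recording the two underlying bundles and declaring the remaining verifications straightforward.
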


\begin{proof}
If $\mathcal{K}$ is a rigged $\mathcal{A}$-module bundle over $\mathcal{M}$, then the underlying rigged Hilbert space bundle over $\mathcal{Y}$ of  $\mathscr{R}^{*}(\twist_{\mathcal{T}}(\mathcal{K}))$ is  $r^{*}\mathcal{K}_{\mathcal{T}}=r^{*}(T_{\C} \otimes \pi'^{*}\mathcal{K})$, while the underlying rigged Hilbert space bundle of $\twist_{\mathscr{R}^{*}\mathcal{T}}(\mathcal{K})$ is $\mathcal{K}_{\mathscr{R}^{*}\mathcal{T}}=r^{*}T_{\C} \otimes \pi^{*}\mathcal{K}$. It is straightforward to show that the canonical isometric isomorphism between these rigged Hilbert space bundles is $\pi^{*}\mathcal{A}$-linear and respects the bundle isomorphisms $(r^{[2]})^{*}\kappa_{\mathcal{T}}$ and $\kappa_{\mathscr{R}^{*}\mathcal{T}}$.  \end{proof}

Refinements may be used to relate lifting gerbes in a common situation (see for example \cref{sec:impLiftingGerbe}).
To start, suppose that we are given a commutative diagram
\begin{equation*}
\xymatrix{\U(1) \ar@{=}[d] \ar[r] & \widetilde{\mathcal{G}} \ar[d]^{\tilde{\phi}} \ar[r] & \mathcal{G} \ar[d]^{\phi} \\ \U(1) \ar[r] & \widetilde{\mathcal{H}} \ar[r] & \mathcal{H}}
\end{equation*}
of Fr\'echet Lie groups and smooth group homomorphisms, whose horizontal sequences are central extensions. We suppose that $\mathcal{E}$ is a Fr\'echet principal  $\mathcal{G}$-bundle over $\mathcal{M}$, and $\mathcal{F}$ is a Fr\'echet principal  $\mathcal{H}$-bundle over $\mathcal{M}$, so that the lifting gerbes $\mathscr{L}_{\mathcal{E}}$ and $\mathscr{L}_{\mathcal{F}}$ are defined. We suppose further that $f: \mathcal{E} \to \mathcal{F}$ is a bundle morphism that is equivariant along $\phi$. Then, the diagram
\begin{equation*}
\xymatrix{\mathcal{E}^{[2]} \ar[d]_{f \times f} \ar[r]^{\delta} & \mathcal{G} \ar[d]^{\phi} \\ \mathcal{F}^{[2]} \ar[r]_{\delta} & \mathcal{H}}
\end{equation*}
is commutative. We may regard $\tilde\phi$ as a morphism of principal $\U(1)$-bundles covering $\phi:\mathcal{G} \to \mathcal{H}$, and hence obtain via pullback another another morphism $\rho\defeq \delta^{*}\tilde\phi: \delta^{*}\widetilde{\mathcal{G}}\to \delta^{*}\widetilde{\mathcal{H}}$ covering $f^{[2]}$. The fact that $\tilde\phi$ is also a group homomorphism implies, by construction of the lifting bundle gerbes, the following.

\begin{lemma}
\label{lem:refinement}
For every bundle morphism $f:\mathcal{E} \to \mathcal{F}$ that is equivariant along $\phi$, the pair 
$\mathscr{R}_f\defeq(f,\rho)$ is a refinement $\mathscr{R}_f:\mathscr{L}_{\mathcal{E}} \to \mathscr{L}_{\mathcal{F}}$. In particular, the two lifting problems are equivalent.
\end{lemma}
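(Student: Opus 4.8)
The plan is to check directly that $\mathscr{R}_f=(f,\rho)$ satisfies the two requirements in the definition of a refinement $\mathscr{L}_{\mathcal{E}}\to\mathscr{L}_{\mathcal{F}}$, and then to read off the equivalence of lifting problems from \cref{prop:lifting}.

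First I would observe that, being a bundle morphism of principal bundles over $\mathcal{M}$, the map $f$ covers $\mathrm{id}_{\mathcal{M}}$; hence it commutes with the surjective submersions of $\mathscr{L}_{\mathcal{E}}$ and $\mathscr{L}_{\mathcal{F}}$ (which are simply the bundle projections to $\mathcal{M}$) and induces $f^{[2]}:\mathcal{E}^{[2]}\to\mathcal{F}^{[2]}$, so $f$ plays the role of the map $r$ in the definition. The square relating $f^{[2]}$ with the two difference maps $\delta$ commutes, as already noted in the text, and this is exactly the identification of base spaces that makes $\rho\defeq\delta^{*}\tilde\phi$ a well-defined bundle morphism $\mathcal{Q}=\delta^{*}\widetilde{\mathcal{G}}\to\mathcal{Q}'=\delta^{*}\widetilde{\mathcal{H}}$ covering $f^{[2]}$. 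That $\rho$ is a morphism of principal $\U(1)$-bundles comes from the fact that $\tilde\phi$ is a group homomorphism restricting to the identity on the central $\U(1)$, so that $\tilde\phi(\tilde g u)=\tilde\phi(\tilde g)u$ for $u\in\U(1)$; smoothness is inherited from $\tilde\phi$ under pullback.

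The second requirement is the compatibility of $\rho$ with the multiplications $\mu$ and $\mu'$. Here I would use that, under the cocycle identity $\delta(p_1,p_2)\delta(p_2,p_3)=\delta(p_1,p_3)$ on $\mathcal{E}^{[3]}$ (immediate from $p_j=p_i\delta(p_i,p_j)$), the map $\mu$ is literally the multiplication of $\widetilde{\mathcal{G}}$ restricted to the relevant fibres, and likewise $\mu'$ is that of $\widetilde{\mathcal{H}}$. Since $\tilde\phi$ is multiplicative, one gets $\rho(\mu(q_{12}\otimes q_{23}))=\tilde\phi(q_{12})\tilde\phi(q_{23})=\mu'(\rho(q_{12})\otimes\rho(q_{23}))$, which is precisely the commutativity of the square in the definition of a refinement. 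This completes the verification that $\mathscr{R}_f:\mathscr{L}_{\mathcal{E}}\to\mathscr{L}_{\mathcal{F}}$ is a refinement.

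For the final clause, a refinement induces an equivalence $\mathscr{R}_f^{*}:\Triv(\mathscr{L}_{\mathcal{F}})\to\Triv(\mathscr{L}_{\mathcal{E}})$ between categories of trivializations, and \cref{prop:lifting} identifies $\Triv(\mathscr{L}_{\mathcal{E}})\cong\Lift(\mathcal{E})$ and $\Triv(\mathscr{L}_{\mathcal{F}})\cong\Lift(\mathcal{F})$; composing these gives an equivalence $\Lift(\mathcal{F})\simeq\Lift(\mathcal{E})$, so the two lifting problems are equivalent. The whole argument is a routine unwinding of the constructions and I do not expect any genuine obstacle; the only point that needs a moment's attention is making the base maps line up so that $\rho=\delta^{*}\tilde\phi$ really is a morphism $\mathcal{Q}\to\mathcal{Q}'$ over $f^{[2]}$, which is where the commutativity of $\delta\circ f^{[2]}=\phi\circ\delta$ is used.
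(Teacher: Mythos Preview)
Your proposal is correct and follows exactly the approach the paper takes: the paper simply remarks that the fact that $\tilde\phi$ is a group homomorphism implies the lemma ``by construction of the lifting bundle gerbes,'' and you have written out precisely the routine unwinding of that remark. Your added detail about the final clause (invoking \cref{prop:lifting} and the equivalence $\mathscr{R}_f^{*}$ on trivializations) is also the intended argument.
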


Next we suppose that we have a smooth representation of the central extension $\U(1) \to \widetilde{\mathcal{H}} \to \mathcal{H}$ on a rigged $A$-module $E$, in the sense of \cref{def:repofce}. Then we induce, along the Lie group homomorphisms $\tilde\phi$ and $\phi$, a smooth representation of the central extension $\U(1) \to \widetilde{\mathcal{G}} \to \mathcal{G}$ on $E$. We consider  the rigged \cstar-algebra bundles $\mathcal{A}\defeq (\mathcal{E} \times A)/\mathcal{G}$ and $\mathcal{B} \defeq (\mathcal{F} \times A)/\mathcal{H}$ over $\mathcal{M}$. In this situation, we have  two universal twisted module bundles: $E(\mathcal{E})^{\twist}$ is the universal $\mathscr{L}_{\mathcal{E}}$-twisted $\mathcal{A}$-module bundle with typical fibre $E$, and $E(\mathcal{F})^{\twist}$ is the universal $\mathscr{L}_{\mathcal{F}}$-twisted $\mathcal{B}$-module bundle with typical fibre $E$. We suppose again that $f: \mathcal{E} \to \mathcal{F}$ is a $\phi$-equivariant bundle morphism. By \cref{prop:InvarianceUnderLiftsCStar} it induces an isometric isomorphism $\mathcal{A} \to \mathcal{B}$, under which  $E(\mathcal{F})^{\twist}$ becomes an $\mathscr{L}_{\mathcal{F}}$-twisted $\mathcal{A}$-module bundle. 

\begin{proposition}
\label{prop:functorialityofuniversalmodule}
There exists a canonical isometric isomorphism
\begin{equation*}
\mathscr{R}_f^{*}E(\mathcal{F})^{\twist} \cong E(\mathcal{E})^{\twist}
\end{equation*} 
between $\mathscr{L}_{\mathcal{E}}$-twisted $\mathcal{A}$-module bundles.
\end{proposition}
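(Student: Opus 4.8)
The plan is to observe that, as rigged Hilbert space bundles, both sides are the trivial bundle over $\mathcal{E}$ with fibre $E$: by \cref{def:canonicaltvb} one has $E(\mathcal{E})^{\twist}=\mathcal{E}\times E$, while $\mathscr{R}_f^{*}E(\mathcal{F})^{\twist}$ is the refinement pullback $f^{*}(\mathcal{F}\times E)$, which comes with the tautological fibrewise isometric identification $\Xi\colon f^{*}(\mathcal{F}\times E)\to \mathcal{E}\times E$, $(p,(f(p),v))\mapsto(p,v)$. First I would record that $\Xi$ is a well-defined isometric isomorphism of rigged Hilbert space bundles over $\mathcal{E}$, which is immediate, and that its inverse is smooth. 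Everything then reduces to two compatibilities: that $\Xi$ intertwines the twisting isomorphisms of the two $\mathscr{L}_{\mathcal{E}}$-twisted bundles, and that it is $\pi^{*}\mathcal{A}$-linear.

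For the twisting isomorphisms, recall that $\mathcal{Q}_{\mathscr{L}_{\mathcal{E}}}=\delta^{*}\widetilde{\mathcal{G}}$, $\mathcal{Q}_{\mathscr{L}_{\mathcal{F}}}=\delta^{*}\widetilde{\mathcal{H}}$, and $\rho=\delta^{*}\tilde\phi$, so that over a point $(p_1,p_2)\in\mathcal{E}^{[2]}$ with $g=\delta(p_1,p_2)$ — hence $\delta(f(p_1),f(p_2))=\phi(g)$ — the induced map $\rho_{\C}$ sends $[\tilde g,z]$, with $\tilde g\in\widetilde{\mathcal{G}}$ projecting to $g$, to $[\tilde\phi(\tilde g),z]$. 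By the defining formula in \cref{lem:universal}, the twisting isomorphism of $\mathscr{R}_f^{*}E(\mathcal{F})^{\twist}$ (which is $(f^{[2]})^{*}$ of the one for $E(\mathcal{F})^{\twist}$ precomposed with $\rho_{\C}$) therefore sends the pair $[\tilde g,z]$ and $(f(p_2),v)$ to $(f(p_1),z\,(\tilde\phi(\tilde g)v))$. Since the representation of $\widetilde{\mathcal{G}}$ on $E$ is precisely the one induced along $\tilde\phi$, i.e.\ $\tilde g v=\tilde\phi(\tilde g)v$, applying $\Xi$ yields exactly $\nu_{\mathcal{E}}([\tilde g,z],(p_2,v))$, the value of the twisting isomorphism $\nu_{\mathcal{E}}$ of $E(\mathcal{E})^{\twist}$ from \cref{lem:universal}. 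The one conceptual point is thus the cancellation of $\rho_{\C}$ against the induced representation.

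For $\pi^{*}\mathcal{A}$-linearity, recall from the proof of \cref{lem:twistedmodulebundle} that the $\pi^{*}\mathcal{B}$-module structure on $E(\mathcal{F})^{\twist}=\mathcal{F}\times E$ is given, through the canonical global trivialization $[q,a]\mapsto(q,a)$ of $\pi^{*}\mathcal{B}$, by the (unchanged) $A$-action $a\cdot v$ on $E$, and likewise for $E(\mathcal{E})^{\twist}=\mathcal{E}\times E$ and $\pi^{*}\mathcal{A}$. Transporting the $\mathcal{B}$-module structure along the isometric isomorphism $\mathcal{A}\cong\mathcal{B}$ of \cref{prop:InvarianceUnderLiftsCStar}, which sends $[p,a]$ to $[f(p),a]$, one sees that $[p,a]$ acts on $(f(p),v)$ by $(f(p),a\cdot v)$; since $\Xi$ carries $(f(p),v)\mapsto(p,v)$ and $(f(p),a\cdot v)\mapsto(p,a\cdot v)$, it is $\pi^{*}\mathcal{A}$-linear. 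Smoothness of $\Xi$ and of its inverse follows exactly as in the proofs of \cref{lem:universal,lem:reptwistedvb} from the fact that $\widetilde{\mathcal{G}}\to\mathcal{G}$ admits smooth local sections. I do not anticipate a genuine obstacle here; the only real work is the bookkeeping of the identifications $\mathcal{Q}_{\mathscr{L}_{\mathcal{E}}}=\delta^{*}\widetilde{\mathcal{G}}$, $\rho=\delta^{*}\tilde\phi$ and $\mathcal{A}\cong\mathcal{B}$, and using the induced representations consistently on both sides.
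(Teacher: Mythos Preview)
Your proposal is correct and is exactly the natural verification one would expect; the paper in fact states this proposition without proof, so your argument supplies the routine details the authors omitted. The only substantive point---that the induced $\widetilde{\mathcal{G}}$-action on $E$ satisfies $\tilde g v=\tilde\phi(\tilde g)v$, so that $\rho_{\C}$ cancels against the twisting isomorphism---is handled correctly.
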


\section{Rigged fermionic Fock spaces}\label{sec:FreeFermions}

\label{sec:freefermions}

In this section we consider the fermionic Fock space, which plays the role of the typical fibre of the Fock bundles we want to construct.
In \cref{sec:CliffFockAndImp} we discuss algebraic and analytical aspects of Clifford algebras and Fock spaces; this is a collection of more or less well-known material. In \cref{sec:SmoothRepresentations} we recast Clifford algebras and Fock spaces in the new setting of rigged \cstar-algebras and rigged Hilbert spaces we introduced in \cref{sec:HilbertBundle}, and we prove several new smoothness results about Fock spaces and Clifford multiplication.

\subsection{Lagrangians and Fock spaces}
\label{sec:CliffFockAndSpin}
\label{sec:CliffFockAndImp}

Concerning infinite-dimensional Clifford algebras and their representations on Fock spaces, we follow \cite{PR95}.
The results we require on implementers are more spread out across the literature, see for instance \cite{Ara85, Ott95, Ne09, Kristel2019}.
Throughout  \cref{sec:freefermions}, we consider  a complex Hilbert space $V$ equipped with a real structure $\alpha$, i.e., an anti-unitary involution $\alpha : V \rightarrow V$.

Given a unital \cstar-algebra $A$, we recall that a map $f:V \rightarrow A$ is a \emph{Clifford map} if the following equations are satisfied, for all $v,w \in V$:
\begin{align}\label{eq:CliffordMap}
 f(v)f(w) + f(w)f(v) &= 2 \langle v, \alpha(w) \rangle \mathds{1}, \quad f(v)^{*} = f(\alpha(v)).
\end{align}
The Clifford \cstar-algebra $\Cl(V)$ is the unique (up to unique isomorphism) unital \cstar-algebra equipped with a Clifford map $\iota: V \rightarrow \Cl(V)$, such that for each unital \cstar-algebra $A$ and each Clifford map $f: V \rightarrow A$, there exists a unique unital \cstar-algebra homomorphism $\Cl(f): \Cl(V) \rightarrow A$ with the property that $\Cl(f) \circ \iota = f$.
An explicit construction of $\Cl(V)$ is given in \cite[Section 1.2]{PR95}.

We define the orthogonal group of $V$, denoted $\O(V)$, to consist of those unitary transformations of $V$ that commute with the real structure.
If $g \in \O(V)$, then $\iota g: V \rightarrow \Cl(V)$ is a Clifford map.
We write $\theta_{g} \defeq \Cl( \iota g): \Cl(V) \rightarrow \Cl(V)$ for its extension to $\Cl(V)$.
The map $\theta_{g}$ is called the \emph{Bogoliubov automorphism} associated to $g$.
The map $\theta: g \mapsto \theta_{g}$ is  a continuous homomorphism from $\O(V)$ to $\Aut(\Cl(V))$, where $\O(V)$ is equipped with the operator norm topology, and $\Aut(\Cl(V))$ is equipped with the strong operator topology, see, e.g. \cite[Proposition 4.35]{Ambler2012}.

A \emph{Lagrangian} in $V$ is a subspace $L \subset V$ such that $V$ splits as the orthogonal direct sum $V = L \oplus \alpha(L)$.
If $L$ is a Lagrangian, then the \emph{Fock space} $\mc{F}_{L}$ is the Hilbert completion of the exterior algebra, $\Lambda L$, of $L$.
We identify $\alpha(L)$ with the dual of $L$ by identifying $w \in \alpha(L)$ with the linear map $L \ni v \mapsto \langle v, \alpha(w) \rangle$.
If $v \in L$ and $w \in \alpha(L) \simeq L^{*}$, then we write $c(v): \mc{F}_{L} \rightarrow \mc{F}_{L}$ for left multiplication with $v$, and $a(w) : \mc{F}_{L} \rightarrow \mc{F}_{L}$ for contraction with $w$.
The maps $c(v)$ and $a(v)$ are bounded operators on $\mc{F}_{L}$, and the map 
\begin{align*}
 \rho_{L}:V = L \oplus \alpha(L) &\rightarrow \mc{B}(\mc{F}_{L}), \quad (v,w) \mapsto \sqrt{2}(c(v) + a(w))
\end{align*}
is a Clifford map. This means that $\Cl(\rho_{L}): \Cl(V) \rightarrow \mc{B}(\mc{F}_{L})$ is a unital \cstar-algebra homomorphism; i.e., a representation of $\Cl(V)$ on $\mc{F}_{L}$.
This representation is irreducible \cite[Theorem 2.4.2]{PR95} and faithful; hence, we may identify $\Cl(V)$ with its image in $\mc{B}(\mc{F}_{L})$.
Whenever convenient, we adopt the notation $a \lact v = \Cl(\rho_{L})(a)(v)$ for $a \in \Cl(V)$ and $v \in \mc{F}_{L}$.

An element $g \in \O(V)$ is called \emph{implementable}, if there exists  $U \in \U(\mc{F}_{L})$ with the property that
\begin{equation}\label{eq:Implementer}
 \theta_{g}(a) = UaU^{*}
\end{equation}
for all $a \in \Cl(V)$;
the operator $U$ is said to \emph{implement} $g$. The problem to decide which $g\in \O(V)$ are implementable is called the \quot{implementability problem}.
 It is completely solved: an element $g \in \O(V)$ is implementable if and only if the operator $P_{L} g P_{L}^{\perp}$ is Hilbert-Schmidt, where $P_L$ is the orthogonal projection to $L$, see \cite[Theorem 3.3.5]{PR95} or \cite[Theorem 6.3]{Ara85}.
We write $\O_{\res}(V)$ for the set consisting of those $g \in \O(V)$ which are implementable; the set $\O_{\res}(V)$ is in fact a subgroup of $\O(V)$.

The group $\O(V)$ can be equipped with the structure of Banach Lie group in the standard way, with underlying topology the operator norm topology. The Lie algebra of $\O(V)$ is
\begin{equation*}
 \lie{o}(V) = \{ X \in \mc{B}(V) \mid [X,\alpha] = 0, X^{*} = -X \}.
\end{equation*}
The subgroup $\O_L(V)$ can also be equipped with the structure of a Banach Lie group,  whose underlying topology is given by the norm $\| g\|_{\mc{J}} = \| g\| + \| P_{L} g P_{L}^{\perp} \|_{2}$, where $\|g\|$ is the operator norm of $g$, and where $\| \cdot \|_{2}$ is the Hilbert-Schmidt norm, \cite[Section 3.4]{Kristel2019}. The inclusion $\O_L(V) \to \O(V)$ is smooth.
The Lie algebra of $\O_{\res}(V)$ is 
\begin{equation*}
 \lie{o}_{\res}(V) = \{ X \in \lie{o}(V) \mid \| P_{L} X P_{L}^{\perp} \|_{2} < \infty \}.
\end{equation*}

The \emph{group of implementers}, $\Imp_{\res}(V)$, is defined to be the subgroup of $\U(\mc{F}_{L})$ consisting of those operators $U \in \U(\mc{F}_{L})$, for which there exists a $g \in \O_{\res}(V)$ such that  \cref{eq:Implementer} holds.
If $U \in \Imp_{\res}(V)$, then the element $g \in \O_{\res}(V)$ that it implements is determined uniquely, and we obtain a group homomorphism $q:\Imp_{\res}(V) \rightarrow \O_{\res}(V)$.
Using the irreducibility of the representation of $\Cl(V)$ on $\mc{F}_{L}$ together with Schur's Lemma, we see that, for each $g \in \O_{\res}(V)$, the fibre $q^{-1}\{g\}$ is a $\U(1)$-torsor.
We equip the group $\Imp_{\res}(V)$ with the structure of Banach Lie group as in \cite[Theorem 3.15]{Kristel2019}, see also \cite{Wurzbacher2001}.
We then have that the exact sequence
\begin{equation*}
 \U(1) \rightarrow \Imp_{\res}(V) \xrightarrow{q} \O_{\res}(V) ,
\end{equation*}
is a central extension of Banach Lie groups.
It is important to note that the topology underlying the Banach Lie group structure on $\Imp_{\res}(V)$ is not the operator norm topology, and that the inclusion map $\Imp_{\res}(V) \rightarrow \U(\mc{F}_{L})$ is not continuous, let alone smooth.

The following results are  used in \cref{sec:lagrangiangerbe,sec:LagrangianTwistedSpinorBundle}, where we carry out the construction of twisted Fock bundles.
We write $\Lag(V)$ for the set of Lagrangians in $V$.
Let $L \in \Lag(V)$ be a Lagrangian, and let $\mathcal{J}_L:=i(P_L-P_L^{\perp})$ be the corresponding complex structure.
We  define the \emph{restricted unitary group}:
\begin{equation*}
 \U_{\res}(V) \defeq \{ g \in \U(V) \mid \|[g,\mc{J}_{L}]\|_{2} < \infty \}.
\end{equation*}
Suppose that $L' \in \Lag(V)$ is a second Lagrangian.
We may now ask if the corresponding Fock space representations are unitarily equivalent, in other words, if there exists a unitary operator $T: \mc{F}_{L} \rightarrow \mc{F}_{L'}$ that intertwines the $\Cl(V)$ representations. The problem to decide when this is the case is called the \quot{equivalence problem}. The following result shows that it is closely related to the implementability problem.

\begin{lemma}\label{lem:FockSpacesUnitarilyEquivalent}
For two Lagrangians $L,L'\in \Lag(V)$, the following are equivalent:
 \begin{enumerate}
  \item The representations $\mc{F}_{L}$ and $\mc{F}_{L'}$ are unitarily equivalent.
  \item The operator $P_{L}^{\perp}P_{L'}: V \rightarrow V$ is Hilbert-Schmidt.
  \item The operator $P_{L}- P_{L'}: V \rightarrow V$ is Hilbert-Schmidt.
  \item There exists an element $g \in \O_{\res}(V)$ such that $g(L) = L'$.
  \item There exists an element $g \in \U_{\res}(V)$ such that $g(L) = L'$.
 \end{enumerate}
\end{lemma}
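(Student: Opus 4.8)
## Proof plan for Lemma (equivalence problem ↔ implementability)

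The plan is to prove a cycle of implications that passes through all five statements, exploiting the results already established in the excerpt — in particular the solution of the implementability problem ($g\in\O(V)$ is implementable iff $P_L g P_L^\perp$ is Hilbert--Schmidt) and the Fock space construction. The equivalences among (2), (3), (4), (5) are essentially functional-analytic facts about projections and restricted groups, while the bridge to (1) is where the representation theory enters.

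\textbf{Step 1: $(2)\Leftrightarrow(3)$.} This is a standard Hilbert--Schmidt identity. Since $\alpha P_L\alpha=P_L^\perp$ and similarly for $L'$, one writes $P_L-P_{L'}=P_L P_{L'}^\perp - P_L^\perp P_{L'}$ and observes $\alpha(P_L^\perp P_{L'})\alpha = P_L P_{L'}^\perp$, so both summands lie in the Hilbert--Schmidt class iff one of them does; conversely $P_L^\perp P_{L'} = P_L^\perp(P_{L'}-P_L)$ is Hilbert--Schmidt whenever $P_L-P_{L'}$ is. (One should also note $P_L^\perp P_{L'}$ HS $\Leftrightarrow$ $P_{L'}^\perp P_L$ HS by taking adjoints and using $\alpha$-conjugation, which will be convenient below.)

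\textbf{Step 2: $(4)\Leftrightarrow(5)$ and $(3)\Rightarrow(5)$.} For $(5)\Rightarrow(4)$: if $g\in\U_{\res}(V)$ with $g(L)=L'$, then $g$ commutes with $\alpha$ on the nose? No — elements of $\U_{\res}$ need not lie in $\O(V)$. The cleaner route is $(3)\Rightarrow(5)$: given $P_L-P_{L'}$ Hilbert--Schmidt, one builds an explicit unitary carrying $L$ to $L'$ using the ``Kato--Nagy'' rotation $g=(P_{L'}P_L+P_{L'}^\perp P_L^\perp)\,|P_{L'}P_L+P_{L'}^\perp P_L^\perp|^{-1}$ (well-defined and unitary once $\|P_L-P_{L'}\|<1$, and in general by a path argument), which satisfies $g(L)=L'$ and differs from a polynomial expression in $P_L,P_{L'}$ by Hilbert--Schmidt corrections, hence $[g,\mathcal{J}_L]$ is Hilbert--Schmidt, i.e. $g\in\U_{\res}(V)$. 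For $(5)\Rightarrow(3)$: if $g(L)=L'$ then $P_{L'}=gP_Lg^*$, so $P_L-P_{L'}=P_L g g^* - g P_L g^* = [\,\cdot\,]$ which one rewrites as a product involving $[g,P_L]$; since $[g,\mathcal{J}_L]$ HS $\Leftrightarrow [g,P_L]$ HS, this gives $P_L-P_{L'}$ HS. For $(4)\Rightarrow(3)$: an element $g\in\O_{\res}(V)$ has $P_L g P_L^\perp$ Hilbert--Schmidt, and combined with $\alpha$-equivariance this forces $[g,P_L]$ HS, then proceed as before. For $(3)\Rightarrow(4)$: use the unitary from $(3)\Rightarrow(5)$ but symmetrized to commute with $\alpha$ — replace $g$ by its ``orthogonal part'' via the polar-type decomposition adapted to the real structure, or simply observe that $\tfrac12(g+\alpha g\alpha)$-type corrections are Hilbert--Schmidt so one lands in $\O_{\res}(V)$.

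\textbf{Step 3: the bridge $(4)\Rightarrow(1)$ and $(1)\Rightarrow(2)$.} For $(4)\Rightarrow(1)$: given $g\in\O_{\res}(V)$ with $g(L)=L'$, by the solution to the implementability problem $g$ is implementable, but more is true — one can compare the Fock representations directly. The map $\Lambda L\to\Lambda L'$ ``transporting along $g$'' extends to a unitary $T:\mathcal{F}_L\to\mathcal{F}_{L'}$; the relation $\rho_{L'}(gv)=T\rho_L(v)T^*$ holds on the dense exterior algebras because $g$ carries the splitting $V=L\oplus\alpha(L)$ to $V=L'\oplus\alpha(L')$ compatibly with creation/annihilation operators, and then $\Cl(\rho_{L'})\circ\theta_g = \mathrm{Ad}(T)\circ\Cl(\rho_L)$; composing with an implementer $U$ of $g^{-1}$ on $\mathcal{F}_{L'}$... — actually it is cleanest to say: $g(L)=L'$ means $L$ and $L'$ define the \emph{same} quasi-free state up to the automorphism $\theta_g$, and by the uniqueness (up to unitary equivalence) of the GNS representation together with irreducibility, $\mathcal{F}_L$ and $\mathcal{F}_{L'}$ are unitarily equivalent as $\Cl(V)$-modules. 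For $(1)\Rightarrow(2)$: this is the classical Shale--Stinespring-type criterion — if $T:\mathcal{F}_L\to\mathcal{F}_{L'}$ intertwines the representations, then comparing the vacuum expectation values of $\rho(v_1)\rho(v_2)$ in the two Fock spaces forces the two-point functions (governed by $P_L$ and $P_{L'}$ respectively) to differ by a trace-class/Hilbert--Schmidt amount; concretely the cyclic vacuum vector $\Omega_{L'}$ pulled back by $T$ is a normalized vector in $\mathcal{F}_L$ annihilated by $a(w)$-type operators adapted to $L'$, and the existence of such a vector in $\mathcal{F}_L$ is exactly Hilbert--Schmidtness of $P_L^\perp P_{L'}$ (this is \cite[Theorem 3.3.5]{PR95} type content, applied to the automorphism relating $L$ and $L'$).

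\textbf{Main obstacle.} The functional-analytic equivalences $(2)$--$(5)$ are routine given familiarity with the Hilbert--Schmidt ideal and restricted Grassmannian technology. The genuine content, and the step I expect to require the most care, is $(1)\Leftrightarrow(2)$: one must pin down precisely how a representation-theoretic intertwiner forces a Hilbert--Schmidt condition on the projections, and vice versa. The slickest argument routes through the solved implementability problem: $(1)$ for the pair $(L,L')$ is equivalent, via transport, to implementability of some $g$ with $g(L)=L'$ inside a single Fock space, reducing everything to \cite[Theorem 3.3.5]{PR95}; making this reduction rigorous (choosing $g$, checking it is orthogonal and restricted, verifying the intertwiner descends correctly) is the crux. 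I would therefore organize the proof as $(4)\Rightarrow(1)\Rightarrow(2)\Rightarrow(3)\Rightarrow(4)$ plus the separate loop $(3)\Leftrightarrow(5)$ and $(4)\Leftrightarrow(5)$, citing \cite{PR95} and \cite{Ara85} for the hard implementability input and doing the projection algebra by hand.
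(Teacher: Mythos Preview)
Your proposal is substantially more ambitious than the paper's proof. The paper simply cites \cite[Theorems 3.4.1 and 3.4.2]{PR95} for the equivalence of (1)--(4) in their entirety, then observes that $(4)\Rightarrow(5)$ is immediate from $\O_{\res}(V)\subset\U_{\res}(V)$, and finally proves $(5)\Rightarrow(3)$ by the short computation $[g,\mathcal{J}_L]=2i[g,P_L]$ and $P_L-P_{L'}=[P_L,g]g^{-1}$. Your $(5)\Rightarrow(3)$ argument in Step~2 is exactly this computation, so on the one piece the paper actually proves you agree perfectly.

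Where you differ is in trying to unpack (1)--(4) rather than citing them wholesale. This is a legitimate choice and your sketches are broadly sound, but two places are thinner than you acknowledge. First, in $(3)\Rightarrow(5)$ the Kato--Nagy rotation needs $\|P_L-P_{L'}\|<1$, and your fallback ``path argument'' is not obviously available: the restricted Grassmannian has $\mathbb{Z}$-many connected components, so you cannot simply connect two Lagrangians by a path without first checking they lie in the same component---which is essentially what you are trying to prove. Second, in $(3)\Rightarrow(4)$ the passage from a $\U_{\res}$-element to an $\O_{\res}$-element via ``$\tfrac12(g+\alpha g\alpha)$-type corrections'' does not produce a unitary; getting an honest orthogonal element mapping $L$ to $L'$ requires a more careful construction (this is part of what \cite{PR95} does). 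Neither of these is fatal, but they are precisely the points where the paper opts to cite rather than reprove.
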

\begin{proof}
 That 1-4 are equivalent is proven in \cite[Theorem 3.4.1 \& Theorem 3.4.2]{PR95}. That 4 implies 5 is obvious, after all $\O_{\res}(V) \subset \U_{\res}(V)$. Finally, assume that 5 holds.
 We shall prove that 3 then holds.
 First, we compute
 \begin{equation*}
  [g, \mc{J}_{L}] = i[g, P_{L} - P_{L}^{\perp}] = i[g, 2P_{L} - \mathds{1}] = 2i[g,P_{L}],
 \end{equation*}
 whence we conclude that $[P_{L},g]$ is Hilbert-Schmidt.
 By assumption, we have that $g(L) = L'$, hence
 \begin{equation*}
  P_{L} - P_{L'} = P_{L} - g P_{L} g^{-1} = (P_{L}g - gP_{L})g^{-1} = [P_{L},g] g^{-1}. 
 \end{equation*}
 Hence, $P_L - P_{L'}$ is Hilbert-Schmidt and 3 is satisfied. 
\end{proof}

If a pair of Lagrangians satisfies any of the equivalent conditions of \cref{lem:FockSpacesUnitarilyEquivalent} we say that the Lagrangians are \emph{equivalent}.
This defines an equivalence relation on the set $\Lag(V)$ of Lagrangians.
We write $\Lag_{\res}(V)$ for the set of Lagrangians equivalent to $L$.
The following \lcnamecref{lem:TheTwoU1Torsors} makes the relationship between the implementability problem and the equivalence problem  explicit.
Its proof is a straightforward computation employing the implementability condition \labelcref{eq:Implementer}.  

\begin{proposition}\label{lem:TheTwoU1Torsors}
Let $L_{1},L_{2} \in \Lag_L(V)$, and suppose $g_{1},g_{2} \in \O_{\res}(V)$ satisfy $g_{1}(L)=L_{1}$ and $g_{2}(L) = L_{2}$. Let $\U_{\Cl(V)}(\mc{F}_{L_{1}}, \mc{F}_{L_{2}})$ denote the set of unitary equivalences of $\Cl(V)$-representations, and let $\Imp_{L}(V)_{g}$ denote the set of implementers of $g\in \O_L(V)$. Then, the map
\begin{equation*}
\EqImp_{g_1,g_2}:  \U_{\Cl(V)}(\mc{F}_{L_{1}}, \mc{F}_{L_{2}}) \rightarrow \Imp_{L}(V)_{g_{2}^{-1}g_{1}}, \quad \quad T \mapsto \Lambda_{g_{2}}^{-1}T\Lambda_{g_{1}},
\end{equation*}
 is an isomorphism of $\U(1)$-torsors. 
Moreover, if $L_{3}\in \Lag_L(V)$ and $g_{3}\in \O_L(V)$ such that $g_{3}(L)=L_{3}$, then there is a commutative diagram
\begin{equation*}
\xymatrix@C=8em{\U_{\Cl(V)}(\mathcal{F}_{L_{2}},\mathcal{F}_{L_{3}})  \times\U_{\Cl(V)}(\mathcal{F}_{L_{1}},\mathcal{F}_{L_{2}}) \ar[r]^-{\EqImp_{g_2,g_3} \times \EqImp_{g_1,g_2}} \ar[d]_{\circ} & \Imp_L(V)_{g_{3}^{-1}g_{2}}  \times \Imp_L(V)_{g_{2}^{-1}g_{1}} \ar[d] \\
\U_{\Cl(V)}(\mathcal{F}_{L_{1}},\mathcal{F}_{L_{3}}) \ar[r]_-{\EqImp_{g_1,g_3}} & \Imp_L(V)_{g_{3}^{-1}g_{1}}}
\end{equation*}
\end{proposition}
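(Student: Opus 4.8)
The plan is to phrase everything in terms of the canonical unitaries $\Lambda_g : \mathcal{F}_L \to \mathcal{F}_{g(L)}$ attached to an orthogonal transformation $g \in \O(V)$, namely the maps induced on Fock spaces by functoriality of the exterior algebra ($\Lambda_g = \mathcal{F}(\Lambda g)$, entrywise application of $g$). If these have not been recorded earlier, I would first spend a line verifying their key property: since $g$ commutes with $\alpha$ it carries the splitting $V = L \oplus \alpha(L)$ to $V = g(L) \oplus \alpha(g(L))$, so $\Lambda_g$ conjugates $\rho_L(v,w)$ into $\rho_{g(L)}(gv,gw)$, and hence by the universal property of $\Cl(V)$ one gets the intertwining relation
\begin{equation*}
\Lambda_g(a \lact v) = \theta_g(a) \lact \Lambda_g v \qquad (a \in \Cl(V),\ v \in \mathcal{F}_L),
\end{equation*}
together with its inverted form $\Lambda_g^{-1}(b \lact w) = \theta_{g^{-1}}(b) \lact \Lambda_g^{-1} w$ (using $\theta_g^{-1} = \theta_{g^{-1}}$).

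First I would check that $\EqImp_{g_1,g_2}$ really lands in $\Imp_L(V)_{g_{2}^{-1}g_{1}}$. Given $T \in \U_{\Cl(V)}(\mathcal{F}_{L_1},\mathcal{F}_{L_2})$, set $U := \Lambda_{g_2}^{-1} T \Lambda_{g_1}$, a unitary on $\mathcal{F}_L$. A short computation, applying the intertwining relation for $\Lambda_{g_1}$, then that $T$ commutes with the $\Cl(V)$-action, then the inverted relation for $\Lambda_{g_2}$, yields $U(a \lact v) = \theta_{g_2^{-1}g_1}(a) \lact U v$ for all $a \in \Cl(V)$ and $v \in \mathcal{F}_L$; equivalently $U a U^* = \theta_{g_2^{-1}g_1}(a)$, so $U$ implements $g_2^{-1}g_1 \in \O_{\res}(V)$ (a group, so this element is implementable).

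Next, for the torsor statement: $\U_{\Cl(V)}(\mathcal{F}_{L_1},\mathcal{F}_{L_2})$ is a $\U(1)$-torsor — nonempty because $L_1 \sim L$ and $L_2 \sim L$ force $L_1 \sim L_2$, so \cref{lem:FockSpacesUnitarilyEquivalent} applies, and a torsor by irreducibility of the Fock representations plus Schur — while $\Imp_L(V)_{g_{2}^{-1}g_{1}}$ is a $\U(1)$-torsor by the discussion preceding the statement. The assignment $T \mapsto \Lambda_{g_2}^{-1} T \Lambda_{g_1}$ is manifestly $\U(1)$-equivariant; it is a bijection with inverse $U \mapsto \Lambda_{g_2} U \Lambda_{g_1}^{-1}$ (the reverse computation shows this image intertwines the $\Cl(V)$-actions). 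Since any $\U(1)$-equivariant map between $\U(1)$-torsors is automatically an isomorphism, $\EqImp_{g_1,g_2}$ is an isomorphism of $\U(1)$-torsors.

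Finally, commutativity of the square is just the insertion of $\Lambda_{g_2}^{-1}\Lambda_{g_2} = \operatorname{id}$: for $S \in \U_{\Cl(V)}(\mathcal{F}_{L_2},\mathcal{F}_{L_3})$ and $T \in \U_{\Cl(V)}(\mathcal{F}_{L_1},\mathcal{F}_{L_2})$,
\begin{equation*}
\EqImp_{g_1,g_3}(S \circ T) = \Lambda_{g_3}^{-1} S T \Lambda_{g_1} = \bigl(\Lambda_{g_3}^{-1} S \Lambda_{g_2}\bigr)\bigl(\Lambda_{g_2}^{-1} T \Lambda_{g_1}\bigr) = \EqImp_{g_2,g_3}(S)\cdot\EqImp_{g_1,g_2}(T),
\end{equation*}
where the unlabelled right-hand vertical map is composition of implementers, which indeed sends $\Imp_L(V)_{g_3^{-1}g_2}\times\Imp_L(V)_{g_2^{-1}g_1}$ to $\Imp_L(V)_{g_3^{-1}g_1}$ because a product of implementers of $h_1$ and $h_2$ implements $h_1 h_2$. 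I do not expect a genuine obstacle; the only point needing care is to have the intertwining property of $\Lambda_g$ available in exactly the form above (with $\theta_{g^{-1}}$ appearing upon inversion) and to keep straight that the various maps have the claimed $\U(1)$-torsors as codomains — the rest is bookkeeping.
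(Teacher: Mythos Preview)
Your proposal is correct and follows essentially the same route as the paper: the paper states that the proof is a straightforward computation using the implementability condition \eqref{eq:Implementer}, and its (commented-out) argument verifies the codomain via the intertwining identity $\Lambda_g(a \lact v) = \theta_g(a)\lact \Lambda_g v$, exhibits the inverse $U \mapsto \Lambda_{g_2} U \Lambda_{g_1}^{-1}$, and notes $\U(1)$-equivariance --- exactly your steps, with the diagram check being the same telescoping computation you wrote.
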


Another aspect of the theory of Lagrangians that we need in \cref{sec:bundlegerbe} is the completion of subspaces to Lagrangians. The following sets up the relevant terminology and results.    
 
\begin{definition}
 A subspace $L' \subset V$ is a \emph{sublagrangian} if it is isotropic and if $L' \oplus \alpha(L') \subseteq V$ is of finite and even codimension.
\end{definition}

\begin{lemma}\label{lem:sublagrangians}
 Let $L' \subset V$ be a sublagrangian. Then there exists a Lagrangian  $L \subset V$ such that $L' \subseteq L$.
 If $L_{0}$ and $L_{1}$ are two Lagrangians which both contain the sublagrangian $L'$, then $L_{0}$ and $L_1$ are equivalent.
\end{lemma}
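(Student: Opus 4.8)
The plan is to prove the two assertions separately, both by reducing to the finite-dimensional complement of $L' \oplus \alpha(L')$. For the existence statement, set $W \defeq (L' \oplus \alpha(L'))^{\perp}$, which by hypothesis is a finite-dimensional subspace of even dimension, say $\dim_{\C} W = 2k$. First I would check that $W$ is $\alpha$-invariant: since $L' \oplus \alpha(L')$ is $\alpha$-invariant (it is manifestly closed under $\alpha$) and $\alpha$ is an anti-unitary involution, it preserves orthogonal complements, so $\alpha(W) = W$. Thus $(W, \alpha|_W, \langle \cdot, \cdot\rangle|_W)$ is a finite-dimensional complex Hilbert space with a real structure, and the restriction of the bilinear form $(v,w)\mapsto \langle v,\alpha(w)\rangle$ to $W$ is a nondegenerate symmetric form whose associated quadratic form is positive definite on the real subspace $W^{\alpha}$; a standard linear-algebra argument (or \cite[Section 3.4]{PR95}) shows that such a $W$ admits a Lagrangian decomposition $W = M \oplus \alpha(M)$ with $\dim_{\C} M = k$. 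Then $L \defeq L' \oplus M$ is isotropic, and $L \oplus \alpha(L) = (L' \oplus \alpha(L')) \oplus (M \oplus \alpha(M)) = (L'\oplus\alpha(L')) \oplus W = V$, so $L$ is a Lagrangian containing $L'$.

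For the second assertion, suppose $L_0$ and $L_1$ are Lagrangians with $L' \subseteq L_0$ and $L' \subseteq L_1$. I would show that $P_{L_0} - P_{L_1}$ is Hilbert--Schmidt — in fact finite-rank — so that equivalence follows from \cref{lem:FockSpacesUnitarilyEquivalent} (condition 3). Write $V = (L'\oplus\alpha(L')) \oplus W$ as above. On the summand $L'$ both $P_{L_0}$ and $P_{L_1}$ act as the identity (since $L' \subseteq L_i$ and $L'$ is orthogonal to $\alpha(L_i) \supseteq \alpha(L')$, hence $L'$ lies in the range of $P_{L_i}$); on $\alpha(L')$ both act as $0$ (since $\alpha(L') \subseteq \alpha(L_i) = L_i^{\perp}$). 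Hence $P_{L_0} - P_{L_1}$ vanishes on $L' \oplus \alpha(L')$, and therefore, being self-adjoint, it also maps $W$ into $W$ and vanishes off $W$; so $P_{L_0} - P_{L_1}$ has rank at most $\dim_{\C} W = 2k < \infty$. In particular it is Hilbert--Schmidt, and \cref{lem:FockSpacesUnitarilyEquivalent} gives that $L_0$ and $L_1$ are equivalent.

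I expect the main obstacle to be the careful justification of the claim that $P_{L_0}-P_{L_1}$ annihilates $L'\oplus\alpha(L')$ and preserves $W$ — one must be a little careful that $L_i$ being a Lagrangian containing $L'$ genuinely forces $L' \subseteq \operatorname{ran}(P_{L_i})$ and $\alpha(L') \subseteq \ker(P_{L_i})$, which uses $V = L_i \oplus \alpha(L_i)$ orthogonally together with $\alpha(L') \subseteq \alpha(L_i)$. The finite-dimensional existence of a Lagrangian splitting of $W$ is routine but should be cited rather than reproved. Everything else is bookkeeping with orthogonal projections.
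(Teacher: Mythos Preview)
Your proposal is correct and follows essentially the same strategy as the paper's proof: both reduce the existence claim to finding a Lagrangian in the finite-dimensional $\alpha$-invariant complement $W=(L'\oplus\alpha(L'))^{\perp}$, and both establish equivalence by showing a projection-related operator is finite-rank. The only cosmetic difference is that you verify condition~3 of \cref{lem:FockSpacesUnitarilyEquivalent} (that $P_{L_0}-P_{L_1}$ is Hilbert--Schmidt, via vanishing on $L'\oplus\alpha(L')$), whereas the paper verifies condition~2 (that $P_{L_0}^{\perp}P_{L_1}$ is Hilbert--Schmidt, via the decomposition $L_i=L'\oplus K_i$ with $K_i$ finite-dimensional).
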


\begin{proof}
 Let $L'$ be a sublagrangian. Then, by definition, the orthogonal complement of $L' \oplus \alpha(L')$ is of finite and even dimension. Moreover, it is clearly preserved by $\alpha$, hence it contains a Lagrangian subspace $K \subset V$. It follows that $L' \oplus K \subset V$ is Lagrangian.
For the second part, a direct computation shows that $P_{L_{0}}^{\perp} P_{L_{1}}$ is Hilbert-Schmidt which suffices to prove the claim, according to \cref{lem:FockSpacesUnitarilyEquivalent}.
\end{proof}

Finally, in \cref{sec:LagrangianTwistedSpinorBundle} we need to study the unitary group $\U(L)$ of a Lagrangian $L\in \mathrm{Lag}(V)$. We embed  into $\O_{\res}(V)$ by sending an operator $T \in \U(L)$ to the operator that acts by $T$ on $L$ and by $\alpha T \alpha$ on $\alpha(L)$.

\begin{lemma}
\label{lem:UnitarySubgroup}
The homogeneous space  $\O_L(V)/U(L)$ carries a unique structure of a Banach manifold, such that the projection $\O_L(V) \to \O_L(V)/U(L)$ is a submersion.
\end{lemma}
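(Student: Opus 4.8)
The plan is to present $\U(L)$ as a Banach Lie subgroup of $\O_L(V)$ whose Lie algebra $\lie{u}(L)$ is topologically complemented in $\lie{o}_{\res}(V)$, and then to read off the manifold structure on the quotient from the standard exponential chart; uniqueness comes for free from the general fact that a quotient carries at most one manifold structure for which the projection is a submersion (cf.\ \cite[Lemma 1.9]{Glockner2015}, whose Banach version is classical).

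First I would record that the embedding sending $T\in\U(L)$ to the operator acting by $T$ on $L$ and by $\alpha T\alpha$ on $\alpha(L)$ identifies $\U(L)$ with a \emph{closed} subgroup of $\O_L(V)$. Its image consists of the operators block diagonal with respect to the orthogonal splitting $V=L\oplus\alpha(L)$; block diagonality is a norm-closed condition, and on block-diagonal operators the norm $\|\cdot\|_{\mc{J}}$ coincides with the operator norm, so that $\U(L)$ inherits its usual Banach Lie group structure and the inclusion is a smooth homomorphism. Inside $\lie{o}_{\res}(V)=\{X\in\lie{o}(V):\|P_LXP_L^{\perp}\|_{2}<\infty\}$ its Lie algebra is the block-diagonal part $\lie{u}(L)=\{X\in\lie{o}_{\res}(V):P_LXP_L^{\perp}=0=P_L^{\perp}XP_L\}$.

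The one substantial step is the construction of a closed complement. For $Y\in\lie{o}_{\res}(V)$ put $Y_{d}\defeq P_LYP_L+P_L^{\perp}YP_L^{\perp}$ and $Y_{o}\defeq P_LYP_L^{\perp}+P_L^{\perp}YP_L$, so $Y=Y_{d}+Y_{o}$. Since $V=L\oplus\alpha(L)$ orthogonally we have $\alpha P_L=P_L^{\perp}\alpha$, and from this together with skew-adjointness one checks directly that $Y_{d}$ and $Y_{o}$ are again skew-adjoint and commute with $\alpha$; hence $Y_{d}\in\lie{u}(L)$, while $Y_{o}$ lies in $\lie{m}\defeq\{Y\in\lie{o}_{\res}(V):P_LYP_L=0=P_L^{\perp}YP_L^{\perp}\}$. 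Moreover $Y\mapsto Y_{d}$ and $Y\mapsto Y_{o}$ are bounded for $\|\cdot\|_{\mc{J}}$: one has $\|Y_{d}\|_{\mc{J}}=\|Y_{d}\|\leqslant\|Y\|$, and, since $P_LY_{o}P_L^{\perp}=P_LYP_L^{\perp}$, also $\|Y_{o}\|_{\mc{J}}=\|Y_{o}\|+\|P_LYP_L^{\perp}\|_{2}\leqslant\|Y\|_{\mc{J}}$. Therefore $\lie{o}_{\res}(V)=\lie{u}(L)\oplus\lie{m}$ is a topological direct sum of closed subspaces.

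Finally, with the splitting in hand, the rest is the textbook argument: the map $\lie{m}\times\U(L)\to\O_L(V)$, $(X,h)\mapsto\exp(X)h$, has invertible differential $(\xi,\eta)\mapsto\xi+\eta$ at $(0,e)$, hence is a local diffeomorphism there, so composing $\exp$ restricted to a small ball $B\subset\lie{m}$ with the projection $p:\O_L(V)\to\O_L(V)/\U(L)$ produces a homeomorphism onto an open neighbourhood of the base point $e\U(L)$; left translates by elements of $\O_L(V)$ give an atlas with smooth transition maps, relative to which $p$ is locally the projection $B\times\U(L)\to B$ and is therefore a submersion, indeed a locally trivial principal $\U(L)$-bundle. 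The main obstacle is really only the second step, i.e.\ verifying that the decomposition into block-diagonal and off-diagonal parts is continuous for the mixed norm $\|\cdot\|_{\mc{J}}$ and compatible with the reality and skew-adjointness constraints; everything else is formal or a standard fact about Banach Lie groups.
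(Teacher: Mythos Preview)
Your proof is correct and follows essentially the same strategy as the paper's: both arguments reduce the claim to exhibiting $\U(L)$ as a Banach Lie subgroup of $\O_L(V)$ and then appeal to the standard quotient manifold theorem for Banach Lie groups.

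The packaging differs slightly. The paper observes that the inclusion $\U(L)\hookrightarrow\O_L(V)$ is an isometry for the respective metrics (since block-diagonal elements have vanishing off-diagonal Hilbert--Schmidt part), hence a topological embedding, checks that the derivative at the identity is an immersion, and then cites Bourbaki \cite[III, Prop.~1.6.11]{Bourbaki1998} for the quotient structure. You instead make the complementation of $\lie{u}(L)$ in $\lie{o}_{\res}(V)$ explicit via the block-diagonal/off-diagonal decomposition, verify directly that both projections are bounded for the $\|\cdot\|_{\mc{J}}$-norm, and then build the exponential chart by hand. Your route is more self-contained and makes visible the one nontrivial point---continuity of the splitting for the mixed norm---which the paper's invocation of ``immersion'' leaves implicit; the paper's route is terser. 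Neither approach introduces an idea the other lacks.
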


\begin{proof}
We  claim that $\U(L)\subset \O_L(V)$ is a submanifold. Having this, the lemma is a standard result  \cite[Proposition 1.6.11 of Chapter III]{Bourbaki1998}.  
 To show the claim, we note that, as topological spaces, both $\U(L)$ and $\O_{\res}(V)$ are metric spaces, where the metric on $\U(L)$ is given by $d(T_{1},T_{2}) = \| T_{1} - T_{2} \|$ and the metric on $\O_{\res}(V)$ is given by $d(g_{1},g_{2}) = \|g_{1} - g_{2} \| + \|[g_{1} - g_{2}, \mc{J}_{L}] \|_{2}$.
 It is clear that the inclusion $\U(L) \rightarrow \O_{\res}(V)$ is an isometry and hence a topological embedding. 
 It remains to show that the inclusion is an immersion, this can be checked by computing the derivative of the inclusion at the identity.
\end{proof}

\cref{lem:FockSpacesUnitarilyEquivalent} tells us that the map $\O_{\res}(V) \rightarrow \Lag_{L}(V), g \mapsto g(L)$ is surjective.
It is clear that this map descends to a map on the quotient $\O_{\res}(V)/\U(L) \rightarrow \Lag_{\res}(V), [g]\mapsto g(L)$.
This map is injective, which we see as follows.
Let $L' \in \Lag_{\res}(V)$ be arbitrary, and let $g_{1},g_{2} \in \O_{\res}(V)$ with the property that $g_{1}(L) = g_{2}(L) = L'$.
The map $g_{1}^{-1}g_{2}$, then preserves $L$, and thus lies in $\U(L)$, which implies that $[g_{1}] = [g_{2}] \in \O_{\res}(V)/\U(L)$.
We conclude that 
\begin{equation*}
\O_{\res}(V)/\U(L) \rightarrow \Lag_{\res}(V), [g]\mapsto g(L)
\end{equation*}
is a bijection. We view $\Lag_{\res}(V)$ as a Banach manifold through the identification with $\O_{L}(V)/\U(L)$.
\begin{remark}
 The Banach manifold $\Lag_{L}(V)$ turns out to be a rich geometric object, and appears frequently in the literature, see among others \cite[Chapter 7]{PS86}, \cite{Borthwick1992, Furutani2004, Waldron2009}.
 In particular, it is even a Hilbert manifold, and it supports a \emph{Pfaffian line bundle} (see \cite{SW}).
We remark that the description of the Banach manifold structure on $\Lag_{L}(V)$ given in \cite{PS86,Borthwick1992} is, at first glance, different from the one we just gave, but one can show that they in fact agree \cite{kristel2019b}.
\end{remark}

We complete this section with an elementary result which will be used in \cref{sec:LagrangianTwistedSpinorBundle}.
\begin{lemma}
\label{lem:splittingoverUL}
The central extension $\U(1) \to \Imp_L(V) \to \O_L(V)$ splits over $\U(L)$. More precisely, the map 
 $\sigma: \U(L) \rightarrow \Imp_{L}(V), g \mapsto \Lambda_{g}$ is a group homomorphism and a smooth section.
\end{lemma}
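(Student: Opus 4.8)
The plan is to realize $\sigma$ through the natural ``second quantization'' implementer. For $g\in\U(L)$ let $\Lambda_g$ be the unitary operator on $\mc{F}_{L}=\overline{\Lambda L}$ obtained by applying the exterior algebra functor to the unitary $g\colon L\to L$, so that $\Lambda_g(v_1\wedge\dots\wedge v_k)=gv_1\wedge\dots\wedge gv_k$; this is exactly the operator denoted $\Lambda_g$ in \cref{lem:TheTwoU1Torsors}, specialized to the case $g(L)=L$. Functoriality of the exterior algebra immediately gives $\Lambda_{\mathds 1}=\mathds 1$ and $\Lambda_{gh}=\Lambda_g\Lambda_h$, so that $g\mapsto\Lambda_g$ is already a group homomorphism $\U(L)\to\U(\mc{F}_{L})$.

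Next I would check that $\Lambda_g$ implements the image $\hat g$ of $g$ under the embedding $\U(L)\hookrightarrow\O_L(V)$, so that $\sigma$ actually takes values in $\Imp_L(V)$ and is a section of $q$ over $\U(L)$. Here $\hat g$ acts by $g$ on $L$ and by $\alpha g\alpha$ on $\alpha(L)$. A direct calculation with creation and annihilation operators gives $\Lambda_g c(v)\Lambda_g^{-1}=c(\hat g v)$ for $v\in L$ and $\Lambda_g a(w)\Lambda_g^{-1}=a(\hat g w)$ for $w\in\alpha(L)\simeq L^{*}$, hence $\Lambda_g\rho_{L}(\xi)\Lambda_g^{-1}=\rho_{L}(\hat g\xi)$ for all $\xi\in V$. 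By the universal property of $\Cl(V)$ this forces $\Lambda_g a\Lambda_g^{-1}=\theta_{\hat g}(a)$ for all $a\in\Cl(V)$, which is precisely \cref{eq:Implementer}. Since $\hat g$ preserves the splitting $V=L\oplus\alpha(L)$, the off-diagonal block $P_L\hat g P_L^{\perp}$ vanishes and is in particular Hilbert--Schmidt, so $\hat g\in\O_L(V)$ and $\Lambda_g\in\Imp_L(V)$ with $q(\Lambda_g)=\hat g$. Together with the previous paragraph, $\sigma$ is a group homomorphism and a set-theoretic section of $q$ over $\U(L)$.

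The substantive point is that $\sigma$ is smooth for the Banach Lie group structure on $\Imp_L(V)$, which is not the subspace topology inherited from $\U(\mc{F}_{L})$ (indeed the inclusion $\Imp_L(V)\hookrightarrow\U(\mc{F}_{L})$ is not continuous, and $g\mapsto\Lambda_g$ is continuous only into the strong, not the norm, topology). Since $\sigma$ is a homomorphism and the group operations of $\Imp_L(V)$ are smooth, it suffices to prove smoothness on a neighbourhood of $\mathds 1\in\U(L)$: for arbitrary $g_0$ one has $\sigma(g)=\sigma(g_0)\,\sigma(g_0^{-1}g)$, and left translation by $\sigma(g_0)$ is a diffeomorphism of $\Imp_L(V)$. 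Near the identity I would invoke the construction of the Banach Lie group $\Imp_L(V)$ from \cite{Kristel2019}: its smooth structure is set up precisely so that $q$ admits a smooth local section $s$ over a neighbourhood $\mc{U}$ of $\mathds 1\in\O_L(V)$, given by the normalized implementer of Shale's construction. For $\hat g\in\mc{U}$ with $g\in\U(L)$, the off-diagonal blocks $P_L\hat g P_L^{\perp}$ and $P_L^{\perp}\hat g P_L$ vanish, so the Bogoliubov part of Shale's implementer formula disappears and it reduces to $s(\hat g)=\chi(g)\Lambda_g$; the normalization factor $\chi$ is a $\U(1)$-valued function of $g$ that depends smoothly on $g$ (for the standard normalization $s(\hat g)\Omega=\Lambda_g\Omega=\Omega$ forces $\chi\equiv1$ on block-diagonal elements, and any other smooth choice of section differs from it by a smooth $\U(1)$-valued function on $\mc{U}$). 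Hence on $\iota^{-1}(\mc{U})\cap\U(L)$, where $\iota\colon\U(L)\to\O_L(V)$ is the smooth inclusion from \cref{lem:UnitarySubgroup}, one has $\sigma=\chi^{-1}\cdot(s\circ\iota)$, a product of smooth maps, so $\sigma$ is smooth near $\mathds 1$ and therefore everywhere.

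The step I expect to be the main obstacle is exactly this last comparison: making rigorous that the normalized implementer used to topologize $\Imp_L(V)$ in \cite{Kristel2019}, restricted to block-diagonal elements of $\O_L(V)$, agrees with the exterior-power operator $\Lambda_g$ up to a factor that is smooth in $g$. This requires unwinding the explicit implementer formula (or the corresponding structural statement) of that reference; everything else in the argument is routine.
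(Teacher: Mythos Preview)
Your proposal is correct and follows essentially the same strategy as the paper: verify that $\Lambda_g$ implements $\hat g$ (hence $\sigma$ is a homomorphic section), reduce smoothness to a neighbourhood of the identity via the homomorphism property, and there compare $\sigma$ with the distinguished local section used to define the Banach Lie group structure on $\Imp_L(V)$ in \cite{Kristel2019}. The only difference is in how that comparison is carried out: the paper works directly in exponential coordinates, observing that for $g=\exp(A)$ with $A\in\lie{o}_{\res}(V)$ block-diagonal the off-diagonal piece $\tilde A_1$ of \cite[Section~3.5]{Kristel2019} vanishes, whence the defining section satisfies $\exp(\tilde A_0)=\Lambda_{\exp(A)}=\Lambda_g$ on the nose. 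This sidesteps your normalization factor $\chi$ entirely, so the ``main obstacle'' you flag dissolves once one uses the Lie-algebra description of the section rather than Shale's implementer formula; otherwise your argument is complete.
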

\begin{proof}
 It is clear that $\sigma$ is a group homomorphism. That $\sigma(g)$ is an implementer and implements $g$ follows from the identity
 \begin{equation*}
  \theta_{g}(a) \lact v = \Lambda_{g} a \lact \Lambda_{g}^{-1}v, \quad a \in \Cl(V), v \in \mc{F}.
 \end{equation*}
 It remains to show that $\sigma$ is smooth. It suffices to show that the map $\sigma: \U(L) \rightarrow \Imp_{L}(V)$ coincides with the local section $\sigma$ defined in \cite[Section 3.5]{Kristel2019} on the intersection of their domains, because this would imply that $\sigma$ is smooth in a neighbourhood of the identity, and hence smooth everywhere.
 To that end, suppose that $g = \exp (A) \in \U(L)$, for $A \in \lie{o}_{\res}(V)$ small enough.
 We have that $A$, just like $g$, is block diagonal, which implies that the element $\tilde{A}_{1}$ defined in \cite[Section 3.5]{Kristel2019} is equal to zero.
 It then follows easily that $\exp(\tilde{A}_{0}) = \Lambda_{\exp(A)} = \Lambda_{g}$.
\end{proof}

\subsection{Smooth representations on Fock space}

\label{sec:SmoothRepresentations}

In \cref{sec:SpinorBundleOnLoopSpaceI} we construct rigged  Hilbert space bundles and rigged \cstar-algebra bundles over Fr\'echet manifolds. This requires a detailed study of smoothness properties of the  representations obtained in \cref{sec:CliffFockAndSpin}; this is the goal of this section. We continue working in the setting of a complex Hilbert space $V$ with a real structure $\alpha$, and a Lagrangian subspace $L\subset V$ with corresponding Fock space $\mathcal{F}_{L}$.

As  a subgroup of $\U(\mc{F}_{L})$, the group $\Imp_{\res}(V)$ of implementers comes equipped with a unitary representation on $\mc{F}_{L}$.
As is typical for infinite dimensional representations, the action map 
$\Imp_{\res}(V) \times \mc{F}_{L} \rightarrow \mc{F}_{L}$
is not smooth.
The \emph{subspace of smooth vectors} in $\mc{F}_{L}$ is defined to be
\begin{equation*}
 \mc{F}^{\smooth}_{L} \defeq \{ v \in \mc{F}_{L} \mid \Imp_{\res}(V) \rightarrow \mc{F}_{L}, U \mapsto Uv \text{ is smooth} \}.
\end{equation*}

\begin{lemma}
 The set of smooth vectors $\mc{F}_{L}^{\smooth}$ contains the exterior algebra $\Lambda L$, and is hence a dense subspace of $\mc{F}_{L}$.
\end{lemma}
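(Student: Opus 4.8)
The density statement is immediate once the inclusion $\Lambda L \subseteq \mc{F}_L^{\smooth}$ is established, since $\Lambda L$ is by construction dense in $\mc{F}_L$. So the real task is to show that for every $v\in\Lambda L$ the orbit map $\phi_v\colon \Imp_{\res}(V)\to\mc{F}_L$, $U\mapsto Uv$, is smooth. The plan is to reduce this, by means of the implementability condition, to the single assertion that the vacuum orbit map $U\mapsto U\Omega$ is smooth, where $\Omega=1\in\Lambda^0L$. First I would observe that every $v\in\Lambda L$ can be written as $v=a\lact\Omega$ for some $a$ in the algebraic Clifford algebra (the linear span of products $\iota(w_1)\cdots\iota(w_k)$): indeed $c(w)=\tfrac{1}{\sqrt2}\,\iota(w)\lact$ for $w\in L$, so $w_1\wedge\cdots\wedge w_k=2^{-k/2}\,\iota(w_1)\cdots\iota(w_k)\lact\Omega$, and such elements span $\Lambda L$. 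If $U\in\Imp_{\res}(V)$ projects to $g=q(U)\in\O_{\res}(V)$, then \cref{eq:Implementer} rearranges to $Ua=\theta_g(a)U$ as operators on $\mc{F}_L$, and evaluating at $\Omega$ gives the key identity
\begin{equation*}
Uv \;=\; U(a\lact\Omega)\;=\;\theta_{q(U)}(a)\lact(U\Omega).
\end{equation*}

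Thus $\phi_v$ factors as the composite of $U\mapsto\bigl(\theta_{q(U)}(a),\,U\Omega\bigr)\in\Cl(V)\times\mc{F}_L$ with the Clifford multiplication map $\Cl(V)\times\mc{F}_L\to\mc{F}_L$; the latter is continuous bilinear between Banach spaces, hence smooth. For the first map I would invoke three facts: (i) $q\colon\Imp_{\res}(V)\to\O_{\res}(V)$ is a smooth group homomorphism, being part of the central extension of Banach Lie groups; (ii) for a fixed element $a$ of the algebraic Clifford algebra, the map $g\mapsto\theta_g(a)$ is smooth from $\O_{\res}(V)$ to $\Cl(V)$ in the norm topology --- this follows by writing $a$ as a finite sum of products of generators and using that $\theta_g(\iota(w_1)\cdots\iota(w_k))=\iota(gw_1)\cdots\iota(gw_k)$, that $g\mapsto gw$ is smooth (the inclusion $\O_{\res}(V)\to\mc{B}(V)$ is smooth and $T\mapsto Tw$ is bounded linear), that $\iota$ is bounded linear, and that multiplication in $\Cl(V)$ is continuous; and (iii) the vacuum orbit map $U\mapsto U\Omega$ is smooth. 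Granting these, $\phi_v$ is a composite of smooth maps and we are done.

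Fact (iii) is the analytic core of the argument and the only point that is not formal. I would deduce it from the explicit local description of the implementer group that underlies the Banach Lie group structure in \cite{Kristel2019} (see also \cite{Wurzbacher2001}): near the identity an implementer $\Lambda_g$ acts on $\Omega$ through an explicit formula built from a Hilbert--Schmidt datum depending smoothly on $g$, and combined with the smooth local section $g\mapsto\Lambda_g$ this yields smoothness of $U\mapsto U\Omega$ on a neighbourhood of $\mathds{1}$. Smoothness then propagates to all of $\Imp_{\res}(V)$ by the group structure: for $U$ near $U_0$ write $U=U_0W$ with $W$ near $\mathds{1}$; then $U\mapsto W$ is a diffeomorphism (left translation), $W\mapsto W\Omega$ is smooth near $\mathds{1}$, and $U_0$ acts on $\mc{F}_L$ as a bounded operator (equivalently, one may run this group argument directly on $\phi_v$, which reduces the whole statement to a neighbourhood of $\mathds{1}$). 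I expect the main obstacle to be pinning down the precise form of fact (iii) from the construction in \cite{Kristel2019} and verifying its smoothness carefully; once that is in hand, everything else is a routine chain of compositions of smooth maps.
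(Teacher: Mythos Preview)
Your proposal is correct and matches the argument behind the result the paper cites. Note that the paper itself gives no proof here: it simply refers to \cite[Proposition 3.17]{Kristel2019} (and \cite[Section 10.1]{Ne09}). Your sketch is a faithful reconstruction of that argument --- reduce to the vacuum via the implementer identity $U(a\lact\Omega)=\theta_{q(U)}(a)\lact U\Omega$, treat the Bogoliubov factor by elementary smoothness of $g\mapsto\iota(gw_1)\cdots\iota(gw_k)$, and handle the vacuum orbit $U\mapsto U\Omega$ using the explicit local model of $\Imp_{\res}(V)$ built in \cite{Kristel2019}. You correctly flag fact~(iii) as the only non-formal input; in the reference this is exactly where the Banach Lie structure (the smooth local section and the explicit exponential-type formula for implementers near $\mathds{1}$) is used, so your deferral there is appropriate rather than a gap.
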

This has been shown in \cite[Proposition 3.17]{Kristel2019}, see also \cite[Section 10.1]{Ne09}.
We remark that we have not been able to give a concrete description of the set $\mathcal{F}_L^{\infty}$ in any of our examples (described in \cref{sec:spinorbundles}).

By definition of smooth vectors, the Lie algebra $\lie{imp}(V)$ of $\Imp_L(V)$ acts infinitesimally on $\mathcal{F}_L^{\smooth}$; i.e., for $X \in \lie{imp}(V)$ and  $v \in \mc{F}^{\smooth}_{L}$, we may define
\begin{equation*}
 Xv \defeq \der{}{t} \bigg|_{t=0} \exp(t X)(v).
\end{equation*}
Let $\mc{P}(\mc{F}_{L})$ be the set of all continuous semi-norms on $\mc{F}_{L}$.
We define a topology on $\mc{F}_{L}^{\smooth}$ by the following family of semi-norms, \cite[Section 4]{neebdiffvect}:
\begin{equation*}
 p_{n}(v) = \sup \{ p( X_{1} \dots X_{n} v) \mid X_{i} \in \lie{imp}(V), \|X_{i}\| \leqslant 1 \}, \quad p \in \mc{P}(\mc{F}_{L}), n \in \mathbb{N}_{0}.
\end{equation*}
We write $s: \mc{F}_{L}^{\smooth} \rightarrow \R_{\geqslant 0}$ for the restriction of the norm on $\mc{F}_{L}$ to $\mc{F}_{L}^{\smooth}$.
Because $s= s_{0}$ is one of the semi-norms defining the topology on $\mc{F}_{L}^{\smooth}$ we have the following result.
\begin{lemma}\label{prop:SmoothFockSpace}
 The map $\mc{F}_{L}^{\smooth} \hookrightarrow \mc{F}_{L}$ is continuous.
\end{lemma}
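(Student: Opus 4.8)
The plan is to read the assertion as the statement that the linear inclusion $\iota\colon\mc{F}_{L}^{\smooth}\hookrightarrow\mc{F}_{L}$ is continuous, and to reduce this, as for any linear map between topological vector spaces, to continuity at the origin. Concretely, it suffices to show that for every $\varepsilon>0$ the preimage $\iota^{-1}\bigl(\{w\in\mc{F}_{L}:\|w\|<\varepsilon\}\bigr)=\{v\in\mc{F}_{L}^{\smooth}:s(v)<\varepsilon\}$ is a neighbourhood of $0$ in the locally convex topology that the family $\{p_{n}\}_{p\in\mc{P}(\mc{F}_{L}),\,n\in\mathbb{N}_{0}}$ puts on $\mc{F}_{L}^{\smooth}$.

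The key observation is simply the inspection of that family for $n=0$: since the supremum in the definition of $p_{n}$ ranges over an empty collection of operators $X_{i}$ when $n=0$, one has $p_{0}(v)=p(v)$ for every $p\in\mc{P}(\mc{F}_{L})$. Taking $p$ to be the Hilbert space norm of $\mc{F}_{L}$ shows that $s=s_{0}$ is literally one of the seminorms defining the topology of $\mc{F}_{L}^{\smooth}$. Consequently $\{v\in\mc{F}_{L}^{\smooth}:s(v)<\varepsilon\}$ is a basic open neighbourhood of $0$ in $\mc{F}_{L}^{\smooth}$, and its image under $\iota$ is by construction contained in the $\varepsilon$-ball of $\mc{F}_{L}$; this gives continuity at $0$, hence everywhere. (Equivalently, one may phrase this as: for every continuous seminorm $q$ on $\mc{F}_{L}$ the composite $q\circ\iota=q$ is, by the same $n=0$ remark, a continuous seminorm on $\mc{F}_{L}^{\smooth}$.)

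I do not anticipate any genuine obstacle: the statement is built into the way the topology on $\mc{F}_{L}^{\smooth}$ was defined, and the only thing worth making explicit is the identification of $s$ with the $n=0$ member of the defining seminorm family.
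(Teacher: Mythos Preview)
Your proof is correct and is essentially identical to the paper's argument: both simply observe that the Hilbert norm $s$ coincides with the seminorm $s_{0}$ (the $n=0$ member of the defining family), so continuity of the inclusion is immediate. You have merely unpacked in more detail why ``$s$ is one of the defining seminorms'' implies continuity, which the paper leaves implicit.
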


The following result is \cite[Theorem 4.4 and Proposition 5.4]{neebdiffvect}.

\begin{lemma}\label{lem:ImpActsOnF}
Equipped with the semi-norms $p_{n}$, the space $\mc{F}_{L}^{\smooth}$ is complete, and hence Fr\'echet.
 Moreover, the action map $\Imp_{\res}(V) \times \mc{F}_{L}^{\smooth} \rightarrow \mc{F}_{L}^{\smooth}$ is smooth.
\end{lemma}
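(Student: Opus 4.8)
This \lcnamecref{lem:ImpActsOnF} is a special case of the general theory of smooth vectors for continuous unitary representations of Banach--Lie groups; here it is applied to the Banach--Lie group $\Imp_{\res}(V)$, with Banach--Lie algebra $\lie{imp}(V)$, and its tautological unitary representation $\pi$ on $\mc{F}_{L}$. Accordingly, the plan is to establish the two assertions in turn, treating completeness of $(\mc{F}_{L}^{\smooth},(p_{n})_{n})$ as the foundation for smoothness of the action.

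For completeness, the idea is to realise $\mc{F}_{L}^{\smooth}$ as a countable projective limit of Banach spaces. For $n\in\mb{N}_{0}$ let $\mc{F}_{L}^{(n)}$ denote the space of $C^{n}$-vectors of $\pi$, normed by $\|v\|_{(n)}\defeq\sum_{k=0}^{n}p_{k}(v)$; the main point is that each $\mc{F}_{L}^{(n)}$ is complete. The key tool is the integral identity
\begin{equation*}
\pi(\exp(tX))v-v=\int_{0}^{t}\pi(\exp(sX))\,\d\pi(X)v\,\d s,\qquad X\in\lie{imp}(V),\ v\in\mc{F}_{L}^{(1)},
\end{equation*}
together with its iterates and its extension from one-parameter subgroups to the exponential chart of $\Imp_{\res}(V)$ (available because $\Imp_{\res}(V)$ is a Banach--Lie group). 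Given a $\|\cdot\|_{(n)}$-Cauchy sequence $(v_{k})$, it converges in $\mc{F}_{L}$ to some $v$ since $p_{0}=s$ is among the defining seminorms (see \cref{prop:SmoothFockSpace}), and for each tuple $X_{1},\dots,X_{j}$ with $j\le n$ and $\|X_{i}\|\le1$ the sequence $\d\pi(X_{1})\cdots\d\pi(X_{j})v_{k}$ is $\|\cdot\|$-Cauchy, hence convergent; passing to the limit $k\to\infty$ in the integral identity and its iterates (using that the operators $\pi(\exp(sX))$ are unitary, hence uniformly bounded in $s$) shows that $v$ is again a $C^{n}$-vector with the expected derivatives and that $\|v_{k}-v\|_{(n)}\to0$. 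Since the inclusions $\mc{F}_{L}^{(n+1)}\hookrightarrow\mc{F}_{L}^{(n)}$ are continuous, $\mc{F}_{L}^{\smooth}=\bigcap_{n}\mc{F}_{L}^{(n)}$, and the $p_{n}$ generate precisely the projective limit topology, it follows that $\mc{F}_{L}^{\smooth}$ is a metrizable, complete locally convex space, i.e.\ Fr\'echet.

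For smoothness of the action map $\Imp_{\res}(V)\times\mc{F}_{L}^{\smooth}\to\mc{F}_{L}^{\smooth}$, the plan is to bootstrap on the order of differentiability, exploiting two elementary facts: (i) for every $X\in\lie{imp}(V)$ the operator $\d\pi(X)$ maps $\mc{F}_{L}^{\smooth}$ continuously into itself, with $p_{n}(\d\pi(X)v)\le\|X\|\,p_{n+1}(v)$ straight from the definition of the seminorms; and (ii) the derived representation is equivariant, $\d\pi(X)\pi(g)v=\pi(g)\,\d\pi(\mathrm{Ad}(g^{-1})X)v$. Using (i), one upgrades the defining property of a smooth vector: for $v\in\mc{F}_{L}^{\smooth}$ the orbit map $\Imp_{\res}(V)\to\mc{F}_{L}$, $g\mapsto\pi(g)v$, is in fact smooth as a map into $\mc{F}_{L}^{\smooth}$, because $\der{}{t}\big|_{t=0}\pi(\exp(tX))v=\pi(\exp(tX))\,\d\pi(X)v$ then holds in $\mc{F}_{L}^{\smooth}$. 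One then checks that the action is continuous as a map into $\mc{F}_{L}^{\smooth}$ by the same integral estimates, and, using (ii) together with smoothness of the adjoint action of $\Imp_{\res}(V)$ on $\lie{imp}(V)$, one expresses every iterated partial derivative of $(g,v)\mapsto\pi(g)v$ through $\pi$, $\d\pi$ and $\mathrm{Ad}$, obtaining continuity of all these derivatives in the $p_{n}$-topology, and hence smoothness of the action.

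The main obstacle is the finite-stage completeness together with the bookkeeping of higher \emph{mixed} derivatives in infinite dimensions: one must verify that a vector which is separately $C^{1}$ along all one-parameter subgroups, with locally bounded derivative, is genuinely $C^{n}$ in the exponential chart, and that Cauchy-ness in all the $p_{n}$ is strong enough to preserve this finite-order smoothness in the limit. This is exactly \cite[Theorem 4.4]{neebdiffvect} (completeness) and \cite[Proposition 5.4]{neebdiffvect} (smoothness of the action), which we invoke.
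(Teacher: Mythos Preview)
Your proposal is correct and takes essentially the same approach as the paper: both rely on the general theory of smooth vectors for continuous unitary representations of Banach--Lie groups, citing \cite[Theorem 4.4]{neebdiffvect} for completeness and \cite[Proposition 5.4]{neebdiffvect} for smoothness of the action. The paper in fact gives no argument beyond these citations, so your sketch of the underlying ideas (projective limit of $C^{n}$-vector spaces, integral identity, equivariance of the derived representation) is more detailed than what the paper provides, but the logical content is identical.
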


We may reformulate the last two lemmas in the language of rigged Hilbert spaces and smooth representations, as explained \cref{sec:HilbertBundle}, \cref{def:RiggedHilbertSpace,def:smoothreprhs}. 

\begin{proposition}
\label{prop:smoothFockSpace}
The space of smooth vectors $\mathcal{F}^{\smooth}_L$ is a rigged Hilbert space, and it carries a smooth  representation of the Banach Lie group $\Imp_{\res}(V)$.
\end{proposition}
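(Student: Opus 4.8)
The plan is to simply assemble \cref{prop:smoothFockSpace} from the three lemmas that precede it, translating the functional-analytic statements into the language of \cref{sec:HilbertBundle}. Concretely, \cref{lem:ImpActsOnF} supplies that $\mathcal{F}_L^{\smooth}$, equipped with the family of seminorms $p_n$, is a Fr\'echet space, and \cref{prop:SmoothFockSpace} (together with the fact that the Hilbert-space norm $s=s_0$ is one of the defining seminorms) supplies that the inner product of $\mathcal{F}_L$ restricts to a continuous sesquilinear form on $\mathcal{F}_L^{\smooth}$; by \cref{def:RiggedHilbertSpace} this is exactly the assertion that $\mathcal{F}_L^{\smooth}$ is a rigged Hilbert space, with associated Hilbert space the completion, which is $\mathcal{F}_L$ itself since $\Lambda L \subseteq \mathcal{F}_L^{\smooth}$ is dense in $\mathcal{F}_L$.

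For the second claim, I would invoke \cref{def:smoothreprhs}: a smooth representation of $\Imp_{\res}(V)$ on the rigged Hilbert space $\mathcal{F}_L^{\smooth}$ is an action by isometric morphisms of rigged Hilbert spaces whose action map is smooth. Smoothness of $\Imp_{\res}(V) \times \mathcal{F}_L^{\smooth} \to \mathcal{F}_L^{\smooth}$ is precisely the second half of \cref{lem:ImpActsOnF}. That each $U \in \Imp_{\res}(V)$ acts by a morphism of Fr\'echet spaces $\mathcal{F}_L^{\smooth} \to \mathcal{F}_L^{\smooth}$ follows because $\mathcal{F}_L^{\smooth}$ is by definition $\Imp_{\res}(V)$-invariant and the restricted action is continuous (indeed smooth) in the $\mathcal{F}_L^{\smooth}$-variable; that it is isometric with respect to the restricted inner product follows because $U$ is unitary on $\mathcal{F}_L$ and $s$ is the restriction of the Hilbert-space norm. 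I would note that linearity and the group-action axioms are inherited from the unitary representation on $\mathcal{F}_L$.

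There is essentially no real obstacle here; the work has all been done in \cref{lem:ImpActsOnF}, \cref{prop:SmoothFockSpace}, and the preceding discussion following \cite{Kristel2019,neebdiffvect}. The only point requiring a word of care is the identification of the completion $(\mathcal{F}_L^{\smooth})^{\langle\cdot,\cdot\rangle}$ with $\mathcal{F}_L$, which uses the density of $\Lambda L$ — hence of $\mathcal{F}_L^{\smooth}$ — in $\mathcal{F}_L$; and the verification that the continuous inclusion $\mathcal{F}_L^{\smooth} \hookrightarrow \mathcal{F}_L$ of \cref{prop:SmoothFockSpace} is exactly the map $\iota$ appearing in the reformulation of \cref{def:RiggedHilbertSpace} as a pair $(F,H)$ with dense image. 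So the proof is a short paragraph: cite the three lemmas, observe $s=s_0$ is among the defining seminorms, and conclude via \cref{def:RiggedHilbertSpace,def:smoothreprhs}.

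\begin{proof}
By \cref{lem:ImpActsOnF}, the space $\mathcal{F}_L^{\smooth}$ equipped with the seminorms $p_n$ is a Fr\'echet space. Since $s=s_0$ is one of these seminorms (\cref{prop:SmoothFockSpace}), the restriction of the inner product of $\mathcal{F}_L$ to $\mathcal{F}_L^{\smooth}$ is a continuous sesquilinear inner product; hence $\mathcal{F}_L^{\smooth}$ is a rigged Hilbert space in the sense of \cref{def:RiggedHilbertSpace}. Its completion is $\mathcal{F}_L$, because $\Lambda L \subseteq \mathcal{F}_L^{\smooth}$ is dense in $\mathcal{F}_L$, and the inclusion $\mathcal{F}_L^{\smooth} \hookrightarrow \mathcal{F}_L$ is the associated continuous dense injection. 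The action of $\Imp_{\res}(V)$ on $\mathcal{F}_L$ preserves $\mathcal{F}_L^{\smooth}$ by definition of smooth vectors, and each $U \in \Imp_{\res}(V)$ acts isometrically for the restricted inner product since it is unitary on $\mathcal{F}_L$; moreover each such $U$ is a morphism of Fr\'echet spaces $\mathcal{F}_L^{\smooth} \to \mathcal{F}_L^{\smooth}$ by the smoothness statement of \cref{lem:ImpActsOnF}. Finally, that same statement gives that the action map $\Imp_{\res}(V) \times \mathcal{F}_L^{\smooth} \to \mathcal{F}_L^{\smooth}$ is smooth. By \cref{def:smoothreprhs}, this is a smooth representation of the Banach Lie group $\Imp_{\res}(V)$ on the rigged Hilbert space $\mathcal{F}_L^{\smooth}$.
\end{proof}
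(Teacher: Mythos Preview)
Your proposal is correct and matches the paper's own treatment: the paper does not give a formal proof but simply states that \cref{prop:smoothFockSpace} is a reformulation of the two preceding lemmas (\cref{prop:SmoothFockSpace} and \cref{lem:ImpActsOnF}) in the language of \cref{def:RiggedHilbertSpace,def:smoothreprhs}. Your written-out argument spells out exactly this translation, including the identification of the completion with $\mathcal{F}_L$ via density of $\Lambda L$ and the isometry of the action via unitarity, which the paper leaves implicit.
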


Just like the action map $\Imp_{\res}(V) \times \mc{F}_{L} \rightarrow \mc{F}_{L}$ is not smooth, the  map $\O(V) \times \Cl(V) \rightarrow \Cl(V)$ for the action by Bogoliubov automorphisms is not smooth either, an issue that we handle in a similar way.
First, we introduce some notation.
We write $\Clsm$ for the subspace of smooth vectors in $\Cl(V)$, i.e.~
\begin{equation*}
 \Cl(V)^{\smooth} \defeq \{ a \in \Cl(V) \mid \O(V) \rightarrow \Cl(V), g \mapsto \theta_{g}(a) \text{ is smooth} \},
\end{equation*}
and consider  the infinitesimal action of $\lie{o}(V)$ on $\Clsm$, i.e.,
\begin{equation*}
 Ya \defeq \der{}{t}\bigg|_{t=0} \theta_{\exp(tY)}(a).
\end{equation*}
Let $\mc{R}(\Cl(V))$ be the set of continuous semi-norms on $\Cl(V)$.
The topology on $\Clsm$ is then defined by the family of semi-norms
\begin{equation*}
 r_{n}(a) = \sup \{ r( Y_{1} \dots Y_{n} a) \mid Y_{i} \in \lie{o}(V), \|Y_{i}\| \leqslant 1 \}, \quad r \in \mc{R}(\Cl(V)), n \in \mathbb{N}_{0}.
\end{equation*}
For $X = (Y,\lambda) \in \lie{imp}(V) = \lie{o}_{\res}(V) \oplus \R$ and $a \in \Clsm$ we define
\begin{equation*}
 Xa \defeq \der{}{t} \bigg|_{t=0} \exp(tX)a \exp(-tX)
\end{equation*}
Because $\exp(tX)$ implements $\exp(tY)$, we obtain
\begin{equation*}
 Xa = \der{}{t}\bigg|_{t=0} \exp(tX) a \exp(-tX) = \der{}{t}\bigg|_{t=0} \theta_{\exp(tY)}(a) = Ya
\end{equation*}
We then have, for all $a \in \Clsm$ and all $v \in \mc{F}_{L}^{\smooth}$,
\begin{equation}\label{eq:ActionIsADerivation}
 X (a \lact v) = Xa \lact v + a \lact Xv.
\end{equation}
We write $s: \Clsm \rightarrow \R_{\geqslant 0}$ for the restriction of the norm on $\Cl(V)$ to $\Clsm$.
We observe that if $r \in \mc{R}(\Cl(V))$, then $\overline{r}: \Cl(V) \rightarrow \R, a \mapsto r(a^{*})$ is a continuous semi-norm, because the involution is continuous.
Using the fact that $s=s_{0}$ is one of the semi-norms defining the topology on $\Clsm$, and the fact that $\O(V)$ acts on $\Cl(V)$ by \cstar-automorphisms, and hence that $r_{n}(a^{*}) = \overline{r}_{n}(a)$ for all $r \in \mc{R}(\Cl(V))$ and all $n \in \mathbb{N}_{0}$, we conclude that the following result holds.
\begin{lemma}\label{lem:CliffordFrechetAlgebra}
 The inclusion map $\Cl(V)^{\smooth} \hookrightarrow \Cl(V)$ is continuous.
 Moreover, the involution on $\Cl(V)$ restricts to a continuous involution on $\Cl(V)^{\smooth}$.
\end{lemma}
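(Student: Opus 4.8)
The plan is to read off both statements directly from the way the seminorm family on $\Clsm$ is built. Recall that the topology of $\Clsm$ is given by the seminorms $r_n$, $r \in \mc{R}(\Cl(V))$, $n \in \mathbb{N}_0$, and that for $n=0$ the empty product gives $r_0 = r$; in particular, taking $r$ to be the $\cstar$-norm, the seminorm $s_0$ in this family is precisely the norm $s$ inherited from $\Cl(V)$. For the first claim, a linear map from a locally convex space into a normed space is continuous exactly when the pull-back of the target norm is a continuous seminorm on the source. Here the pull-back of the $\cstar$-norm along $\Clsm \hookrightarrow \Cl(V)$ is $s = s_0$, which is one of the seminorms defining the topology of $\Clsm$ and hence continuous. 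This proves continuity of the inclusion.

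For the involution, I would first check that $*$ maps $\Clsm$ into itself: if $a \in \Clsm$, then $g \mapsto \theta_g(a^{*}) = \theta_g(a)^{*}$ is the composition of the smooth map $g \mapsto \theta_g(a)$ with the continuous $\R$-linear map $*$ on $\Cl(V)$, hence smooth, so $a^{*} \in \Clsm$. Next, since $*$ is $\R$-linear and continuous on $\Cl(V)$ it commutes with the real-parameter derivative $\der{}{t}\big|_{t=0}$, and since each $\theta_g$ is a $\cstar$-automorphism we get, for $Y \in \lie{o}(V)$ and $b \in \Clsm$,
\begin{equation*}
Y(b^{*}) = \der{}{t}\bigg|_{t=0}\theta_{\exp(tY)}(b^{*}) = \der{}{t}\bigg|_{t=0}\theta_{\exp(tY)}(b)^{*} = (Yb)^{*}.
\end{equation*}
Iterating this identity gives $Y_1\cdots Y_n(a^{*}) = (Y_1\cdots Y_n a)^{*}$ for all $Y_i \in \lie{o}(V)$, so that
\begin{equation*}
r_n(a^{*}) = \sup\{\, r\big((Y_1\cdots Y_n a)^{*}\big) : Y_i \in \lie{o}(V),\ \|Y_i\| \leqslant 1 \,\} = \overline r_n(a),
\end{equation*}
where $\overline r(b) \defeq r(b^{*})$ is again a continuous seminorm on $\Cl(V)$ because the involution there is continuous. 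Thus $a \mapsto a^{*}$ pulls each defining seminorm $r_n$ of $\Clsm$ back to the defining seminorm $\overline r_n$, so $*$ is continuous on $\Clsm$; it is an involution since it already is one on $\Cl(V)$.

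I do not expect a genuine obstacle here: the argument is a bookkeeping exercise. The only points that require a moment's care are that $*$, although complex anti-linear, still commutes with $\der{}{t}\big|_{t=0}$ because the parameter $t$ is real, and that the identity $\theta_g(b^{*}) = \theta_g(b)^{*}$ is exactly the statement that $\O(V)$ acts by $\cstar$-automorphisms, which is recorded earlier in this section. Everything else is the formal manipulation of the two seminorm families already introduced.
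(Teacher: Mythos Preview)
Your proposal is correct and follows exactly the approach the paper takes: both parts are read off from the seminorm family, using $s=s_0$ for the inclusion and the identity $r_n(a^{*})=\overline r_n(a)$ (coming from $\theta_g(b^{*})=\theta_g(b)^{*}$) for the involution. You have simply spelled out the intermediate steps---in particular that $*$ preserves $\Clsm$ and that $Y(b^{*})=(Yb)^{*}$---which the paper leaves implicit in its one-sentence justification.
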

Using the fact that the action map $\O(V) \times \Cl(V) \rightarrow \Cl(V)$ is continuous, see \cite[Proposition 4.35]{Ambler2012}, together with \cite[Theorem 6.2]{neebdiffvect}, we obtain the following result.
\begin{lemma}
 Equipped with the semi-norms $r_{n}$, the algebra $\Clsm$ is a Fr\'echet algebra.
 Moreover, the action map $\O(V) \times \Clsm \rightarrow \Clsm$ is smooth.
\end{lemma}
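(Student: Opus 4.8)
The plan is to read off the topological-vector-space part from a general theorem on smooth vectors, and to add the (routine) algebraic part by hand. The hypotheses needed to apply \cite[Theorem 6.2]{neebdiffvect} are in place: $\O(V)$ is a Banach--Lie group with smooth exponential map, it acts on the Banach space $\Cl(V)$ via the homomorphism $\theta$, and the associated action map $\O(V)\times\Cl(V)\to\Cl(V)$ is (jointly) continuous by \cite[Proposition 4.35]{Ambler2012}. That theorem then gives at once that $\Clsm$, equipped with the seminorms $r_{n}$, is complete; since on the Banach space $\Cl(V)$ every continuous seminorm is dominated by a multiple of $\|\cdot\|$, the family $\{r_{n}\}$ is equivalent to the countable family obtained from $r=\|\cdot\|$, so $\Clsm$ is metrizable, hence Fr\'echet. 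The same theorem yields smoothness of the action map $\O(V)\times\Clsm\to\Clsm$, which is the second assertion.

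It remains to see that $\Clsm$ is a subalgebra of $\Cl(V)$ on which multiplication is continuous. For the first point: if $a,b\in\Clsm$, then $g\mapsto\theta_{g}(ab)=\theta_{g}(a)\theta_{g}(b)$ is smooth, being the composite of $g\mapsto(\theta_{g}(a),\theta_{g}(b))$ with the (bilinear, hence smooth) multiplication of the Banach algebra $\Cl(V)$; thus $ab\in\Clsm$. For continuity of multiplication in the $r_{n}$-topology, I would use that each one-parameter group $t\mapsto\theta_{\exp(tY)}$, $Y\in\lie{o}(V)$, consists of algebra homomorphisms; differentiating $\theta_{\exp(tY)}(a)\theta_{\exp(tY)}(b)$ at $t=0$ shows that the infinitesimal action of $\lie{o}(V)$ on $\Clsm$ is by derivations, $Y(ab)=(Ya)b+a(Yb)$. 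Iterating this Leibniz rule gives
\begin{equation*}
(Y_{1}\cdots Y_{n})(ab)=\sum_{I\subseteq\{1,\dots,n\}}(Y_{I}a)(Y_{I^{c}}b),
\end{equation*}
where $Y_{I}$ denotes the ordered product of the $Y_{i}$ with $i\in I$. Applying a continuous seminorm $r\leqslant C_{r}\|\cdot\|$ and using submultiplicativity of the $C^{*}$-norm, one obtains an estimate of the form $r_{n}(ab)\leqslant C_{r}\,2^{n}\,q_{n}(a)\,q_{n}(b)$, where $q_{n}\defeq\max_{0\leqslant k\leqslant n}\|\cdot\|_{k}$ is a continuous seminorm on $\Clsm$ (here $\|\cdot\|_{k}$ is $r_{k}$ for $r=\|\cdot\|$). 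Such a bilinear bound is precisely continuity of multiplication (which, being continuous and bilinear, is then automatically smooth). Together with the previous observation this shows $\Clsm$ is a Fr\'echet algebra.

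The only genuinely delicate point is the appeal to \cite[Theorem 6.2]{neebdiffvect}: one must check that $\O(V)$ lies in the class of Lie groups to which that theorem applies (Banach--Lie groups are locally exponential, so this is fine) and that $\O(V)\times\Cl(V)\to\Cl(V)$ is jointly continuous, which follows from strong continuity of $\theta\colon\O(V)\to\Aut(\Cl(V))$ together with the fact that the $\theta_{g}$ are isometries. Everything else — the subalgebra property and the Leibniz estimate — is a routine computation once the derivation property of the infinitesimal action is noted.
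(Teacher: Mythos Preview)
Your argument is correct and follows the same route as the paper: the continuity of $\O(V)\times\Cl(V)\to\Cl(V)$ from \cite[Proposition~4.35]{Ambler2012} feeds into \cite[Theorem~6.2]{neebdiffvect}, which yields completeness of $\Clsm$ in the $r_n$-topology and smoothness of the restricted action. The paper's proof consists of exactly this citation and nothing more.

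Where you go beyond the paper is in treating the word \emph{algebra} seriously: the cited theorem of Neeb delivers a Fr\'echet \emph{space}, not a Fr\'echet algebra, and you supply the missing ingredients---closure of $\Clsm$ under multiplication via $\theta_g(ab)=\theta_g(a)\theta_g(b)$, and continuity of multiplication via the iterated Leibniz estimate $r_n(ab)\leqslant C_r 2^n q_n(a)q_n(b)$. These steps are routine but genuinely required for the statement as written, and the paper leaves them implicit. Your version is therefore strictly more complete; the only cosmetic point is that your Leibniz expansion is the same device the paper uses later (Equation~\eqref{eq:NCBinomial}) in the proof of \cref{prop:SmoothCliffordActionOnF}, so one could alternatively cross-reference that computation.
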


Again, we  reformulate this in the language introduced in \cref{sec:HilbertBundle}, \cref{def:riggedcstaralgebra,def:smoothrepcstar}.  

\begin{proposition}
\label{prop:CliffordFrechetAlgebra}
The algebra of smooth vectors $\Clsm$ is a rigged \cstar-algebra, and it carries a smooth representation of the Banach Lie group $\mathrm{O}(V)$.
\end{proposition}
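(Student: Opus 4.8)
The plan is to deduce the statement directly from the two preceding lemmas together with the definitions of a rigged \cstar-algebra (\cref{def:riggedcstaralgebra}) and of a smooth representation on one (\cref{def:smoothrepcstar}), supplying the few points those lemmas leave implicit.

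For the first assertion: by the preceding lemma $\Clsm$ is a Fr\'echet algebra, and $s=s_{0}$ is among the seminorms defining its topology, hence a continuous norm; by \cref{lem:CliffordFrechetAlgebra} the involution of $\Cl(V)$ restricts to a continuous complex anti-linear involution of $\Clsm$. It then remains to identify the norm completion of $(\Clsm,s)$ with a \cstar-algebra. The norm-isometric inclusion $\Clsm\hookrightarrow\Cl(V)$ identifies this completion with the norm-closure of $\Clsm$ inside $\Cl(V)$, which is automatically a \cstar-subalgebra; I would moreover observe that $\iota(V)\subseteq\Clsm$, since $\theta_{g}(\iota(v))=\iota(gv)$ and $g\mapsto gv$ is smooth on the Banach Lie group $\O(V)$ (being the restriction of the continuous bilinear map $\mc{B}(V)\times V\to V$), so that $\Clsm$ contains the $*$-subalgebra generated by $\iota(V)$, which is norm-dense in $\Cl(V)$ by construction. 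Hence the completion of $(\Clsm,s)$ is $\Cl(V)$ itself, a \cstar-algebra, and $\Clsm$ is a rigged \cstar-algebra.

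For the second assertion: by the preceding lemma the action map $\O(V)\times\Clsm\to\Clsm$ is smooth, so by \cref{def:smoothrepcstar} it remains only to check that each Bogoliubov automorphism $\theta_{g}$ is an isometric morphism of rigged \cstar-algebras. Now $\theta_{g}$ is a $*$-automorphism of $\Cl(V)$, hence an algebra homomorphism, intertwines the involution, and is isometric for $s$; the one remaining point is that $\theta_{g}$ restricts to a continuous endomorphism of $\Clsm$ (then, since $\theta_{g}^{-1}=\theta_{g^{-1}}$, an isomorphism of Fr\'echet spaces). For this I would differentiate the relation $\theta_{\exp(tY)}\theta_{g}=\theta_{g}\theta_{\exp(tg^{-1}Yg)}$ at $t=0$ to get $Y\theta_{g}(a)=\theta_{g}\big((g^{-1}Yg)a\big)$ for $Y\in\lie{o}(V)$ and $a\in\Clsm$, iterate this to handle the higher-order terms, and note that $Z\mapsto g^{-1}Zg$ is a bijective isometry of $\lie{o}(V)$ while $r\circ\theta_{g}$ is again a continuous seminorm on $\Cl(V)$; this gives $r_{n}\circ\theta_{g}=(r\circ\theta_{g})_{n}$, a defining seminorm of $\Clsm$, so $\theta_{g}$ is continuous. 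Feeding these observations into \cref{def:smoothrepcstar} completes the proof.

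Essentially all of the genuinely analytic content is already carried by the two preceding lemmas (and hence by the smoothness results of Neeb and the norm-continuity of $\O(V)\times\Cl(V)\to\Cl(V)$). Within the present proof the only steps that need a short argument are the density of $\Clsm$ in $\Cl(V)$, used to identify the completion, and the invariance of the seminorms $r_{n}$ under the automorphisms $\theta_{g}$, which is what is needed to see that the $\theta_{g}$ are morphisms of rigged \cstar-algebras and not merely \cstar-isometries. Neither is hard, so I do not anticipate a real obstacle here.
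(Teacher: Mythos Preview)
Your proposal is correct and follows the same approach as the paper, which simply presents the proposition as a direct reformulation of the two preceding lemmas in the language of \cref{def:riggedcstaralgebra,def:smoothrepcstar} and offers no further argument. You go beyond the paper by spelling out two points it leaves implicit: the identification of the norm completion of $\Clsm$ with $\Cl(V)$ via the density argument $\iota(V)\subseteq\Clsm$, and the verification that each $\theta_{g}$ is a continuous automorphism of $\Clsm$ via the seminorm identity $r_{n}\circ\theta_{g}=(r\circ\theta_{g})_{n}$; both arguments are sound and fill in exactly what a careful reader would want to check.
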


The following result tells us exactly in what sense the riggings  $\mc{F}_{L}^{\smooth}$ and $\Clsm$ are compatible.

\begin{proposition}\label{prop:SmoothCliffordActionOnF}
The Fock space representation $\Cl(V) \times \mc{F}_{L} \rightarrow \mc{F}_{L}$ restricts to a map $\Cl(V)^{\infty} \times \mc{F}_{L}^{\infty} \rightarrow \mc{F}_{L}^{\infty}$, and turns the rigged Hilbert space $\mathcal{F}_L^{\infty}$ into a rigged $\Cl(V)^{\infty}$-module. \end{proposition}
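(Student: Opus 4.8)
The plan is to verify the three assertions in turn: (i) that Clifford multiplication maps $\Cl(V)^{\infty}\times\mathcal{F}_L^{\infty}$ into $\mathcal{F}_L^{\infty}$, (ii) that the restricted action map is smooth, and (iii) that the bounds \cref{eq:BoundednessCriteria} hold on the riggings. For (i), fix $a\in\Cl(V)^{\infty}$ and $v\in\mathcal{F}_L^{\infty}$. By definition of smooth vectors, $U\mapsto Uv$ is smooth on $\Imp_{\res}(V)$ and $g\mapsto\theta_g(a)$ is smooth on $\O(V)$. The implementability condition $U(a\lact v)=\theta_{q(U)^{-1}}(a)\lact(Uv)$ — more precisely, rearranging \cref{eq:Implementer}, $U\,(a\lact v)=\theta_{g}(a)\lact Uv$ where $g=q(U)$ after relabelling — exhibits $U\mapsto U(a\lact v)$ as a composite of smooth maps: $U\mapsto(\theta_{q(U)}(a),Uv)$ followed by Clifford multiplication $\Cl(V)\times\mathcal{F}_L\to\mathcal{F}_L$, which is continuous (indeed bounded bilinear, hence smooth). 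Here I use that $q:\Imp_{\res}(V)\to\O_{\res}(V)$ is a smooth homomorphism and the inclusion $\O_{\res}(V)\to\O(V)$ is smooth, so $U\mapsto\theta_{q(U)}(a)$ is smooth into $\Cl(V)$. Thus $a\lact v\in\mathcal{F}_L^{\infty}$.

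For (ii), I would use the infinitesimal characterization of the Fr\'echet topologies on $\mathcal{F}_L^{\infty}$ and $\Cl(V)^{\infty}$ via the seminorms $p_n$ and $r_n$, together with the Leibniz-type identity \cref{eq:ActionIsADerivation}: for $X\in\lie{imp}(V)$ one has $X(a\lact v)=Xa\lact v+a\lact Xv$. Iterating, $X_1\cdots X_n(a\lact v)$ is a sum of $2^n$ terms of the form $(X_{i_1}\cdots X_{i_k}a)\lact(X_{j_1}\cdots X_{j_{n-k}}v)$, and since Clifford multiplication is contractive up to the norm of $a$ (the operator norm bound $\|a\lact v\|\le\|a\|\,\|v\|$), each term is controlled by $r_k(a)\,p_{n-k}(v)$. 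This yields continuity estimates $p_n(a\lact v)\le\sum_{k=0}^n\binom{n}{k}r_k(a)\,p_{n-k}(v)$, showing the bilinear map $\Cl(V)^{\infty}\times\mathcal{F}_L^{\infty}\to\mathcal{F}_L^{\infty}$ is continuous, hence smooth. I should be a little careful that the relevant derivatives really land in $\mathcal{F}_L^{\infty}$ and $\Cl(V)^{\infty}$ respectively and that \cref{eq:ActionIsADerivation} extends to iterated derivatives, but this is a routine bootstrapping using \cref{lem:ImpActsOnF} and the analogous statement for $\Cl(V)^{\infty}$.

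For (iii), the inequalities in \cref{eq:BoundednessCriteria} — namely $\langle a\lact v,a\lact v\rangle\le\|a\|^2\langle v,v\rangle$ and $\langle a\lact v,w\rangle=\langle v,a^{*}\lact w\rangle$ — hold on all of $\Cl(V)\times\mathcal{F}_L$ because $\Cl(\rho_L):\Cl(V)\to\mathcal{B}(\mathcal{F}_L)$ is a $*$-homomorphism of \cstar-algebras, so they hold a fortiori on the dense subspaces $\Cl(V)^{\infty}$ and $\mathcal{F}_L^{\infty}$ with the norm $\|a\|$ being exactly the restriction of the \cstar-norm (the map $s$ of \cref{lem:CliffordFrechetAlgebra}) and the inner product being the restriction of that on $\mathcal{F}_L$. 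Combined with (i) and (ii) this verifies all the conditions of \cref{def:RiggedRepresentation}, so $\mathcal{F}_L^{\infty}$ is a rigged $\Cl(V)^{\infty}$-module.

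The main obstacle is step (ii): one must be confident that iterating the derivation identity \cref{eq:ActionIsADerivation} is legitimate — i.e.\ that repeated infinitesimal actions commute appropriately with Clifford multiplication and that the resulting vectors remain smooth — so that the seminorm estimates close up. This is where the compatibility of the two riggings genuinely enters; by contrast steps (i) and (iii) are essentially formal consequences of smoothness of the group actions and of $\Cl(\rho_L)$ being a $*$-representation.
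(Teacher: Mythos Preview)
Your proposal is correct and follows essentially the same route as the paper's proof: the paper also reduces everything to showing continuity of the restricted action map (invoking \cref{re:repriggedcstar} for your step~(iii)), and proves continuity via exactly the iterated Leibniz estimate $\|X_1\cdots X_n(a\lact v)\|\le\sum_{k=0}^n\binom{n}{k}s_k(a)s_{n-k}(v)$ that you write down. The one minor difference is your step~(i): the paper argues that $a\lact v$ is a smooth vector by computing the derivative of $U\mapsto U(a\lact v)$ at the identity and identifying it as $X\mapsto Xa\lact v+a\lact Xv$, whereas you factor $U(a\lact v)=\theta_{q(U)}(a)\lact Uv$ directly as a composite of smooth maps (using that Clifford multiplication $\Cl(V)\times\mathcal{F}_L\to\mathcal{F}_L$ is bounded bilinear, hence smooth between Banach spaces). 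Your argument is arguably cleaner, since it gives smoothness on the whole group in one stroke rather than requiring a separate iteration or translation argument. Your caution about step~(ii) is well-placed but the iteration is indeed legitimate: smooth vectors are preserved under the Lie algebra action (this is part of the content of \cref{lem:ImpActsOnF} and its analogue for $\Cl(V)^\infty$), so each summand in the expansion lives where it should.
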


\begin{proof}
 A priori, the image of the action map is contained in $\mc{F}_{L}$ instead of $\mc{F}_{L}^{\smooth}$, however, a standard computation shows that if $a \in \Cl(V)^{\smooth}$ and $v \in \mc{F}_{L}^{\smooth}$, then the derivative of the map $\Imp_{L}(V) \rightarrow \mc{F}_{L}, U \mapsto U (a \lact v)$ at $\mathds{1} \in \Imp_{\res}(V)$ is given by $\lie{imp}(V) \rightarrow \mc{F}_{L}, X \mapsto X(a) \lact v + a \lact X(v)$, which implies that $a \lact v \in \mc{F}^{\smooth}_{L}$.
 To complete the proof, it suffices to show that the map $\Cl(V)^{\smooth} \times \mc{F}^{\smooth}_{L} \rightarrow \mc{F}^{\smooth}_{L}$ is continuous (see \cref{re:repriggedcstar}).
We start by deriving a useful estimate for $\| X_{1} \dots X_{n}(a \lact v)\|$. 
Equation \cref{eq:ActionIsADerivation} implies that for all $a \in \Clsm$, all $v \in \mc{F}_{L}^{\smooth}$, and all $X_{1},...,X_{n} \in \lie{imp}(V)$ we have the following non-commutative binomial expansion
\begin{equation}\label{eq:NCBinomial}
 X_{1}...X_{n}(a \lact v) = \sum_{k=0}^{n}\sum_{\sigma \in \lie{S}_{k}^{n}} X_{\sigma(1)}...X_{\sigma(k)}a \lact X_{\sigma(k+1)} ... X_{\sigma(n)} v,
\end{equation}
where $\lie{S}_{k}^{n}$ is the set of bijections $\sigma: \{1,...,n\} \rightarrow \{1,...,n\}$ such that the restrictions $\sigma: \{1,...,k\} \rightarrow \{ 1,...,n\}$ and $\sigma: \{k+1,...,n\}\rightarrow \{1,...,n\}$ are order preserving. We now suppose that $\| X_{i} \| \leqslant 1$ and compute for arbitrary $\sigma \in \lie{S}_{k}^{n}$
\begin{align*}
 \|  X_{\sigma(1)}...X_{\sigma(k)}a \lact X_{\sigma(k+1)} ... X_{\sigma(n)} v \| &\leqslant \| X_{\sigma(1)}...X_{\sigma(k)}a \| \| X_{\sigma(k+1)} ... X_{\sigma(n)} v  \| \leqslant s_{k}(a) s_{n-k}(v).
\end{align*}
From which we obtain the estimate
\begin{align*}
 \| X_{1} \dots X_{n} (a \lact v) \| \leqslant \sum_{k=0}^{n} \sum_{\sigma \in \lie{S}_{k}^{n}} s_{k}(a) s_{n-k}(v) = \sum_{k=0}^{n} \binom{n}{k} s_{k}(a) s_{n-k}(v).
\end{align*}
Let us write $\rho: \Clsm \times \mc{F}_{L}^{\smooth} \rightarrow \mc{F}_{L}^{\smooth}$ for the action map.
A subbasis of open neighbourhoods of zero in $\mc{F}^{\smooth}_{L}$ is given by the open neighbourhoods
 \begin{equation*}
  U(p,n,\eps) = \{ v \in \mc{F}_{L}^{\smooth} \mid p_{n}(v) < \eps \}, \quad p \in \mc{P}, \, n \in \mathbb{N}_{0}, \, \eps > 0.
 \end{equation*}
 We assume, without loss of generality, that $p \leqslant \| \cdot \|$.
 Hence, let $W \defeq U(p,n,\eps)$, with $p,n,\eps$ as above, be arbitrary.
 We shall prove that $\rho^{-1}(W)$ is open by finding for each $(a,v) \in \rho^{-1}(W)$ an open neighbourhood of $(a,v)$ contained in $\rho^{-1}(W)$.
 Indeed, for each $k =0,...,n$ we consider the open neighbourhoods
 $Z_{k} = \{ b \in \Clsm \mid s_{k}(b-a) < \eps \min \{ 2^{-n}, 2^{-n}/s_{n-k}(v) \} /3 \} $ of $a$, and $Z'_{k} = \{ w \in \mc{F}_{L}^{\smooth} \mid s_{n-k}(w-v) < \eps \min \{ 2^{-n}, 2^{-n}/s_{k}(a) \}/3 \}$ of $v$. The intersection $Z \defeq \cap_{k} Z_{k} \times Z'_{k}$ is then an open neighbourhood of $(a,v)$.
 Let $(b,w) \in \Clsm \times \mc{F}_{L}^{\smooth}$ be arbitrary, then we compute
 \begin{equation*}
  \rho(b,w) - \rho(a,v) = (b - a) \lact (w - v) + a \lact (w - v) + (b-a) \lact v.
 \end{equation*}
 We thus obtain the estimate, for $(b,w) \in Z$ and $X_{1}, \dots X_{n} \in \lie{imp}(V)$ with $\|X_{i} \| \leqslant 1$,
 \begin{align*}
  p( X_{1}...X_{n} (\rho(b,w) - \rho(a,v)))
  &\leqslant \| X_{1} \dots X_{n} ( \rho(b,w) - \rho(a,v)) \| \\
  &\leqslant \| X_{1} \dots X_{n} ((b-a) \lact (w-v)) \| + \|X_{1} \dots X_{n} (a \lact (w-v)) \| \\
  & \hspace{17em}+ \| X_{1} \dots X_{n} ((b-a) \lact v) \| \\
  & \leqslant \sum_{k=0}^{n} \binom{n}{k} 
  \big(   s_{k}(b-a)s_{n-k}(w-v) 
  +\, s_{k}(a) s_{n-k}(w-v) 
  \\[-1em]&\hspace{17em}+\, s_{k}(b-a) s_{n-k}(v)
   \big) \\
  & < \frac{\eps}{3} \sum_{k=0}^{n} \binom{n}{k} \bigg( \frac{2^{-2n}}{3} + 2^{-n} + 2^{-n} \bigg) \\
  &= \frac{\eps}{3} \left( 2 + \frac{2^{-n}}{3} \right) \\
  & < \eps\text{.}
 \end{align*}
 Which implies that
 \begin{equation*}
  p_{n}(\rho(b,w) - \rho(a,v)) = \sup \{ 
  p( X_{1}...X_{n} (\rho(b,w) - \rho(a,v))) \mid X_{i} \in \lie{imp}(V), \| X_{i} \| \leqslant 1 \} < \eps.
 \end{equation*}
 Hence $Z \subseteq \rho^{-1}(W)$, and we are done.
\end{proof}

The smooth representations of \cref{prop:smoothFockSpace,prop:CliffordFrechetAlgebra}, as well as the module structure of \cref{prop:SmoothCliffordActionOnF} all fit together and give the following result.

\begin{theorem}
\label{thm:repofimp}
The smooth representations of $\O_L(V)$ on $\Cl(V)^{\infty}$ and of $\Imp_L(V)$ on $\mathcal{F}_L^{\infty}$ form a smooth representation of the central extension $\U(1) \to \Imp_L(V)\to\O_L(V)$ on the rigged $\Cl(V)^{\infty}$-module $\mathcal{F}_L^{\infty}$ in the sense of \cref{def:repofce}.
\end{theorem}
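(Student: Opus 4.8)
The statement is essentially an assembly of the smoothness results established above, so the plan is to identify the three required pieces of data and then verify the two compatibility axioms of \cref{def:repofce}. Recall that for the central extension $\U(1)\to\Imp_L(V)\stackrel{q}{\to}\O_L(V)$ acting on the rigged $\Clsm$-module $\mathcal{F}_L^{\infty}$ we need: a smooth representation of $\O_L(V)$ on $\Clsm$, a smooth representation of $\Imp_L(V)$ on $\mathcal{F}_L^{\infty}$, that the central $\U(1)$ acts by scalars, and that $U\cdot(a\lact v)=(q(U)\cdot a)\lact(U\cdot v)$.

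The first two are already available. That $\mathcal{F}_L^{\infty}$ is a rigged $\Clsm$-module is \cref{prop:SmoothCliffordActionOnF}; that $\Imp_L(V)$ acts smoothly on it by isometries is \cref{prop:smoothFockSpace}. For the action of $\O_L(V)$ on $\Clsm$ I would start from \cref{prop:CliffordFrechetAlgebra}, which gives a smooth action of the full group $\O(V)$ on $\Clsm$ by isometric automorphisms of the rigged \cstar-algebra, and restrict it along the inclusion $\O_L(V)\hookrightarrow\O(V)$. Since this inclusion is smooth (see \cref{sec:CliffFockAndImp}), the composite $\O_L(V)\times\Clsm\to\O(V)\times\Clsm\to\Clsm$ is smooth, and each Bogoliubov automorphism $\theta_g$ is still an isometric morphism of rigged \cstar-algebras; hence we obtain a smooth representation in the sense of \cref{def:smoothrepcstar}.

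It then remains to check the two axioms. Condition (a) is immediate: the central subgroup $\U(1)\subset\Imp_L(V)$ consists exactly of the scalar operators $z\,\mathds{1}$ on $\mathcal{F}_L$, namely the implementers of $\mathds{1}\in\O_L(V)$, and these preserve $\mathcal{F}_L^{\infty}$ and act on it by scalar multiplication. For condition (b), let $U\in\Imp_L(V)$ with $q(U)=g$. Using the identification of $\Cl(V)$ with its image in $\mc{B}(\mathcal{F}_L)$ under the irreducible Fock representation, the defining property \cref{eq:Implementer} of an implementer becomes $\theta_g(a)=UaU^{*}$; moreover the $\O_L(V)$-action on $\Clsm$ is precisely $g\cdot a=\theta_g(a)$. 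Therefore, for every $a\in\Clsm$ and $v\in\mathcal{F}_L^{\infty}$,
\begin{equation*}
U\cdot(a\lact v)=(UaU^{*})(Uv)=\theta_g(a)\lact(Uv)=(q(U)\cdot a)\lact(U\cdot v),
\end{equation*}
where all terms lie in $\mathcal{F}_L^{\infty}$ by \cref{prop:smoothFockSpace,prop:SmoothCliffordActionOnF}; this is exactly condition (b).

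The only point requiring any care is the mismatch between the groups $\O(V)$ and $\O_L(V)$ in \cref{prop:CliffordFrechetAlgebra}, handled by smoothness of the inclusion; beyond that there is no genuine obstacle, since all the analytic content --- the existence of the riggings $\Clsm\subset\Cl(V)$ and $\mathcal{F}_L^{\infty}\subset\mathcal{F}_L$, the smoothness of the group actions, and the compatibility of Clifford multiplication with the riggings --- was already established in \cref{prop:CliffordFrechetAlgebra,prop:smoothFockSpace,prop:SmoothCliffordActionOnF}, and the two compatibility axioms follow formally from the definitions and the implementability identity \cref{eq:Implementer}.
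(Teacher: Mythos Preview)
Your proposal is correct and follows essentially the same approach as the paper: both verify condition (a) by noting that the central $\U(1)\subset\Imp_L(V)$ consists of scalar operators on $\mathcal{F}_L$, and verify condition (b) by observing that it is exactly the implementability identity \cref{eq:Implementer}. Your write-up is in fact slightly more thorough than the paper's, since you make explicit the restriction of the $\O(V)$-action on $\Clsm$ to $\O_L(V)$ along the smooth inclusion, a point the paper leaves implicit.
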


\begin{proof}
Condition (a) of \cref{def:repofce} requires that $\U(1)\subset \Imp_L(V)$ act on $\mathcal{F}_L^{\infty}$ by scalar multiplication; this is clear as $\Imp_L(V)\subset \U(\mathcal{F})$. Condition (b) is the compatibility of the three involved actions: if $U\in \Imp_L(V)$ implements $g\in \O_L(V)$, $a\in \Cl(V)^{\infty}$, and $v\in \mathcal{F}_L^{\infty}$, the condition is 
\begin{equation*}
\theta_g(a) \lact Uv = U(a\lact v)\text{;}
\end{equation*}
this is precisely the condition that $U$ implements $g$ of \cref{eq:Implementer}.
\end{proof}

Finally, we provide two lemmas about the functoriality of the riggings, with respect to a change of the real Hilbert space.
These will be used in \cref{sec:impLiftingGerbe} in order to make the transition between the \emph{typical} fibre of a Fock bundle and its \emph{actual} fibres. Let $V'$ be another complex Hilbert space with real structure $\alpha'$, and let $\nu: V \to V'$ be an orthogonal transformation, i.e., a unitary isomorphism that commutes with the real structures.

\begin{lemma}
\label{lem:UniversalFrechetCliffordAlgebra}
 The isometric $\ast$-isomorphism $\Cl(\nu): \Cl(V) \rightarrow \Cl(V')$ restricts to an isometric  isomorphism $\Clsm \rightarrow \Cl(V')^{\smooth}$ of rigged \cstar-algebras with inverse (the restriction of) $\Cl(\nu^{-1})$.
\end{lemma}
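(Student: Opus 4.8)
The plan is to show that the Bogoliubov-type isomorphism $\Cl(\nu)\colon\Cl(V)\to\Cl(V')$ carries smooth vectors to smooth vectors, and that it is an isomorphism of rigged \cstar-algebras onto $\Cl(V')^\infty$. Since $\nu\colon V\to V'$ is an orthogonal transformation, $\iota'\circ\nu\colon V\to\Cl(V')$ is a Clifford map, and by the universal property $\Cl(\nu)\defeq\Cl(\iota'\nu)$ is the unique unital \cstar-homomorphism with $\Cl(\nu)\circ\iota=\iota'\circ\nu$; applying the same reasoning to $\nu^{-1}$ and using uniqueness in the universal property shows $\Cl(\nu^{-1})=\Cl(\nu)^{-1}$, so $\Cl(\nu)$ is an isometric $\ast$-isomorphism of \cstar-algebras. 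The key compatibility is the intertwining identity on $\O(V)$: conjugation by $\nu$ gives a group isomorphism $\O(V)\to\O(V')$, $g\mapsto \nu g\nu^{-1}$, and one checks on generators (elements of $V$) that
\begin{equation*}
\Cl(\nu)\circ\theta_g=\theta_{\nu g\nu^{-1}}\circ\Cl(\nu)
\end{equation*}
for all $g\in\O(V)$, again by the universal property since both sides are unital \cstar-homomorphisms agreeing on $\iota(V)$.

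Granting this, I would argue as follows. Fix $a\in\Cl(V)^\infty$, i.e.\ $g\mapsto\theta_g(a)$ is smooth $\O(V)\to\Cl(V)$. Then $g'\mapsto\theta_{g'}(\Cl(\nu)a)$ equals $g'\mapsto\Cl(\nu)\bigl(\theta_{\nu^{-1}g'\nu}(a)\bigr)$, which is a composition of the smooth map $g'\mapsto\nu^{-1}g'\nu$ ($\O(V')\to\O(V)$, smooth because conjugation by a fixed operator is continuous linear hence smooth), the smooth orbit map $g\mapsto\theta_g(a)$, and the bounded linear (hence smooth) map $\Cl(\nu)$. Therefore $\Cl(\nu)a\in\Cl(V')^\infty$, so $\Cl(\nu)$ restricts to a map $\Cl(V)^\infty\to\Cl(V')^\infty$; the same argument with $\nu^{-1}$ gives the inverse restriction, so this restriction is a bijection. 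It remains to see it is an isomorphism of rigged \cstar-algebras, i.e.\ continuous for the Fréchet topologies (with continuous inverse) and isometric for the norms; isometry and $\ast$-preservation are inherited from the \cstar-level statement.

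For continuity with respect to the Fréchet topologies defined by the seminorms $r_n$, I would note that the intertwining identity differentiates: for $Y\in\lie{o}(V)$ and $a\in\Cl(V)^\infty$ one gets $\Cl(\nu)(Ya)=(\nu Y\nu^{-1})(\Cl(\nu)a)$, where $\nu Y\nu^{-1}\in\lie{o}(V')$ and $\|\nu Y\nu^{-1}\|=\|Y\|$ since $\nu$ is unitary. Iterating, $\Cl(\nu)(Y_1\cdots Y_n a)=(\nu Y_1\nu^{-1})\cdots(\nu Y_n\nu^{-1})(\Cl(\nu)a)$, and the correspondence $Y\mapsto\nu Y\nu^{-1}$ is a norm-isometric bijection $\lie{o}(V)\to\lie{o}(V')$. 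Hence for any continuous seminorm $r'$ on $\Cl(V')$, the seminorm $r:=r'\circ\Cl(\nu)$ is continuous on $\Cl(V)$ (as $\Cl(\nu)$ is bounded), and
\begin{equation*}
r'_n(\Cl(\nu)a)=\sup\{r'((\nu Y_1\nu^{-1})\cdots(\nu Y_n\nu^{-1})\Cl(\nu)a):\|Y_i\|\le1\}=\sup\{r(Y_1\cdots Y_n a):\|Y_i\|\le 1\}=r_n(a),
\end{equation*}
so $\Cl(\nu)$ pulls back each generating seminorm of $\Cl(V')^\infty$ to a generating seminorm of $\Cl(V)^\infty$; this shows $\Cl(\nu)\colon\Cl(V)^\infty\to\Cl(V')^\infty$ is a homeomorphism (in fact seminorm-preserving), with $\Cl(\nu^{-1})$ as inverse. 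Combining, $\Cl(\nu)$ restricts to an isometric isomorphism of rigged \cstar-algebras as claimed.

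The main obstacle is not any single hard estimate but making the universal-property bookkeeping airtight: one must carefully verify that $\iota'\nu$, $\theta_{\nu g\nu^{-1}}\Cl(\nu)$, and $\Cl(\nu)\theta_g$ are genuinely unital \cstar-homomorphisms (continuity/boundedness, $\ast$-preservation, unitality) so that uniqueness in the universal property applies, and that the differentiated intertwining identity $\Cl(\nu)(Ya)=(\nu Y\nu^{-1})(\Cl(\nu)a)$ is legitimate — this follows by differentiating $\Cl(\nu)\theta_{\exp(tY)}(a)=\theta_{\exp(t\nu Y\nu^{-1})}(\Cl(\nu)a)$ at $t=0$, using continuity of $\Cl(\nu)$ to pull the derivative through. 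Everything else is routine.
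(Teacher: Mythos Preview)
Your proof is correct and follows essentially the same approach as the paper: both establish the intertwining identity $\Cl(\nu)\circ\theta_g=\theta_{\nu g\nu^{-1}}\circ\Cl(\nu)$, use it to show smooth vectors map to smooth vectors via the composition $\O(V')\to\O(V)\to\Cl(V)\to\Cl(V')$, and then verify Fr\'echet continuity by the seminorm identity $r'_n(\Cl(\nu)a)=r_n(a)$ with $r=r'\circ\Cl(\nu)$. You have simply made explicit the ``standard computation'' that the paper leaves to the reader.
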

\begin{proof}
 Let us show that if $a \in \Clsm$ then $\Cl(\nu)(a) \in \Cl(V')^{\smooth}$.
 To prove that, we need to show that, for all $a \in \Clsm$, the map $\O(V') \rightarrow \Cl(V')^{\smooth}, g \mapsto \theta_{g} \Cl(\nu)(a)$ is smooth.
 We decompose that map as follows
 \begin{equation*}
  \xymatrix@R=0.5em{
   \O(V') \ar[r] & \O(V) \ar[r] & \Cl(V) \ar[r] & \Cl(V') \\
   g \ar[r] & \nu^{-1} g \nu \ar[r] & \theta_{\nu^{-1}g\nu}(a) \ar[r] & \theta_{g} \Cl(\nu)(a)
  }
 \end{equation*}
 The first map is obviously smooth. The second map is smooth because $a \in \Clsm$. Writing
 \begin{equation*}
  \theta_{\nu^{-1}g\nu} = \Cl(\nu^{-1}) \theta_{g} \Cl(\nu),
 \end{equation*}
 we see that the last map is simply $\Cl(\nu)$, which is smooth because it is a linear isometry. Now, it remains to check that the resulting map $\Clsm \rightarrow \Cl(V')^{\smooth}$ is smooth; this is a standard computation.  
\end{proof}

The analogous result for the  smooth Fock space, proved in a similar way,  is the following. We notice that if $L\subset V$ is a Lagrangian, then  $\nu(L) \subset V'$ is a Lagrangian. We let $\Lambda_{\nu}: \mathcal{F}_L \to \mathcal{F}_{\nu(L)}$ denote the induced unitary map. 

\begin{lemma}
\label{lem:UnitaryRestrictedToSmoothVectors}
The unitary map $\Lambda_{\nu}: \mc{F}_{L} \rightarrow \mc{F}_{\nu(L)}$ restricts to an isometric  isomorphism
$\Lambda_{\nu}: \mc{F}_{L}^{\smooth} \rightarrow \mc{F}_{\nu(L)}^{\smooth}$ of rigged Hilbert spaces with inverse (the restriction of) $\Lambda_{\nu^{-1}}$. \end{lemma}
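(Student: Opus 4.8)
The statement is the Fock-space analogue of \cref{lem:UniversalFrechetCliffordAlgebra}, so I would follow exactly the same strategy, transporting everything through the orthogonal transformation $\nu$ rather than re-deriving it from scratch. The only nontrivial thing to check is that the unitary $\Lambda_\nu:\mathcal{F}_L \to \mathcal{F}_{\nu(L)}$ maps smooth vectors to smooth vectors and does so continuously in the Fr\'echet topologies defined by the seminorms $p_n$; everything else (isometry, bijectivity, and that $\Lambda_{\nu^{-1}}$ is the inverse) is immediate from the corresponding properties of $\Lambda_\nu$ as a unitary and the functoriality $\Lambda_{\nu^{-1}}\Lambda_\nu = \operatorname{id}$.

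\textbf{Key steps.} First I would record the conjugation formula relating the two implementer groups: for $g\in\O_{\res}(V)$ one has $\nu g\nu^{-1}\in\O_{\res}(V')$ (since conjugation by $\nu$ preserves the Hilbert--Schmidt condition defining the restricted group, because $\nu$ is unitary and intertwines the polarizations $L$ and $\nu(L)$), and if $U\in\Imp_L(V)$ implements $g$ then $\Lambda_\nu U \Lambda_\nu^{-1}\in\Imp_{\nu(L)}(V')$ implements $\nu g\nu^{-1}$ — this follows from the implementability condition \cref{eq:Implementer} together with $\Cl(\nu)\circ\theta_g = \theta_{\nu g\nu^{-1}}\circ\Cl(\nu)$ and the fact that $\Lambda_\nu$ intertwines the Fock representations $\rho_L$ and $\rho_{\nu(L)}$ along $\Cl(\nu)$. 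Thus conjugation by $\Lambda_\nu$ is an isomorphism of Banach Lie groups $\Imp_L(V)\to\Imp_{\nu(L)}(V')$ covering $g\mapsto\nu g\nu^{-1}$. Second, given $v\in\mathcal{F}_L^{\smooth}$, I would show $\Lambda_\nu v\in\mathcal{F}_{\nu(L)}^{\smooth}$ by factoring the orbit map $\Imp_{\nu(L)}(V')\to\mathcal{F}_{\nu(L)}$, $U'\mapsto U'(\Lambda_\nu v)$, as
\begin{equation*}
\Imp_{\nu(L)}(V') \longrightarrow \Imp_L(V) \longrightarrow \mathcal{F}_L \stackrel{\Lambda_\nu}{\longrightarrow} \mathcal{F}_{\nu(L)},\qquad U'\mapsto \Lambda_\nu^{-1}U'\Lambda_\nu \mapsto (\Lambda_\nu^{-1}U'\Lambda_\nu)v \mapsto \Lambda_\nu((\Lambda_\nu^{-1}U'\Lambda_\nu)v),
\end{equation*}
in which the first map is the smooth Lie group isomorphism just described, the second is smooth because $v\in\mathcal{F}_L^{\smooth}$, and the third is a bounded linear (hence smooth) map. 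Third, to see that the resulting bijection $\Lambda_\nu:\mathcal{F}_L^{\smooth}\to\mathcal{F}_{\nu(L)}^{\smooth}$ is a homeomorphism for the Fr\'echet topologies, I would check at the Lie-algebra level: the isomorphism of groups differentiates to an isomorphism $\lie{imp}(V)\to\lie{imp}(\nu(L))(V')$, $X\mapsto \d(\mathrm{Ad}_{\Lambda_\nu})X$, which is bounded with bounded inverse, and under it the infinitesimal actions are intertwined: $(\d\mathrm{Ad}_{\Lambda_\nu}X)(\Lambda_\nu v) = \Lambda_\nu(Xv)$. Hence for every continuous seminorm $p$ on $\mathcal{F}_{\nu(L)}$ the seminorm $p':=p\circ\Lambda_\nu$ on $\mathcal{F}_L$ is continuous and, after rescaling by the norm of $\d\mathrm{Ad}_{\Lambda_\nu}$ on each $\|X_i\|\le 1$ ball, one gets $p_n(\Lambda_\nu v)\le C^n p'_n(v)$ (and conversely), which gives continuity of $\Lambda_\nu$ and of its inverse. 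Finally I would note $\Lambda_\nu$ is an isometry for the restricted norms $s=s_0$ since it is an isometry on the full Fock spaces, and that $\Lambda_{\nu^{-1}}$ restricts to the inverse since $\Lambda_{\nu^{-1}}\Lambda_\nu=\operatorname{id}_{\mathcal{F}_L}$ already as unitaries.

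\textbf{Main obstacle.} There is no deep obstacle here — this is a routine transport-of-structure argument parallel to \cref{lem:UniversalFrechetCliffordAlgebra}. The one point that needs a little care is verifying that conjugation by $\Lambda_\nu$ is genuinely a morphism of Banach Lie groups $\Imp_L(V)\to\Imp_{\nu(L)}(V')$ and not merely an abstract group isomorphism, i.e.\ that it is smooth with respect to the specific Banach Lie group structures on the implementer groups (whose topology, as emphasized in \cref{sec:CliffFockAndImp}, is \emph{not} the norm topology inherited from $\U(\mathcal{F})$). This follows because conjugation by $\Lambda_\nu$ descends from the smooth map $\nu(\cdot)\nu^{-1}$ on $\O_{\res}(V)$ — which is smooth for the $\|\cdot\|_{\mathcal{J}}$-topology since $\nu$ intertwines $\mathcal{J}_L$ and $\mathcal{J}_{\nu(L)}$ — combined with the fact that the two central extensions are pulled back from the same $\U(1)$-torsor structure; one can either invoke the explicit local sections of $q$ used to build the Banach Lie group structure in \cite[Section 3.5]{Kristel2019}, exactly as in the proof of \cref{lem:splittingoverUL}, or simply observe that the diagram relating the two central extensions commutes and that a bundle isomorphism of $\U(1)$-central extensions covering a diffeomorphism of the bases is automatically a diffeomorphism.
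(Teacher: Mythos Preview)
Your proposal is correct and follows essentially the same approach as the paper, which does not spell out a proof but merely says the lemma is ``proved in a similar way'' to \cref{lem:UniversalFrechetCliffordAlgebra}. Your factorization of the orbit map through conjugation $\Imp_{\nu(L)}(V')\to\Imp_L(V)$ is exactly the Fock-space analogue of the factorization $\O(V')\to\O(V)$ used there, and your identification of the one point requiring care---that conjugation by $\Lambda_\nu$ is a smooth isomorphism of the implementer groups for their specific Banach Lie structures, not just the inherited operator topology---is apt; the paper addresses precisely this issue (for $\Lambda_g$ with $g\in\O_L(V)$ rather than for a general $\nu$) in the argument sketched in \cref{lem:SmoothIntertwiners}, reducing to the Lie-algebra level just as you suggest.
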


\begin{remark}
\label{re:intertwinernu}
The isometric isomorphism $\Cl(\nu):\Clsm \rightarrow \Cl(V')^{\smooth}$ of rigged \cstar-algebras  and the isometric isomorphism $\Lambda_{\nu}: \mathcal{F}_L^{\infty} \to \mathcal{F}_{\nu(L)}^{\infty}$ fit together into a commutative diagram
\begin{equation*}
\xymatrix{\Cl(V)^{\infty} \times \mathcal{F}_L^{\infty} \ar[r] \ar[d]_{\Cl(\nu) \times \Lambda_{\nu}} & \mathcal{F}_L^{\infty} \ar[d]^{\Lambda_{\nu}} \\ \Cl(V')^{\infty} \times \mathcal{F}_{\nu(L)}^{\infty} \ar[r] & \mathcal{F}_{\nu(L)}^{\infty}\text{.}}
\end{equation*}
In this sense, $\Lambda_{\nu}$ is an intertwiner along the algebra homomorphism $\Cl(\nu)$.
\end{remark}

\begin{remark}
\label{lem:SmoothIntertwiners}
The restriction to spaces of smooth vectors has no effect on the $\U(1)$-torsor of unitary equivalences between Fock space representations, considered in \cref{lem:TheTwoU1Torsors}. This will be interesting in \cref{sec:impLiftingGerbe}.
 To make the statement more precise, let $\U_{\Clsm}(\mathcal{F}^{\smooth}_{L_1},\mathcal{F}^{\smooth}_{L_2})$ denote the set of isometric isomorphisms $T: \mathcal{F}^{\smooth}_{L_1} \to \mathcal{F}^{\smooth}_{L_2}$ of rigged Hilbert spaces that commute with the action of all $a\in \Clsm$. Then, restriction and extension to completions result into a bijection
\begin{equation*}
\U_{\Clsm}(\mc{F}^{\smooth}_{L_1}, \mc{F}^{\smooth}_{L_2})\cong \U_{\Cl(V)}(\mc{F}_{L_1}, \mc{F}_{L_2}) \text{.}
\end{equation*}
Indeed, showing that extension $\U_{\Clsm}(\mc{F}^{\smooth}_{L_1}, \mc{F}^{\smooth}_{L_2})\to \U_{\Cl(V)}(\mc{F}_{L_1}, \mc{F}_{L_2})$ is well-defined is routine. Conversely, if $T \in  \U_{\Cl(V)}(\mc{F}_{L_1}, \mc{F}_{L_2})$, all we need to show is that $T(\mc{F}^{\smooth}_{L_1}) \subseteq \mc{F}_{L_2}^{\smooth}$.
 Hence, let $v \in \mc{F}^{\smooth}_{L_1}$.
 We claim that $Tv \in \mc{F}_{L_2}$ is a smooth vector.
 By \cite[Theorem 7.2]{neebdiffvect} it is sufficient to prove that the map $\Imp_{L_2}(V) \rightarrow \C, U \mapsto \langle U Tv, Tv \rangle = \langle T^{*}UTv,v \rangle$ is smooth in an open neighbourhood of the identity.
 In turn, it suffices to show that the map $\Imp_{L_2}(V) \rightarrow \Imp_{L_1}(V), U \mapsto T^{*}UT$ is smooth in an open neighbourhood of the identity.
 By \cref{lem:TheTwoU1Torsors} we may assume that $T = U_{0} \Lambda_{g}^{-1}$ for some $U_{0} \in \Imp_{L_2}(V)$ and $g\in \O_{L_2}(V)$ such that $g(L_2)=L_1$.
 Because $\Imp_{L_2}(V)$ is a Banach Lie group, the map $U \mapsto U_{0}^{*}U U_{0}$ is smooth.
 It remains to be shown that the map $C_{\Lambda_{g}}:\Imp_{L_2}(V) \rightarrow \Imp_{L_1}(V), U \mapsto \Lambda_{g} U \Lambda_{g}^{-1}$ is smooth in an open neighbourhood of the identity. This can be done by showing that it corresponds to the adjoint action $C_{g}: \lie{o}_{L_2}(V) \rightarrow \lie{o}_{L_1}(V)$ on the level of Lie algebras, which is smooth. 
\end{remark}

\section{Smooth Fock bundles}
\def\thedefinition{\thesection.\arabic{definition}}

\label{sec:smoothfockbundles}
\label{sec:spinstructures}
\label{sec:SpinorBundleOnLoopSpaceI}
\label{sec:TheAssociatedFockBundle}

The general setting in which we construct smooth Fock bundles consists of a Fr\'echet manifold $\mathcal{M}$ and a principal $\mathcal{G}$-bundle $\mathcal{E}$ over $\mathcal{M}$, for a Fr\'echet Lie group $\mathcal{G}$.
In order to bring this structure in contact with Fock spaces, we require the following structure.

\label{sec:generalsetup}

\begin{definition}
\label{def:fockextension}
A \emph{Fock extension} for a Fr\'echet Lie group  $\mathcal{G}$ is a quadruple $(V,\alpha,L,\omega)$ consisting of a complex Hilbert space $V$, a real structure $\alpha$ on $V$, a Lagrangian subspace $L\subset V$, and a smooth group homomorphism $\omega:\mathcal{G} \to \O_L(V)$.
\end{definition}

Suppose now given a Fock extension $(V, \alpha, L, \omega)$.
First, we obtain a smooth representation
\begin{equation*}
\mathcal{G} \times \Cl^{\infty}(V) \to \Cl^{\infty}(V)\text{,}
\end{equation*}
induced along the smooth group homomorphism $\omega$ from the smooth representation of $\O_L(V)$ on the rigged \cstar-algebra $\Cl^{\infty}(V)$, see \cref{prop:CliffordFrechetAlgebra}.
Using \cref{lem:RiggedCStarBundle}, we define the following.

\begin{definition}
\label{def:CliffordBundle}
The \emph{Clifford bundle} associated to $\mathcal{E}$ is the rigged \cstar-algebra bundle
\begin{equation*}
\Cl_V^{\infty}(\mathcal{E}) \defeq (\mathcal{E} \times \Cl^{\infty}(V))/\mathcal{G}\text{.}
\end{equation*}
\end{definition}
Second, we obtain a central extension of Fr\'echet Lie groups,
\begin{equation*}
1\to \U(1) \to \widetilde{\mathcal{G}} \to \mathcal{G} \to 1\text{,}
\end{equation*}
obtained by pullback of $\Imp_L(V)$ along $\omega$.
We may now consider the corresponding lifting problem (see \cref{def:lifts}).

\begin{definition}
\label{def:FockStructure}
A \emph{Fock structure} on $\mathcal{E}$ is a lift $\widetilde{\mathcal{E}}$ of the structure group of $\mathcal{E}$ to $\widetilde{\mathcal{G}}$.
\end{definition}
Third, the smooth representation $\Imp_L(V) \times \mathcal{F}_{L}^{\infty} \to \mathcal{F}_{L}^{\infty}$ of \cref{prop:smoothFockSpace} induces via the smooth group homomorphism $\widetilde{\mathcal{G}} \to \Imp_L(V)$, obtained from the definition of $\widetilde{\mathcal{G}}$ as a pullback, a smooth representation of $\widetilde{\mathcal{G}}$ on $\mathcal{F}^{\infty}_{L}$. Using \cref{lem:AssociatedHilbertBundle}, we give the main definition of this section. 
\begin{definition}
\label{def:fockbundle}
Suppose $\mathcal{E}$ is equipped with a Fock structure $\widetilde{\mathcal{E}}$.
The  \emph{Fock bundle} associated  $\widetilde{\mathcal{E}}$ is the rigged Hilbert space bundle
\begin{equation*}
\mathcal{F}^{\infty}(\widetilde{\mathcal{E}}) \defeq (\widetilde{\mathcal{E}} \times \mathcal{F}^{\infty}_L) / \widetilde{\mathcal{G}}
\end{equation*} 
over $\mathcal{M}$. 
\end{definition}

In this general setting, there is one important result about our Fock bundles, namely a version of Clifford multiplication of the Clifford bundle $\Cl_V^{\infty}(\mathcal{E})$ on the Fock bundle $\mathcal{F}^{\infty}(\widetilde{\mathcal{E}})$. It is a direct application of \cref{prop:InducedBundleRep}, only using the fact that the rigged $\Cl_V^{\infty}(\mathcal{E})$-module $\mathcal{F}^{\infty}$ is a smooth representation of the central extension $\U(1)\to \widetilde{\mathcal{G}} \to \mathcal{G}$ in the sense of \cref{def:repofce}. This, in turn, follows immediately from \cref{thm:repofimp} and the construction of $\widetilde{\mathcal{G}}$ as the pullback of $\Imp_L(V)$. Thus, we have the following result. 

\begin{theorem}
\label{th:cliffmult}
There is a unique map
\begin{equation*}
\Cl_V^{\infty}(\mathcal{E}) \times_{\mathcal{M}} \mathcal{F}^{\infty}(\widetilde{\mathcal{E}}) \to \mathcal{F}^{\infty}(\widetilde{\mathcal{E}})
\end{equation*}
such that
\begin{equation*}
([e,a],[\tilde e,v]) \mapsto [\tilde e,a\lact v]
\end{equation*}
for all $\tilde e \in \widetilde{E}$ projecting to $e\in \mathcal{E}$, and all $a\in \Cl^{\infty}(V)$ and $v\in \mathcal{F}^{\infty}_L$.
Moreover, this map is a smooth morphism of Fr\'echet vector bundles over $\mathcal{M}$, and   exhibits the Fock bundle $\mathcal{F}^{\infty}(\widetilde{\mathcal{E}})$ as a rigged $\Cl_V^{\infty}(\mathcal{E})$-module bundle.
\end{theorem}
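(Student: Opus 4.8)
The plan is to obtain this as an instance of \cref{prop:InducedBundleRep}. The relevant central extension is $\U(1) \to \widetilde{\mathcal{G}} \stackrel{q}{\to} \mathcal{G}$, defined as the pullback of $\Imp_L(V)$ along $\omega$; the relevant rigged \cstar-algebra is $\Cl^{\infty}(V)$ of \cref{prop:CliffordFrechetAlgebra}; and the relevant rigged module over it is $\mathcal{F}_L^{\infty}$ of \cref{prop:SmoothCliffordActionOnF}. Taking $\mathcal{P} := \mathcal{E}$, $\widetilde{\mathcal{P}} := \widetilde{\mathcal{E}}$, and $\phi : \widetilde{\mathcal{E}} \to \mathcal{E}$ the given Fock structure, the associated bundles produced by that proposition are exactly $\Cl_V^{\infty}(\mathcal{E})$ and $\mathcal{F}^{\infty}(\widetilde{\mathcal{E}})$, so the only thing I need to check before invoking it is that the smooth actions of $\mathcal{G}$ on $\Cl^{\infty}(V)$ and of $\widetilde{\mathcal{G}}$ on $\mathcal{F}_L^{\infty}$ form a smooth representation of the central extension $\U(1) \to \widetilde{\mathcal{G}} \to \mathcal{G}$ in the sense of \cref{def:repofce}.

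To do this I would use that, by construction of $\widetilde{\mathcal{G}}$ as a pullback, there is a commutative square of Fr\'echet Lie group homomorphisms over $\U(1) \to \Imp_L(V) \to \O_L(V)$ in which the central circles are identified, the bottom map $\mathcal{G} \to \O_L(V)$ is $\omega$, and the top map $\widetilde{\omega} : \widetilde{\mathcal{G}} \to \Imp_L(V)$ covers it. The actions in question are the composites of the smooth representations of \cref{prop:CliffordFrechetAlgebra} and \cref{prop:smoothFockSpace} with $\omega$ and $\widetilde{\omega}$ respectively, hence smooth. Condition (a) of \cref{def:repofce} holds because $\widetilde{\omega}$ maps the central $\U(1) \subset \widetilde{\mathcal{G}}$ into the central $\U(1) \subset \Imp_L(V) \subset \U(\mathcal{F}_L)$, which acts by scalar multiplication. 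Condition (b) is exactly the implementability identity: setting $U := \widetilde{\omega}(\tilde g)$ and $g := q(\tilde g)$, so that $U$ implements $\omega(g)$, we get $\tilde g \cdot (a \lact v) = U(a \lact v) = \theta_{\omega(g)}(a) \lact Uv = (g \cdot a) \lact (\tilde g \cdot v)$ by \cref{thm:repofimp}, which is the required compatibility.

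Once this is in place, \cref{prop:InducedBundleRep} immediately yields the unique map $\Cl_V^{\infty}(\mathcal{E}) \times_{\mathcal{M}} \mathcal{F}^{\infty}(\widetilde{\mathcal{E}}) \to \mathcal{F}^{\infty}(\widetilde{\mathcal{E}})$ characterized by $([\phi(\tilde e),a],[\tilde e,v]) \mapsto [\tilde e, a \lact v]$ and asserts that it turns $\mathcal{F}^{\infty}(\widetilde{\mathcal{E}})$ into a rigged $\Cl_V^{\infty}(\mathcal{E})$-module bundle with typical fibre $\mathcal{F}_L^{\infty}$; smoothness of the map as a morphism of Fr\'echet vector bundles over $\mathcal{M}$ is then recorded in \cref{re:repcstarbundle}. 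I do not expect any genuine obstacle: all the analytic content — the rigged \cstar-algebra structure on $\Cl(V)$, the rigged Hilbert space structure on $\mathcal{F}_L$, their smoothness properties, and the associated-bundle machinery — has already been assembled earlier. The only step requiring an explicit (but very short) argument is the bookkeeping in the second paragraph, namely that conditions (a) and (b) of \cref{def:repofce} are inherited when a smooth representation of a central extension is pulled back along a morphism of central extensions.
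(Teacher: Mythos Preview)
Your proposal is correct and follows essentially the same route as the paper: the paper explicitly states that the theorem is a direct application of \cref{prop:InducedBundleRep}, reduced to checking that $\mathcal{F}_L^{\infty}$ is a smooth representation of the central extension $\U(1)\to\widetilde{\mathcal{G}}\to\mathcal{G}$ in the sense of \cref{def:repofce}, which it deduces from \cref{thm:repofimp} and the pullback construction of $\widetilde{\mathcal{G}}$. Your second paragraph spells out the verification of conditions (a) and (b) in slightly more detail than the paper does, but the argument is the same.
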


We postpone examples and application to \cref{sec:spinorbundles}, so that we can immediately apply the twisted version of the theory developed in \cref{sec:twisted}.

\begin{remark}
Our theory of rigged Hilbert space bundles, rigged \cstar-algebra bundles, and smooth representations has the following nice implications for Fock bundles and their Clifford multiplication.
\begin{enumerate}[(a)]

\item 
For each point $p\in \mathcal{M}$, the fibre $\Cl_V^{\infty}(\mathcal{E})_{p}$ is a rigged \cstar-algebra, the fibre $\mathcal{F}^{\smooth}(\widetilde{\mathcal{E}})_{p}$ is a rigged Hilbert  space, and  the fibrewise Clifford multiplication
\begin{equation*}
\Cl_V^{\infty}(\mathcal{E})_{p} \times \mathcal{F}^{\infty}(\widetilde{\mathcal{E}})_{p} \to \mathcal{F}^{\infty}(\widetilde{\mathcal{E}})_{p}
\end{equation*}
turns $\mathcal{F}^{\smooth}(\widetilde{\mathcal{E}})_{p}$ into a rigged $\Cl_V^{\infty}(\mathcal{E})$-module (\cref{re:repcstarbundle}). 

\item
Taking the fibrewise Hilbert completion in the bundle $\mathcal{F}^{\infty}(\widetilde{\mathcal{E}})$ results by \cref{rem:ContinuousHilbertBundle} in a continuous Hilbert space bundle $\mathcal{F}(\widetilde{\mathcal{E}})$ with typical fibre the Fock space $\mathcal{F}_{L}$.
\item
Taking the fibrewise norm completion in the bundle $\Cl_V^{\infty}(\mathcal{E})$ results by \cref{rem:fibrewiseCstar} in a  continuous bundle $\Cl_V(\mathcal{E})$ of \cstar-algebras   with typical fibre the Clifford \cstar-algebra $\Cl(V)$. 

\item
\cref{lem:continuous smoothrep} assures that Clifford multiplication is a continuous bundle morphism
\begin{equation*}
\Cl_V(\mathcal{E}) \times_{\mathcal{M}} \mathcal{F}(\widetilde{\mathcal{E}}) \to \mathcal{F}(\widetilde{\mathcal{E}})\text{,}
\end{equation*}  
which restrict  over each point $p$ to a representation of the Clifford \cstar-algebra $\Cl_V(\mathcal{E})_{p}$ on the Hilbert space $\mathcal{F}(\widetilde{\mathcal{E}})_{p}$.

\end{enumerate}
\end{remark}



\section{The twisted perspective to Fock bundles}
\def\thedefinition{\thesubsection.\arabic{definition}}

\label{sec:twisted}
\label{sec:twistedspinorbundles}

In this section we consider a Fr\'echet manifold $\mathcal{M}$ and a principal $\mathcal{G}$-bundle $\mathcal{E}$ over $\mathcal{M}$, for a Fr\'echet Lie group $\mathcal{G}$, and we consider a Fock extension  $(V,\alpha,L,\omega)$ for $\mathcal{G}$. We construct from this data the following three bundle gerbes:
\begin{itemize}

\item 
The \emph{Fock lifting gerbe}, which is the lifting gerbe for   Fock structures in the sense of \cref{def:FockStructure}. Its universal twisted rigged Hilbert space bundle is the \emph{twisted Fock bundle}.

\item
The \emph{implementer lifting gerbe}, which represents the obstruction against  a Fock bundle obtained  from fibrewise defined Fock spaces.    

\item
The \emph{Lagrangian Gra\ss mannian gerbe}, which represents the obstruction against a Fock bundle obtained from fibrewise Lagrangians.

\end{itemize}
Each of these bundle gerbes comes equipped with a version of a  twisted Fock bundle. We prove that all three bundle gerbes are canonically isomorphic, and that the isomorphisms exchange the individual twisted Fock bundles. If a trivialization of any of these bundle gerbes is provided, then each  twisted Fock bundle untwists precisely to the Fock bundle of \cref{def:fockbundle}.

\subsection{The twisted Fock bundle}

\label{sec:Focklifting}

In this section we  directly apply the abstract framework of lifting gerbes explained in \cref{sec:twistedriggedhb} to Fock structures. 
Associated to the problem of lifting the structure group of $\mathcal{E}$ along the central extension
$\U(1) \to \widetilde{\mathcal{G}} \to \mathcal{G}$
is a lifting gerbe, the \emph{Fock lifting gerbe} $\mathscr{L}_{\mathcal{E}}$. As a consequence of  \cref{prop:lifting} we have the following result.

\begin{proposition}
\label{prop:fockstructureslifting}
The category of Fock structures  on $\mathcal{\mathcal{E}}$ is canonically isomorphic to the category of trivializations of the Fock lifting gerbe $\mathscr{L}_{\mathcal{E}}$.
\end{proposition}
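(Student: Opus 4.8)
The proof is a direct application of the general theory of lifting gerbes recalled in \cref{sec:twistedriggedhb}, specialized to $\mathcal{P} = \mathcal{E}$ and the central extension $\U(1) \to \widetilde{\mathcal{G}} \to \mathcal{G}$ obtained by pulling back $\Imp_L(V)$ along $\omega$. First I would unwind \cref{def:FockStructure,def:lifts}: a Fock structure on $\mathcal{E}$ is precisely a lift of the structure group of $\mathcal{E}$ along this central extension, and a morphism of Fock structures is by definition a bundle morphism $\widetilde{\mathcal{E}} \to \widetilde{\mathcal{E}}'$ over $\mathcal{M}$ commuting with the projections to $\mathcal{E}$ — i.e.\ exactly a morphism in the category $\Lift(\mathcal{E})$ introduced in \cref{sec:twistedriggedhb}. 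Thus the category of Fock structures on $\mathcal{E}$ is, verbatim, the category $\Lift(\mathcal{E})$.

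Next I would record that the Fock lifting gerbe $\mathscr{L}_{\mathcal{E}}$ was defined as the lifting gerbe $\mathscr{L}_{\mathcal{P}}$ of this very lifting problem: its surjective submersion is $\mathcal{E} \to \mathcal{M}$, its principal $\U(1)$-bundle is $\delta^{*}\widetilde{\mathcal{G}}$ over $\mathcal{E}^{[2]}$, and its isomorphism $\mu$ is group multiplication. Consequently $\Triv(\mathscr{L}_{\mathcal{E}})$ is literally the category of trivializations occurring in \cref{prop:lifting}. Applying \cref{prop:lifting} then produces two mutually inverse functors $\Triv(\mathscr{L}_{\mathcal{E}}) \to \Lift(\mathcal{E})$ and $\Lift(\mathcal{E}) \to \Triv(\mathscr{L}_{\mathcal{E}})$ establishing an isomorphism of categories; transporting $\Lift(\mathcal{E})$ back to the category of Fock structures via the identification of the first paragraph yields the claim. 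The isomorphism is canonical because the functors of \cref{prop:lifting} are canonically defined from the lifting problem.

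The only point that genuinely needs attention is that \cref{prop:lifting} — cited there with reference to the finite-dimensional literature — holds in the Fr\'echet setting. This is already secured in \cref{sec:twistedriggedhb}: fibre products of surjective submersions of Fr\'echet manifolds exist as closed submanifolds, tensor products of Fr\'echet principal $\U(1)$-bundles are defined as usual, and the two constructions relating a lift $\widetilde{\mathcal{P}}$ to a trivialization $\mathcal{T}$ (equipping $\phi:\widetilde{\mathcal{P}} \to \mathcal{P}$ with the $\U(1)$-action induced from $\U(1)\subset\widetilde{\mathcal{G}}$, respectively defining the $\widetilde{\mathcal{G}}$-action $t\cdot\tilde g \defeq \tau(\tilde g^{-1}\otimes t)$) are manifestly smooth, using only that $q:\widetilde{\mathcal{G}} \to \mathcal{G}$ admits smooth local sections, which holds since $\widetilde{\mathcal{G}}$ is a Fr\'echet principal $\U(1)$-bundle over $\mathcal{G}$. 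Hence there is no substantial obstacle: the proposition is pure definition-chasing once \cref{prop:lifting} is invoked, and a one- or two-line proof in the paper that simply points to \cref{prop:lifting} with $\mathcal{P}=\mathcal{E}$ suffices.
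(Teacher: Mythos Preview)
Your proposal is correct and matches the paper's approach exactly: the paper gives no separate proof but simply introduces the proposition with the words ``As a consequence of \cref{prop:lifting} we have the following result,'' which is precisely the one-line invocation you anticipated. Your additional unwinding of definitions (identifying Fock structures with $\Lift(\mathcal{E})$ and the Fock lifting gerbe with the lifting gerbe of the relevant central extension) is accurate and makes explicit what the paper leaves implicit.
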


We consider the smooth representation $\widetilde{\mathcal{G}} \times \mathcal{F}_{L}^{\infty} \to \mathcal{F}_{L}^{\infty}$, for which we note that the central subgroup $\U(1) \subset \widetilde{\mathcal{G}}$ acts by scalar multiplication.

\begin{definition}
\label{def:twistedfock}
The \emph{twisted Fock bundle of $\mc{E}$} is the universal $\mathscr{L}_\mathcal{E}$-twisted rigged Hilbert space bundle $\mathcal{F}^{\infty}(\mathcal{E})^{\twist}$ associated to the smooth representation $\tilde{\mc{G}} \times \mc{F}_{L}^{\infty} \rightarrow \mc{F}_{L}^{\infty}$ according to \cref{def:canonicaltvb}.
\end{definition}

As explained in \cref{sec:SpinorBundleOnLoopSpaceI}, we also have a smooth representation $\mathcal{G} \times \Cl(V)^{\infty} \to \Cl(V)^{\infty}$, and the associated Clifford bundle $\Cl_V^{\infty}(\mathcal{E})$.
Together with the Fock space representation $\Cl(V)^{\infty} \times \mathcal{F}_{L}^{\infty} \to \mathcal{F}_{L}^{\infty}$, we have a smooth representation of the central extension $\U(1) \to \widetilde{\mathcal{G}} \to \mathcal{G}$.
Then, by \cref{lem:twistedmodulebundle},  the twisted Fock bundle $\mathcal{F}^{\infty}(\mathcal{E})^{\twist}$ is  an $\mathscr{L}_{\mathcal{E}}$-twisted rigged $\Cl_V^{\infty}(\mathcal{E})$-module bundle, and \cref{lem:reptwistedmb} implies the following result.

\begin{theorem}
\label{th:twistedspinor}
 Suppose $\mathcal{T}$ is a trivialization of the Fock lifting gerbe $\mathscr{L}_\mathcal{E}$ corresponding to a Fock structure $\widetilde{\mathcal{E}}$ under the isomorphism of \cref{prop:fockstructureslifting}, and let $\mathcal{F}^{\infty}(\widetilde{\mathcal{E}})$ be the associated Fock bundle. Then, there exists a canonical isometric isomorphism
\begin{equation*}
\twist_{\mathcal{T}}(\mathcal{F}^{\infty}(\widetilde{\mathcal{E}})) \cong \mathcal{F}^{\infty}(\mathcal{E})^{tw}
\end{equation*}   
of $\mathscr{L}_\mathcal{E}$-twisted rigged $\Cl_V^{\infty}(\mathcal{E})$-module bundles. In other words, untwisting the twisted Fock bundle $\mathcal{F}^{\infty}(\mathcal{E})^{\twist}$ gives the Fock bundle $\mathcal{F}^{\infty}(\widetilde{\mathcal{E}})$. 
\end{theorem}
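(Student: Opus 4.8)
The plan is to recognize that \cref{th:twistedspinor} is an almost immediate corollary of the general machinery already assembled in \cref{sec:twistedriggedhb}, applied to the particular lifting gerbe $\mathscr{L}_{\mathcal{E}}$ coming from the central extension $\U(1)\to\widetilde{\mathcal{G}}\to\mathcal{G}$. First I would invoke \cref{prop:fockstructureslifting}: a trivialization $\mathcal{T}$ of $\mathscr{L}_{\mathcal{E}}$ corresponds, under the functor of \cref{prop:lifting}, to a Fock structure $\widetilde{\mathcal{E}}$, i.e.\ a lift of the structure group. Then I would note that the universal twisted bundle $\mathcal{F}^{\infty}(\mathcal{E})^{\twist}$ of \cref{def:twistedfock} is, by definition, precisely the universal $\mathscr{L}_{\mathcal{E}}$-twisted rigged Hilbert space bundle $E(\mathcal{P})^{\twist}$ of \cref{def:canonicaltvb} for the choice $\mathcal{P}=\mathcal{E}$, $\widetilde{\mathcal{G}}$ the pulled-back implementer extension, and $E=\mathcal{F}^{\infty}_L$ with its smooth $\widetilde{\mathcal{G}}$-action from \cref{prop:smoothFockSpace}, in which $\U(1)$ acts by scalars. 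Likewise $\mathcal{F}^{\infty}(\widetilde{\mathcal{E}})$ of \cref{def:fockbundle} is exactly the associated bundle $E(\widetilde{\mathcal{P}})=(\widetilde{\mathcal{E}}\times\mathcal{F}^{\infty}_L)/\widetilde{\mathcal{G}}$.

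With these identifications in place, the isometric isomorphism $\twist_{\mathcal{T}}(\mathcal{F}^{\infty}(\widetilde{\mathcal{E}}))\cong\mathcal{F}^{\infty}(\mathcal{E})^{\twist}$ of rigged Hilbert space bundles is precisely the content of \cref{lem:reptwistedvb}. To upgrade it to an isomorphism of $\mathscr{L}_{\mathcal{E}}$-twisted \emph{rigged $\Cl_V^{\infty}(\mathcal{E})$-module bundles}, I would appeal to \cref{lem:reptwistedmb}: this applies verbatim once one knows that the data $(\U(1)\to\widetilde{\mathcal{G}}\to\mathcal{G},\ \Cl(V)^{\infty},\ \mathcal{F}^{\infty}_L)$ is a smooth representation of the central extension on a rigged module in the sense of \cref{def:repofce}. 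That last point is exactly \cref{thm:repofimp}, noting that the pulled-back extension $\widetilde{\mathcal{G}}$ inherits the representation along the smooth homomorphisms $\mathcal{G}\to\O_L(V)$ and $\widetilde{\mathcal{G}}\to\Imp_L(V)$, and that conditions (a), (b) of \cref{def:repofce} are preserved under such pullback since $\U(1)$ still acts by scalars and the implementability identity \cref{eq:Implementer} is unchanged. Hence \cref{lem:reptwistedmb} yields the asserted isometric isomorphism of twisted rigged module bundles, and the final sentence of the theorem ("untwisting gives $\mathcal{F}^{\infty}(\widetilde{\mathcal{E}})$") is just the statement that $\twist_{\mathcal{T}}$ is, up to this isomorphism, inverse to untwisting, which is \cref{lem:twistingmod}.

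Concretely the proof would be about three sentences: cite \cref{prop:fockstructureslifting} to pass $\mathcal{T}\leftrightarrow\widetilde{\mathcal{E}}$; observe that \cref{def:twistedfock} and \cref{def:fockbundle} are special cases of \cref{def:canonicaltvb} and the associated-bundle construction; then apply \cref{lem:reptwistedmb} (whose hypotheses are verified by \cref{thm:repofimp} together with the pullback construction of $\widetilde{\mathcal{G}}$). I do not anticipate a genuine obstacle here — the real work was already done in \cref{lem:reptwistedvb,lem:reptwistedmb} and in \cref{sec:SmoothRepresentations} — so the only thing to be careful about is bookkeeping: making sure the two descriptions of $\widetilde{\mathcal{G}}$ (as pullback of $\Imp_L(V)$ along $\omega$, versus as the structure group whose lifting gerbe is $\mathscr{L}_{\mathcal{E}}$) are literally the same, and that the $\Cl_V^{\infty}(\mathcal{E})$-module structure used on both sides (via \cref{prop:InducedBundleRep} on one side, via \cref{lem:twistedmodulebundle} on the other) matches under the canonical global trivialization $\pi^{*}\mathcal{A}\to\mathcal{E}\times\Cl(V)^{\infty}$. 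The one subtlety worth a remark is that \cref{lem:reptwistedmb} was stated for an arbitrary central extension $\U(1)\to\widetilde{\mathcal{G}}\to\mathcal{G}$ with a smooth representation on a rigged $A$-module, so it is enough to check that our situation is an instance of that; this is where \cref{thm:repofimp} is the essential input, and no new computation beyond what is already in the excerpt is needed.
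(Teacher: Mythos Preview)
Your proposal is correct and matches the paper's approach exactly: the paper states the theorem as an immediate consequence of \cref{lem:reptwistedmb}, having observed via \cref{lem:twistedmodulebundle} that $\mathcal{F}^{\infty}(\mathcal{E})^{\twist}$ is a twisted $\Cl_V^{\infty}(\mathcal{E})$-module bundle, with the hypotheses of \cref{def:repofce} supplied by \cref{thm:repofimp}. Your unpacking of why the various definitions line up is accurate bookkeeping, but there is no additional content beyond citing \cref{lem:reptwistedmb}.
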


\subsection{The frame-dependent Fock bundle}

\label{sec:impLiftingGerbe}
\label{sec:lagrangiangerbe}

This and the subsequent section are motivated by the following situation. We suppose that we have a  continuous Hilbert space bundle $\mathcal{H}$ over $\mathcal{M}$ with typical fibre $V$, see the explanations before \cref{rem:ContinuousHilbertBundle}. We assume that $\mathcal{H}$ is equipped with a real structure, i.e., a fibrewise anti-unitary continuous bundle morphism $\mathcal{H}\to \mathcal{H}$ that corresponds under local trivializations to the real structure $\alpha$ on $V$. We will call this a \emph{continuous real Hilbert space bundle with typical fibre $V$}.

The aim is to construct a continuous bundle of Fock spaces from $\mathcal{H}$.
For this purpose, one might want to choose, for each $p \in \mc{M}$, a Lagrangian $L_p\subset \mathcal{H}_p$, in such a way that the corresponding Fock spaces $\mathcal{F}_{L_p}$ form again a continuous Hilbert space bundle over $\mathcal{M}$. It turns out that this is too restrictive, and that it is more appropriate to choose, for each $p \in \mc{M}$, an \emph{equivalence class} of Lagrangians in $\mc{H}_{p}$.
However, by passing to equivalence classes, we lose the right to speak about \emph{the} Fock space $\mc{F}_{L_{p}}$, the result of which is that we shall obtain a projective, or twisted, Fock bundle.

There are many methods of controlling the choice of equivalence class of  Lagrangians; in the present section, we attack this problem by considering orthogonal frames of $\mathcal{H}_p$, i.e., orthogonal transformations $\nu:V \to \mathcal{H}_p$.
Given such a frame, we consider the Lagrangian $\nu(L) \subset \mathcal{H}_{p}$ and the corresponding Fock space $\mathcal{F}_{\nu(L)}$, which is a representation of the Clifford \cstar-algebra  $\Cl(\mathcal{H}_{p})$.
However, if $\nu'$ is another orthogonal frame in $\mathcal{H}_p$, then the two Fock spaces $\mathcal{F}_{\nu(L)}$ and $\mathcal{F}_{\nu'(L)}$ are  unitarily equivalent if and only if the element $g\defeq \nu^{-1}\circ\nu\in \O(V)$ is implementable, i.e., $g\in \O_L(V)$, see \cref{lem:FockSpacesUnitarilyEquivalent}.
In other words, for each $p \in \mc{M}$ we must choose a collection of frames of $\mc{H}_{p}$ on which $\O_{\res}(V)$ acts transitively.
Roughly speaking, this means that we need to reduce the structure group of the orthogonal frame bundle of $\mathcal{H}$ from $\O(V)$ to $\O_L(V)$.

As announced above, this will not be enough to construct a bundle Fock spaces from $\mc{H}$, indeed, we see from \cref{lem:TheTwoU1Torsors} that the set $\U_{\Cl(\mathcal{H}_{p})}(\mathcal{F}_{\nu'(L)},\mathcal{F}_{\nu(L)})$ of possible unitary equivalences is a $\U(1)$-torsor and canonically isomorphic to the fibre of $\Imp_L(V) \to \O_L(V)$ over $g$. Again, roughly speaking, this means that we need to lift the structure group of the reduced orthogonal frame bundle from $\O_L(V)$ to $\Imp_L(V)$.

In this article, we are interested in \emph{smooth} bundles of Fock spaces, and so must impose certain smoothness conditions on what should be the orthogonal frame bundle of $\mathcal{H}$.
It turns out that the following setting is appropriate. 

\begin{definition}
\label{def:srofb}
Let $\mathcal{H}$ be a continuous real Hilbert space bundle over a Fr\'echet manifold $\mathcal{M}$ with typical fibre $V$.  A \emph{smooth  orthogonal frame bundle} for $\mathcal{H}$ is a Fr\'echet principal $\O(V)$-bundle $\O(\mathcal{H})$ over $\mathcal{M}$ together with a  fibre-wise isometric  isomorphism 
\begin{equation*}
(\O(\mathcal{H}) \times V)/\O(V) \cong \mathcal{H}
\end{equation*}
of continuous real Hilbert space bundles over $\mathcal{M}$.

\end{definition}

\begin{remark}
\label{re:frameid}
\label{lem:framebundle}
A smooth orthogonal frame bundle $\O(\mathcal{H})$ is indeed (a version of) the orthogonal frame bundle of $\mathcal{H}$, in the sense that the fibre $\O(\mathcal{H})_p$ can be identified with the set $\O(V,\mathcal{H}_p)$ of orthogonal frames in $\mathcal{H}$ at $p\in \mathcal{M}$. To see this, denote by $\psi$ the isomorphism $(\O(\mathcal{H}) \times V)/\O(V) \cong \mathcal{H}$. If $\nu\in \O(\mathcal{H})$, then $v\mapsto \psi([\nu,v])$ is an orthogonal frame. Conversely, if $\varphi:V \to \mathcal{H}_p$ is an orthogonal frame, then -- since the $\O(V)$-action on $V$ is free -- there exists a unique $\nu\in \O(\mathcal{H})$ with $\psi^{-1}(\varphi(v))=[\nu,v]$ for all $v\in V$. In the following we will identify elements $\nu\in \O(\mathcal{H})_p$ with orthogonal frames $\nu:V \to \mathcal{H}_p$ without further notice.   
\end{remark}

\begin{remark}
Given a smooth orthogonal frame bundle $\O(\mathcal{H})$, it is clear what a reduction to  $\O_L(V)\subset \O(V)$ is: a Fr\'echet principal $\O_L(V)$-bundle $\O_L(\mathcal{H})$ over $\mathcal{M}$ together with a smooth $\O_L(V)$-equivariant bundle map $\O_L(\mathcal{H}) \to \O(\mathcal{H})$.  
\end{remark}

\begin{remark}
\label{re:Hfromgeneralsetting}
We recall our general setting, in which we start with a Fr\'{e}chet principal $\mc{G}$-bundle $\mc{E}$ over $\mc{M}$, and a Fock extension $(V,\alpha,L,\omega)$ for $\mathcal{G}$. Then, we consider the continuous representation $\mathcal{G} \times V \to V$  induced along the smooth group homomorphism $\mathcal{G} \to \O_L(V) \to \O(V)$ and consider the associated continuous Hilbert space bundle
\begin{equation}
\label{eq:hsbdlimp}
\mathcal{H}_{\mathcal{E}} \defeq (\mathcal{E} \times V)/\mathcal{G}\text{.}
\end{equation}
Its typical fibre is $V$, and it comes equipped with a real structure induced from $\alpha$. Next we provide a smooth  orthogonal frame bundle for $\mathcal{H}$, by considering the Fr\'echet principal $\O(V)$-bundle
\begin{equation*}
\O(\mathcal{H}_{\mathcal{E}}) \defeq \mathcal{E} \times_{\mathcal{G}} \O(V),
\end{equation*}
i.e.,  the extension of the structure group of $\mathcal{E}$ along the smooth group homomorphism $\mathcal{G} \to \O_L(V) \to \O(V)$, see \cref{lem:bundleext}.
By \cref{prop:InvarianceUnderLiftsHilbert} we have a canonical isomorphism
\begin{equation*}
(\O(\mathcal{H}_{\mathcal{E}}) \times V)/\O(V)\cong (\mathcal{E} \times V)/\mathcal{G} = \mathcal{H}_{\mathcal{E}}\text{.}
\end{equation*}
The identification of elements of $\O(\mathcal{H}_{\mathcal{E}})_p$ with orthogonal frames $V \to (\mathcal{H}_{\mathcal{E}})_p$ of \cref{re:frameid} becomes now the following: for $[e, g] \in \O(\mc{H}_{\mathcal{E}})_{p}$, the frame is $v \mapsto [e, gv]$.
Finally, $\O(\mathcal{H})$ comes equipped with a reduction to $\O_L(V)$ given by the Fr\'echet principal $\O_L(V)$-bundle
\begin{equation}
\label{def:olh}
\O_L(\mathcal{H}_{\mathcal{E}}) \defeq \mathcal{E} \times_{\mathcal{G}} \O_L(V)\text{,}
\end{equation}
the extension of the structure group of $\mathcal{E}$ along the smooth group homomorphism $\mathcal{G} \to \O_L(V)$. By construction, $\O_L(\mathcal{H}_{\mathcal{E}})$ is then a reduction of $\O(\mathcal{H}_{\mathcal{E}})$.  
\end{remark}

In the following, we continue with the assumption that our continuous real Hilbert space bundle $\mathcal{H}$ is equipped with a smooth orthogonal frame bundle $\O(\mathcal{H})$ as in \cref{def:srofb} and a reduction $\O_L(\mathcal{H})$, and will now explain the construction of a corresponding smooth twisted Fock bundle. 
The idea to define a bundle  of Fock spaces using orthogonal frames leads us to a rigged Hilbert space bundle $\mathcal{F}\frm{\mathcal{H}}$ over $\O_L(\mathcal{H})$, whose fibre over a frame $\nu\in \O_L(\mathcal{H})$ is $\mathcal{F}^{\infty}_{\nu(L)}$. We will call $\mathcal{F}\frm{\mathcal{H}}$ the \emph{frame-dependent Fock bundle}. 

In order to properly construct this bundle, we define the set $\mathcal{F}\frm{\mathcal{H}}$ to be the disjoint union of the fibres $\mathcal{F}^{\infty}_{\nu(L)}$. \cref{lem:UnitaryRestrictedToSmoothVectors} provides a bijection
\begin{equation}
\label{eq:71phi}
\phi:\O_L(\mathcal{H}) \times \mathcal{F}_{L}^{\infty} \to \mathcal{F}\frm{\mathcal{H}}: (\nu,v) \mapsto (\nu,\Lambda_{\nu}(v))
\end{equation}
which is fibrewise an isometric isomorphism; this equips $\mathcal{F}\frm{\mathcal{H}}$ with the structure of a  rigged Hilbert space bundle over $\O(\mathcal{H})$, with  global trivialization $\phi$. 

By construction, each fibre $\mathcal{F}^{\infty}_{\nu(L)}$ is a rigged $\Cl(\mathcal{H}_p)^{\infty}$-module, as $\nu(L)\subset \mathcal{H}_p$ is a Lagrangian. In order to capture the global aspects of this module structure, we introduce the rigged \cstar-algebra bundle
\begin{equation*}
\Cl^{\infty}(\mathcal{H}) \defeq (\O(\mathcal{H}) \times \Cl(V)^{\infty})/\O(V)
\end{equation*}
over $\mathcal{M}$. In a fibre over $p \in \mathcal{M}$, we obtain an isometric isomorphism of rigged \cstar-algebras
\begin{equation}
\label{eq:clid}
\Cl^{\infty}(\mathcal{H})_p \to \Cl(\mathcal{H}_p)^{\infty}: [\nu,a] \mapsto \Cl(\nu)(a)
\end{equation}
provided by \cref{lem:UniversalFrechetCliffordAlgebra}.
This shows that the bundle $\Cl^{\infty}(\mathcal{H})$ combines in an appropriate way the fibrewise defined rigged \cstar-algebras $\Cl(\mathcal{H}_p)^{\infty}$. In particular, the fibre $\Cl^{\infty}(\mathcal{H})_p$ acts on $\mathcal{F}^{\infty}_{\nu(L)}$.

Given the reduction $\O_L(\mathcal{H}) \to \O(\mathcal{H})$, \cref{prop:InvarianceUnderLiftsCStar} yields a canonical isomorphism
\begin{equation*}
\Cl(\mathcal{H})^{\infty} \cong (\O_L(\mathcal{H}) \times \Cl(V)^{\infty})/\O_L(V)\text{,}
\end{equation*}
of rigged \cstar-algebra bundles, which we shall use in the following.

\begin{lemma}
\label{lem:fockframemodule}
The action of the  fibre $\Cl^{\infty}(\mathcal{H})_p$ on $\mathcal{F}^{\infty}_{\nu(L)}$ exhibits the frame-dependent Fock bundle $\mathcal{F}\frm{\mathcal{H}}$ as a rigged $\pi^{*}\Cl^{\infty}(\mathcal{H})$-module bundle, where $\pi: \O_L(\mathcal{H}) \to \mathcal{M}$ is the bundle projection.
Explicitly, the action  in the fibre over a frame $\nu\in \O_L(\mathcal{H})$ is given by
\begin{equation*}
([\nu,a], v) \mapsto \Cl(\nu)(a)\lact v\text{,}
\end{equation*}
for $a\in \Cl(V)^{\infty}$ and $v\in \mathcal{F}^{\infty}_{\nu(L)}$.  
\end{lemma}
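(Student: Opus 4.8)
The plan is to verify \cref{def:repcstarbundle} directly, exploiting that over the total space $\O_L(\mathcal{H})$ both bundles involved are globally trivial. First I would identify the pullback $\pi^{*}\Cl^{\infty}(\mathcal{H})$. Using the isomorphism $\Cl^{\infty}(\mathcal{H}) \cong (\O_L(\mathcal{H}) \times \Cl(V)^{\infty})/\O_L(V)$ recorded just before the statement, the pullback along the bundle projection $\pi: \O_L(\mathcal{H}) \to \mathcal{M}$ is canonically trivial: the tautological section $\nu \mapsto [\nu,-]$ furnishes an isometric isomorphism $\pi^{*}\Cl^{\infty}(\mathcal{H}) \cong \O_L(\mathcal{H}) \times \Cl(V)^{\infty}$ of rigged \cstar-algebra bundles over $\O_L(\mathcal{H})$ (in particular $\pi^{*}\Cl^{\infty}(\mathcal{H})$ is a rigged \cstar-algebra bundle, so that the statement makes sense). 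Likewise, the map $\phi$ of \cref{eq:71phi} is a global trivialization of $\mathcal{F}\frm{\mathcal{H}}$ over $\O_L(\mathcal{H})$. Thus matters reduce to checking that, read in these two trivializations, the proposed fibrewise action is the typical-fibre Clifford action $\Cl(V)^{\infty} \times \mathcal{F}_L^{\infty} \to \mathcal{F}_L^{\infty}$ of \cref{prop:SmoothCliffordActionOnF}.

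Next I would define the action and check it is well defined. For $p \in \mathcal{M}$ and $\nu \in \O_L(\mathcal{H})_p$, the subspace $\nu(L) \subset \mathcal{H}_p$ is a Lagrangian, so \cref{prop:SmoothCliffordActionOnF} applied to the Hilbert space $\mathcal{H}_p$ turns $\mathcal{F}^{\infty}_{\nu(L)}$ into a rigged $\Cl(\mathcal{H}_p)^{\infty}$-module. Precomposing with the isometric isomorphism $\Cl^{\infty}(\mathcal{H})_p \to \Cl(\mathcal{H}_p)^{\infty}$, $[\nu,a]\mapsto \Cl(\nu)(a)$ of \cref{eq:clid} (which comes from \cref{lem:UniversalFrechetCliffordAlgebra}) and invoking \cref{re:inducedcstarrep}, the rigged Hilbert space $\mathcal{F}^{\infty}_{\nu(L)}$ becomes a rigged $\Cl^{\infty}(\mathcal{H})_p$-module with action $([\nu,a],v)\mapsto \Cl(\nu)(a)\lact v$; this is the map $\rho$. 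Because \cref{eq:clid} is independent of the chosen representative, and because every element of $\pi^{*}\Cl^{\infty}(\mathcal{H})$ in the fibre over $\nu$ has a unique representative with first slot $\nu$ (the $\O_L(V)$-action on fibres being free and transitive), $\rho$ is a well-defined fibre-preserving map over $\O_L(\mathcal{H})$.

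The key computation is then immediate. An element of $\pi^{*}\Cl^{\infty}(\mathcal{H})$ over $\nu$ is $[\nu,a]$ with $a \in \Cl(V)^{\infty}$, and an element of $\mathcal{F}\frm{\mathcal{H}}$ over $\nu$ is $\Lambda_{\nu}(v_0) = \phi(\nu,v_0)$ with $v_0 \in \mathcal{F}_L^{\infty}$. Hence
\begin{equation*}
\rho([\nu,a],\Lambda_{\nu}(v_0)) = \Cl(\nu)(a) \lact \Lambda_{\nu}(v_0) = \Lambda_{\nu}(a \lact v_0)\text{,}
\end{equation*}
the last step being exactly the intertwining property of \cref{re:intertwinernu}. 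So in the trivializations of the first step $\rho$ becomes $(a,v_0) \mapsto a\lact v_0$; taking $U = \O_L(\mathcal{H})$, the diagram of \cref{def:repcstarbundle} commutes globally, with typical fibre the rigged $\Cl(V)^{\infty}$-module $\mathcal{F}_L^{\infty}$, and the fibrewise smoothness of $\rho$ together with the module axioms are inherited from \cref{prop:SmoothCliffordActionOnF}. This exhibits $\mathcal{F}\frm{\mathcal{H}}$ as a rigged $\pi^{*}\Cl^{\infty}(\mathcal{H})$-module bundle with the explicit action stated.

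I expect no genuine obstacle: the whole argument is bookkeeping along a chain of canonical identifications. The points demanding attention are reconciling the $\O(V)$- and $\O_L(V)$-descriptions of $\Cl^{\infty}(\mathcal{H})$ through \cref{lem:bundleext} and \cref{prop:InvarianceUnderLiftsCStar}, the functoriality relation $\Cl(\nu g) = \Cl(\nu) \circ \theta_g$ (with $\theta_g$ the Bogoliubov automorphism) which makes both the tautological trivialization and \cref{eq:clid} well defined, and checking that the global trivializations $\phi$ and \cref{eq:clid} are set up compatibly so that \cref{re:intertwinernu} applies verbatim. All analytic content (smoothness, the boundedness and adjoint identities) is already packaged in \cref{prop:SmoothCliffordActionOnF} and the functoriality \cref{lem:UniversalFrechetCliffordAlgebra,lem:UnitaryRestrictedToSmoothVectors}.
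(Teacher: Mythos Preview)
Your proposal is correct and follows essentially the same approach as the paper: trivialize $\pi^{*}\Cl^{\infty}(\mathcal{H})$ via the tautological section, use the global trivialization $\phi$ of $\mathcal{F}\frm{\mathcal{H}}$, and invoke the intertwining identity of \cref{re:intertwinernu} to identify the action in these trivializations with the standard Clifford action $\Cl(V)^{\infty}\times\mathcal{F}_L^{\infty}\to\mathcal{F}_L^{\infty}$. The paper's proof is terser but structurally identical; your version simply spells out the key computation $\Cl(\nu)(a)\lact\Lambda_{\nu}(v_0)=\Lambda_{\nu}(a\lact v_0)$ explicitly.
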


\begin{proof}
Only the smoothness of the action is to check. 
We observe that the pullback of $\Cl^{\infty}(\mathcal{H})$ along $\pi:\O_L(\mathcal{H}) \to \mathcal{M}$ is canonically isomorphic to the trivial  bundle $\O_L(\mathcal{H}) \times \Cl(V)^{\infty}$. Hence, the trivial bundle $\O_L(\mathcal{H}) \times \mathcal{F}^{\infty}$ is a rigged $\pi^{*}\Cl^{\infty}(\mathcal{H})$-module bundle. We claim that the isometric isomorphism $\phi: \O_L(\mathcal{H}) \times \Cl(V)^{\infty} \to \mathcal{F}\frm{\mathcal{H}}$ exchanges this $\pi^{*}\Cl^{\infty}(\mathcal{H})$-module structure with the one in question; this proves the lemma. The claim follows directly from \cref{re:intertwinernu}.  
\end{proof}

Next, we discuss the transformation behaviour of the frame-dependent Fock bundle under a change of frame. Since we have used the reduced frame bundle $\O_L(\mathcal{H})$, the fibres $\mathcal{F}\frm{\mathcal{H}}_{\nu_1}$ and $\mathcal{F}\frm{\mathcal{H}}_{\nu_2}$ over two frames $\nu_1,\nu_2\in \O_L(\mathcal{H})_{p}$ are unitarily equivalent $\Cl^{\infty}(\mathcal{H})_p$-representations. However, as mentioned above, no canonical equivalence is given; instead, consistent choices of unitary equivalences would require a lift of the structure group of $\O_L(\mathcal{H})$ along the central extension
$\U(1) \to \Imp_L(V) \to \O_L(V)$.
The existence of such a lift is a strong condition, and we will first continue without one.
Thus, we consider the lifting bundle gerbe $\mathscr{L}_{\mathcal{H}} \defeq \mathscr{L}_{\O_L(\mathcal{H})}$
for the problem of lifting the structure group of $\O_L(\mathcal{H})$ to $\Imp_L(V)$. We will call $\mathscr{L}_{\mathcal{H}}$ the \emph{implementer lifting gerbe} associated to $\mathcal{H}$.

We  now exhibit the  frame-dependent Fock bundle $\mathcal{F}\frm{\mathcal{H}}$ as an $\mathscr{L}_{\mathcal{H}}$-twisted rigged Hilbert space bundle, which allows us to employ the theory developed in \cref{sec:twistedriggedhb}. 
We recall that the principal $\U(1)$-bundle of $\mathscr{L}_{\mathcal{H}}$ over $\O_L(\mathcal{H})^{[2]}$ is $\mathcal{Q}\defeq \delta^{*}\Imp_L(V)$, and we recall from \cref{lem:TheTwoU1Torsors,lem:SmoothIntertwiners} that we have canonical bijections
\begin{equation*}
\mathcal{Q}_{\nu,\nu'}=\Imp_L(V)_{\delta(\nu,\nu')}\cong \U_{\Cl(\mathcal{H}_p)}(\mathcal{F}_{\nu'(L)},\mathcal{F}_{\nu(L)})\cong \U_{\Cl(\mathcal{H}_p)^{\infty}}(\mathcal{F}^{\infty}_{\nu'(L)},\mathcal{F}^{\infty}_{\nu(L)})\text{.}
\end{equation*}
Under these bijections, we have a map
\begin{equation*}
\varphi_{\nu,\nu'}:\mathcal{Q}_{\nu,\nu'} \times \mathcal{F}\frm{\mathcal{H}}_{\nu'} \to \mathcal{F}\frm{\mathcal{H}}_{\nu}: (U,v) \mapsto Uv
\end{equation*}
which induces the structure of a bundle morphism $\varphi: \mathcal{Q}_{\C} \otimes \pr_2^{*}\mathcal{F}\frm{\mathcal{H}} \to \pr_1^{*}\mathcal{F}\frm{\mathcal{H}}$.

\begin{lemma}
\label{lem:fockframe}
The bundle morphism $\varphi$ is an isometric isomorphism of rigged Hilbert space bundles, and  turns the frame-dependent Fock bundle $\mathcal{F}\frm{\mathcal{H}}$  into an $\mathscr{L}_{\mathcal{H}}$-twisted $\Cl^{\infty}(\mathcal{H})$-module bundle.
Moreover, the bundle morphism $\phi$ of \cref{eq:71phi} establishes an isometric isomorphism between $\mathcal{F}\frm{\mathcal{H}}$ and the universal $\mathscr{L}_{\mathcal{H}}$-twisted $\Cl^{\infty}(\mathcal{H})$-module bundle $\mathcal{F}^{\infty}(\O_L(\mathcal{H}))^{\twist}$ with fibre $\mathcal{F}_{L}^{\infty}$.
\end{lemma}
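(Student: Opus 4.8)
The plan is to verify the three claimed assertions of \cref{lem:fockframe} in turn, using the universal twisted bundle machinery of \cref{sec:twistedriggedhb} as much as possible to avoid redoing local computations. First I would recall the explicit description of the universal $\mathscr{L}_{\mathcal{H}}$-twisted rigged Hilbert space bundle $\mathcal{F}^{\infty}(\O_L(\mathcal{H}))^{\twist} = \O_L(\mathcal{H}) \times \mathcal{F}_L^{\infty}$ from \cref{def:canonicaltvb}, together with its structure isomorphism $\nu$ from \cref{lem:universal}, namely $\nu([\tilde g,z],w) = z(\tilde g w)$ over $\O_L(\mathcal{H})^{[2]}$, where $\tilde g \in \Imp_L(V)$ projects to $\delta(\nu_1,\nu_2)$. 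The key point is that the global trivialization $\phi$ of \cref{eq:71phi} is designed precisely to transport this structure: I would check that $\phi$ is fibrewise $\Lambda_{\nu}$, already known to be an isometric isomorphism of rigged Hilbert spaces by \cref{lem:UnitaryRestrictedToSmoothVectors}, so $\mathcal{F}\frm{\mathcal{H}}$ is a rigged Hilbert space bundle with global trivialization $\phi$, as already asserted before the lemma.

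Next, to show $\varphi$ is an isometric isomorphism of rigged Hilbert space bundles, I would argue that under $\phi$ it corresponds to the universal structure isomorphism $\nu$. Concretely, given frames $\nu_1,\nu_2 \in \O_L(\mathcal{H})_p$, an element $U \in \mathcal{Q}_{\nu_1,\nu_2}=\Imp_L(V)_{\delta(\nu_1,\nu_2)}$, and $v \in \mathcal{F}\frm{\mathcal{H}}_{\nu_2}$, I would write $v = \Lambda_{\nu_2}(w)$ for $w \in \mathcal{F}_L^{\infty}$ via $\phi$, and compute $\varphi_{\nu_1,\nu_2}(U, \Lambda_{\nu_2}w) = U\Lambda_{\nu_2}w$; I must then check this equals $\Lambda_{\nu_1}(\text{something in } \mathcal{F}_L^{\infty})$, which amounts to identifying $\Lambda_{\nu_1}^{-1}U\Lambda_{\nu_2}$ as an implementer in $\Imp_L(V)$ — this is exactly the content of the $\U(1)$-torsor isomorphism $\EqImp$ of \cref{lem:TheTwoU1Torsors} combined with \cref{lem:SmoothIntertwiners}, which guarantees that $U$ restricts to an isometric isomorphism $\mathcal{F}^{\infty}_{\nu_2(L)} \to \mathcal{F}^{\infty}_{\nu_1(L)}$ of rigged Hilbert spaces. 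Thus $\varphi$ is fibrewise an isometric isomorphism; smoothness of $\varphi$ (and of its inverse $v \mapsto [\tilde g,1]\otimes \tilde g^{-1}v$) follows from the smoothness of the $\Imp_L(V)$-action on $\mathcal{F}_L^{\infty}$ (\cref{prop:smoothFockSpace}) and the existence of smooth local sections of $\Imp_L(V) \to \O_L(V)$ along $\delta$, exactly as in the proof of \cref{lem:universal}. The compatibility of $\varphi$ with the bundle gerbe multiplication $\mu$ of $\mathscr{L}_{\mathcal{H}}$ follows from the commutative square in \cref{lem:TheTwoU1Torsors} relating composition of unitary equivalences with the torsor maps $\EqImp$, so $\varphi$ turns $\mathcal{F}\frm{\mathcal{H}}$ into an $\mathscr{L}_{\mathcal{H}}$-twisted rigged Hilbert space bundle. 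That it is moreover a twisted $\Cl^{\infty}(\mathcal{H})$-module bundle requires checking $\varphi$ is $\Cl^{\infty}(\mathcal{H})$-linear in the sense of \cref{def:trmb}, i.e.\ $U(a\lact v) = a \lact (Uv)$ for $a \in \Cl(\mathcal{H}_p)^{\infty}$ under the identification \cref{eq:clid}; but this is precisely the implementability condition \cref{eq:Implementer}, since $U$ implements an element fixing the Clifford action identification across the two frames (more precisely, the two frames induce the same isomorphism $\Cl^{\infty}(\mathcal{H})_p \cong \Cl(\mathcal{H}_p)^{\infty}$ composed with a Bogoliubov automorphism, and $U$ intertwines accordingly).

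For the final assertion, that $\phi$ is an isometric isomorphism between $\mathcal{F}\frm{\mathcal{H}}$ and the universal twisted module bundle $\mathcal{F}^{\infty}(\O_L(\mathcal{H}))^{\twist}$, I would simply assemble the pieces: $\phi$ is an isometric isomorphism of rigged Hilbert space bundles by construction; it is $\pi^{*}\Cl^{\infty}(\mathcal{H})$-linear by \cref{lem:fockframemodule} (whose proof already records that $\phi$ exchanges the module structures via \cref{re:intertwinernu}); and it intertwines $\nu$ with $\varphi$ by the computation in the previous paragraph. Hence $\phi$ is an isomorphism of $\mathscr{L}_{\mathcal{H}}$-twisted $\Cl^{\infty}(\mathcal{H})$-module bundles. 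I expect the main obstacle to be the bookkeeping in the $\Cl^{\infty}(\mathcal{H})$-linearity of $\varphi$: one must carefully track the two different identifications of the fibre $\Cl^{\infty}(\mathcal{H})_p$ with $\Cl(\mathcal{H}_p)^{\infty}$ coming from the two frames $\nu_1,\nu_2$ (these differ by the Bogoliubov automorphism $\theta_{\delta(\nu_1,\nu_2)}$ transported along $\Cl(\nu_1)$), and see that the implementer $U$ precisely compensates this discrepancy — this is conceptually just \cref{eq:Implementer} but notationally delicate. Everything else reduces cleanly to results already established: \cref{lem:UnitaryRestrictedToSmoothVectors}, \cref{lem:UniversalFrechetCliffordAlgebra}, \cref{re:intertwinernu}, \cref{lem:TheTwoU1Torsors}, \cref{lem:SmoothIntertwiners}, \cref{lem:universal}, and \cref{lem:twistedmodulebundle}.
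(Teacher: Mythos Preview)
Your proposal is correct and follows essentially the same route as the paper: the heart of the argument is the commutative square
\begin{equation*}
\xymatrix@C=4em{\mathcal{Q}_{\C} \otimes \pr_2^{*}\mathcal{F}^{\infty}(\O_L(\mathcal{H}))^{\twist} \ar[r]^-{\nu} \ar[d]_{1 \otimes \pr_2^{*}\phi} & \pr_1^{*}\mathcal{F}^{\infty}(\O_L(\mathcal{H}))^{\twist} \ar[d]^{\pr_1^{*}\phi} \\ \mathcal{Q}_{\C} \otimes \pr_2^{*}\mathcal{F}\frm{\mathcal{H}} \ar[r]_-{\varphi} & \pr_1^{*}\mathcal{F}\frm{\mathcal{H}}}
\end{equation*}
which you verify via the torsor identification of \cref{lem:TheTwoU1Torsors}. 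The one difference is organizational: the paper writes down this diagram and then simply says that, since $\phi$ is a global trivialization, \emph{all} claims follow at once---the properties of $\varphi$ (isometric isomorphism, compatibility with $\mu$, $\Cl^{\infty}(\mathcal{H})$-linearity) are transported wholesale from the already-established properties of $\nu$ via \cref{lem:universal} and \cref{lem:twistedmodulebundle}. You instead verify the compatibility with $\mu$ and the $\Cl^{\infty}(\mathcal{H})$-linearity of $\varphi$ separately and directly; this is not wrong, but it is redundant once the square commutes and $\phi$ is known to be a module isomorphism (\cref{lem:fockframemodule}). Streamlining your write-up to the paper's ``one diagram proves everything'' form would shorten it considerably without losing content.
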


\begin{proof}
Let us first note that behind the stage is the smooth representation of the central extension $\U(1)\to \Imp_L(V) \to \O_L(V)$ on the rigged $\Cl(V)^{\infty}$-module $\mathcal{F}_L^{\infty}$ from \cref{thm:repofimp}.
Then, we recall that the universal $\mathscr{L}_{\mathcal{H}}$-twisted $\Cl^{\infty}(\mathcal{H})$-module bundle $\mathcal{F}^{\infty}(\O_L(\mathcal{H}))^{\twist}$ consists of the trivial bundle $\O_L(\mathcal{H}) \times \mathcal{F}^{\infty}$ together with the isometric isomorphism
$\nu: \mathcal{Q}_{\C} \otimes \pr_2^{*}\mathcal{F}^{\infty}(\O_L(\mathcal{H}))^{\twist} \to \pr_1^{*}\mathcal{F}^{\infty}(\O_L(\mathcal{H}))^{\twist}$
over $\O_L(\mathcal{H})^{[2]}$  characterized by $\nu([U,z],v) = z(U v)$ for all $U\in \Imp_L(V)$, $z\in\C$ and $v\in \mathcal{F}^{\infty}$.
We notice by inspection of the definitions of the involved maps that the diagram
\begin{equation*}
\xymatrix@C=4em{\mathcal{Q}_{\C} \otimes \pr_2^{*}\mathcal{F}^{\infty}(\O_L(\mathcal{H}))^{\twist} \ar[r]^-{\nu} \ar[d]_{1 \otimes \pr_2^{*}\phi} & \pr_1^{*}\mathcal{F}^{\infty}(\O_L(\mathcal{H}))^{\twist} \ar[d]^{\pr_1^{*}\phi} \\ \mathcal{Q}_{\C} \otimes \pr_2^{*}\mathcal{F}\frm{\mathcal{H}} \ar[r]_-{\varphi} & \pr_1^{*}\mathcal{F}\frm{\mathcal{H}}}
\end{equation*}
is commutative. Since $\phi$ is a global trivialization of $\mathcal{F}\frm{\mathcal{H}}$, this proves all claims at once.
\end{proof}

We recall from \cref{sec:twistedriggedhb} that any trivialization~$\mathcal{T}$ of  the implementer lifting gerbe $\mathscr{L}_{\mathcal{H}}$  provides an equivalence
\begin{equation*}
\twist_{\mathcal{T}}:\VBdl^{\Cl^{\infty}(\mathcal{H})}(\mathcal{M}) \to \Mod{\mathscr{L}_{\mathcal{H}}}^{\Cl^{\infty}(\mathcal{H})}\text{,}
\end{equation*}
between rigged $\Cl^{\infty}(\mathcal{H})$-module bundles over $\mathcal{M}$ and $\mathscr{L}_{\mathcal{H}}$-twisted rigged  $\Cl^{\infty}(\mathcal{H})$-module bundles. In particular, we have the following result:

\begin{theorem}
\label{th:unwistfdfb}
Let $\mathcal{H}$ be a continuous real Hilbert space bundle over $\mathcal{M}$ with typical fibre $V$, equipped with a smooth  orthogonal frame bundle and a reduction to $\O_L(V)$.
Let $\mathcal{T}$ be a trivialization of the implementer lifting gerbe $\mathscr{L}_{\mathcal{H}}$. Then, there exists a unique (up to unique isometric isomorphism) rigged $\Cl^{\infty}(\mathcal{H})$-module bundle $\mathcal{F}\frm{\mathcal{H},\mathcal{T}}$ over $\mathcal{M}$ that is an untwisted version of the frame-dependent Fock bundle $\mathcal{F}\frm{\mathcal{H}}$:
\begin{equation*}
\twist_{\mathcal{T}}(\mathcal{F}\frm{\mathcal{H},\mathcal{T}})\cong \mathcal{F}\frm{\mathcal{H}}\text{.}
\end{equation*} 
\end{theorem}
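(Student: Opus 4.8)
The plan is to apply the general untwisting machinery of \cref{lem:twistingmod} to the bundle gerbe $\mathscr{L}_{\mathcal{H}}$, the rigged \cstar-algebra bundle $\Cl^{\infty}(\mathcal{H})$ over $\mathcal{M}$, and the trivialization $\mathcal{T}$. By \cref{lem:fockframe}, the frame-dependent Fock bundle $\mathcal{F}\frm{\mathcal{H}}$ is an $\mathscr{L}_{\mathcal{H}}$-twisted rigged $\Cl^{\infty}(\mathcal{H})$-module bundle, i.e.\ an object of $\Mod{\mathscr{L}_{\mathcal{H}}}^{\Cl^{\infty}(\mathcal{H})}$. Since \cref{lem:twistingmod} states that the twisting functor
\begin{equation*}
\twist_{\mathcal{T}}:\VBdl^{\Cl^{\infty}(\mathcal{H})}(\mathcal{M}) \to \Mod{\mathscr{L}_{\mathcal{H}}}^{\Cl^{\infty}(\mathcal{H})}
\end{equation*}
is an equivalence of categories, it is in particular essentially surjective: there exists an object $\mathcal{F}\frm{\mathcal{H},\mathcal{T}}$ in $\VBdl^{\Cl^{\infty}(\mathcal{H})}(\mathcal{M})$ together with an isomorphism $\twist_{\mathcal{T}}(\mathcal{F}\frm{\mathcal{H},\mathcal{T}})\cong \mathcal{F}\frm{\mathcal{H}}$ of $\mathscr{L}_{\mathcal{H}}$-twisted rigged $\Cl^{\infty}(\mathcal{H})$-module bundles. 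That settles existence.

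For the uniqueness statement, I would argue as follows. Suppose $\mathcal{K}_1$ and $\mathcal{K}_2$ are two rigged $\Cl^{\infty}(\mathcal{H})$-module bundles over $\mathcal{M}$ together with isometric isomorphisms $\twist_{\mathcal{T}}(\mathcal{K}_i)\cong \mathcal{F}\frm{\mathcal{H}}$ of twisted module bundles. Composing one with the inverse of the other gives an isometric isomorphism $\twist_{\mathcal{T}}(\mathcal{K}_1)\cong \twist_{\mathcal{T}}(\mathcal{K}_2)$. Since $\twist_{\mathcal{T}}$ is an equivalence, it is fully faithful, so this isomorphism is of the form $\twist_{\mathcal{T}}(\psi)$ for a unique morphism $\psi:\mathcal{K}_1 \to \mathcal{K}_2$ in $\VBdl^{\Cl^{\infty}(\mathcal{H})}(\mathcal{M})$; moreover $\psi$ is an isomorphism because $\twist_{\mathcal{T}}$ reflects isomorphisms, and it is isometric because the twisting functor is compatible with the Hermitian metrics (the underlying Hilbert space bundle of $\twist_{\mathcal{T}}(\mathcal{K})$ is $\mathcal{T}_{\C}\otimes \pi^{*}\mathcal{K}$ with $\mathcal{T}_{\C}$ a hermitian line bundle, so isometry of $\twist_{\mathcal{T}}(\psi)$ is equivalent to isometry of $\psi$). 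This gives uniqueness up to unique isometric isomorphism. Concretely, one may also exhibit $\mathcal{F}\frm{\mathcal{H},\mathcal{T}}$ by the descent construction recalled after \cref{lem:twisting}: it is the bundle over $\mathcal{M}$ obtained by gluing $\mathcal{T}_{\C}^{*}\otimes \mathcal{F}\frm{\mathcal{H}}$ over $\O_L(\mathcal{H})$ along the descent isomorphism \cref{eq:descent}, with the $\Cl^{\infty}(\mathcal{H})$-action descending by \cref{def:trmb} since $\varphi$ is $\Cl^{\infty}(\mathcal{H})$-linear.

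There is essentially no main obstacle here beyond bookkeeping: all the real work has already been done in \cref{lem:twistingmod} (the $\mathcal{A}$-linear version of the untwisting equivalence) and in \cref{lem:fockframe} (identifying $\mathcal{F}\frm{\mathcal{H}}$ as a twisted module bundle and in fact as the universal one, $\mathcal{F}^{\infty}(\O_L(\mathcal{H}))^{\twist}$). The only point requiring a word of care is that $\Cl^{\infty}(\mathcal{H})$ is genuinely a rigged \cstar-algebra bundle over the \emph{base} $\mathcal{M}$ — which is what \cref{lem:twistingmod} demands — rather than over the surjective submersion $\O_L(\mathcal{H})$; this is exactly the content of the isomorphisms \cref{eq:clid} and the paragraph following it, which express $\Cl^{\infty}(\mathcal{H})$ as $(\O_L(\mathcal{H})\times \Cl(V)^{\infty})/\O_L(V)$, so that its pullback along $\pi$ is the trivial bundle appearing in the twisting functor. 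Once this identification is in place, the theorem is an immediate instance of the essential surjectivity and full faithfulness of $\twist_{\mathcal{T}}$.
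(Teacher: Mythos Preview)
Your proposal is correct and follows essentially the same approach as the paper: the paper states the theorem as an immediate consequence of the equivalence $\twist_{\mathcal{T}}:\VBdl^{\Cl^{\infty}(\mathcal{H})}(\mathcal{M}) \to \Mod{\mathscr{L}_{\mathcal{H}}}^{\Cl^{\infty}(\mathcal{H})}$ from \cref{lem:twistingmod}, combined with \cref{lem:fockframe} establishing that $\mathcal{F}\frm{\mathcal{H}}$ lies in the target category. Your write-up simply spells out the essential surjectivity and full faithfulness arguments in more detail than the paper does, but the underlying reasoning is identical.
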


The theorem tells us that the goal of constructing a smooth bundle of Fock spaces from the given real Hilbert space bundle $\mathcal{H}$ is achieved when a trivialization $\mathcal{T}$ of the implementer lifting gerbe $\mathscr{L}_{\mathcal{H}}$  is provided. Tracing back through the relevant definitions, the fibre of the rigged Hilbert space bundle $\mathcal{F}\frm{\mathcal{H},\mathcal{T}}$ over a point $p\in \mathcal{M}$ is canonically 
\begin{equation*}
\mathcal{F}\frm{\mathcal{H},\mathcal{T}}_p \cong (\mathcal{T}_{\C}^{*})_{\nu} \otimes \mathcal{F}^{\infty}_{\nu(L)}    
\end{equation*} 
for any  frame $\nu\in \O_L(\mathcal{H})_p$.
Thus, the fibre we tried to use naively, $\mathcal{F}^{\infty}_{\nu(L)}$, is corrected with a complex line coming from the trivialization $\mathcal{T}$.

We conclude this section by proving that the frame-dependent Fock bundle nicely agrees with the twisted Fock bundle of \cref{sec:Focklifting}.
We consider a Fr\'echet principal $\mathcal{G}$-bundle $\mathcal{E}$ over $\mathcal{M}$, and induce the continuous real Hilbert space bundle $\mathcal{H}_{\mathcal{E}}$ and its reduced smooth  orthogonal frame bundle $\O_L(\mathcal{H}_{\mathcal{E}}) \to \O(\mathcal{H}_{\mathcal{E}})$ as described in \cref{re:Hfromgeneralsetting}.
Then, we only have to notice two facts: first,  we have a commutative diagram
\begin{equation*}
\xymatrix{\U(1) \ar@{=}[d] \ar[r] & \widetilde{\mathcal{G}} \ar[d] \ar[r] & \mathcal{G} \ar[d]^{\omega} \\ \U(1) \ar[r] & \Imp_{\res}(V) \ar[r] & \O_L(V)}
\end{equation*}
of central extensions, under which the smooth representation of $\U(1)\to\Imp_{\res}(V) \to \O_L(V)$ we have used in the present section induces the smooth representation of $\U(1)\to\widetilde{\mathcal{G}} \to \mathcal{G}$ used in \cref{sec:Focklifting}. Second, the projection $\mathcal{E} \to \O_L(\mathcal{H}_{\mathcal{E}})$ is $\omega$-equivariant. Now, \cref{lem:refinement} implies that this projection extends to a refinement refinement $\mathscr{R}: \mathscr{L}_{\mathcal{E}} \to \mathscr{L}_{\mathcal{H}_{\mathcal{E}}}$ between the Fock lifting gerbe and the  implementer lifting gerbe. Further, the Clifford bundles $\Cl_V^{\infty}(\mathcal{E})$ of \cref{def:CliffordBundle} and $\Cl^{\infty}(\mathcal{H}_{\mathcal{E}})$ are canonically isomorphic. Now,   \cref{prop:functorialityofuniversalmodule} implies the following result.

\begin{theorem}
\label{prop:refinement}
Let $\mathcal{E}$ be a Fr\'echet principal $\mathcal{G}$-bundle over $\mathcal{M}$, and let $\mathcal{H}_{\mathcal{E}}$ be the corresponding continuous real Hilbert space bundle. Then, the following statements hold.\begin{enumerate}[(a)]
\item 
The Fock lifting gerbe $\mathscr{L}_{\mathcal{E}}$ and the implementer lifting gerbe $\mathscr{L}_{\mathcal{H}_{\mathcal{E}}}$ are canonically isomorphic via the refinement  $\mathscr{R}: \mathscr{L}_{\mathcal{E}} \to \mathscr{L}_{\mathcal{H}_{\mathcal{E}}}$.

\item
The frame-dependent Fock bundle $\mathcal{F}\frm{\mathcal{H}_{\mathcal{E}}}$ corresponds under the refinement $\mathscr{R}$   to the twisted Fock bundle $\mathcal{F}^{\infty}(\mathcal{E})^{\twist}$ of \cref{def:twistedfock}. More precisely, there exists a canonical isometric isomorphism 
\begin{equation*}
\mathscr{R}^{*}\mathcal{F}\frm{\mathcal{H}_{\mathcal{E}}} \cong \mathcal{F}^{\infty}(\mathcal{E})^{\twist}
\end{equation*}
of $\mathscr{L}_{\mathcal{E}}$-twisted $\Cl^{\smooth}_V(\mathcal{E})$-module bundles. 
\end{enumerate}
\end{theorem}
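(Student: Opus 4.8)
The plan is to verify the hypotheses of the refinement machinery already assembled in \cref{sec:twistedriggedhb}, and then simply cite \cref{lem:refinement} and \cref{prop:functorialityofuniversalmodule}. Concretely, the data to check is a commutative square of central extensions
\begin{equation*}
\xymatrix{\U(1) \ar@{=}[d] \ar[r] & \widetilde{\mathcal{G}} \ar[d] \ar[r] & \mathcal{G} \ar[d]^{\omega} \\ \U(1) \ar[r] & \Imp_{\res}(V) \ar[r] & \O_L(V)}
\end{equation*}
together with a $\omega$-equivariant bundle morphism $\mathcal{E} \to \O_L(\mathcal{H}_{\mathcal{E}})$. The square is commutative by the very construction of $\widetilde{\mathcal{G}}$ as the pullback of $\Imp_{\res}(V)$ along $\omega$ (so $\widetilde{\mathcal{G}}\to\Imp_{\res}(V)$ is the pullback projection and $\U(1)\to\U(1)$ is the identity). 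For the equivariant bundle morphism, I would recall from \cref{re:Hfromgeneralsetting} that $\O_L(\mathcal{H}_{\mathcal{E}}) = \mathcal{E} \times_{\mathcal{G}} \O_L(V)$ is the extension of the structure group of $\mathcal{E}$ along $\omega$, so the canonical map $e \mapsto [e, 1]$ is smooth and $\omega$-equivariant essentially by construction (using \cref{lem:bundleext}). Then \cref{lem:refinement} immediately yields the refinement $\mathscr{R}=\mathscr{R}_f:\mathscr{L}_{\mathcal{E}} \to \mathscr{L}_{\mathcal{H}_{\mathcal{E}}}$, which gives part (a).

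For part (b), the key point is that the two universal twisted module bundles $\mathcal{F}^{\infty}(\mathcal{E})^{\twist}$ and $\mathcal{F}\frm{\mathcal{H}_{\mathcal{E}}}$ are both instances of the universal construction of \cref{def:canonicaltvb}, for the same underlying representation. On the $\mathcal{G}$-side, $\mathcal{F}^{\infty}(\mathcal{E})^{\twist}$ is the universal $\mathscr{L}_{\mathcal{E}}$-twisted module bundle for the representation of $\widetilde{\mathcal{G}}$ on $\mathcal{F}_L^{\infty}$ induced from $\Imp_{\res}(V)$; on the $\O_L(V)$-side, \cref{lem:fockframe} identifies $\mathcal{F}\frm{\mathcal{H}_{\mathcal{E}}}$ with the universal $\mathscr{L}_{\mathcal{H}_{\mathcal{E}}}$-twisted module bundle $\mathcal{F}^{\infty}(\O_L(\mathcal{H}_{\mathcal{E}}))^{\twist}$ for the representation of $\Imp_{\res}(V)$ on $\mathcal{F}_L^{\infty}$. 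Since the $\widetilde{\mathcal{G}}$-representation is obtained by inducing the $\Imp_{\res}(V)$-representation along the vertical maps of the square above — and the Clifford bundles $\Cl_V^{\infty}(\mathcal{E})$ and $\Cl^{\infty}(\mathcal{H}_{\mathcal{E}})$ are canonically isomorphic via \cref{prop:InvarianceUnderLiftsCStar} — we are precisely in the situation of \cref{prop:functorialityofuniversalmodule}. That proposition then provides the desired canonical isometric isomorphism $\mathscr{R}^{*}\mathcal{F}\frm{\mathcal{H}_{\mathcal{E}}} \cong \mathcal{F}^{\infty}(\mathcal{E})^{\twist}$ of $\mathscr{L}_{\mathcal{E}}$-twisted $\Cl_V^{\infty}(\mathcal{E})$-module bundles.

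The only real bookkeeping obstacle — not a deep one — is making sure all the identifications line up coherently: that the representation of $\widetilde{\mathcal{G}}$ on $\mathcal{F}_L^{\infty}$ used in \cref{sec:Focklifting} is literally the one induced along $\widetilde{\mathcal{G}}\to\Imp_{\res}(V)$ from the representation used in the present section (this is immediate from the pullback definition of $\widetilde{\mathcal{G}}$), and that the isomorphism $\Cl_V^{\infty}(\mathcal{E}) \cong \Cl^{\infty}(\mathcal{H}_{\mathcal{E}})$ from \cref{prop:InvarianceUnderLiftsCStar} is the $\mathcal{A}$-linearity datum required by \cref{prop:functorialityofuniversalmodule}. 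Once these are noted, the proof is a one-line invocation. I would write the proof as: \emph{By \cref{re:Hfromgeneralsetting}, the canonical map $\mathcal{E} \to \O_L(\mathcal{H}_{\mathcal{E}})$ is $\omega$-equivariant, and the square of central extensions above commutes by construction of $\widetilde{\mathcal{G}}$. Part (a) then follows from \cref{lem:refinement}. For part (b), \cref{lem:fockframe} identifies $\mathcal{F}\frm{\mathcal{H}_{\mathcal{E}}}$ with the universal $\mathscr{L}_{\mathcal{H}_{\mathcal{E}}}$-twisted $\Cl^{\infty}(\mathcal{H}_{\mathcal{E}})$-module bundle with fibre $\mathcal{F}_L^{\infty}$, and under the canonical isomorphism $\Cl_V^{\infty}(\mathcal{E}) \cong \Cl^{\infty}(\mathcal{H}_{\mathcal{E}})$ of \cref{prop:InvarianceUnderLiftsCStar} the hypotheses of \cref{prop:functorialityofuniversalmodule} are met, whence the claimed isometric isomorphism.}
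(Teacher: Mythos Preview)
Your proposal is correct and follows essentially the same route as the paper: verify the commutative square of central extensions and the $\omega$-equivariant map $\mathcal{E}\to\O_L(\mathcal{H}_{\mathcal{E}})$, then invoke \cref{lem:refinement} for (a) and \cref{prop:functorialityofuniversalmodule} together with the Clifford bundle identification for (b). The only minor addition you make explicit is the citation of \cref{lem:fockframe} to identify $\mathcal{F}\frm{\mathcal{H}_{\mathcal{E}}}$ with the universal $\mathscr{L}_{\mathcal{H}_{\mathcal{E}}}$-twisted module bundle, which the paper leaves implicit.
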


Thus, both  twisted versions of smooth Fock bundles are equivalent. 
Using \cref{lem:refinementtriv,th:twistedspinor} we  deduce that the untwisted versions are equivalent, too.

\begin{corollary}
\label{co:untwisting1}
Let $\mathcal{T}$ be a trivialization of the implementer lifting gerbe, and let $\widetilde{\mathcal{E}}$ be the Fock structure on $\mathcal{E}$ that corresponds to the trivialization $\mathscr{R}^{*}\mathcal{T}$ of the Fock lifting gerbe under the isomorphism of \cref{prop:fockstructureslifting}. Then,
the untwisted frame-dependent Fock bundle $\mathcal{F}\frm{\mathcal{H}_{\mathcal{E}},\mathcal{T}}$ of \cref{th:unwistfdfb} is canonically isomorphic to the Fock bundle $\mathcal{F}^{\infty}(\widetilde{\mathcal{E}})$ of \cref{def:fockbundle} as rigged $\Cl_V^{\infty}(\mathcal{E})$-module bundles,
\begin{equation*}
\mathcal{F}\frm{\mathcal{H}_{\mathcal{E}},\mathcal{T}}\cong \mathcal{F}^{\infty}(\widetilde{\mathcal{E}})\text{.} 
\end{equation*}
\end{corollary}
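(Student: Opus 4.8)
The plan is to assemble \cref{co:untwisting1} from three ingredients already in place: the refinement comparison of \cref{prop:refinement}, the untwisting equivalence of \cref{th:unwistfdfb}, and the compatibility of twisting with refinements from \cref{lem:refinementtriv}, all glued by the fundamental untwisting statement \cref{th:twistedspinor}. First I would record the setup: the trivialization $\mathcal{T}$ of the implementer lifting gerbe $\mathscr{L}_{\mathcal{H}_{\mathcal{E}}}$ pulls back along the refinement $\mathscr{R}:\mathscr{L}_{\mathcal{E}} \to \mathscr{L}_{\mathcal{H}_{\mathcal{E}}}$ to a trivialization $\mathscr{R}^{*}\mathcal{T}$ of the Fock lifting gerbe $\mathscr{L}_{\mathcal{E}}$, which under the isomorphism of \cref{prop:fockstructureslifting} corresponds to a Fock structure $\widetilde{\mathcal{E}}$ on $\mathcal{E}$; this is exactly how $\widetilde{\mathcal{E}}$ is defined in the statement.

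The core computation is then a chain of canonical isometric isomorphisms of rigged $\Cl_V^{\infty}(\mathcal{E})$-module bundles over $\mathcal{M}$. Starting from \cref{th:unwistfdfb}, the untwisted frame-dependent Fock bundle satisfies $\twist_{\mathcal{T}}(\mathcal{F}\frm{\mathcal{H}_{\mathcal{E}},\mathcal{T}})\cong \mathcal{F}\frm{\mathcal{H}_{\mathcal{E}}}$ as $\mathscr{L}_{\mathcal{H}_{\mathcal{E}}}$-twisted $\Cl^{\infty}(\mathcal{H}_{\mathcal{E}})$-module bundles. Applying the refinement functor $\mathscr{R}^{*}$ and using \cref{prop:refinement}(b) (together with the canonical identification $\Cl^{\infty}(\mathcal{H}_{\mathcal{E}})\cong \Cl_V^{\infty}(\mathcal{E})$) gives
\begin{equation*}
\mathscr{R}^{*}\bigl(\twist_{\mathcal{T}}(\mathcal{F}\frm{\mathcal{H}_{\mathcal{E}},\mathcal{T}})\bigr)\cong \mathscr{R}^{*}\mathcal{F}\frm{\mathcal{H}_{\mathcal{E}}} \cong \mathcal{F}^{\infty}(\mathcal{E})^{\twist}\text{.}
\end{equation*}
On the other hand, \cref{lem:refinementtriv} (with $\mathcal{A}=\Cl_V^{\infty}(\mathcal{E})$, $\mathscr{G}=\mathscr{L}_{\mathcal{E}}$, $\mathscr{G}'=\mathscr{L}_{\mathcal{H}_{\mathcal{E}}}$, $\mathscr{R}=\mathscr{R}$) gives a canonical natural isometric isomorphism $\mathscr{R}^{*}\circ\twist_{\mathcal{T}} \cong \twist_{\mathscr{R}^{*}\mathcal{T}}$ of functors $\VBdl^{\Cl_V^{\infty}(\mathcal{E})}(\mathcal{M}) \to \Mod{\mathscr{L}_{\mathcal{E}}}^{\Cl_V^{\infty}(\mathcal{E})}$, so the left-hand side is canonically $\twist_{\mathscr{R}^{*}\mathcal{T}}(\mathcal{F}\frm{\mathcal{H}_{\mathcal{E}},\mathcal{T}})$. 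Combining, $\twist_{\mathscr{R}^{*}\mathcal{T}}(\mathcal{F}\frm{\mathcal{H}_{\mathcal{E}},\mathcal{T}})\cong \mathcal{F}^{\infty}(\mathcal{E})^{\twist}$. Now \cref{th:twistedspinor}, applied to the trivialization $\mathscr{R}^{*}\mathcal{T}$ and its corresponding Fock structure $\widetilde{\mathcal{E}}$, gives $\twist_{\mathscr{R}^{*}\mathcal{T}}(\mathcal{F}^{\infty}(\widetilde{\mathcal{E}}))\cong \mathcal{F}^{\infty}(\mathcal{E})^{\twist}$. Since $\twist_{\mathscr{R}^{*}\mathcal{T}}$ is an equivalence of categories (\cref{lem:twistingmod}), it reflects isomorphisms, and we conclude $\mathcal{F}\frm{\mathcal{H}_{\mathcal{E}},\mathcal{T}}\cong \mathcal{F}^{\infty}(\widetilde{\mathcal{E}})$ canonically as rigged $\Cl_V^{\infty}(\mathcal{E})$-module bundles.

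I expect the main obstacle to be purely bookkeeping rather than conceptual: one must check that all the identifications are compatible as module bundle isomorphisms (not merely as rigged Hilbert space bundle isomorphisms), i.e. that the $\Cl_V^{\infty}(\mathcal{E})$-linearity is preserved at each step — in particular that the canonical isomorphism $\Cl^{\infty}(\mathcal{H}_{\mathcal{E}})\cong \Cl_V^{\infty}(\mathcal{E})$ is the one intertwining the relevant module structures (via \cref{prop:InvarianceUnderLiftsCStar} and the fact that the refinement $\mathscr{R}$ covers the bundle morphism $\mathcal{E}\to\O_L(\mathcal{H}_{\mathcal{E}})$), and that the natural isomorphism of \cref{lem:refinementtriv} is compatible with the $\Cl_V^{\infty}(\mathcal{E})$-actions, which is precisely what that lemma asserts. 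A secondary point is uniqueness: since each $\twist_{(-)}$ is an equivalence, the isomorphism $\mathcal{F}\frm{\mathcal{H}_{\mathcal{E}},\mathcal{T}}\cong \mathcal{F}^{\infty}(\widetilde{\mathcal{E}})$ is determined up to the usual ambiguity, and one should remark that it is canonical given the canonical isomorphisms fed in. No hard analysis is needed here; everything rests on the machinery of \cref{sec:twistedriggedhb} applied to the refinement constructed in \cref{re:Hfromgeneralsetting}.
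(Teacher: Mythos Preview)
Your proposal is correct and follows exactly the route the paper indicates: the corollary is stated immediately after \cref{prop:refinement} with the one-line justification ``Using \cref{lem:refinementtriv,th:twistedspinor} we deduce that the untwisted versions are equivalent, too,'' and your argument is precisely the unpacking of that sentence (together with the equivalence \cref{lem:twistingmod} to pass from an isomorphism after twisting to one before). There is no substantive difference between your approach and the paper's.
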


\subsection{The polarization-dependent Fock bundle}

\label{sec:LagrangianTwistedSpinorBundle}

In this section we pick up the idea of the previous section to form a bundle of Fock space from a continuous real Hilbert space bundle $\mathcal{H}$ over $\mathcal{M}$ with typical fibre $V$, equipped with a smooth  orthogonal frame bundle $\O(\mathcal{H})$ and a reduction $\O_L(\mathcal{H}) \to \O(\mathcal{H})$, see \cref{def:srofb}.
In \cref{sec:impLiftingGerbe}, we used restricted orthogonal frames in order to induce the Lagrangians in this equivalence class.
In this section, we work with these Lagrangians directly, without reference to frames.

We recall that, for any $p\in \mathcal{M}$ and $\nu_1,\nu_2\in \O_L(\mathcal{H})_p$, the Lagrangians $\nu_1(L)$ and $\nu_2(L)$ in $\mathcal{H}_p$ are equivalent, and so represent a distinguished equivalence class $\mathcal{P}_p$ of Lagrangians in $\mathcal{H}_p$. We denote by $\O_{L,\mathcal{P}_p}(V,\mathcal{H}_p) \subset \O(V,\mathcal{H}_p)$ the subset of orthogonal frames $\nu$ in $\mathcal{H}_p$ such that $\nu(L)\in \mathcal{P}_p$, and obtain immediately the following result.

\begin{lemma}
The bijection $\O(\mathcal{H})_p \cong \O(V,\mathcal{H}_p)$ of \cref{lem:framebundle} restricts to a bijection $\O_L(\mathcal{H}_p) \cong \O_{L,\mathcal{P}_p}(V,\mathcal{H}_p)$.  
\end{lemma}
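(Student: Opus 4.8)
The statement follows from unwinding the definitions and applying the previously established correspondence between frames and Lagrangians. Recall from \cref{re:frameid} that the bijection $\O(\mathcal{H})_p \cong \O(V,\mathcal{H}_p)$ is given explicitly: an element $\nu \in \O(\mathcal{H})_p$ corresponds to the orthogonal frame $v \mapsto \psi([\nu,v])$, where $\psi$ is the isomorphism $(\O(\mathcal{H}) \times V)/\O(V) \cong \mathcal{H}$. The reduced frame bundle $\O_L(\mathcal{H})$ comes with a smooth $\O_L(V)$-equivariant bundle map $\O_L(\mathcal{H}) \to \O(\mathcal{H})$, which on fibres is an injection $\O_L(\mathcal{H})_p \hookrightarrow \O(\mathcal{H})_p$. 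The plan is to identify the image of this injection, under the bijection $\O(\mathcal{H})_p \cong \O(V,\mathcal{H}_p)$, with $\O_{L,\mathcal{P}_p}(V,\mathcal{H}_p)$.

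First I would verify the inclusion $\O_L(\mathcal{H})_p \subseteq \O_{L,\mathcal{P}_p}(V,\mathcal{H}_p)$: if $\nu \in \O_L(\mathcal{H})_p$ corresponds to a frame $\nu: V \to \mathcal{H}_p$, then by the very definition of the equivalence class $\mathcal{P}_p$ (as the class of $\nu_1(L)$ for any $\nu_1 \in \O_L(\mathcal{H})_p$), we have $\nu(L) \in \mathcal{P}_p$, so $\nu \in \O_{L,\mathcal{P}_p}(V,\mathcal{H}_p)$. Conversely, suppose $\nu \in \O(V,\mathcal{H}_p)$ is a frame with $\nu(L) \in \mathcal{P}_p$. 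Fix any $\nu_0 \in \O_L(\mathcal{H})_p$, viewed as a frame $\nu_0: V \to \mathcal{H}_p$. Then $g \defeq \nu_0^{-1}\circ\nu \in \O(V)$, and since $\nu(L)$ and $\nu_0(L)$ are both in $\mathcal{P}_p$, hence equivalent as Lagrangians in $\mathcal{H}_p$, the Lagrangians $g(L) = \nu_0^{-1}(\nu(L))$ and $L = \nu_0^{-1}(\nu_0(L))$ are equivalent in $V$ (equivalence of Lagrangians is preserved by the orthogonal transformation $\nu_0^{-1}$; this can also be read off from \cref{lem:FockSpacesUnitarilyEquivalent}, e.g. via condition~(3) and the fact that conjugation by a unitary preserves the Hilbert--Schmidt class). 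By the last clause of \cref{lem:FockSpacesUnitarilyEquivalent}, there is $h \in \O_{\res}(V) = \O_L(V)$ with $h(L) = g(L)$; but then $h^{-1}g$ preserves $L$, hence $h^{-1}g \in \U(L) \subset \O_L(V)$ (using the embedding $\U(L) \hookrightarrow \O_L(V)$), so $g = h(h^{-1}g) \in \O_L(V)$. Therefore $\nu = \nu_0 \cdot g$ with $g \in \O_L(V)$, and since $\O_L(\mathcal{H})_p$ is an $\O_L(V)$-orbit (as a fibre of the principal $\O_L(V)$-bundle $\O_L(\mathcal{H})$) containing $\nu_0$, we conclude $\nu \in \O_L(\mathcal{H})_p$.

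The two inclusions give the claimed restriction of the bijection. The only mild subtlety — and the one point that deserves care — is confirming that equivalence of Lagrangians in $\mathcal{H}_p$ is transported correctly to equivalence of Lagrangians in $V$ along a fixed frame $\nu_0$; this is routine since $\nu_0$ is a unitary isomorphism intertwining the real structures, so it carries Hilbert--Schmidt operators to Hilbert--Schmidt operators and preserves all the orthogonal projections involved in the criteria of \cref{lem:FockSpacesUnitarilyEquivalent}. Everything else is a direct consequence of the principal-bundle structure and the definitions of $\mathcal{P}_p$ and $\O_{L,\mathcal{P}_p}(V,\mathcal{H}_p)$, so no genuine obstacle arises.
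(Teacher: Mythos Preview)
Your argument is correct. The paper does not give a proof at all: it states the lemma with the phrase ``obtain immediately the following result'' and moves on. Your proposal fills in exactly the details the paper suppresses, following the only natural route (showing both inclusions via the principal-bundle structure and \cref{lem:FockSpacesUnitarilyEquivalent}). One minor remark: your detour through an auxiliary $h \in \O_L(V)$ with $h(L)=g(L)$ is not strictly needed, since the equivalence of $g(L)$ and $L$ already gives $P_L^\perp g P_L$ and $P_L g P_L^\perp$ Hilbert--Schmidt directly, i.e.\ $g \in \O_L(V)$; but your version is of course also valid.
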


Before we continue, we recall from \cref{sec:CliffFockAndImp} that the homogeneous space $\O_L(V)/\U(L)$ is a Banach manifold and in bijection with the set $\Lag_L(V)$ of Lagrangians in $V$ that are equivalent to $L$. We used this bijection to induce a Banach manifold structure on $\Lag_L(V)$.  In the present situation, we consider the quotient
\begin{equation*}
\Lag_{L}(\mc{H}) \defeq \O_{\res}(\mc{H})/ \U(L)\text{.} 
\end{equation*}

\begin{lemma}
\label{lem:lagrangianprojection}
There exists a unique Fr\'echet manifold structure on $\Lag_{L}(\mathcal{H})$
such that the canonical projection $\O_L(\mathcal{H}) \to \Lag_L(\mathcal{H})$ is a submersion. Moreover, $\Lag_L(\mathcal{H})\to \mathcal{M}$ is a locally trivial fibre bundle over $\mathcal{M}$ with typical fibre $\Lag_L(V)$. 
\end{lemma}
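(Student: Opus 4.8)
The plan is to build the Fr\'echet manifold structure on $\Lag_L(\mathcal{H})$ by descent from a standard local model, mimicking the finite-dimensional associated-bundle construction but with the infinite-dimensional homogeneous space $\Lag_L(V)=\O_L(V)/\U(L)$ as typical fibre.

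First I would invoke \cref{lem:UnitarySubgroup}, which says $\O_L(V)/\U(L)$ carries a canonical Banach manifold structure with the quotient map a submersion, and recall from the discussion after \cref{lem:UnitarySubgroup} that this quotient is in bijection with $\Lag_L(V)$. By \cref{lem:splittingoverUL}, $\U(L)$ sits inside $\O_L(V)$ as a submanifold over which the central extension splits; in particular $\O_L(V) \to \O_L(V)/\U(L)$ admits smooth local sections. Next I would use \cref{re:frameid}/\cref{lem:framebundle} to identify $\O_L(\mathcal{H})_p$ with the set of restricted orthogonal frames $\O_{L,\mathcal{P}_p}(V,\mathcal{H}_p)$, under which the fibre of $\Lag_L(\mathcal{H})=\O_L(\mathcal{H})/\U(L)$ over $p$ is precisely the equivalence class $\mathcal{P}_p$ of Lagrangians in $\mathcal{H}_p$, i.e. $\Lag_L(\mathcal{H})_p\cong \Lag_L(\mathcal{H}_p)$ as Banach manifolds.

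Then I would set up the Fr\'echet manifold structure locally. Choose a cover $\{U_\alpha\}$ of $\mathcal{M}$ trivializing the Fr\'echet principal $\O_L(V)$-bundle $\O_L(\mathcal{H})$, say via $\varphi_\alpha: \O_L(\mathcal{H})|_{U_\alpha} \to U_\alpha \times \O_L(V)$. Passing to the quotient by the right $\U(L)$-action gives bijections
\begin{equation*}
\Psi_\alpha: \Lag_L(\mathcal{H})|_{U_\alpha} \to U_\alpha \times \bigl(\O_L(V)/\U(L)\bigr) = U_\alpha \times \Lag_L(V)\text{,}
\end{equation*}
and since $U_\alpha\times \Lag_L(V)$ is a product of a Fr\'echet manifold with a Banach (hence Fr\'echet) manifold, it is a Fr\'echet manifold. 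The transition functions $\Psi_\beta\circ\Psi_\alpha^{-1}$ are, on the first factor, the identity, and on the second factor are induced by the smooth transition functions of $\O_L(\mathcal{H})$ via the left $\O_L(V)$-action on $\O_L(V)/\U(L)$; this action is smooth because $\O_L(V)\to\O_L(V)/\U(L)$ is a submersion admitting local sections (\cref{lem:splittingoverUL}) and group multiplication on $\O_L(V)$ is smooth. Hence the $\Psi_\alpha$ form a smooth atlas, giving the required Fr\'echet manifold structure, and by construction $\O_L(\mathcal{H})\to\Lag_L(\mathcal{H})$ is locally $U_\alpha\times\O_L(V)\to U_\alpha\times\O_L(V)/\U(L)$, hence a submersion; uniqueness follows from the standard fact that there is at most one such structure making the quotient map a submersion (as cited already in the proof of \cref{lem:AssociatedFrechetBundle} via \cite[Lemma 1.9]{Glockner2015}). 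The maps $\Psi_\alpha$ simultaneously exhibit $\Lag_L(\mathcal{H})\to\mathcal{M}$ as a locally trivial fibre bundle with typical fibre $\Lag_L(V)$.

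The main obstacle is purely the infinite-dimensional bookkeeping: one must check that the associated-bundle construction $\O_L(\mathcal{H})\times_{\O_L(V)}(\O_L(V)/\U(L))$ genuinely produces a Fr\'echet manifold, which is not literally \cref{lem:AssociatedFrechetBundle} (that lemma deals with vector bundles, associating a Fr\'echet \emph{space}, not a homogeneous space as fibre). The key technical points are that $\Lag_L(V)=\O_L(V)/\U(L)$ is a Banach manifold with the left $\O_L(V)$-action smooth, and that this smoothness persists when $\O_L(V)$-valued cocycles are replaced by smooth maps from a Fr\'echet manifold; both are routine given \cref{lem:UnitarySubgroup} and \cref{lem:splittingoverUL}, but I would state the general ``associated bundle with homogeneous fibre'' construction carefully (or cite an appropriate reference, e.g. in the spirit of \cite{Glockner2015}) rather than reprove it, and then specialize.
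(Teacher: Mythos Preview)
Your approach is essentially the same as the paper's: use local trivializations of the principal $\O_L(V)$-bundle $\O_L(\mathcal{H})$ to induce bijections $\Lag_L(\mathcal{H})|_{U_\alpha}\cong U_\alpha\times \Lag_L(V)$, check smoothness of the transition functions, and read off both claims. The paper states this in two lines; you have simply unpacked the details.

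One small correction: your repeated appeal to \cref{lem:splittingoverUL} is misplaced. That lemma concerns the splitting of the central extension $\U(1)\to\Imp_L(V)\to\O_L(V)$ over $\U(L)$, which is irrelevant here. The existence of smooth local sections of $\O_L(V)\to\O_L(V)/\U(L)$ (and hence smoothness of the left $\O_L(V)$-action on the quotient) follows already from \cref{lem:UnitarySubgroup} together with the general Bourbaki result cited there, which gives $\O_L(V)\to\O_L(V)/\U(L)$ the structure of a principal $\U(L)$-bundle. Once you fix this attribution, your argument is complete and matches the paper's.
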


\begin{proof}
Any local trivialization $\Phi: \O_L(\mathcal{H})|_{U} \to U \times \O_L(V)$ induces immediately a bijection $\Lag_L(\mathcal{H})|_{U} \cong U \times \Lag_L(V)$; these bijections equip $\Lag_L(\mathcal{H})$ with a Fr\'echet manifold structure, and all  claims are then straightforward consequences. 
\end{proof}

The importance of the fibre bundle $\Lag_L(\mathcal{H})$ lies in the following fact.

\begin{lemma}
\label{lem:lagrangianclass}
For each point $p\in \mathcal{M}$, the map $\Lag_L(\mathcal{H})_p \to \mathcal{P}_p:[\nu]\mapsto \nu(L)$ is a well-defined bijection. In other words, the fibre bundle $\Lag_L(\mathcal{H})$ parametrizes in a smooth way the distinguished equivalence class $\mathcal{P}_p$ of Lagrangians in $\mathcal{H}$. 
\end{lemma}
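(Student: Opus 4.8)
The plan is to verify that the map $\Lag_L(\mathcal{H})_p \to \mathcal{P}_p$, $[\nu] \mapsto \nu(L)$, is well-defined and bijective by reducing everything to the fibrewise situation handled in \cref{sec:CliffFockAndImp}. Recall that $\Lag_L(\mathcal{H})_p = \O_L(\mathcal{H})_p/\U(L)$, and that by \cref{re:frameid,lem:framebundle} the fibre $\O_L(\mathcal{H})_p$ is identified with the set $\O_{L,\mathcal{P}_p}(V,\mathcal{H}_p)$ of orthogonal frames $\nu : V \to \mathcal{H}_p$ with $\nu(L) \in \mathcal{P}_p$.

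First I would check well-definedness: if $\nu_1,\nu_2 \in \O_L(\mathcal{H})_p$ lie in the same $\U(L)$-orbit, say $\nu_2 = \nu_1 \circ g$ with $g \in \U(L)$ (where $\U(L)$ is embedded into $\O_L(V)$ by $g \mapsto g \oplus \alpha g \alpha$ as before \cref{lem:UnitarySubgroup}), then $\nu_2(L) = \nu_1(g(L)) = \nu_1(L)$ since $g$ preserves $L$. Hence the map descends to the quotient. That the image lands in $\mathcal{P}_p$ is built into the definition of $\O_{L,\mathcal{P}_p}(V,\mathcal{H}_p)$.

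Next, surjectivity: given $L' \in \mathcal{P}_p$, pick any frame $\nu_0 \in \O_L(\mathcal{H})_p$; then $\nu_0(L) \in \mathcal{P}_p$, so $\nu_0(L)$ and $L'$ are equivalent Lagrangians in $\mathcal{H}_p$. By the fibrewise equivalence result (\cref{lem:FockSpacesUnitarilyEquivalent}, condition (4), applied in $\mathcal{H}_p$ with its real structure) there is $h \in \O_{\res}(\mathcal{H}_p)$ with $h(\nu_0(L)) = L'$. Then $\nu := h \circ \nu_0$ is an orthogonal frame $V \to \mathcal{H}_p$, and one checks $\nu^{-1} \circ \nu_0' \in \O_L(V)$ for any two such frames, so $\nu \in \O_L(\mathcal{H})_p$ and $\nu(L) = L'$; alternatively, and more cleanly, transport $h$ through $\nu_0$ to get an element of $\O_L(V)$ acting on the abstract frame. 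For injectivity: if $\nu_1(L) = \nu_2(L)$, then $g := \nu_1^{-1} \circ \nu_2 \in \O_L(V)$ satisfies $g(L) = L$, hence $g \in \U(L)$ (by the defining property of the embedding $\U(L) \hookrightarrow \O_L(V)$), so $[\nu_1] = [\nu_2]$ in $\O_L(\mathcal{H})_p/\U(L)$. Combining gives the bijection, and the ``smooth parametrization'' statement follows from \cref{lem:lagrangianprojection}, which already established that $\Lag_L(\mathcal{H})$ is a locally trivial fibre bundle with fibre $\Lag_L(V)$.

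The only mildly delicate point — and the main obstacle, though a minor one — is bookkeeping the identification of $\O_L(\mathcal{H})_p$ with frames versus the abstract principal-bundle fibre: one must be careful that the $\U(L)$-action used to form the quotient $\Lag_L(\mathcal{H})$ matches, under the identification of \cref{re:frameid}, the action $\nu \mapsto \nu \circ g$ on actual frames, so that the orbit computation above is legitimate. Once that compatibility is recorded, the rest is an immediate transcription of the abstract statement about $\O_L(V)/\U(L) \cong \Lag_L(V)$ proved in \cref{sec:CliffFockAndImp} into the fibrewise setting of $\mathcal{H}_p$. No new analysis is needed.
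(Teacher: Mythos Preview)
Your proposal is correct and follows essentially the same argument as the paper: well-definedness via $\U(L)$ preserving $L$, and bijectivity by transporting the abstract identification $\O_L(V)/\U(L)\cong\Lag_L(V)$ to the fibre $\mathcal{H}_p$ through a chosen frame. The only cosmetic difference is that the paper packages surjectivity and injectivity together by explicitly constructing the inverse map $L_p\mapsto[\nu g]$ (with $g\in\O_L(V)$ satisfying $g(L)=\nu^{-1}(L_p)$) and checking it is well-defined, whereas you verify surjectivity and injectivity separately; your ``alternatively, transport $h$ through $\nu_0$'' is exactly the paper's construction.
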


\begin{proof}
Well-definedness is clear, since elements of $\U(L)$ preserve $L$, and different frames $\nu$ lead to equivalent Lagrangians. If $L_p\in \mathcal{P}_p$ is any representative, we choose  $\nu\in \O_L(\mathcal{H})_p$ and obtain  two equivalent Lagrangians $L$ and $\nu^{-1}(L_p)$ in $V$. By \cref{lem:FockSpacesUnitarilyEquivalent} there exists $g\in \O_L(V)$ such that $g(L)=\nu^{-1}(L_p)$. We send $L_p \mapsto [\nu g]\in \Lag_L(\mathcal{H})_p$, and claim that this is well-defined. It is easy to see that $[\nu g]$ is independent of the choice of $\nu$. 
If $g'\in \O_L(V)$ also satisfies $g'(L)=\nu^{-1}(L)=g(L)$, then it follows from the fact that $\Lag_L(V)=\O_L(V)/\U(L)$ that $g'=g h$, for $h\in \U(L)$. This shows that $[\nu g]=[\nu g']$ in $\Lag_L(\mathcal{H})$.
Hence, the assignment  $L_p \mapsto [\nu g]$ is well-defined. Since it is inverse to the map $[\nu]\mapsto \nu(L)$, we are done.
\end{proof}

In the following we shall see that all structure that we discussed in \cref{sec:impLiftingGerbe} is $\U(L)$-equivariant and descends along the projection $\O_L(\mathcal{H}) \to \Lag_L(\mathcal{H})$. In particular, the frame-dependent Fock bundle $\mathcal{F}\frm{\mathcal{H}}$ descends to a rigged Hilbert space bundle that we call the \emph{polarization-dependent Fock bundle} and that we denote by $\mathcal{F}\pol{\mathcal{H}}$. Its fibre over a Lagrangian $L_p$ is the Fock space $\mathcal{F}^{\infty}_{L_p}$. For the construction, we consider the trivial rigged Hilbert space bundle $\O_L(\mathcal{H}) \times \mathcal{F}_L^{\infty}$ over $\O_L(\mathcal{H})$, to which the $\U(L)$-action lifts using the  section $\sigma:\U(L) \rightarrow \Imp_{\res}(V)$ of \cref{lem:splittingoverUL},  setting
\begin{equation}
\label{eq:actUL}
(\nu,v) \cdot g \defeq (\nu g,\sigma(g^{-1})v)\text{.}
\end{equation}
This action is smooth, as the section $\sigma$ is smooth and $\Imp_L(V) \times \mathcal{F}_L^{\infty} \to \mathcal{F}_L^{\infty}$ is smooth. Hence, the quotient bundle $(\O_L(\mathcal{H}) \times \mathcal{F}_L^{\infty})/\U(L)$ is  a rigged Hilbert space bundle over $\Lag_L(\mathcal{H})$.

Now, we let $\mathcal{F}\pol{\mathcal{H}}$ be the disjoint union of the fibres $\mathcal{F}_{L_p}^{\infty}$, and consider the map
\begin{equation*}
\psi: (\O_L(\mathcal{H}) \times \mathcal{F}_L^{\infty})/\U(L) \to \mathcal{F}\pol{\mathcal{H}}:[\nu,v] \mapsto (\nu(L),\Lambda_{\nu}(v))\text{,}
\end{equation*}
which is a well-defined bijection and turns $\mathcal{F}\pol{\mathcal{H}}$ into a rigged Hilbert space bundle. 
We obtain a commutative diagram
\begin{equation}
\label{eq:focklagdescend}
\xymatrix{\O_L(\mathcal{H}) \times \mathcal{F}_L^{\infty} \ar[r]^-{\phi} \ar[d] & \mathcal{F}\frm{\mathcal{H}} \ar[d] \\ (\O_L(\mathcal{H})\times \mathcal{F}^{\infty}_L)/\U(L) \ar[r]_-{\psi} & \mathcal{F}\pol{\mathcal{H}}}
\end{equation}
whose first row consists of rigged Hilbert space bundles over $\O_L(\mathcal{H})$, and whose second row consists of rigged Hilbert space bundles over $\Lag_{L}(\mathcal{H})$, and the vertical arrows are the projections in the quotients by $\U(L)$-actions. In particular, $\mathcal{F}\pol{\mathcal{H}} = \mathcal{F}\frm{\mathcal{H}}/\U(L)$, where the $\U(L)$-action on $\mathcal{F}\frm{\mathcal{H}}$ is induced from \cref{eq:actUL} along $\phi$; explicitly, the action of $g\in \U(L)$ takes $v\in \mathcal{F}^{\infty}_{\nu(L)}=\mathcal{F}\frm{\mathcal{H}}_{\nu}$ to $\Lambda_{\nu g^{-1}\nu^{-1}}(v)\in \mathcal{F}^{\infty}_{\nu(L)}=\mathcal{F}\frm{\mathcal{H}}_{\nu g}$.

The next step is to exhibit the polarization-dependent Fock bundle $\mathcal{F}\pol{\mathcal{H}}$ as a rigged $\pi^{*}\Cl^{\infty}(\mathcal{H})$-module bundle, where $\pi: \Lag_L(\mathcal{H}) \to \mathcal{M}$. Since the fibre $\Cl^{\infty}(\mathcal{H})_p$ can be identified canonically with $\Cl(\mathcal{H}_p)^{\infty}$, see \cref{eq:clid}, each fibre $\mathcal{F}^{\infty}_{L_p}$ is naturally a rigged $\Cl(\mathcal{H}_p)^{\infty}$-module. Moreover, this module structure coincides under the projection $\mathcal{F}\frm{\mathcal{H}}\to \mathcal{F}\pol{\mathcal{H}}$ to the $\Cl(\mathcal{H}_p)^{\infty}$-module structure of $\mathcal{F}\frm{\mathcal{H}}$ of \cref{lem:fockframemodule}. Hence, we conclude that the following result holds.

\begin{lemma}
\label{lem:lagfockmod}
The polarization-dependent Fock bundle $\mathcal{F}\pol{\mathcal{H}}$ is a rigged $\pi^{*}\Cl^{\infty}(\mathcal{H})$-module bundle, in such a way  that all maps in Diagram \cref{eq:focklagdescend} are intertwiners.
\end{lemma}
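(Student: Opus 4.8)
The plan is to transport the module structure of the frame-dependent Fock bundle $\mathcal{F}\frm{\mathcal{H}}$ down along the quotient map $\mathcal{F}\frm{\mathcal{H}} \to \mathcal{F}\pol{\mathcal{H}}$, and to check that it descends. First I would observe that, by \cref{eq:clid} together with \cref{lem:UniversalFrechetCliffordAlgebra}, each fibre $\Cl^{\infty}(\mathcal{H})_p$ is canonically identified with $\Cl(\mathcal{H}_p)^{\infty}$, and that for any frame $\nu\in \O_L(\mathcal{H})_p$ the Lagrangian $\nu(L)\subset \mathcal{H}_p$ gives a Fock space $\mathcal{F}^{\infty}_{\nu(L)}$ which by \cref{prop:SmoothCliffordActionOnF} is a rigged $\Cl(\mathcal{H}_p)^{\infty}$-module. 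The $\U(L)$-action relating the fibres of $\mathcal{F}\frm{\mathcal{H}}$ over $\nu$ and $\nu g$ is, by the explicit description after \cref{eq:focklagdescend}, implemented by $\Lambda_{\nu g^{-1}\nu^{-1}}$, which is a unitary intertwiner of $\Cl(\mathcal{H}_p)^{\infty}$-representations (it implements an element of $\U(\mathcal{H}_p)$ fixing $\mathcal{P}_p$, hence fixing the relevant equivalence class, and more concretely it is an implementer by \cref{lem:splittingoverUL} applied in the frame $\nu$). Consequently the fibrewise $\Cl(\mathcal{H}_p)^{\infty}$-module structure is constant along $\U(L)$-orbits, so it passes to a well-defined action $\pi^{*}\Cl^{\infty}(\mathcal{H}) \times_{\Lag_L(\mathcal{H})} \mathcal{F}\pol{\mathcal{H}} \to \mathcal{F}\pol{\mathcal{H}}$ making the vertical maps in \cref{eq:focklagdescend} intertwiners.

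The main point that then needs verification is the local triviality and smoothness required by \cref{def:repcstarbundle}. Here I would argue exactly as in \cref{lem:fockframemodule}: pulling back $\Cl^{\infty}(\mathcal{H})$ along $\O_L(\mathcal{H}) \to \mathcal{M}$ yields the trivial bundle $\O_L(\mathcal{H})\times \Cl(V)^{\infty}$, and the isometric isomorphism $\phi$ of \cref{eq:71phi} exchanges the trivial $\Cl(V)^{\infty}$-module structure on $\O_L(\mathcal{H})\times \mathcal{F}_L^{\infty}$ with the frame-dependent one, by \cref{re:intertwinernu}. Since $\O_L(\mathcal{H})\to \Lag_L(\mathcal{H})$ is a submersion admitting local sections (\cref{lem:lagrangianprojection}), any such local section $s:U\to \O_L(\mathcal{H})$ gives, via $\psi\circ (s\times \mathrm{id})$ and $\Phi\circ s$, local trivializations of $\mathcal{F}\pol{\mathcal{H}}$ and $\pi^{*}\Cl^{\infty}(\mathcal{H})$ over $U\subset \Lag_L(\mathcal{H})$ in which the action becomes the typical-fibre action $\Cl(V)^{\infty}\times \mathcal{F}_L^{\infty}\to \mathcal{F}_L^{\infty}$ of \cref{prop:SmoothCliffordActionOnF}; this yields the commuting diagram of \cref{def:repcstarbundle}. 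Smoothness of the global action map is inherited from smoothness of the action on $\mathcal{F}\frm{\mathcal{H}}$ (\cref{lem:fockframemodule}) together with the fact that the $\U(L)$-quotient maps are surjective submersions, so that a map out of the quotient is smooth iff its pullback is.

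The step I expect to be the only real obstacle — though it is essentially bookkeeping — is confirming that the $\U(L)$-twisting map $\Lambda_{\nu g^{-1}\nu^{-1}}$ genuinely intertwines the $\Cl(\mathcal{H}_p)^{\infty}$-actions on $\mathcal{F}^{\infty}_{\nu(L)}$ coming from the two different frame-descriptions; this is where \cref{lem:splittingoverUL}, \cref{re:intertwinernu}, and the naturality of $\Cl(\nu)$ from \cref{lem:UniversalFrechetCliffordAlgebra} must be combined carefully, using that $g$ acts on $V$ preserving $L$ so that the implementer $\sigma(g)$ fixes the module structure by the implementability condition \cref{eq:Implementer}. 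Once that compatibility is in place, the descent of the module structure and the intertwining property of all maps in \cref{eq:focklagdescend} follow, and the statement that $\mathcal{F}\pol{\mathcal{H}}$ is a rigged $\pi^{*}\Cl^{\infty}(\mathcal{H})$-module bundle is a direct consequence of the already-established corresponding fact for $\mathcal{F}\frm{\mathcal{H}}$.
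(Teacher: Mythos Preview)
Your proposal is correct and follows essentially the same approach as the paper: the paper's argument (given in the paragraph immediately preceding the lemma) simply notes that each fibre $\mathcal{F}^{\infty}_{L_p}$ is already a rigged $\Cl(\mathcal{H}_p)^{\infty}$-module via \cref{eq:clid}, and that this module structure agrees with the one on $\mathcal{F}\frm{\mathcal{H}}$ under the projection, so the result follows from \cref{lem:fockframemodule}. Your version spells out in more detail the same descent mechanism---checking that the $\U(L)$-action is by $\Cl(\mathcal{H}_p)^{\infty}$-intertwiners and verifying local triviality via sections of $\O_L(\mathcal{H})\to\Lag_L(\mathcal{H})$---which the paper leaves implicit.
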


In order to exhaust all properties of the polarization-dependent Fock bundle $\mathcal{F}\pol{\mathcal{H}}$, the last step is to exhibit it as a twisted module bundle. In contrast to the twisted Fock bundle of \cref{sec:Focklifting} and the frame-dependent Fock bundle of \cref{sec:impLiftingGerbe}, the twisting bundle gerbe is not a lifting gerbe. It will be called  the \emph{Lagrangian Gra\ss mannian gerbe} $\mathscr{G}_{\mathcal{H}}$, and we shall construct it now. 
The first ingredient is the fibre bundle projection $\Lag_{\res}(\mc{H}) \rightarrow \mathcal{M}$, of which we have seen in \cref{lem:lagrangianprojection} that it is a surjective submersion. 

We note that $\Lag_L(\mathcal{H})^{[k]} = \O_L(\mathcal{H})^{[k]}/\U(L)^k$, where $\U(L)^k$ acts factor-wise.
We consider the the principal $\U(1)$-bundle $\mathcal{Q}=\delta^{*}\Imp_{L}(V)$ over $\O_L(\mathcal{H})^{[2]}$, which is the $\U(1)$-bundle of the implementer lifting gerbe $\mathscr{L}_{\mathcal{H}}$. 
The $\U(L)^2$-action lifts from  $\O_L(\mathcal{H})^{[2]}$ to $\mathcal{Q}$ using the section $\sigma:\U(L) \rightarrow \Imp_{\res}(V)$ of \cref{lem:splittingoverUL}, by setting
\begin{equation*}
(\nu_1,\nu_2,U)\cdot (g_1,g_2) := (\nu_1 g_1,\nu_2g_2,\sigma(g_1)^{-1}U\sigma(g_2))\text{.}
\end{equation*}
We remark that $U$ implements the unique $g\in \O_L(V)$ satisfying $\nu_2=\nu_1 g$. From this one can see that  $\sigma(g_1)^{-1}U\sigma(g_2)$ implements $g_1^{-1}gg_2$. Since $\nu_2 g_2 =\nu_1 g_1g_1^{-1}g g_2$, this shows that the action is well-defined.
Moreover, it preserves the $\U(1)$-action; hence, the bundle $\mathcal{Q}$ descends to a principal $\U(1)$-bundle $\mathcal{Q}' = \mathcal{Q}/\U(L)^2$ over $\Lag_L(\mathcal{H})^{[2]}$. 
Since the projections $\pr_{ij}:\O_L(\mathcal{H})^{[3]} \to \O_L(\mathcal{H})^{[2]}$ are equivariant along $\pr_{ij}:\U(L)^3 \to \U(L)^2$, we obtain  $\U(L)^{3}$-actions on the pullbacks $\pr_{ij}^{*}\mathcal{Q}$ such that $\pr_{ij}^{*}\mathcal{Q}'= \pr_{ij}^{*}\mathcal{Q}/\U(L)^{3}$.
It is straightforward to check that the isomorphism
\begin{equation*}
\mu: \pr_{12}^{*}\mathcal{Q} \otimes \pr_{23}^{*}\mathcal{Q} \to \pr_{13}^{*}\mathcal{Q}
\end{equation*}
of the implementer lifting gerbe,
which was defined using the multiplication in $\Imp_L(V)$, is $\U(L)^3$-equivariant.
Hence it descends to an isomorphism
\begin{equation*}
\mu': \pr_{12}^{*}\mathcal{Q}' \otimes \pr_{23}^{*}\mathcal{Q}' \to \pr_{13}^{*}\mathcal{Q}'
\end{equation*}
of principal $\U(1)$-bundles over $\Lag_L(\mathcal{H})^{[3]}$.
The associativity condition for $\mu$ implies the one for $\mu'$. This completes the construction of the Lagrangian Gra\ss mannian  gerbe $\mathscr{G}_{\mathcal{H}}$ over $\mathcal{M}$.

\begin{remark}
\label{re:intertwinerbundle}
The principal $\U(1)$-bundle $\mathcal{Q}'$ of $\mathscr{G}_{\mathcal{H}}$ can be interpreted nicely using \cref{lem:TheTwoU1Torsors,lem:SmoothIntertwiners}. Indeed, these results provide isomorphisms of $\U(1)$-torsors 
\begin{equation*}
\mathcal{Q}'_{L_{1},L_{2}}\cong \U_{\Cl(\mathcal{H}_{p})}(\mathcal{F}_{L_{1}},\mathcal{F}_{L_{2}})\cong \U_{\Cl(\mathcal{H}_p)^{\infty}}(\mathcal{F}^{\infty}_{L_1},\mathcal{F}^{\infty}_{L_2})\text{,}
\end{equation*}
under which the bundle gerbe product $\mu'$ becomes the composition of unitary equivalences. Using these isomorphisms, one can in fact equip the disjoint unions of the the torsors $\U_{\Cl(\mathcal{H}_{p})}(\mathcal{F}_{L_{1}},\mathcal{F}_{L_{2}})$ or $\U_{\Cl(\mathcal{H}_p)^{\infty}}(\mathcal{F}^{\infty}_{L_1},\mathcal{F}^{\infty}_{L_2})$
over all $(L_1, L_2)\in \Lag_L(\mathcal{H})^{[2]}$ with the structure of  Fr\'echet principal $\U(1)$-bundles, and then identify these with $\mathcal{Q}'$.   
  
\end{remark}

Finally, we consider the isometric isomorphism $\varphi: \mathcal{Q}_{\C} \otimes \pr_2^{*}\mathcal{F}\frm{\mathcal{H}} \to \pr_1^{*}\mathcal{F}\frm{\mathcal{H}}$ of \cref{lem:fockframe}, which turned the frame-dependent Fock bundle into an $\mathscr{L}_{\mathcal{H}}$-twisted rigged Hilbert space bundle. It is straightforward to verify that
$\varphi$ is $\U(L)^2$-equivariant,
and hence descends to an isometric isomorphism
\begin{equation*}
\varphi':\mathcal{Q}'_{\C} \otimes \pr_2^{*}\mathcal{F}\pol{\mathcal{H}} \to \pr_1^{*}\mathcal{F}\pol{\mathcal{H}}
\end{equation*}
over $\Lag_L(\mathcal{H})$, which moreover turns  $\mathcal{F}\pol{\mathcal{H}}$ into an $\mathscr{G}_{\mathcal{H}}$-twisted rigged Hilbert space bundle. We have verified in \cref{lem:fockframe} that the isomorphism $\varphi$ respects the $\pi^{*}\Cl^{\infty}(\mathcal{H})$-actions. It follows that the same holds for $\varphi'$. In the end, we have the following result.

\begin{proposition}
\label{prop:lagfocktwist}
The isometric isomorphism $\varphi'$ turns the polarization-dependent Fock bundle $\mathcal{F}\pol{\mathcal{H}}$ into an $\mathscr{G}_{\mathcal{H}}$-twisted $\Cl^{\infty}(\mathcal{H})$-module bundle. 
\end{proposition}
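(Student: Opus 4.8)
The plan is to deduce every required property of $\varphi'$ from the corresponding property of $\varphi$ established in \cref{lem:fockframe}, using that each object appearing over $\Lag_L(\mathcal{H})^{[k]}$ is the descent along the free $\U(L)^k$-action of the corresponding object over $\O_L(\mathcal{H})^{[k]}$. By \cref{def:trmb}, and since the $\pi^{*}\Cl^{\infty}(\mathcal{H})$-module bundle structure on $\mathcal{F}\pol{\mathcal{H}}$ is already provided by \cref{lem:lagfockmod}, what remains to be checked is: that $\mathcal{F}\pol{\mathcal{H}}$ with $\varphi'$ is a genuine $\mathscr{G}_{\mathcal{H}}$-twisted rigged Hilbert space bundle, i.e.\ that $\varphi'$ is an isometric isomorphism satisfying the cocycle condition relative to $\mu'$; and that $\varphi'$ is $\pi^{*}\Cl^{\infty}(\mathcal{H})$-linear in the sense of \cref{def:trmb}.

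First I would record the descent bookkeeping. We have $\Lag_L(\mathcal{H})^{[k]} = \O_L(\mathcal{H})^{[k]}/\U(L)^{k}$ with $\U(L)^{k}$ acting factor-wise, and the quotient maps $q_k : \O_L(\mathcal{H})^{[k]} \to \Lag_L(\mathcal{H})^{[k]}$ are principal $\U(L)^k$-bundles, in particular surjective submersions. By construction $\mathcal{Q}' = \mathcal{Q}/\U(L)^2$, hence $q_2^{*}\mathcal{Q}'_{\C} \cong \mathcal{Q}_{\C}$ with $\mu'_{\C}$ pulling back to $\mu_{\C}$; and the second row of Diagram \cref{eq:focklagdescend} exhibits $q_1^{*}\mathcal{F}\pol{\mathcal{H}} \cong \mathcal{F}\frm{\mathcal{H}}$ via $\phi$ compatibly with $\psi$, so that $\varphi = q_2^{*}\varphi'$ under these identifications. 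Since the quotient maps are $\U(L)^k$-bundles, the fibres of $\mathcal{F}\pol{\mathcal{H}}$ and of $\mathcal{F}\frm{\mathcal{H}}$ over corresponding points coincide, so isometry of $\varphi$ (\cref{lem:fockframe}) immediately gives isometry of $\varphi'$; and an equivariant bundle isomorphism between bundles with free $\U(L)^2$-actions descends to a bundle isomorphism of the quotients, so $\varphi'$ is an isometric isomorphism of rigged Hilbert space bundles over $\Lag_L(\mathcal{H})^{[2]}$ — here I use that $\varphi$ is $\U(L)^2$-equivariant (the computation preceding the proposition).

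Next I would check the cocycle condition and $\mathcal{A}$-linearity by pullback. Both sides of $\varphi'(\mu'_{\C}(\ell_{12}\otimes\ell_{23})\otimes v) = \varphi'(\ell_{12}\otimes\varphi'(\ell_{23}\otimes v))$ are morphisms of rigged Hilbert space bundles over $\Lag_L(\mathcal{H})^{[3]}$, so the identity may be verified after pulling back along the surjective submersion $q_3$; there, using $q_2^{*}\mu'_{\C}=\mu_{\C}$ and $q_2^{*}\varphi'=\varphi$, it becomes exactly the cocycle condition for $\varphi$ relative to $\mu$ of \cref{lem:fockframe}. Likewise, the equality $\varphi'(\ell, a\, v) = a\,\varphi'(\ell, v)$ for a section $a$ of $\pi^{*}\Cl^{\infty}(\mathcal{H})$ pulls back along $q_2$ to the corresponding equality for $\varphi$ — using \cref{lem:lagfockmod}, which states that all arrows in Diagram \cref{eq:focklagdescend} are intertwiners, so the $\pi^{*}\Cl^{\infty}(\mathcal{H})$-action on $q_1^{*}\mathcal{F}\pol{\mathcal{H}}$ agrees with the one on $\mathcal{F}\frm{\mathcal{H}}$ of \cref{lem:fockframemodule} — and this holds because $\varphi$ respects the $\pi^{*}\Cl^{\infty}(\mathcal{H})$-actions by \cref{lem:fockframe}.

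The main obstacle, such as it is, is purely organizational: one must make precise that an equivariant isometric isomorphism descends to an isometric isomorphism of quotients, and that the various quotient constructions ($\mathcal{Q}'$, $\mu'$, $\varphi'$, and the descended $\Cl^{\infty}(\mathcal{H})$-action from \cref{lem:lagfockmod}) are mutually compatible, so that pullback along $q_k$ coherently undoes the descent and identities of bundle morphisms may indeed be checked after pullback. This is routine because all the $\U(L)^k$-actions are free and the quotient maps $q_k$ are surjective submersions; no new analytic input is required beyond the smoothness of the section $\sigma$ from \cref{lem:splittingoverUL}, which was already used to define the $\U(L)^k$-actions.
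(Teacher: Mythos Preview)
Your proposal is correct and follows exactly the paper's approach: the paper states (in the paragraph immediately preceding the proposition, which serves as its proof) that $\varphi$ is $\U(L)^2$-equivariant and hence descends to the isometric isomorphism $\varphi'$, that the cocycle condition for $\varphi'$ follows from that for $\varphi$, and that $\Cl^{\infty}(\mathcal{H})$-linearity of $\varphi'$ follows from that of $\varphi$ verified in \cref{lem:fockframe}. You have simply spelled out the descent bookkeeping in more detail than the paper does.
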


We now have the following consequence of general bundle gerbe theory, telling us how to construct smooth Fock bundles in the present context. 

\begin{theorem}
\label{th:unwistfdfbLag}
Let $\mathcal{H}$ be a continuous real Hilbert space bundle over $\mathcal{M}$ with typical fibre $V$, equipped with a smooth  orthogonal frame bundle and a reduction to $\O_L(V)$.
Let $\mathcal{T}$ be a trivialization of the Lagrangian Gra\ss mannian  gerbe $\mathscr{G}_{\mathcal{H}}$. Then, there exists a unique (up to unique isometric isomorphism) rigged $\Cl^{\infty}(\mathcal{H})$-module bundle $\mathcal{F}\pol{\mathcal{H},\mathcal{T}}$ over $\mathcal{M}$ that is an untwisted version of the polarization-dependent Fock bundle $\mathcal{F}\pol{\mathcal{H}}$:
\begin{equation*}
\twist_{\mathcal{T}}(\mathcal{F}\pol{\mathcal{H},\mathcal{T}})\cong \mathcal{F}\pol{\mathcal{H}}\text{.}
\end{equation*}
\end{theorem}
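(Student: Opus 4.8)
The plan is to invoke the general untwisting machinery developed in \cref{sec:twistedriggedhb}, specialized to the Lagrangian Gra\ss mannian gerbe. By \cref{prop:lagfocktwist}, the polarization-dependent Fock bundle $\mathcal{F}\pol{\mathcal{H}}$ is an $\mathscr{G}_{\mathcal{H}}$-twisted $\Cl^{\infty}(\mathcal{H})$-module bundle, i.e.~an object of the category $\Mod{\mathscr{G}_{\mathcal{H}}}^{\Cl^{\infty}(\mathcal{H})}$. Given a trivialization $\mathcal{T}$ of $\mathscr{G}_{\mathcal{H}}$, \cref{lem:twistingmod} asserts that the twisting functor
\begin{equation*}
\twist_{\mathcal{T}}:\VBdl^{\Cl^{\infty}(\mathcal{H})}(\mathcal{M}) \to \Mod{\mathscr{G}_{\mathcal{H}}}^{\Cl^{\infty}(\mathcal{H})}
\end{equation*}
is an equivalence of categories. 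In particular it is essentially surjective, so there exists a rigged $\Cl^{\infty}(\mathcal{H})$-module bundle $\mathcal{F}\pol{\mathcal{H},\mathcal{T}}$ over $\mathcal{M}$ together with an isometric isomorphism $\twist_{\mathcal{T}}(\mathcal{F}\pol{\mathcal{H},\mathcal{T}})\cong \mathcal{F}\pol{\mathcal{H}}$ of $\mathscr{G}_{\mathcal{H}}$-twisted $\Cl^{\infty}(\mathcal{H})$-module bundles. That is the existence half.

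For uniqueness, suppose $\mathcal{K}_1$ and $\mathcal{K}_2$ are two rigged $\Cl^{\infty}(\mathcal{H})$-module bundles over $\mathcal{M}$ with isomorphisms $\twist_{\mathcal{T}}(\mathcal{K}_i)\cong \mathcal{F}\pol{\mathcal{H}}$. Composing these gives an isometric isomorphism $\twist_{\mathcal{T}}(\mathcal{K}_1)\cong \twist_{\mathcal{T}}(\mathcal{K}_2)$ of twisted module bundles. Since $\twist_{\mathcal{T}}$ is fully faithful (being an equivalence), this isomorphism is the image under $\twist_{\mathcal{T}}$ of a unique isometric isomorphism $\mathcal{K}_1 \cong \mathcal{K}_2$; moreover the induced isomorphism on $\mathcal{F}\pol{\mathcal{H}}$ is the identity by construction, which pins it down uniquely. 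Hence $\mathcal{F}\pol{\mathcal{H},\mathcal{T}}$ is unique up to unique isometric isomorphism. One small point to address is that \cref{lem:twistingmod} is stated for a general bundle gerbe $\mathscr{G}$ over $\mathcal{M}$, whereas here the surjective submersion is the fibre bundle $\Lag_L(\mathcal{H}) \to \mathcal{M}$; this is admissible because \cref{lem:lagrangianprojection} guarantees it is a genuine surjective submersion of Fr\'echet manifolds, so all hypotheses of the general theory are met, and the stack property of $\VBdl$ (used for essential surjectivity) applies over this submersion as in \cref{re:presheafstack}.

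There is essentially no technical obstacle here beyond unwinding the references: the theorem is a direct corollary of the equivalence in \cref{lem:twistingmod}. If anything is delicate, it is the bookkeeping that the $\Cl^{\infty}(\mathcal{H})$-module structure on $\mathcal{F}\pol{\mathcal{H}}$ established in \cref{prop:lagfocktwist} is the one being matched — but this is built into the category $\Mod{\mathscr{G}_{\mathcal{H}}}^{\Cl^{\infty}(\mathcal{H})}$, so the $\mathcal{A}$-linearity is automatically carried along by the functor $\twist_{\mathcal{T}}$ of \cref{eq:Atwisting}. Thus the proof is a short formal argument citing \cref{prop:lagfocktwist,lem:twistingmod,lem:lagrangianprojection}.
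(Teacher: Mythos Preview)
Your proposal is correct and matches the paper's approach exactly: the paper presents this theorem as a direct consequence of \cref{prop:lagfocktwist} together with the general untwisting equivalence of \cref{lem:twistingmod}, without giving any further argument. Your write-up is actually more detailed than the paper's, which simply announces the result as ``a consequence of general bundle gerbe theory''.
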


Explicitly, the fibre of the rigged Hilbert space bundle $\mathcal{F}\pol{\mathcal{H},\mathcal{T}}$ over a point $p\in \mathcal{M}$ is canonically 
\begin{equation*}
\mathcal{F}\pol{\mathcal{H},\mathcal{T}}_p \cong (\mathcal{T}_{\C}^{*})_{L_p} \otimes \mathcal{F}^{\infty}_{L_p}    
\end{equation*} 
for any Lagrangian  $L_p \subset \mathcal{H}_p$ in $\Lag_L(\mathcal{H})$.
We see again that the fibre we tried to use naively, $\mathcal{F}^{\infty}_{L_p}$, is corrected with a complex line coming from the trivialization $\mathcal{T}$.

It remains to establish  the precise relation between the polarization-dependent Fock bundle $\mathcal{F}\pol{\mathcal{H}}$ and the frame-dependent Fock bundle $\mathcal{F}\frm{\mathcal{H}}$, which in turn is equivalent to the twisted Fock bundle $\mathcal{F}^\infty(\mathcal{E})^\twist$ by \cref{prop:refinement}. Due to the definition of $\mathscr{G}_{\mathcal{H}}$ and $\mathcal{F}\pol{\mathcal{H}}$ as quotients of $\mathscr{L}_{\mathcal{H}}$ and $\mathcal{F}\frm{\mathcal{H}}$, the following is in fact immediately true.

\begin{theorem}\
\label{prop:equivambler}
\begin{enumerate}[(a)]

\item 
The canonical projections $\O_L(\mathcal{H}) \to \Lag_L(\mathcal{H})$ and $\mathcal{Q} \to \mathcal{Q}'$ form a refinement $\mathscr{P}: \mathscr{L}_{\mathcal{H}} \to \mathscr{G}_{\mathcal{H}}$ from the implementer lifting gerbe to the Lagrangian Gra\ss mannian gerbe; in particular, these bundle gerbes over $\mathcal{M}$ are canonically isomorphic. 

\item
The canonical projection $\mathcal{F}\frm{\mathcal{H}} \to \mathcal{F}\pol{\mathcal{H}}$ induces an isometric isomorphism 
\begin{equation*}
\mathcal{F}\frm{\mathcal{H}} \cong \mathscr{P}^{*}\mathcal{F}\pol{\mathcal{H}}
\end{equation*}
of $\mathscr{L}_{\mathcal{H}}$-twisted $\Cl^{\infty}(\mathcal{H})$-module bundles. 
\end{enumerate}
\end{theorem}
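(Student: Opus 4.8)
The plan is to verify that the statement is an essentially formal consequence of the constructions just carried out, so the proof will mostly consist of unpacking definitions and checking equivariance. Part (a) asks that the canonical projections $r:\O_L(\mathcal{H}) \to \Lag_L(\mathcal{H})$ and $\rho:\mathcal{Q} \to \mathcal{Q}'$ form a refinement $\mathscr{P}:\mathscr{L}_{\mathcal{H}} \to \mathscr{G}_{\mathcal{H}}$. First I would recall that $\mathscr{L}_{\mathcal{H}}$ has surjective submersion $\O_L(\mathcal{H}) \to \mathcal{M}$, $\U(1)$-bundle $\mathcal{Q}$ over $\O_L(\mathcal{H})^{[2]}$, and product $\mu$, while $\mathscr{G}_{\mathcal{H}}$ has surjective submersion $\Lag_L(\mathcal{H}) \to \mathcal{M}$ (a submersion by \cref{lem:lagrangianprojection}), $\U(1)$-bundle $\mathcal{Q}' = \mathcal{Q}/\U(L)^2$, and product $\mu'$. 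The map $r$ commutes with the projections to $\mathcal{M}$ by construction, and the induced map $r^{[2]}:\O_L(\mathcal{H})^{[2]} \to \Lag_L(\mathcal{H})^{[2]}$ is exactly the quotient by $\U(L)^2$ (using the identity $\Lag_L(\mathcal{H})^{[k]} = \O_L(\mathcal{H})^{[k]}/\U(L)^k$ noted before \cref{re:intertwinerbundle}). The morphism $\rho:\mathcal{Q} \to \mathcal{Q}'$ is simply the quotient projection, which covers $r^{[2]}$ by definition of $\mathcal{Q}'$. Finally, the compatibility square with $\mu$ and $\mu'$ commutes because $\mu'$ was \emph{defined} as the descent of $\mu$ along the $\U(L)^3$-equivariant quotient maps $\pr_{ij}^{*}\mathcal{Q} \to \pr_{ij}^{*}\mathcal{Q}'$; this is precisely the content of the computation in the comment block following the definition of $\mu'$. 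Hence $\mathscr{P} = (r,\rho)$ is a refinement, and every refinement induces an isomorphism in the bicategory of bundle gerbes, giving the claimed canonical isomorphism over $\mathcal{M}$.

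For part (b), I would note that the frame-dependent Fock bundle $\mathcal{F}\frm{\mathcal{H}}$ is a rigged Hilbert space bundle over $\O_L(\mathcal{H})$, the polarization-dependent Fock bundle $\mathcal{F}\pol{\mathcal{H}}$ is a rigged Hilbert space bundle over $\Lag_L(\mathcal{H})$, and by construction $\mathcal{F}\pol{\mathcal{H}} = \mathcal{F}\frm{\mathcal{H}}/\U(L)$ with the $\U(L)$-action as described just before \cref{lem:lagfockmod}. Since $r:\O_L(\mathcal{H}) \to \Lag_L(\mathcal{H})$ is exactly the quotient projection by $\U(L)$, there is a tautological isometric isomorphism $\mathcal{F}\frm{\mathcal{H}} \cong r^{*}\mathcal{F}\pol{\mathcal{H}} = \mathscr{P}^{*}\mathcal{F}\pol{\mathcal{H}}$ of rigged Hilbert space bundles over $\O_L(\mathcal{H})$ — a pulled-back associated bundle along a quotient map recovers the original bundle. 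It then remains to check two compatibilities: that this isomorphism respects the $\pi^{*}\Cl^{\infty}(\mathcal{H})$-module structures, and that it intertwines the twisting isomorphisms $\varphi$ of $\mathcal{F}\frm{\mathcal{H}}$ (over $\O_L(\mathcal{H})^{[2]}$) and $(r^{[2]})^{*}\varphi'$ obtained by pulling back the descended isomorphism $\varphi'$. The module compatibility follows from \cref{lem:lagfockmod}, which already asserts that the projection $\mathcal{F}\frm{\mathcal{H}} \to \mathcal{F}\pol{\mathcal{H}}$ is an intertwiner. The compatibility with the twisting isomorphisms holds because $\varphi'$ was constructed as the descent of the $\U(L)^2$-equivariant map $\varphi$ (the computation in the comment block preceding \cref{prop:lagfocktwist}), so pulling $\varphi'$ back along $r^{[2]}$ returns $\varphi$ up to the canonical identifications.

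I expect no serious obstacle here: the theorem is deliberately placed so that both parts are ``immediately true'' given the quotient constructions of $\mathscr{G}_{\mathcal{H}}$ and $\mathcal{F}\pol{\mathcal{H}}$ from $\mathscr{L}_{\mathcal{H}}$ and $\mathcal{F}\frm{\mathcal{H}}$. The only mild care needed is bookkeeping of the several commuting $\U(L)$- and $\U(L)^k$-actions and checking that the descent data (Fréchet manifold structures on the quotients, submersivity of the quotient maps, well-definedness of $\mu'$ and $\varphi'$) are all in place — but these were all established in \cref{lem:lagrangianprojection}, \cref{lem:lagfockmod}, and \cref{prop:lagfocktwist}. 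So the proof I would write is short: invoke the definitions, cite \cref{lem:lagrangianprojection} for submersivity of $\Lag_L(\mathcal{H}) \to \mathcal{M}$, observe that $\mathcal{Q}'$, $\mu'$, $\mathcal{F}\pol{\mathcal{H}}$, and $\varphi'$ are by definition the $\U(L)$-quotients of their frame-dependent counterparts, conclude that $(r,\rho)$ is a refinement and that $\mathcal{F}\frm{\mathcal{H}} \cong \mathscr{P}^{*}\mathcal{F}\pol{\mathcal{H}}$, and finally remark that the module-linearity and the compatibility with the twistings are inherited from the corresponding equivariance statements. One could even compress this into a single sentence of the form ``This follows immediately from the definitions of $\mathscr{G}_{\mathcal{H}}$ and $\mathcal{F}\pol{\mathcal{H}}$ as quotients,'' which is in fact what the authors signal with the phrase preceding the theorem; a proof at all is arguably optional.
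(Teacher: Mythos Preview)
Your proposal is correct and matches the paper's approach exactly: the paper gives no proof at all, remarking just before the theorem that it is ``in fact immediately true'' because $\mathscr{G}_{\mathcal{H}}$ and $\mathcal{F}\pol{\mathcal{H}}$ are defined as the $\U(L)$-quotients of $\mathscr{L}_{\mathcal{H}}$ and $\mathcal{F}\frm{\mathcal{H}}$. Your unpacking of the definitions and equivariance checks is precisely the content implicit in that sentence, and your final observation that a proof is arguably optional is spot on.
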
 

We close with the following for the case that the continuous real Hilbert space bundle $\mathcal{H}$ is induced from a Fr\'echet principal $\mathcal{G}$-bundle $\mathcal{E}$ as in \cref{re:Hfromgeneralsetting}. The result follows from the general theory of \cref{lem:refinementtriv} and from \cref{th:twistedspinor,prop:refinement}.

\begin{corollary}
\label{co:untwisting2}
Let $\mathcal{T}$ be a trivialization of the Lagrangian Gra\ss mannian gerbe, let $\mathscr{P}^{*}\mathcal{T}$ be the pullback trivialization of the implementer lifting gerbe $\mathscr{L}_{\mathcal{H}_{\mathcal{E}}}$, and let $\widetilde{\mathcal{E}}$ be the Fock structure on $\mathcal{E}$ that corresponds to the trivialization $\mathscr{R}^{*}\mathscr{P}^{*}\mathcal{T}$ of the Fock lifting gerbe under the isomorphism of \cref{prop:fockstructureslifting}. Then, we have canonical isomorphisms between all three versions of Fock bundles,
\begin{equation*}
\mathcal{F}^{\infty}(\widetilde{\mathcal{E}}) \cong \mathcal{F}^{\infty}(\mathcal{H}_{\mathcal{E}},\mathscr{P}^{*}\mathcal{T})^{\mathrm{frm}}\cong \mathcal{F}\pol{\mathcal{H}_{\mathcal{E}},\mathcal{T}}\text{,}
\end{equation*}
as rigged $\Cl_V^{\infty}(\mathcal{E})$-module bundles over $\mathcal{M}$.
\end{corollary}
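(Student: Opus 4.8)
The plan is to assemble the desired chain of isomorphisms by simply concatenating the comparison results already established for the three twisted pictures, and then transporting them across the untwisting equivalence. Concretely, \Cref{prop:refinement}(a) gives a refinement $\mathscr{R}:\mathscr{L}_{\mathcal{E}} \to \mathscr{L}_{\mathcal{H}_{\mathcal{E}}}$, and \Cref{prop:equivambler}(a) gives a refinement $\mathscr{P}:\mathscr{L}_{\mathcal{H}_{\mathcal{E}}} \to \mathscr{G}_{\mathcal{H}_{\mathcal{E}}}$; their composite $\mathscr{P}\circ\mathscr{R}$ is a refinement $\mathscr{L}_{\mathcal{E}} \to \mathscr{G}_{\mathcal{H}_{\mathcal{E}}}$. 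On the twisted-bundle side, \Cref{prop:refinement}(b) and \Cref{prop:equivambler}(b) provide isometric isomorphisms $\mathscr{R}^{*}\mathcal{F}^{\mathrm{frm}}(\mathcal{H}_{\mathcal{E}}) \cong \mathcal{F}^{\infty}(\mathcal{E})^{\twist}$ and $\mathscr{P}^{*}\mathcal{F}^{\mathrm{pol}}(\mathcal{H}_{\mathcal{E}}) \cong \mathcal{F}^{\mathrm{frm}}(\mathcal{H}_{\mathcal{E}})$ of twisted $\Cl^{\infty}(\mathcal{H}_{\mathcal{E}})$-module bundles, which under the canonical isomorphism $\Cl^{\infty}(\mathcal{H}_{\mathcal{E}})\cong \Cl_V^{\infty}(\mathcal{E})$ are isomorphisms of $\Cl_V^{\infty}(\mathcal{E})$-module bundles. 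Applying $\mathscr{R}^{*}$ to the second and composing with the first yields
\begin{equation*}
(\mathscr{P}\circ\mathscr{R})^{*}\mathcal{F}^{\mathrm{pol}}(\mathcal{H}_{\mathcal{E}}) \;\cong\; \mathscr{R}^{*}\mathcal{F}^{\mathrm{frm}}(\mathcal{H}_{\mathcal{E}}) \;\cong\; \mathcal{F}^{\infty}(\mathcal{E})^{\twist}
\end{equation*}
as $\mathscr{L}_{\mathcal{E}}$-twisted $\Cl_V^{\infty}(\mathcal{E})$-module bundles.

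Next I would untwist. By \Cref{th:twistedspinor}, for the Fock structure $\widetilde{\mathcal{E}}$ corresponding under \Cref{prop:fockstructureslifting} to a trivialization of $\mathscr{L}_{\mathcal{E}}$, we have $\twist_{\mathcal{T}_0}(\mathcal{F}^{\infty}(\widetilde{\mathcal{E}}))\cong \mathcal{F}^{\infty}(\mathcal{E})^{\twist}$ for the trivialization $\mathcal{T}_0 \defeq \mathscr{R}^{*}\mathscr{P}^{*}\mathcal{T} = (\mathscr{P}\circ\mathscr{R})^{*}\mathcal{T}$. Combining with the previous display and with the definitions $\mathcal{F}^{\mathrm{frm}}(\mathcal{H}_{\mathcal{E}},\mathscr{P}^{*}\mathcal{T})$ (from \Cref{th:unwistfdfb}) and $\mathcal{F}^{\mathrm{pol}}(\mathcal{H}_{\mathcal{E}},\mathcal{T})$ (from \Cref{th:unwistfdfbLag}), which are the essentially unique untwisted versions, the statement reduces to a commutativity-of-untwisting assertion: untwisting $(\mathscr{P}\circ\mathscr{R})^{*}\mathcal{F}^{\mathrm{pol}}(\mathcal{H}_{\mathcal{E}})$ by $\mathcal{T}_0$ agrees with pulling back $\mathcal{T}$, untwisting $\mathcal{F}^{\mathrm{pol}}(\mathcal{H}_{\mathcal{E}})$ by it, and the analogous intermediate steps. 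This is exactly what \Cref{lem:refinementtriv} supplies: for a refinement $\mathscr{R}'$ and a trivialization $\mathcal{T}$ of the target gerbe, the functors $\mathscr{R}'^{*}\circ\twist_{\mathcal{T}}$ and $\twist_{\mathscr{R}'^{*}\mathcal{T}}$ are canonically naturally isometrically isomorphic as functors into $\Mod{\mathscr{L}_{\mathcal{E}}}^{\Cl_V^{\infty}(\mathcal{E})}$. Applying this twice — once to $\mathscr{P}$ and $\mathcal{T}$, once to $\mathscr{R}$ and $\mathscr{P}^{*}\mathcal{T}$ — and using that $\twist_{\mathcal{T}}$ is an equivalence (\Cref{lem:twistingmod}) to cancel it, we deduce the first isomorphism $\mathcal{F}^{\infty}(\widetilde{\mathcal{E}}) \cong \mathcal{F}^{\mathrm{frm}}(\mathcal{H}_{\mathcal{E}},\mathscr{P}^{*}\mathcal{T})$ and, chaining once more, the second one $\mathcal{F}^{\mathrm{frm}}(\mathcal{H}_{\mathcal{E}},\mathscr{P}^{*}\mathcal{T}) \cong \mathcal{F}^{\mathrm{pol}}(\mathcal{H}_{\mathcal{E}},\mathcal{T})$, both as $\Cl_V^{\infty}(\mathcal{E})$-module bundles.

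The main obstacle, such as it is, is purely bookkeeping: one must check that all the identifications of Clifford bundles ($\Cl_V^{\infty}(\mathcal{E})\cong\Cl^{\infty}(\mathcal{H}_{\mathcal{E}})$, and its compatibility with the refinements, via \Cref{prop:InvarianceUnderLiftsCStar}) are respected throughout, so that every isomorphism produced is genuinely one of $\Cl_V^{\infty}(\mathcal{E})$-module bundles and not merely of rigged Hilbert space bundles, and that the natural isomorphisms from \Cref{lem:refinementtriv} compose coherently under $\mathscr{P}\circ\mathscr{R}$ (functoriality of $\mathscr{R}'\mapsto\mathscr{R}'^{*}$ on refinements, which is straightforward from the definition of refinement). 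Since each individual ingredient — \Cref{prop:refinement}, \Cref{prop:equivambler}, \Cref{th:twistedspinor}, \Cref{lem:refinementtriv}, \Cref{lem:twistingmod} — has already been established with the module structure carried along, no genuinely new analysis is needed; the proof is a short diagram chase, which is why I would present it as a couple of lines deducing the claim from the cited results rather than expanding any of them.
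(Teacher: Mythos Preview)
Your proposal is correct and follows essentially the same approach as the paper, which simply states that the result follows from \cref{lem:refinementtriv}, \cref{th:twistedspinor}, and \cref{prop:refinement} (with \cref{prop:equivambler} implicit for the polarization-dependent part). Your version is a more explicit unpacking of precisely how these ingredients are chained together, including the bookkeeping on the Clifford module structure, but the route is the same.
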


\section{Spinor bundles on loop space}

\label{sec:spinorbundles}
\label{sec:DiracLagrangians}
\label{sec:AnotherBundle}
\label{sec:bundlegerbe}

In this section we provide an application of the material we developed in \cref{sec:smoothfockbundles,sec:twisted}, in the context of 2-dimensional non-linear sigma models. In the simplest case, such models describe the motion of closed strings in a smooth manifold $M$. A closed string is a smooth map $\tau:S^1 \to M$, and thus just a  
point in free loop space of $M$,
\begin{equation*}
LM \defeq C^{\infty}(S^1,M)\text{.}
\end{equation*}     
Viewing closed strings in $M$ as points in $LM$ brings us into the well-studied context of point-like particles moving in a manifold, even if that manifold is now the infinite-dimensional Fr\'echet manifold $LM$. This analogy has proved to be very useful for the understanding of 2-dimensional sigma models.

For example, the discussion of point-like fermions in a finite-dimensional manifold $M$ requires a lift of the structure group of the frame bundle of $M$ to the spin group.
Killingback argued in \cite{killingback1} that fermionic strings in $M$, viewed as point-like particles in $LM$, require a similar lift of the structure group of the frame bundle of $LM$. Such a lift is called a \emph{loop spin structure on $M$} or \emph{spin structure on $LM$}. We  recall and explain the details in the first subsection below, and describe how our general theory of smooth Fock bundles can be used to construct  spinor bundles on the loop space $LM$.

\subsection{Loop spin structures}

\label{sec:loopspin}

We will discuss loop spin structures in a slightly more general setting, assuming that $E$ is a real vector bundle over $M$ of finite rank $d$, equipped with a Riemannian metric. In the application to fermionic strings, $E=TM$ is the tangent bundle of a Riemannian manifold  $M$. As a preliminary assumption, we suppose that $E$ is equipped with a spin structure, which we regard as a  principal $\Spin(d)$-bundle $\Spin(E)$ over $M$ together with a smooth map $\Spin(E) \to \O(E)$ that is equivariant along the group homomorphism $\Spin(d) \to \SO(d) \subset \mathrm{O}(d)$. 
We note (see, e.g., \cite[Proposition 1.8]{SW}) that taking free loops in $\Spin(E)$ results in a Fr\'echet principal $L\Spin(d)$-bundle $L\Spin(E)$ over $LM$, where $L\Spin(d)$ is the loop group of $\Spin(d)$. We recall that $L\Spin(d)$ has a distinguished central extension
\begin{equation*}
\U(1) \to \widetilde{L\Spin}(d) \to L\Spin(d)\text{,}
\end{equation*}   
the \emph{basic central extension} \cite{PS86}.
The following definition is Killingback's \cite{killingback1} version of a spin structure on loop space.

\begin{definition}
\label{def:spinstructureLM}
Let $E$ be a real vector bundle of rank $d$ over a smooth manifold $M$ equipped with a Riemannian metric and  a spin structure $\Spin(E)$.
A \emph{loop spin structure on $E$}  is a lift of  the structure group of   $L\Spin(E)$ along the basic central extension
\begin{equation*}
 \U(1) \rightarrow \BCE \rightarrow L\Spin(d).
\end{equation*}
\end{definition}

Thus, a loop spin structure on $E$ is a Fr\'echet principal $\BCE$-bundle $\widetilde{L\Spin}(E)$ over $LM$ together with a bundle map
$\widetilde{L\Spin}(E) \to L\Spin(E)$
that intertwines the group actions along the projection $\smash{\widetilde{L\Spin}}(d) \to L\Spin(d)$. 
The following result was proved by McLaughlin \cite{mclaughlin1}. 

\begin{proposition}
\label{prop:classspinLM}
$E$ admits a loop spin structure if its first fractional Pontryagin class vanishes, 
\begin{equation*}
\frac{1}{2}p_1(E)=0\text{.}
\end{equation*}
\end{proposition}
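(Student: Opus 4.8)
The plan is to recast the existence of a loop spin structure as a triviality statement for the associated lifting gerbe, and then compute the relevant characteristic class by transgression from $M$.

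First, by \cref{def:spinstructureLM} a loop spin structure on $E$ is a lift of the structure group of the Fr\'echet principal $L\Spin(d)$-bundle $L\Spin(E) \to LM$ along the basic central extension $\U(1) \to \BCE \to L\Spin(d)$. By \cref{prop:lifting}, applied to this central extension and this bundle over the Fr\'echet manifold $LM$, such lifts correspond bijectively to trivializations of the lifting gerbe $\mathscr{L}_{L\Spin(E)}$ over $LM$; so it suffices to prove that this Fr\'echet bundle gerbe is trivializable. Since $LM$ is a paracompact Fr\'echet manifold admitting smooth partitions of unity, a bundle gerbe over $LM$ is trivializable if and only if its Dixmier--Douady class in $H^{3}(LM;\Z)$ vanishes. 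Thus the task reduces to computing $DD(\mathscr{L}_{L\Spin(E)})$ and showing it is zero when $\tfrac12 p_1(E)=0$.

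Second, I would use functoriality of the lifting-gerbe construction together with transgression. If $f\colon M \to B\Spin(d)$ classifies the spin structure $\Spin(E)$, then the looped map $Lf\colon LM \to LB\Spin(d) \simeq BL\Spin(d)$ classifies $L\Spin(E)$, and $DD(\mathscr{L}_{L\Spin(E)}) = (Lf)^{*}c$, where $c \in H^{3}(BL\Spin(d);\Z)$ is the universal class classifying the basic central extension. Here I would invoke the known description of the basic central extension of $L\Spin(d)$: under the homotopy equivalence $BL\Spin(d)\simeq LB\Spin(d) = \operatorname{Map}(S^{1},B\Spin(d))$, the class $c$ is the transgression $\tau(\tfrac12 p_1)$ of the universal first fractional Pontryagin class $\tfrac12 p_1 \in H^{4}(B\Spin(d);\Z)$ along the transgression map $\tau\colon H^{4}(B\Spin(d);\Z)\to H^{3}(LB\Spin(d);\Z)$ induced by the evaluation $\mathrm{ev}\colon LB\Spin(d)\times S^{1}\to B\Spin(d)$. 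Combining this with naturality of transgression, which follows from $\mathrm{ev}\circ(Lf\times\mathrm{id}) = f\circ\mathrm{ev}$ and hence $(Lf)^{*}\tau(\tfrac12 p_1) = \tau(f^{*}\tfrac12 p_1)$, yields
\begin{equation*}
DD(\mathscr{L}_{L\Spin(E)}) \;=\; \tau\bigl(f^{*}\tfrac12 p_1\bigr) \;=\; \tau\bigl(\tfrac12 p_1(E)\bigr) \;\in\; H^{3}(LM;\Z).
\end{equation*}
In particular, if $\tfrac12 p_1(E)=0$ then $DD(\mathscr{L}_{L\Spin(E)})=0$, so $\mathscr{L}_{L\Spin(E)}$ admits a trivialization, and therefore $E$ admits a loop spin structure.

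The main obstacle is the interface between the smooth Fr\'echet setting of this paper and the homotopy-theoretic input. One must be confident that (i) triviality of a Fr\'echet bundle gerbe over $LM$ is detected by a class in $H^{3}(LM;\Z)$, via the exponential sequence for the sheaf of smooth $\U(1)$-valued functions and the existence of smooth partitions of unity on $LM$; that (ii) the Dixmier--Douady class of a lifting gerbe is the pullback of the universal extension class along a smooth classifying map, with the smooth classifying space carrying the expected homotopy type $BL\Spin(d)\simeq LB\Spin(d)$; and that (iii) the smooth basic central extension of \cite{PS86} realizes the topological class $\tau(\tfrac12 p_1)$. Each point is standard but deserves a careful citation; an alternative route that stays entirely within the smooth world is to first produce a string structure on $E$ from the hypothesis $\tfrac12 p_1(E)=0$ and then transgress it to a (fusion) loop spin structure on $LM$, but that imports substantially more machinery than the direct obstruction-theoretic argument of McLaughlin \cite{mclaughlin1} sketched above.
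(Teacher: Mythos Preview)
The paper does not give its own proof of this proposition; it simply states that the result was proved by McLaughlin \cite{mclaughlin1}. Your sketch is correct and is essentially a modernized version of McLaughlin's argument, recast in the language of lifting gerbes and Dixmier--Douady classes rather than the cocycle-level obstruction theory of the original; the key input --- that the obstruction class in $H^{3}(LM;\Z)$ is the transgression of $\tfrac12 p_1(E)$ --- is precisely McLaughlin's computation.
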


\begin{remark}
The fact that in \cref{prop:classspinLM} we only have \quot{if} but not \quot{iff}  indicates that Killingback's notion of a loop spin structure is incomplete. Indeed, the condition $\frac{1}{2}p_1(E)=0$ is equivalent to the existence of a \emph{string structure} on $E$, which can be seen as a lift of the structure group of $\Spin(E)$ to the string group, or, equivalently, as a trivialization of the Chern-Simons 2-gerbe associated to $\Spin(E)$, see \cite{waldorf8,Nikolausa}. It has been shown in \cite{Waldorfb} that a complete loop space-theoretic definition involves two  structures in addition to a loop spin structure: a \emph{fusion product} and a \emph{thin homotopy equivariant structure}.
The inclusion of both additional structures makes loop spin structures on $E$ equivalent to string structures on $E$, and makes the claim of \cref{prop:classspinLM} an \quot{iff}. For the present article,  these additional structures are not relevant though. 
However, we remark that a string structure on $E$ induces canonically a loop spin structure on $E$ using a procedure called transgression. 
\end{remark}

In order to bring loop spin structures into the context of \cref{sec:smoothfockbundles,sec:twisted}, we
consider the Fr\'echet manifold $\mathcal{M}:=LM$, the Fr\'echet Lie group $\mathcal{G}:=L\Spin(d)$, and the Fr\'echet principal $\mathcal{G}$-bundle  $\mathcal{E}=L\Spin(E)$. We look at Fock extensions (\cref{def:fockextension}) whose corresponding central extension of $L\Spin(d)$ is the basic central extension. The following hold for all such Fock extensions:
\begin{itemize}

\item 
Fock structures $\widetilde{L\Spin}(E)$ as defined in \cref{def:FockStructure} are precisely the same as loop spin structures  defined in \cref{def:spinstructureLM}. 

\item
The Fock lifting gerbe $\mathscr{L}_{L\Spin(E)}$ as defined in \cref{sec:Focklifting} is precisely the bundle gerbe that appeared under the name of the \emph{spin lifting gerbe of $L\Spin(E)$} \cite{Waldorfa,Waldorfb,Hanisch2017}; its trivializations are the loop spin structures.  \cref{prop:fockstructureslifting} reproduces the statement that trivializations of the  spin lifting gerbe are the same as loop spin structures on $E$ \cite[Section 4.1]{Waldorfa}.

\end{itemize}

All further structure, like the Fock bundle associated to loop spin structure,  the twisted Fock bundle of \cref{sec:Focklifting}, or the bundle gerbes of \cref{sec:impLiftingGerbe,sec:LagrangianTwistedSpinorBundle}, depend on representation-theoretic data that  varies with the choice of Fock extension.
In the following two subsections we discuss in detail two examples, $(V_{ev},\alpha_{ev},L_{ev},\omega_{ev})$ and $(V_{odd},\alpha_{odd},L_{odd},\omega_{odd})$, that have appeared in the literature before.
The construction of $(V_{ev},\alpha_{ev},L_{ev},\omega_{ev})$ requires $d$ to be even; while for $(V_{odd},\alpha_{odd},L_{odd},\omega_{odd})$ there is no such restriction on $d$.
Under a choice of a loop spin structure $\widetilde{L\Spin}(E)$ on $E$, these Fock extensions give rise to two versions of Fock bundles in the sense of \cref{def:fockbundle},
\begin{equation*}
\mathbb{S}^{\infty}_{ev/odd}(\widetilde{L\Spin}(E))\defeq \mathcal{F}_{L_{ev/odd}}^{\infty}(\widetilde{L\Spin}(E))
\text{,}
\end{equation*}  
the \emph{even} and \emph{odd spinor bundle} on $L\Spin(E)$, respectively. According to our general theory, they are rigged Hilbert space bundles over $LM$ with typical fibre the rigged Fock space $\mathcal{F}^{\infty}_{L_{ev/odd}}$. Moreover, they are rigged module bundles over  the Clifford bundles of \cref{def:CliffordBundle},
\begin{equation*}
\Cl_{ev/odd}^{\infty}(L\Spin(E))\defeq \Cl^{\infty}_{V_{ev/odd}}(\mathcal{E})\text{,}
\end{equation*} 
see \cref{th:cliffmult}.

One does not need the existence of a loop spin structure to apply \cref{prop:refinement,prop:equivambler} to the bundles $\mc{H}_{ev/odd} \defeq \mc{E} \times_{\mc{G}} V_{ev/odd}$ and obtain a diagram of isomorphisms of bundle gerbes
\begin{equation}
\label{eq:diagrambundlegerbes}
\xymatrix@R=0.5em@C=3em{& \mathscr{L}_{\mathcal{H}_{ev}} \ar[r] & \mathscr{G}_{\mathcal{H}_{ev}} \\ \mathscr{L}_{L\Spin(E)} \ar[dr] \ar[ur] \\ & \mathscr{L}_{\mathcal{H}_{odd}} \ar[r] & \mathscr{G}_{\mathcal{H}_{odd}}}
\end{equation}
where $\mathscr{L}_{\mathcal{H}_{ev/odd}}$ is the implementer lifting gerbe, and $\mathscr{G}_{\mathcal{H}_{ev/odd}}$ is the Lagrangian Gra{\ss}mannian gerbe (as explained in more detail in \cref{sec:impLiftingGerbe,sec:LagrangianTwistedSpinorBundle}).
The bundles $\mathcal{H}_{ev}$ and $\mathcal{H}_{odd}$ have concrete meanings in each case, as we explain in the following subsections.

All bundle gerbes in diagram \cref{eq:diagrambundlegerbes} come equipped with twisted module bundles of various geometric flavours, canonically isomorphic to either the even twisted Fock bundle $\mathcal{F}^{\infty}_{L_{ev}}(L\Spin(E))^{\twist}$  or the odd twisted Fock bundle $\mathcal{F}^{\infty}_{L_{odd}}(L\Spin(E))^{\twist}$ by \cref{prop:equivambler,prop:refinement}.
Let us explain what these twisted bundles are good for.  
First of all, they exist no matter if $E$ admits a loop spin structure or string structure. They are rigged Hilbert space bundles,  and come equipped with a twisted version of Clifford multiplication by $\Cl^{\infty}_{ev}(L\Spin(E))$ or $\Cl^{\infty}_{odd}(L\Spin(E))$, respectively. 
Upon providing a loop spin structure on $E$, they all untwist to either the even spinor bundle  or the odd one. 
Secondly, the problem of untwisting these twisted bundles  is reduced to finding some trivialization of some of the bundle gerbes in diagram \cref{eq:diagrambundlegerbes}. 

The third advantage is that one may use our twisted spinor bundles even when $E$ does not admit a loop spin structure.
This, for example, might be interesting for applications in $M$-theory. There, spacetime manifolds carry a principal $E_8$-bundle $F$, whose second Chern class (i.e., the second Chern class of the vector bundle associated to it along the adjoint representation $\rho:E_8 \to \mathrm{SU}(248)$) equals the first fractional Pontryagin number,
\begin{equation*}
c_2(F) = \frac{1}{2}p_1(M)\text{.}
\end{equation*}
In particular, $M$ is typically not a string manifold, and $TM$ is typically not equipped with a loop spin structure. 
Both classes can be realized by Chern-Simons 2-gerbes constructed from the basic gerbes over $\mathrm{SU}(248)$ and $\Spin(d)$, respectively (see \cref{sec:lagrangiangerbe}). An \emph{$F$-twisted string structure} is then an isomorphism between these 2-gerbes \cite{Sati2012}. One may observe that the pullback of the basic gerbe over $\mathrm{SU}(248)$ along $\rho$ is the 60th power of the basic gerbe over $E_8$, which implies that the first Chern-Simons 2-gerbe is isomorphic to one constructed from the bundle $F$ and $\mathcal{G}_{E_8}^{60}$. Under transgression to the loop space, an $F$-twisted spin structure is then an isomorphism between a bundle gerbe $\mathcal{G}_F$ and the spin lifting gerbe $\mathcal{S}_{LM}$, where $\mathcal{G}_F$ is the lifting gerbe associated to the problem of lifting the structure group of the looped bundle $LF$ from $LE_8$ to the 60th power of its basic central extension. The isomorphism $\mathcal{G}_F \cong \mathcal{S}_{LM}$ allows then to transfer the twisted spinor bundle into a twisted vector bundle for the bundle gerbe $\mathcal{G}_F$, hence only depending on the M-theoretical input data of the bundle $F$.

\subsection{The odd spinor bundle on loop space}

\label{sec:oddspinorbundle}

In this section we set up the Fock extension $(V_{odd},\alpha_{odd},L_{odd},\omega_{odd})$  of $L\Spin(d)$. We drop the subscript $(..)_{odd}$ in the following. We let $\mathbb{S} \rightarrow S^{1}$ be the odd spinor bundle on the circle, i.e., the one associated  to the odd (i.e., the connected, or bounding) spin structure on the circle.
We consider the Hilbert space $V \defeq L^{2}(S^{1}, \mathbb{S}) \otimes \C^{d}$ of odd spinors on the circle. Component-wise complex conjugation on $\C^d$ yields a real structure  $\alpha: V \rightarrow V$.
In \cite[Section 2]{Kristel2019} it is explained how the space of smooth $\C^d$-valued 2$\pi$-antiperiodic functions on the real line can be identified with the dense subspace $\Gamma(S^{1},\mathbb{S} )\otimes \C^{d} \subset V$.
Under this identification, $V$ has an orthonormal basis $\{ \xi_{n,j} \}_{n \in \mathbb{N}, j =1,...,d}$ by setting
\begin{equation}\label{eq:antiperiodicbasis}
 \xi_{n,j}(t) = e^{-i \left(n+ \frac{1}{2} \right) t} \; e_{j},
\end{equation}
where $\{ e_{j} \}_{j=1,...,d}$ is the standard basis of $\C^{d}$.
It is further shown that $\alpha(\xi_{n,j}) = \xi_{-n-1,j}$ for all $n$ and all $j$.
It follows that the closed linear span
\begin{equation*}
 L \defeq \operatorname{span} \{ \xi_{n,j} \mid n \geqslant 0, \, j=1,...,d \}
\end{equation*}
is a Lagrangian in $V$.

We note that the group $L\SO(d)$ acts on $V=L^{2}(S^{1}, \mathbb{S} )\otimes \C^{d}$ via its natural action on $\C^d$.
It is a standard result that this action factors through $\O_{\res}(V) \to \O(V)$ see \cite[Lemma 3.21]{Kristel2019} or \cite[Proposition 6.3.1]{PS86}.
Moreover, the map $L\SO(d) \rightarrow \O_{\res}(V)$ is smooth, see \cite[Lemma 3.22]{Kristel2019}.
We thus obtain a smooth group homomorphism $\omega:L\Spin(d) \to \O_L(V)$ by composition with the projection $L\Spin(d) \to L\SO(d)$.
This completes the construction of a Fock extension of $L\Spin(d)$.

\begin{proposition}[{{\cite[Theorem 3.26]{Kristel2019}}}]
\label{prop:ceodd}
For   $1\leq d \neq 4$, the pullback of the central extension $\U(1) \to\Imp_{\res}(V) \xrightarrow{q} \O_{\res}(V)$ along $\omega$ is the basic central extension of $L\Spin(d)$.
\end{proposition}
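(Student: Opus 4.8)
The plan is to identify the pullback extension $\omega^{*}\Imp_{\res}(V)$ of $L\Spin(d)$ with the basic central extension by matching a classifying invariant. The cleanest such invariant is the level of the extension, i.e., the cohomology class in $H^{3}(L\Spin(d);\Z)$ (equivalently, up to the well-understood normalisation, the image in $H^{2}_{\mathrm{cts}}(L\Spin(d);\U(1))$ of the extension, or the cocycle describing the associated Lie algebra extension of $L\lie{spin}(d)$). Since $\pi_{1}(\Spin(d))$ is trivial for $d\geq 3$, and for $1\leq d\leq 2$ the loop group $L\Spin(d)$ is abelian and the statement is checked directly, the relevant cases are $d\geq 3$, $d\neq 4$, where $H^{3}(L\Spin(d);\Z)\cong\Z$ and the basic extension is by definition the generator. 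So the task reduces to computing the level of $\omega^{*}\Imp_{\res}(V)$ and showing it equals $1$.

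First I would pass to Lie algebras: the central extension $\U(1)\to\Imp_{\res}(V)\to\O_{\res}(V)$ has a well-known Lie algebra cocycle on $\lie{o}_{\res}(V)$, namely (a multiple of) $c(X,Y)=\tfrac14\trace(\mathcal{J}_{L}[\mathcal{J}_{L},X][\mathcal{J}_{L},Y])$ — this is the standard Pressley–Segal / Araki cocycle controlling the implementability central extension on a fermionic Fock space with polarisation $L$. Pulling back along $d\omega: L\lie{spin}(d)\to\lie{o}_{\res}(V)$ gives a cocycle on $L\lie{spin}(d)$, which by $L\mathrm{SO}(d)$-invariance and the classification of invariant cocycles on loop algebras of simple Lie algebras must be proportional to the basic cocycle $(\xi,\eta)\mapsto\tfrac{1}{2\pi}\int_{S^{1}}\langle\xi,\eta'\rangle$ for a suitable invariant form $\langle\cdot,\cdot\rangle$ on $\lie{spin}(d)$. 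The computation then localises to a rank-one calculation: restrict $\omega$ to a loop $\mathrm{SO}(2)\subset\mathrm{SO}(d)$ (coming from a root $\mathrm{SU}(2)\subset\Spin(d)$), where $V$ decomposes as a sum of two copies of the standard ``charge'' representation $L^{2}(S^{1})\otimes\C^{2}$ of $L\mathrm{SO}(2)$ on which the rotation loop acts with its natural weights, and evaluate the cocycle on the generator of $\lie{so}(2)$. This reproduces exactly the computation in \cite[Section 3]{Kristel2019}; the antiperiodic basis \cref{eq:antiperiodicbasis} and the identity $\alpha(\xi_{n,j})=\xi_{-n-1,j}$ are precisely what is needed to read off the weights and hence the level.

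The main obstacle is the normalisation bookkeeping: the factor of $\tfrac12$ in $\rho_{L}=\sqrt2(c(v)+a(w))$, the precise identification of $\lie{spin}(d)$ with $\lie{so}(d)$ and the induced form, the even/odd spin structure on $S^{1}$ shifting the relevant representation from the integer to the half-integer lattice, and the convention for what ``basic'' means — any of these can shift the apparent level by a factor of $2$, which is exactly why the hypothesis $d\neq 4$ appears (for $d=4$, $\Spin(4)=\Spin(3)\times\Spin(3)$ and the two factors contribute independently, so the extension is of bidegree $(1,1)$ rather than being ``the'' basic one, and one must either exclude it or state the result componentwise). Since \cref{prop:ceodd} is quoted verbatim as \cite[Theorem 3.26]{Kristel2019}, in the paper itself I would not reprove it; I would simply cite that reference, remark that the proof proceeds by the Lie-algebra-cocycle computation sketched above, and note that the exclusion of $d=4$ is due to the reducibility of $\Spin(4)$. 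If a self-contained argument were wanted, the key step to get right — and the one I would spend the most care on — is the rank-one weight computation with the half-integer grading, verifying that it yields level $1$ and not level $2$.
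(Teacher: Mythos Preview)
Your proposal is correct and matches the approach taken in the cited reference; the paper itself gives no proof of this statement, simply citing \cite[Theorem~3.26]{Kristel2019}. The proof sketch you outline---pulling back the Lie algebra cocycle on $\lie{o}_{\res}(V)$ along $d\omega$ and computing the level via a rank-one calculation---is exactly the method used there, and the paper confirms this in its proof of the analogous even-spinor statement (\cref{prop:EvenBasicCE}), which explicitly reduces to the cocycle identity \eqref{eq:CocycleIdentity} and refers back to the same computation in \cite{Kristel2019}.
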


We continue under the assumption of \cref{prop:ceodd}, i.e.,   $1\leq d \neq 4$. Hence, a Fock structure on $L\Spin(E)$ is again the same thing as a loop spin structure on $E$. The corresponding Fock bundle
\begin{equation*}
\mathbb{S}^{\infty}_{odd}(\widetilde{L\Spin}(E)) \defeq \mathcal{F}^{\infty}(\widetilde{L\Spin}(E))
\end{equation*}
is called the \emph{odd loop spinor bundle} of $E$ associated to the loop spin structure $\widetilde{L\Spin}(E)$.
We recall that this is a rigged Hilbert space bundle with typical fibre $\mathcal{F}^{\infty}$, and that it is  a rigged 
module bundle for the Clifford bundle $\Cl^{\infty}(L\Spin(E))$.

Whether or not $E$ admits a loop spin structure, we have from \cref{sec:twisted} a sequence
\begin{equation*}
\mathscr{L}_{L\Spin(E)} \to \mathscr{L}_{\mathcal{H}} \to \mathscr{G}_{\mathcal{H}}
\end{equation*}
of bundle gerbes and refinements over $LM$, where $\mathcal{H}$ is the continuous real Hilbert space bundle 
\begin{equation*}
\mathcal{H} \defeq (L\Spin(E) \times V)/L\Spin(d)
\end{equation*}
defined in \cref{re:Hfromgeneralsetting}, whose real structure is induced from the one of $V$. In the present situation, this is a bundle with interesting fibres, as the following lemma shows.

\begin{lemma}
\label{lem:fibresoddspinor}
For each loop $\gamma\in LM$, there is a canonical isomorphism of real Hilbert spaces
\begin{equation*}
\mathcal{H}_{\gamma} \cong L^2(S^1,\mathbb{S} \otimes \gamma^{*}E_{\C})\text{,}
\end{equation*}
characterized uniquely by the condition that
$[\varphi,s \otimes w] \mapsto s \otimes \varphi_{\C} (w)$
for all $\varphi\in L\Spin(E)$, smooth sections $s$ of $\mathbb{S}$ and $w\in \C^d$; where $\varphi_{\C}(w)$ denotes the pointwise application of the complexification $\varphi(t)_{\C}$ of the frame $\varphi(t):\R^n \to E_{\gamma(t)}$ to $w$. 
\end{lemma}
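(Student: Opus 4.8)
\textbf{Proof plan for \cref{lem:fibresoddspinor}.}
The strategy is to realize both sides of the claimed isomorphism as associated bundles for the same structure group and then invoke \cref{prop:InvarianceUnderLiftsHilbert}. First I would recall that $\mathcal{H}_{\gamma}=(L\Spin(E)_{\gamma} \times V)/L\Spin(d)$ by definition, with $V=L^2(S^1,\mathbb{S})\otimes\C^d$ and $L\Spin(d)$ acting via $L\SO(d)$ on the $\C^d$-factor. On the other side, the bundle $\gamma^{*}E$ is a rank-$d$ Euclidean vector bundle over $S^1$ with a spin structure obtained by looping $\Spin(E)$, i.e.\ its oriented orthonormal frame bundle $\O(\gamma^{*}E)$ has a reduction $\Spin(\gamma^{*}E)=L\Spin(E)_{\gamma}$ along $\Spin(d)\to\SO(d)$. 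Then $\gamma^{*}E_{\C}=(\Spin(\gamma^{*}E)\times\C^d)/\Spin(d)$ fibrewise, and tensoring with the fixed bundle $\mathbb{S}$ over $S^1$ and passing to $L^2$-sections gives
\begin{equation*}
L^2(S^1,\mathbb{S}\otimes\gamma^{*}E_{\C}) \cong \big(\Spin(\gamma^{*}E)\times (L^2(S^1,\mathbb{S})\otimes\C^d)\big)/\Spin(d)\text{,}
\end{equation*}
where an element $\varphi\in\Spin(\gamma^{*}E)$ and a section $s\otimes w$ map to the section $t\mapsto s(t)\otimes\varphi(t)_{\C}(w)$. This is exactly $(L\Spin(E)_{\gamma}\times V)/L\Spin(d)=\mathcal{H}_{\gamma}$ with the action written out, so the identity on the total space of $\Spin(\gamma^{*}E)\times V$ descends to the desired isomorphism, which is manifestly given by the stated formula on the dense set of elements $[\varphi,s\otimes w]$.

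The steps, in order, are: (1) spell out the spin structure on $\gamma^{*}E$ as the loop of $\Spin(E)$ restricted to $\gamma$, so that $\Spin(\gamma^{*}E)$ is literally $L\Spin(E)_{\gamma}$; (2) identify $\gamma^{*}E_{\C}$ with the associated bundle along the complexified standard representation $\Spin(d)\to\SO(d)\to\U(d)$ acting on $\C^d$, using the frame-to-section dictionary $\varphi\mapsto(w\mapsto\varphi_{\C}(w))$; (3) observe that the functor $W\mapsto L^2(S^1,\mathbb{S}\otimes W)$ applied to the bundle $\gamma^{*}E_{\C}$ over $S^1$ commutes with the associated-bundle construction, because $\mathbb{S}$ and the base $S^1$ are fixed — here one uses that an isometric isomorphism of Euclidean (resp.\ Hermitian) vector bundles over $S^1$ induces an isometric isomorphism on $L^2$-sections after tensoring with $\mathbb{S}$, and that this is compatible with the $\Spin(d)$-action; (4) match the resulting $\Spin(d)$-action on $L^2(S^1,\mathbb{S})\otimes\C^d$ with the one defining $V$, which is precisely the statement that $L\Spin(d)$ acts on $V$ through $L\SO(d)$ on the $\C^d$-factor; (5) conclude by \cref{prop:InvarianceUnderLiftsHilbert} (or simply by the uniqueness of the associated bundle) that the formula $[\varphi,s\otimes w]\mapsto s\otimes\varphi_{\C}(w)$ extends to a well-defined isometric isomorphism, and check it intertwines the real structures, both being induced by complex conjugation on the $\C^d$-factor.

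The only genuinely delicate point is step (3): one must be careful that the identification $L^2(S^1,\mathbb{S}\otimes\gamma^{*}E_{\C})\cong(\Spin(\gamma^{*}E)\times V)/\Spin(d)$ is an \emph{isometric} isomorphism of Hilbert spaces, not just an algebraic one, and that it is canonical, i.e.\ independent of auxiliary choices. This comes down to noting that the pointwise frame $\varphi(t)_{\C}:\C^d\to(\gamma^{*}E_{\C})_t$ is unitary (since $\varphi(t)$ is an orthonormal frame and complexification of an orthogonal map is unitary), so the induced map on sections preserves the $L^2$-inner product fibre by fibre and then globally by integration over $S^1$ against the fixed inner product on $\mathbb{S}$; well-definedness on the quotient is the $\Spin(d)$-equivariance, which is immediate because the $\Spin(d)$-action on both sides is by the same unitary transformations of $\C^d$. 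Everything else is routine bookkeeping with associated bundles, so I expect the write-up to be short, essentially \quot{unwind the definitions and apply \cref{prop:InvarianceUnderLiftsHilbert}}.
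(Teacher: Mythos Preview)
Your overall strategy is sound and, once the notational issues below are fixed, coincides with the paper's direct argument: check that the formula descends to the quotient, verify it is an isometry because each $\varphi(t)$ is an orthogonal frame (so $\varphi(t)_{\C}$ is unitary), extend from smooth sections to the $L^2$-completion, and observe that complexification of a real-linear map intertwines complex conjugation. The paper's proof is exactly this, in four sentences, without invoking \cref{prop:InvarianceUnderLiftsHilbert}.

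There is, however, a genuine conflation in your write-up. You set $\Spin(\gamma^{*}E)=L\Spin(E)_{\gamma}$ and then form $\big(\Spin(\gamma^{*}E)\times V\big)/\Spin(d)$. These do not match: $\Spin(\gamma^{*}E)$, as the spin structure on $\gamma^{*}E$, is a principal $\Spin(d)$-bundle \emph{over $S^{1}$}, whereas $L\Spin(E)_{\gamma}$ is its space of global sections, a torsor for the loop group $L\Spin(d)$, not for $\Spin(d)$. Consequently the quotient must be by $L\Spin(d)$, giving $(L\Spin(E)_{\gamma}\times V)/L\Spin(d)$ --- which is already the definition of $\mathcal{H}_{\gamma}$. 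With this correction, your step~(3) becomes the statement that a single global section $\varphi$ of $\Spin(\gamma^{*}E)$ trivializes $\gamma^{*}E_{\C}$ and hence yields an isometry $V\to L^2(S^1,\mathbb{S}\otimes\gamma^{*}E_{\C})$, while two such sections differ by an element of $L\Spin(d)$ acting on $V$; this is the direct argument. Note also that \cref{prop:InvarianceUnderLiftsHilbert} is not the right reference here: it compares two associated bundles over a common base $\mathcal{M}$, not a single fibre against a concretely described Hilbert space. Your parenthetical alternative (``uniqueness of the associated bundle'') is closer to what is actually used.
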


\begin{proof}
It is clear that the formula characterizes the map uniquely, and the way $\Spin(d)$ acts in $V$ shows that it defines a map on the level of smooth sections. Since the frame $\varphi(t)$ is orthogonal, it is straightforward to see that this map is an isometry with respect to the $L^2$ scalar products, and hence extends to an isometric transformation of Hilbert spaces. It is easy to define an inverse map and to show that we have an isometric isomorphism. Finally, it is clear that the real structures are preserved under the complexification of a real-linear map. 
\end{proof}

By the lemma we may view our bundle $\mathcal{H}$ as the continuous real Hilbert space bundle with fibres $\mathcal{H}_{\gamma}=L^2(S^1,\mathbb{S} \otimes \gamma^{*}E_{\C})$. This bundle has been considered by Stolz and Teichner in their outline \cite[Section 3]{stolz3} of a construction of a spinor bundle on loop space, in the case $E=TM$. In order to form a bundle of Fock spaces, they proposed to consider an equivalence class of Lagrangians  by looking at the positive spectrum of the Dirac operator acting on $\mathcal{H}_{\gamma}$.

In the following discussion we will show that their outline is parallel to our construction of the polarization-dependent Fock bundle $\mathcal{F}\pol{\mathcal{H}}$  in \cref{sec:LagrangianTwistedSpinorBundle}. An important step in the construction of this Fock bundle was the reduction of the structure group of $\mathcal{H}$ from $\O(V)$ to $\O_L(V)$.
We observed that in the situation that $\O(\mathcal{H}) = L\Spin(E) \times_{L\Spin(d)} \O(V)$, there is a canonical reduction, denoted $\O_L(\mathcal{H})$.
We have then seen that this reduction selects a distinguished equivalence class of Lagrangians in the fibre $\mathcal{H}_{\gamma}$ over each loop $\gamma$; namely, the class consisting of the images $\nu(L)$ under all restricted orthogonal frames $\nu\in \O_L(\mathcal{H})$. In \cite{stolz3}, another procedure was described to obtain a distinguished equivalence class of Lagrangians. We will recall this and then prove that the two equivalence classes coincide.

We consider the twisted Dirac operator $\D_{\gamma}: \mc{H}_{\gamma} \rightarrow \mc{H}_{\gamma}$, which is defined by
\begin{equation*}
        \D_{\gamma} = i \dd{}{t} \otimes \nabla,
\end{equation*}
where $\nabla$ is the pullback of the covariant derivative corresponding to the Levi-Civita connection on $TM$.
We write $\operatorname{pt}_{\gamma}(t)\in \SO(T_{\gamma(0)}M, T_{\gamma(t)}M)$ for parallel transport along $\gamma$ for time $t$.
Let $\{v_{j}\}_{j=1,...,d}$ be a collection of linearly independent vectors in $T_{\gamma(0)}M_{\C}$ that diagonalizes $\operatorname{pt}_{\gamma}(2\pi) \in \SO( T_{\gamma(0)}M_{\C})$, with eigenvalues $e^{i \ph_{j}} \in S^{1}$. We choose $\ph_{j}$ to be in $[-\pi , \pi )$.
A straightforward computation shows the following result.

\begin{lemma}\label{lem:DiagonalizeDirac}
        The family of vectors $\eta_{n,j} \in \mc{H}_{\gamma}$ defined by
        \begin{equation*}
                \eta_{n,j}(t) = e^{-i (n + \frac{1}{2})t -i \frac{\ph_{j}}{2\pi}t} \otimes \operatorname{pt}_{\gamma}(t) v_{j},
        \end{equation*}
        diagonalizes $\D_{\gamma}$ with eigenvalues
        \begin{equation*}
                \lambda_{n,j} = n + \frac{1}{2} + \frac{\ph_{j}}{2\pi}.
        \end{equation*}
\end{lemma}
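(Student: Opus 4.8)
The plan is to verify both claims—the formula for the eigenvectors and the eigenvalues—by a direct computation, since the operator $\D_\gamma = i\,\dd{}{t}\otimes\nabla$ acts in a completely explicit way on sections of $\mathbb{S}\otimes\gamma^*TM_\C$, and the family $\eta_{n,j}$ is built precisely from the spectral data of the monodromy $\operatorname{pt}_\gamma(2\pi)$. First I would check that each $\eta_{n,j}$ is a genuine element of $\mathcal{H}_\gamma$; that is, that it defines a smooth section of $\mathbb{S}\otimes\gamma^*TM_\C$. The scalar prefactor $e^{-i(n+\frac12)t - i\frac{\ph_j}{2\pi}t}$ is a smooth function on $\R$, and the point is that the total ``winding defect'' makes the section well-defined over $S^1$: the spinorial part $\mathbb{S}$ is $2\pi$-antiperiodic, contributing the factor $e^{-i\pi} = -1$ under $t\mapsto t+2\pi$ from the half-integer $n+\frac12$; the vector part $\operatorname{pt}_\gamma(t)v_j$ fails to be periodic precisely by $\operatorname{pt}_\gamma(2\pi)v_j = e^{i\ph_j}v_j$; and the extra scalar factor $e^{-i\frac{\ph_j}{2\pi}t}$ picks up $e^{-i\ph_j}$ under $t\mapsto t+2\pi$, exactly cancelling the monodromy of the vector part. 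So the product is genuinely periodic up to the sign coming from $\mathbb{S}$, i.e.\ it is a smooth section of the twisted bundle. I would also note that $\{v_j\}$ being a diagonalizing basis of the complexified monodromy means $\{\operatorname{pt}_\gamma(t)v_j\}_{j}$ is, for each $t$, a basis of $T_{\gamma(t)}M_\C$, and together with the orthonormal basis $\{\xi_{n,j}\}$ of $L^2(S^1,\mathbb{S})$ from \cref{eq:antiperiodicbasis} this shows the $\eta_{n,j}$ span a dense subspace of $\mathcal{H}_\gamma$.

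The second step is the eigenvalue computation. Since $\nabla$ is by construction the pullback connection, parallel transport satisfies $\nabla_{\!t}\!\left(\operatorname{pt}_\gamma(t)v_j\right) = 0$, so $\nabla$ annihilates the vector part and $\D_\gamma$ acts only through $i\,\dd{}{t}$ on the scalar prefactor:
\begin{equation*}
\D_\gamma \eta_{n,j} = i\,\dd{}{t}\!\left(e^{-i(n+\frac12)t - i\frac{\ph_j}{2\pi}t}\right)\otimes \operatorname{pt}_\gamma(t)v_j = \left(n+\tfrac12+\tfrac{\ph_j}{2\pi}\right)\eta_{n,j}\text{.}
\end{equation*}
This gives $\lambda_{n,j} = n+\frac12+\frac{\ph_j}{2\pi}$ immediately. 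I would then remark that since $\ph_j\in[-\pi,\pi)$ one has $\frac{\ph_j}{2\pi}\in[-\frac12,\frac12)$, so $\lambda_{n,j} = n+\frac12+\frac{\ph_j}{2\pi}\in[n,n+1)$; in particular $\lambda_{n,j}\neq 0$ for all $n,j$, so $\D_\gamma$ has trivial kernel, and the eigenvalues are all nonzero, which is the relevant fact for later selecting a Lagrangian from the positive spectrum.

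There is not really a hard obstacle here—the statement is called ``a straightforward computation'' in the text for good reason—so the plan is mostly bookkeeping: being careful that $\D_\gamma$ is defined on the complexified bundle $\gamma^*TM_\C$ (so that $\operatorname{pt}_\gamma(2\pi)$ is viewed inside $\SO(T_{\gamma(0)}M_\C)$ and can be genuinely diagonalized), being careful with the antiperiodicity convention for $\mathbb{S}$, and checking the well-definedness over $S^1$ as above. The only mildly delicate point is making sure the eigenvectors actually lie in the domain of $\D_\gamma$ as a self-adjoint operator and that they really do form a complete orthogonal system, so that the listed eigenvalues are the \emph{entire} spectrum; this follows from density of their span together with the fact that distinct $\eta_{n,j}$ (for $n$ ranging and $j$ ranging over an orthonormal diagonalizing basis, which one may take orthonormal since $\operatorname{pt}_\gamma(2\pi)$ is unitary on $T_{\gamma(0)}M_\C$) are mutually orthogonal in $\mathcal{H}_\gamma$. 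I would state this completeness remark explicitly, as it is what justifies using $\D_\gamma$ to define an equivalence class of Lagrangians in the subsequent discussion.
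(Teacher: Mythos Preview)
Your core argument---checking antiperiodicity to confirm $\eta_{n,j}$ is a well-defined section, then using $\nabla_t(\operatorname{pt}_\gamma(t)v_j)=0$ to reduce $\D_\gamma$ to $i\,\dd{}{t}$ on the scalar factor---is exactly the paper's computation and is correct.

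However, your added remark that $\lambda_{n,j}\neq 0$ for all $n,j$ is wrong, and in fact contradicts your own interval computation: from $\lambda_{n,j}\in[n,n+1)$ you get $\lambda_{0,j}\in[0,1)$, which does \emph{not} exclude zero. Concretely, $\lambda_{n,j}=0$ precisely when $n=0$ and $\ph_j=-\pi$, which happens whenever $\operatorname{pt}_\gamma(2\pi)$ has $-1$ as an eigenvalue. The paper's next lemma (\cref{lem:SplittingLagrangian}) treats exactly this case: $\operatorname{Eig}_{>0}(\D_\gamma)$ is only a sublagrangian in general, and one must complete it by choosing a Lagrangian $K\subset\operatorname{Eig}_{=0}(\D_\gamma)$. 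So delete the claim that $\D_\gamma$ has trivial kernel; it is both false and unnecessary for the lemma as stated.
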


\begin{lemma}\label{lem:SplittingLagrangian}
        The subspace $\text{Eig}_{>0}(\D_{\gamma}) \subset \mc{H}_{\gamma}$ is a sublagrangian, and  a Lagrangian if and only if  $\text{Eig}_{=0}(\D_{\gamma})=\{0\}$.
\end{lemma}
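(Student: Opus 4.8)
The statement concerns the eigenspace $\text{Eig}_{>0}(\D_\gamma)$ of the twisted Dirac operator, which by \cref{lem:DiagonalizeDirac} has an explicit orthonormal eigenbasis $\{\eta_{n,j}\}$ with eigenvalues $\lambda_{n,j}=n+\tfrac12+\tfrac{\ph_j}{2\pi}$. The plan is to make everything explicit in this basis and then invoke the definition of sublagrangian together with the characterization of the real structure from \cref{lem:fibresoddspinor}. First I would identify $\mc{H}_\gamma$ with $L^2(S^1,\mathbb{S}\otimes\gamma^*E_\C)$ and pin down the real structure $\alpha$: component-wise complex conjugation on $\C^d$ transported through a local frame, so that $\alpha(\eta_{n,j})$ is a scalar multiple of $\eta_{-n-1,j'}$ for a suitable index $j'$ (the conjugate eigenvector of $\text{pt}_\gamma(2\pi)$), exactly mirroring the relation $\alpha(\xi_{n,j})=\xi_{-n-1,j}$ for the free case recorded after \cref{eq:antiperiodicbasis}. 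The key arithmetic observation is that $\lambda_{n,j}>0$ precisely when $n\geq 0$ (since $\ph_j\in[-\pi,\pi)$ forces $\tfrac{\ph_j}{2\pi}\in[-\tfrac12,\tfrac12)$, so $\lambda_{0,j}\geq 0$ always, and $\lambda_{0,j}=0$ exactly when $\ph_j=-\pi$), and $\lambda_{n,j}<0$ precisely when $n\leq -1$.

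\textbf{Key steps.} (1) Show $\text{Eig}_{>0}(\D_\gamma)$ is isotropic: since $\lambda_{n,j}>0$ means $n\geq 0$ and $\lambda_{-n-1,j'}<0$ whenever $n\geq 0$ (using $\lambda_{-n-1,j'}=-n-\tfrac12+\tfrac{\ph_{j'}}{2\pi}<0$), the conjugates $\alpha(\eta_{n,j})$ of the positive eigenvectors all lie in $\text{Eig}_{<0}(\D_\gamma)$, which is orthogonal to $\text{Eig}_{>0}(\D_\gamma)$; hence $\langle v,\alpha(w)\rangle=0$ for $v,w\in\text{Eig}_{>0}(\D_\gamma)$. (2) Show $\text{Eig}_{>0}(\D_\gamma)\oplus\alpha(\text{Eig}_{>0}(\D_\gamma))\subseteq\mc{H}_\gamma$ has finite even codimension: the orthogonal complement is contained in $\text{Eig}_{=0}(\D_\gamma)\oplus(\text{span}\{\eta_{0,j}:\ph_j=-\pi\})$-type corrections — more precisely, the complement of $\text{Eig}_{>0}\oplus\overline{\text{Eig}_{>0}}$ is exactly $\text{Eig}_{=0}(\D_\gamma)$, because for $n\geq 1$ the vectors $\eta_{-n,j}$ are conjugates of the positive eigenvectors $\eta_{n-1,j'}$, and the $\eta_{0,j}$ with $\ph_j>-\pi$ are positive eigenvectors while those with $\ph_j=-\pi$ are in $\text{Eig}_{=0}$. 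Since $\text{Eig}_{=0}(\D_\gamma)$ is $\alpha$-invariant (conjugation sends a zero eigenvector to a zero eigenvector) and finite-dimensional, and admits a Lagrangian decomposition iff it is even-dimensional — which holds because $\ph_j=-\pi\iff\ph_{j'}=-\pi$ for the conjugate index, so the zero eigenvalues pair up — the codimension is finite and even. (3) Conclude: $\text{Eig}_{>0}(\D_\gamma)$ is a sublagrangian, and it is a Lagrangian iff $\text{Eig}_{=0}(\D_\gamma)=\{0\}$, i.e.\ iff $\D_\gamma$ has trivial kernel.

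\textbf{Main obstacle.} The routine parts are the isotropy and the eigenvalue sign bookkeeping. The step requiring the most care is verifying that $\text{Eig}_{=0}(\D_\gamma)$ has \emph{even} dimension so that $\text{Eig}_{>0}(\D_\gamma)\oplus\alpha(\text{Eig}_{>0}(\D_\gamma))$ really has even codimension — this is forced by the fact that the real structure pairs an eigenvector with eigenvalue $\ph_j=-\pi$ to another such eigenvector, never fixing one (a fixed vector would have to be $\alpha$-invariant, i.e.\ "real", but a real zero-mode of $i\tfrac{d}{dt}$ is impossible). I would spell this pairing out explicitly: $\ph_j=-\pi$ means $e^{i\ph_j}=-1$ is an eigenvalue of $\text{pt}_\gamma(2\pi)\in\SO(d)$, and $-1$-eigenspaces of special orthogonal matrices are even-dimensional, and complex conjugation acts on this even-dimensional real eigenspace without nonzero fixed points in the relevant sense. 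Once that parity is secured, everything else follows from unwinding \cref{def:fockbundle}'s predecessor, the definition of sublagrangian, and \cref{lem:DiagonalizeDirac}.
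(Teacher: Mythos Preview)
Your proposal is correct and follows essentially the same route as the paper: both arguments use the explicit eigenbasis $\{\eta_{n,j}\}$ from \cref{lem:DiagonalizeDirac} to verify that $\alpha(\text{Eig}_{>0}(\D_\gamma))=\text{Eig}_{<0}(\D_\gamma)$, deduce isotropy, obtain the decomposition $\mc{H}_\gamma=\text{Eig}_{>0}\oplus\text{Eig}_{<0}\oplus\text{Eig}_{=0}$, and then argue even-dimensionality of the kernel via the fact that the $-1$-eigenspace of $\operatorname{pt}_\gamma(2\pi)\in\SO(d)$ is even-dimensional. One minor imprecision: your claim that $\alpha(\eta_{n,j})$ is a scalar multiple of a single $\eta_{-n-1,j'}$ need not hold when the eigenvalue $e^{i\ph_j}$ has multiplicity---it is only guaranteed to lie in the span of those $\eta_{-n-1,j'}$ with $\ph_{j'}\equiv -\ph_j$---but this suffices for everything you use it for.
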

\begin{proof}
        A computation using the basis of \cref{lem:DiagonalizeDirac} shows that $\alpha(\text{Eig}_{>0}(\D_{\gamma})) = \text{Eig}_{<0}(\D_{\gamma})$.
        The fact that $\text{Eig}_{>0}(\D_{\gamma})$ is isotropic follows from this and the fact that the basis $\eta_{n,j}$ is orthonormal. We generally have a decomposition
        \begin{equation*}
                \mc{H}_{\gamma} = \text{Eig}_{>0}(\D_{\gamma}) \oplus \alpha(\text{Eig}_{>0}(\D_{\gamma})) \oplus \text{Eig}_{=0}(\D_{\gamma}),
        \end{equation*}
        If the zero eigenspace $\text{Eig}_{\gamma = 0}(\D_{\gamma})$ is non-trivial, then $\text{Eig}_{>0}(\D_{\gamma})$ is not Lagrangian. However if $K \subset \text{Eig}_{=0}(\D_{\gamma})$ is Lagrangian, then $L_{\gamma} \defeq \text{Eig}_{>0}(\D_{\gamma}) \oplus K \subset \mc{H}_{\gamma}$ is Lagrangian.
        Such a choice of $K$ always exists, because $\text{Eig}_{=0}(\D_{\gamma})$ is even-dimensional, ($\lambda_{n,j} = 0$ corresponds to $n = 0$ and $\ph_{j} = -\pi$ and $\text{Eig}_{-1 }(\operatorname{pt}_{\gamma}(2\pi))$ is even dimensional).
\end{proof}

The sublagrangian $\mathrm{Eig}_{>0}(\D_{\gamma})$ is called the \emph{Dirac sublagrangian}.
By \cref{lem:sublagrangians} the Dirac sublagrangian determines an equivalence class of Lagrangians in $\mathcal{H}_{\gamma}$.

\begin{proposition}
\label{prop:equivalenceclasslag}
The two equivalence classes of Lagrangians in $\mathcal{H}_\gamma$, i.e., the one determined by the reduction $\O_L(\mathcal{H})$ and the one determined by the Dirac sublagrangian, are equal.
\end{proposition}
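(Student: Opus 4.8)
The plan is to show that both equivalence classes are captured by the same subset of restricted orthogonal frames, namely those frames $\nu\in\O(\mathcal{H})_\gamma$ for which $\nu(L)$ lies in the equivalence class of the Dirac sublagrangian; equivalently, to exhibit a single frame $\nu_0\in \O_L(\mathcal{H})_\gamma$ whose image $\nu_0(L)$ is a Lagrangian completion of the Dirac sublagrangian. Once such a frame is produced, \cref{lem:sublagrangians} immediately gives that every Lagrangian in the $\O_L(\mathcal{H})$-class is equivalent to every Lagrangian completion of $\mathrm{Eig}_{>0}(\D_\gamma)$, so the two classes coincide.

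First I would fix a loop $\gamma$ and pick a point $\varphi\in \Spin(E)_{\gamma(0)}$, i.e.\ an (oriented, spin) orthonormal frame of $E_{\gamma(0)}$. Using parallel transport $\mathrm{pt}_\gamma(t)$ of the Levi-Civita connection, I would build a path in $\O(E)$ covering $\gamma$; this path is generally not a loop, but $\mathrm{pt}_\gamma(2\pi)\in\SO(E_{\gamma(0)})$, and by choosing the eigenbasis $\{v_j\}$ of $\mathrm{pt}_\gamma(2\pi)$ with phases $\varphi_j\in[-\pi,\pi)$ as in \cref{lem:DiagonalizeDirac}, I can correct the path by the diagonal rotation $\mathrm{diag}(e^{i\varphi_j t/2\pi})$ to obtain an honest element $\tilde\varphi\in L\O(E)$, hence a point $\tilde\varphi\in \O(\mathcal{H})_\gamma$ lying in the $L\SO(d)$-orbit (so actually in the reduced frame bundle $\O_L(\mathcal{H})_\gamma$ by construction of the reduction in \cref{re:Hfromgeneralsetting}). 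The key computation is then to compare $\tilde\varphi(\xi_{n,j})$ with the eigenvectors $\eta_{n,j}$ of \cref{lem:DiagonalizeDirac}: under the isomorphism $\mathcal{H}_\gamma\cong L^2(S^1,\mathbb{S}\otimes\gamma^*E_\C)$ of \cref{lem:fibresoddspinor}, the frame $\tilde\varphi$ sends the standard antiperiodic basis vector $\xi_{n,j}(t)=e^{-i(n+1/2)t}e_j$ to $e^{-i(n+1/2)t-i\varphi_j t/2\pi}\otimes\mathrm{pt}_\gamma(t)v_j = \eta_{n,j}(t)$, up to the bookkeeping of how the complexified frame acts. Thus $\tilde\varphi$ maps the standard Lagrangian $L=\overline{\mathrm{span}}\{\xi_{n,j}:n\geq 0\}$ onto $\overline{\mathrm{span}}\{\eta_{n,j}:n\geq 0\}$, which contains $\mathrm{Eig}_{>0}(\D_\gamma)=\overline{\mathrm{span}}\{\eta_{n,j}:\lambda_{n,j}>0\}$ and is a Lagrangian completion of it (the only $\eta_{n,j}$ with $n\geq 0$ but $\lambda_{n,j}\leq 0$ occur for $n=0,\varphi_j=-\pi$, i.e.\ in the zero eigenspace, and $\tilde\varphi(L)$ picks a Lagrangian inside that even-dimensional kernel, exactly as in the proof of \cref{lem:SplittingLagrangian}).

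Having produced $\nu_0:=\tilde\varphi\in\O_L(\mathcal{H})_\gamma$ with $\nu_0(L)$ a Lagrangian containing the Dirac sublagrangian, I would conclude as follows. On one hand, the equivalence class determined by the reduction $\O_L(\mathcal{H})$ is, by \cref{lem:lagrangianclass} and the discussion in \cref{sec:LagrangianTwistedSpinorBundle}, exactly $\mathcal{P}_\gamma=\{\nu(L):\nu\in\O_L(\mathcal{H})_\gamma\}$, and any two such $\nu(L)$ are equivalent since the transition element lies in $\O_L(V)$ (\cref{lem:FockSpacesUnitarilyEquivalent}). On the other hand, the equivalence class determined by the Dirac sublagrangian is, by \cref{lem:sublagrangians}, the set of all Lagrangians containing $\mathrm{Eig}_{>0}(\D_\gamma)$ — or rather its equivalence closure — and \cref{lem:sublagrangians} guarantees any two Lagrangian completions are equivalent. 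Since $\nu_0(L)$ is simultaneously in $\mathcal{P}_\gamma$ and a Lagrangian completion of $\mathrm{Eig}_{>0}(\D_\gamma)$, and equivalence of Lagrangians is transitive, the two classes are equal.

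The main obstacle I anticipate is the bookkeeping in the comparison step: one must be careful that the isomorphism of \cref{lem:fibresoddspinor} intertwines the action of the frame $\varphi$ on $V=L^2(S^1,\mathbb{S})\otimes\C^d$ (acting only on the $\C^d$ factor) with the complexified frame acting pointwise on $\gamma^*E_\C$, and that the diagonal correction $\mathrm{diag}(e^{i\varphi_j t/2\pi})$ needed to make $\tilde\varphi$ periodic is precisely the twist that converts $e^{-i(n+1/2)t}$ into the eigenfunction $e^{-i(n+1/2)t-i\varphi_j t/2\pi}$ of $\D_\gamma$. One should also check that this correction lies in $L\SO(d)$ (it is a loop of rotations, being block-diagonal $2\times 2$ rotation blocks on the real eigenplanes of $\mathrm{pt}_\gamma(2\pi)$ pieced with the $\pm1$ eigenspaces handled separately) so that $\tilde\varphi$ genuinely sits in the reduced bundle $\O_L(\mathcal{H})_\gamma$; this is the same even-dimensionality input used in \cref{lem:SplittingLagrangian}. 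Apart from that, everything is a direct application of the lemmas already established.
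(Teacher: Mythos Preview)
Your approach is correct in outline but takes a genuinely different route from the paper's proof. You construct directly an \emph{orthogonal} frame $\tilde\varphi\in\O_L(\mathcal{H})_\gamma$ whose image $\tilde\varphi(L)$ is a Lagrangian completion of the Dirac sublagrangian. The paper instead enlarges to \emph{unitary} frames: it introduces $\U_{\res}(\mathcal{H})_\gamma = L\U(TM_\C)_\gamma \times_{L\U(d)} \U_{\res}(V)$, constructs a unitary frame $\psi$ with the clean formula $\psi(\xi_{n,j})=\eta_{n,j}$, and then invokes part~5 of \cref{lem:FockSpacesUnitarilyEquivalent} (equivalence via $\U_{\res}(V)$) to conclude that $\nu(L)$ and $\psi(L)$ are equivalent for any $\nu\in\O_L(\mathcal{H})_\gamma$.

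The trade-off is this. The paper's unitary frame $\psi$ literally satisfies $\psi(\xi_{n,j})=\eta_{n,j}$ because the phase correction $e^{-i\varphi_j t/2\pi}$ is applied to the complex eigenvectors $v_j$ directly; the price is needing the extra equivalence criterion via $\U_{\res}(V)$. Your orthogonal frame cannot satisfy $\tilde\varphi(\xi_{n,j})=\eta_{n,j}$ on the nose, since a real frame cannot send the real basis $e_j$ to the genuinely complex eigenvectors $v_j$ of the holonomy. What actually happens is that $\tilde\varphi$ sends suitable complex combinations like $\xi_{n,j}\pm i\xi_{n,j'}$ to the $\eta$'s, and for the $\varphi_j=-\pi$ block the real $2\times 2$ rotation correction produces $\eta_{n,j}$ only as a combination of $\xi_{n-1,\cdot}$ and $\xi_{n,\cdot}$---still in $L$ precisely when $n\geq 1$, i.e.\ when $\lambda_{n,j}>0$. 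So $\tilde\varphi(L)\supseteq\mathrm{Eig}_{>0}(\D_\gamma)$ does hold and your argument goes through, but the ``bookkeeping'' you flag is somewhat more substantial than a notational matter: it is exactly the complication the paper sidesteps by passing to unitary frames.
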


\begin{proof}
We consider the complexified tangent bundle $TM_{\C}$ and the bundle
$\U(TM_{\C})$ of unitary frames. We let $L\U(TM_{\C})$ be its looping, which is a Fr\'echet principal $L\U(d)$-bundle over $LM$. A standard result, \cite[Proposition 6.3.1]{PS86}, tells us that $L\U(d)$ acts in $V$ through $\U_{\res}(V)$. We do not care about the regularity of this action, and only consider for a single loop $\gamma\in LM$ the set
 \begin{equation*}
  \U_{\res}(\mc{H})_{\gamma} \defeq L\U(TM_{\C})_{\gamma} \times_{L\U(d)} \U_{\res}(V).
\end{equation*}
We may interpret elements $[\varphi,g]$ as unitary maps $\psi: V \to \mathcal{H}_{\gamma}$, $v\mapsto [\varphi,gv]$, making up a map $\U_L(\mathcal{H})_{\gamma} \to \U(V,\mathcal{H}_{\gamma})$.  
By construction, the set $\U_{\res}(\mc{H})_{\gamma}$ is a $\U_L(V)$-torsor, and contains $\O_L(\mathcal{H})_{\gamma}\cong L\SO(TM)_{\gamma} \times_{L\SO(d)}\O_L(V)$ under the inclusions $L\SO(TM) \subset L\U(TM_{\C})$ and $\O_L(V) \subset \U_L(V)$. We have a commutative diagram
\begin{equation*}
\xymatrix{\U_L(\mathcal{H})_{\gamma} \ar[r] & \U(V,\mathcal{H}_{\gamma})\\\O_L(\mathcal{H})_{\gamma} \ar@{^(->}[u] \ar[r] & \O(V,\mathcal{H}_{\gamma})\text{,}\ar@{^(->}[u]}
\end{equation*}
where the map at the bottom is $\O_L(\mathcal{H})_{\gamma} \subset \O(\mathcal{H})_{\gamma}\cong \O_L(V,\mathcal{H}_{\gamma})$, see \cref{lem:framebundle}.  

We construct a particular element in $\U_L(\mathcal{H})_{\gamma}$. To this end, let $\ph_{j}\in [-\pi,\pi)$ and $v_{j}\in T_{\gamma(0)}M_{\C}$ be as in \cref{lem:DiagonalizeDirac}. For each $t \in S^{1}$ define $\ph_{t} \in \U(M)_{\gamma(t)}$ by
 \begin{equation*}
  \ph_{t}(e_{j}) = e^{-i \frac{\ph_{j}}{2\pi}t}  \operatorname{pt}_{\gamma}(t) v_{j}.
 \end{equation*}
 This defines an element $\ph \in L\U(TM_{\C})$ over $\gamma$, and thus an element $\psi \defeq [\ph, \mathds{1}] \in \U_{\res}(\mc{H})_{\gamma}$.
 Let $\eta_{n,j}$ be as in \cref{lem:DiagonalizeDirac}.
 A computation shows that $\psi(\xi_{n,j}) = \eta_{n,j}$.
  Now we consider the Dirac sublagrangian $\operatorname{Eig}_{>0}(\D_{\gamma})$. From the characterization
 \begin{equation*}
  \operatorname{Eig}_{>0}(\D_{\gamma}) = \operatorname{span} \{ \eta_{n,j} \mid n \geqslant 1 , j=1,...,d \} \oplus \operatorname{span} \{ \eta_{n,j} \mid n = 0, j: \ph_{j} \neq -\pi \},
 \end{equation*}
 it follows readily that $\psi^{-1}(\operatorname{Eig}_{>0}(\D_{\gamma})) \subset V$ is a sublagrangian which is contained in our standard Lagrangian $L$.  For any $\nu\in \O_L(\mathcal{H})_p$, it follows that $\nu\psi^{-1}(\operatorname{Eig}_{>0}(\D_{\gamma}))\subset \nu(L)$. Since $\psi\in U_L(\mathcal{H})_{\gamma}$ and $\nu\in \O_L(\mathcal{H})_{\gamma}\subset \U_L(\mathcal{H})_{\gamma}$, it follows that $\nu\psi^{-1}\in \U_L(V)$. Then, by \cref{lem:FockSpacesUnitarilyEquivalent}, it follows that any completion of $\operatorname{Eig}_{>0}(\D_{\gamma})$ is equivalent to $\nu(L)$.
\end{proof}

By \cref{prop:equivalenceclasslag} we see that the Fock spaces $\mathcal{F}_{\nu(L)}$, with $\nu\in \O_L(\mathcal{H})_{\gamma}$, are precisely the Fock spaces considered by Stolz and Teichner in \cite{stolz3}. The problem of combining these fibres into a Fock bundle has not been further pursued in \cite{stolz3}, though the importance of string structures on $TM$ has been observed. Our treatment in \cref{sec:impLiftingGerbe} provides a complete solution to this problem, even on the level of rigged Hilbert bundles instead of just continuous ones.
First of all, we have seen that the fibres $\mathcal{F}^{\infty}_{\nu(L)}$ combine into a rigged Hilbert space bundle over $\O_L(\mathcal{H})$, the frame-dependent Fock bundle $\mathcal{F}\frm{\mathcal{H}}$.
Moreover, this bundle has the structure of a $\mathscr{L}_{\mathcal{H}}$-twisted rigged $\Cl^{\infty}(L\Spin(E))$-module.
We recall that $\mathscr{L}_{\mathcal{H}}$ is canonically isomorphic to the spin lifting gerbe $\mathscr{L}_{L\Spin(E)}$, and that the frame-dependent Fock bundle corresponds under this isomorphism to the twisted Fock bundle $\mathcal{F}^{\infty}(L\Spin(E))^{\twist}$, see \cref{prop:refinement}.
Now, if a loop spin structure $\widetilde{L\Spin}(E)$ on $E$ is provided -- for instance, via a string structure on $E$, it corresponds to a trivialization of $\mathscr{L}_{L\Spin(E)}$ under the isomorphism of \cref{prop:fockstructureslifting}. 
With this trivialization, the twisted Fock bundle untwists precisely to the odd spinor bundle $\mathbb{S}^{\infty}_{odd}(\widetilde{L\Spin}(E))$. This completes the construction of a smooth bundle of Fock space with typical fibre $\mathcal{F}_L^{\infty}$ using a loop spin structure on $E$.

\subsection{The even spinor bundle on loop space}

In this section we set up the Fock extension $(V_{ev},\alpha_{ev},L_{ev},\omega_{ev})$   of $L\Spin(d)$, for which  we suppose that $d$ is even.
We drop the subscript $(..)_{ev}$ in the following. We consider the Hilbert space $V \defeq L^{2}(S^{1},\C^{d})$, and equip it with the real structure given by pointwise complex conjugation.
Let $\{e_{j}\}_{j=0,...d-1,}$ be the standard basis of $\C^{d}$, and let $L_{0}$ be the complex linear span of the vectors $e_{j}+ie_{j+1}$, for even $j < d$.
The space $L_{0}$ is a Lagrangian, commonly called the ``standard Lagrangian''.
We now define the Lagrangian $L \subset L^{2}(S^{1},\C^{d})$ to be the direct sum of the constant functions with values in $L_{0}$, and the functions whose positive Fourier coefficients are zero.
We note that the group $L\SO(d)$ acts on $V=L^2(S^1,\C^d)$ by pointwise multiplication.
In fact, $L\SO(d)$ acts in $V$ through $\O_{L}(V)$ and the corresponding group homomorphism $L\SO(d) \rightarrow \O_{L}(V)$ is continuous (\cite[Prop. 6.25]{Ambler2012}) and thus smooth (\cite[Thm. IV.1.18]{Neeb2006}).
We write $\omega:L\Spin(d) \rightarrow \O_{L}(V)$ for the composition of the pointwise projection with the above map.
This completes the description of the even Fock extension $(V,\alpha,L,\omega)$ of $L\Spin(d)$.

\begin{proposition}\label{prop:EvenBasicCE}
 If $d \geqslant 6$, then $\omega^{*}\Imp_{L}(V)$ is the basic central extension of $L\Spin(d)$.
\end{proposition}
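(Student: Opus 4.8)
The plan is to identify the central extension $\omega^{*}\Imp_{L}(V)$ of $L\Spin(d)$ by computing its characteristic class (the level) and comparing it to that of the basic central extension. Since $L\Spin(d)$ is a loop group and $d\geqslant 6$ implies $\Spin(d)$ is simple and simply connected with $\pi_3(\Spin(d))\cong\Z$, central extensions of $L\Spin(d)$ by $\U(1)$ are classified (up to isomorphism) by an integer, the level; and the basic central extension is the one of level $1$. So it suffices to show that $\omega^{*}\Imp_L(V)$ has level $1$.

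First I would reduce the problem to a computation at the level of Lie algebras, or better, to a restriction to a well-chosen subgroup. The standard strategy (as in Pressley--Segal) is to restrict the extension along a homomorphism $L\U(1)\to L\Spin(d)$ coming from a coroot, or along an embedding $L\SU(2)\to L\Spin(d)$, and to compute the level of the pulled-back extension there. Concretely: the homomorphism $\omega$ factors as $L\Spin(d)\to L\SO(d)\to \O_L(V)$, and the extension $\Imp_L(V)\to \O_L(V)$ restricted to $L\SO(d)$ is governed by the ``basic'' cocycle on $\mathfrak{o}_{res}(V)$, namely (up to normalization) $(X,Y)\mapsto \tfrac14\operatorname{tr}(\mathcal J_L[\mathcal J_L,X][\mathcal J_L,Y])$ or the equivalent Schwinger/$\operatorname{tr}(P_L^{\perp}[X,Y]P_L)$-type expression. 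The task is then to pull this cocycle back along $d\omega:\,L\mathfrak{spin}(d)\to\mathfrak o_{res}(V)$ and show it equals the basic cocycle $\langle X,Y\rangle \mapsto \tfrac{1}{2\pi}\int_{S^1}\langle X(t),Y'(t)\rangle\,dt$ on $L\mathfrak{spin}(d)$ with the normalization that makes it integral and primitive.

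The key steps in order: (1) recall that the representation of $L\SO(d)$ on $V=L^2(S^1,\C^d)$ (by pointwise matrix multiplication, with the standard Fourier/constant-term polarization $L$) lands in $\O_L(V)$ and is smooth into the Banach Lie group $\O_L(V)$ — this is cited above; (2) identify the restricted extension $\Imp_L(V)\to\O_L(V)$ explicitly near the identity via the known cocycle on $\mathfrak o_{res}(V)$; (3) restrict to a maximal torus / a single $\mathfrak{su}(2)$ and compute the pullback cocycle, using the fact that the $L\SO(d)$-action on $V$ is (up to the spinor-free case) the loopification of the \emph{defining} representation $\C^d$ of $SO(d)$, whose Dynkin index is $2$ for $SO(d)$ but whose restriction to $\Spin(d)$ via the covering $\Spin(d)\to SO(d)$ has index... — here one must be careful: the defining representation of $\mathfrak{so}(d)$ has Dynkin index $1$ with respect to the normalized Killing form of $\mathfrak{so}(d)$ (for which the basic level is defined), so the associated loop group extension has level $1$; (4) conclude that $\omega^{*}\Imp_L(V)$ has level $1$, hence is the basic central extension. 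This mirrors the proof strategy of \cref{prop:ceodd} in \cite{Kristel2019}, where the \emph{odd} spinors $L^2(S^1,\mathbb S)\otimes\C^d$ give the same level-$1$ answer for $d\neq 4$; the even case here differs only in the choice of $V$ and polarization, and the same index computation applies once $d\geqslant 6$ guarantees $\pi_1(\Spin(d))$ and $\pi_3$ are as expected and that there are no low-dimensional coincidences (e.g.\ $SO(4)$ not simple, $\mathfrak{so}(6)\cong\mathfrak{su}(4)$ ok, $\mathfrak{so}(4)$ excluded) disrupting the classification.

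The main obstacle I anticipate is the careful bookkeeping of normalizations: one must pin down exactly which bilinear form on $L\mathfrak{spin}(d)$ is ``the'' basic one, verify that the Segal/Lundberg cocycle controlling $\Imp_L(V)$ pulls back to a specific multiple of it, and check that multiple is $+1$ rather than $2$ or $\tfrac12$. The cleanest route is probably to reduce to a single circle subgroup generated by a coroot $\check\alpha$ of $\Spin(d)$, restrict $V$ to the weight decomposition under that circle, and directly evaluate the winding-number/index of the resulting projective representation of $L\U(1)$; the level is then read off as a sum over weights $\lambda$ of $\lambda^2$ times multiplicities, which for the defining representation of $\Spin(d)\to SO(d)$ gives exactly $1$. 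I would also need to invoke that the passage from the Lie-algebra cocycle to the global extension is faithful here because $L\Spin(d)$ is (a) connected, with $\pi_1(L\Spin(d))=\pi_1(\Spin(d))\times\pi_2(\Spin(d))=0$ for $d\geqslant 6$ ... actually $\pi_1(L\Spin(d))\cong\pi_1(\Spin(d))\oplus\pi_2(\Spin(d)) = 0$, and $\pi_2(L\Spin(d))\cong \pi_2(\Spin(d))\oplus\pi_3(\Spin(d))\cong\Z$, so $H^2(L\Spin(d);\Z)\cong\Z$ — hence the level is a complete invariant, and matching it on one subgroup suffices.
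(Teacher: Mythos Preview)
Your strategy is correct and aligned with the paper's: both identify the extension by showing that the pulled-back Lie algebra 2-cocycle on $L\lie{spin}(d)$ equals the basic one. The paper carries this out by a direct global computation, proving the trace identity
\[
\mathrm{trace}\bigl([a_1,a_2]-a_3\bigr) \;=\; -\frac{1}{2\pi i}\int_0^{2\pi}\mathrm{trace}\bigl(f_1(t)f_2'(t)\bigr)\,dt
\]
(with $A_i=\omega(f_i)$, $a_i=P_LA_iP_L$, $A_3=[A_1,A_2]$) via an explicit calculation in the Fourier basis $z_n(t)=e^{-int}$ together with a basis $\{l_j\}$ of $L_0\oplus\alpha(L_0)$, exactly as in \cite[Lem.~3.24]{Kristel2019} but adapted to the even polarization. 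No restriction to a subgroup and no appeal to the classification of extensions by level is used.

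Your proposed route --- restrict to a coroot circle or an embedded $L\SU(2)$, read off the level as a weight-square sum or Dynkin index, and then invoke the classification $H^2(L\Spin(d);\Z)\cong\Z$ to conclude --- is the classical Pressley--Segal technique and is equally valid. It trades the paper's longer elementary basis computation for a shorter one on a subgroup plus more structural input. The normalization hazard you flag is genuine and is the only real risk in your approach: one must match the specific Schwinger cocycle of the \emph{orthogonal} implementer extension $\Imp_L(V)\to\O_L(V)$ (not the unitary basic extension of $L\SU(d)$) against the basic form on $\lie{spin}(d)$, and the various factors of two between trace form, basic inner product, and real-versus-complex Fock constructions all enter here. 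The paper's head-on computation of the full cocycle sidesteps this bookkeeping entirely, at the cost of a slightly longer calculation.
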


\begin{proof}
 The proof is analogous to the proof of \cite[Prop. 6.7.1]{PS86}, see also \cite[Thm. 3.26]{Kristel2019}.
 The result follows from a computation which shows that the 2-cocycle which characterizes the central extension $\Imp_{L}(V)$ pulls back to the 2-cocycle which characterizes the basic central extension of $L\Spin(d)$ along the map $\omega$.
 In fact, the only place where the proof of \cite[Thm. 3.26]{Kristel2019} must be modified so that it can be applied to the current situation is in the proof of \cite[Lem. 3.24]{Kristel2019}.
 This lemma states that for all $f_{1},f_{2} \in L \lie{so}(d)$ we have
  \begin{equation}\label{eq:CocycleIdentity}
  \mathrm{trace}([a_{1},a_{2}] - a_{3}]) = - \frac{1}{2\pi i} \int_{0}^{2\pi} \mathrm{trace}(f_{1}(t)f'_{2}(t)) \d \! t,
 \end{equation}
 where $A_{1} = \omega(f_{1})$ and $A_{2} = \omega(f_{2})$, and $A_{3} = [A_{1},A_{2}]$ and $a_{i} = P_{L} A_{i} P_{L}$.
 This equation still holds, as can be proven by an explicit computation using the definition
 \begin{equation*}
  \mathrm{trace}([a_{1},a_{2}] - a_{3}]) = \sum_{n,j} \langle z_{n} \otimes l_{j}, ([a_{1},a_{2}] - a_{3}) z_{n} \otimes l_{j} \rangle,
 \end{equation*}
 where $z_{n}(t) = e^{-int}$, and where $\{l_{j}\}_{j=1,...,d/2}$ forms a basis for $L_{0}$ and $\{l_{j}\}_{j=d/2+1,...,d}$ forms a basis for $\alpha(L_{0})$.
\end{proof}
Thus, a Fock structure on $L\Spin(E)$ is precisely the same thing as a loop spin structure on $E$.
The corresponding Fock bundle 
\begin{equation*}
\mathbb{S}^{\infty}_{even}(\widetilde{L\Spin}(E))\defeq \mathcal{F}^{\infty}(\widetilde{L\Spin}(E))
\end{equation*}
is called the \emph{even loop spinor bundle} of $E$  associated to the loop spin structure $\widetilde{L\Spin}(E)$.
Whether or not $E$ admits a loop spin structure, we have from \cref{sec:twisted} a sequence
\begin{equation*}
\mathscr{L}_{L\Spin(E)} \to \mathscr{L}_{\mathcal{H}} \to \mathscr{G}_{\mathcal{H}}
\end{equation*}
of bundle gerbes and refinements over $LM$, where $\mathcal{H}$ is the continuous real Hilbert space bundle 
\begin{equation*}
\mathcal{H} \defeq (L\Spin(E) \times V)/L\Spin(d)
\end{equation*}
defined in \cref{eq:hsbdlimp}, whose real structure is induced from the one of $V$. In the present situation, this is a bundle with interesting fibres, as the following lemma shows.

\begin{lemma}
\label{lem:fibresevenspinor}
For each loop $\gamma\in LM$, there is a canonical isomorphism of real Hilbert spaces
\begin{equation*}
\mathcal{H}_{\gamma} \cong L^2(S^1,\gamma^{*}E_{\C})\text{,}
\end{equation*}
characterized uniquely by the condition that
$[\varphi, \sigma] \mapsto  \varphi_{\C} (\sigma)$
for all $\varphi\in L\Spin(E)$ and $\sigma\in C^{\infty}(S^1,\C^d)$; where $\varphi_{\C}(\sigma)(t)\defeq \varphi(t)_{\C}( \sigma(t))$, using the complexification of the frame $\varphi(t):\R^n \to E_{\gamma(t)}$. 
\end{lemma}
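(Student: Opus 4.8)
The plan is to mimic the proof of \cref{lem:fibresoddspinor}, which is the genuinely analogous statement; the only difference is that the spinor bundle $\mathbb{S}$ on the circle does not appear and one works directly with $\C^{d}$-valued functions. First I would check that the prescribed formula is well-defined as a map on the level of smooth sections. An element of $\mathcal{H}_{\gamma}=(L\Spin(E)\times V)/L\Spin(d)$ over $\gamma$ is an equivalence class $[\varphi,\sigma]$ with $\varphi\in L\Spin(E)_{\gamma}$ and $\sigma\in C^{\infty}(S^{1},\C^{d})$, subject to $[\varphi g,\sigma]=[\varphi,g\sigma]$ for $g\in L\Spin(d)$; here $g$ acts on $V=L^{2}(S^{1},\C^{d})$ through the pointwise action of $L\SO(d)$ via the covering $L\Spin(d)\to L\SO(d)$. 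Writing $\bar g(t)\in\SO(d)$ for the image of $g(t)$, the frame $(\varphi g)(t):\R^{d}\to E_{\gamma(t)}$ equals $\varphi(t)\circ\bar g(t)$, so its complexification satisfies $(\varphi g)_{\C}(t)=\varphi(t)_{\C}\circ\bar g(t)_{\C}$. Hence $(\varphi g)_{\C}(\sigma)(t)=\varphi(t)_{\C}(\bar g(t)_{\C}\sigma(t))=\varphi_{\C}(g\sigma)(t)$, which shows the formula $[\varphi,\sigma]\mapsto\varphi_{\C}(\sigma)$ respects the equivalence relation, and that it is uniquely characterized by the stated condition, since every element of the fibre has a representative of this shape.

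Next I would verify that, for a fixed frame $\varphi$, the map $\sigma\mapsto\varphi_{\C}(\sigma)$ is an isometry from $C^{\infty}(S^{1},\C^{d})$ into $C^{\infty}(S^{1},\gamma^{*}E_{\C})$ with respect to the $L^{2}$ inner products. Pointwise, $\varphi(t):\R^{d}\to E_{\gamma(t)}$ is an orthogonal frame, so its complexification $\varphi(t)_{\C}:\C^{d}\to (E_{\gamma(t)})_{\C}$ is a unitary isomorphism for the Hermitian extensions of the metrics; therefore $\langle\varphi(t)_{\C}(\sigma_{1}(t)),\varphi(t)_{\C}(\sigma_{2}(t))\rangle=\langle\sigma_{1}(t),\sigma_{2}(t)\rangle$ for all $t$, and integrating over $S^{1}$ gives the claimed equality of $L^{2}$ inner products. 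Since $C^{\infty}(S^{1},\C^{d})$ is dense in $L^{2}(S^{1},\C^{d})$ and the image $\varphi_{\C}(C^{\infty}(S^{1},\C^{d}))$ is dense in $L^{2}(S^{1},\gamma^{*}E_{\C})$ (as $\varphi_{\C}$ is a fibrewise smooth bundle isomorphism), this isometry extends to an isometric map of Hilbert spaces. An inverse is obtained in exactly the same way from $\varphi^{-1}$: the assignment $\tau\mapsto[\varphi,\varphi_{\C}^{-1}(\tau)]$ is well-defined, independent of the choice of $\varphi$ by the same computation as above, and visibly inverse to our map, so we obtain an isometric isomorphism of Hilbert spaces. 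Finally, the real structure on $\mathcal{H}_{\gamma}$ is induced from pointwise complex conjugation on $\C^{d}$, while $\varphi(t)$ is a \emph{real} frame, i.e.\ $\varphi(t)_{\C}$ intertwines complex conjugation on $\C^{d}$ with the complex conjugation on $(E_{\gamma(t)})_{\C}$ coming from the real subspace $E_{\gamma(t)}$; hence the isomorphism intertwines the real structures, and it is an isomorphism of real Hilbert spaces as claimed.

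I do not expect any serious obstacle here: every step is either a bookkeeping verification about equivalence classes in an associated bundle or the elementary observation that an orthogonal real frame complexifies to a unitary map intertwining the relevant conjugations. The one point worth stating carefully is that the action of $L\Spin(d)$ on $V$ really does factor, pointwise, through the defining action of $\SO(d)$ on $\C^{d}$ used to build $\mathcal{H}_{\gamma}$, so that the identity $(\varphi g)_{\C}=\varphi_{\C}\circ\bar g_{\C}$ is exactly the equivariance needed; this is immediate from the construction of $\omega:L\Spin(d)\to\O_{L}(V)$ in this subsection as the composite of pointwise projection $L\Spin(d)\to L\SO(d)$ with the pointwise $L\SO(d)$-action on $\C^{d}$.
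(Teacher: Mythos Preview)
Your proposal is correct and follows essentially the same approach as the paper, which itself simply repeats the argument of \cref{lem:fibresoddspinor}: check well-definedness via the $L\Spin(d)$-equivariance, observe that orthogonal frames complexify to unitary maps so the assignment is an $L^{2}$-isometry extending to the completions, construct the evident inverse, and note that real frames intertwine the complex conjugations. Your write-up is in fact more detailed than the paper's, which leaves most of these verifications to the reader.
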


\begin{proof}
It is clear that the formula characterizes the map uniquely, and the way $\Spin(d)$ acts in $V$ shows that it defines a map on the level of smooth sections. Since the frame $\varphi(t)$ is orthogonal, it is straightforward to see that this map is an isometry with respect to the $L^2$ scalar products, and hence extends to an isometric transformation of Hilbert spaces. It is easy to define an inverse map and to show that we have an isometric isomorphism. Finally, it is clear that the real structures are preserved under the complexification of a real-linear map. 
\end{proof}

The problem (or impossibility) of choosing Lagrangians $L_{\gamma}\subset \mathcal{H}_{\gamma}$ depending continuously on $\gamma\in LM$
has been addressed and (almost completely) solved by Ambler \cite{Ambler2012}. Our treatment of the polarization-dependent Fock bundle in \cref{sec:LagrangianTwistedSpinorBundle} provides a generalization of Ambler's work from a continuous to a smooth (rigged) setting. In more details, this generalization proceeds in the following steps:
\begin{itemize}

\item 
The Fr\'echet fibre bundle $\Lag_L(\mathcal{H})$ over $LM$ of \cref{lem:lagrangianprojection,lem:lagrangianclass} is a smooth version of Ambler's continuous \quot{polarization class bundle} $Y$ \cite[Chapter 7]{Ambler2012}.

\item The polarization-dependent Fock bundle $\mathcal{F}\pol{\mathcal{H}}$ of \cref{sec:LagrangianTwistedSpinorBundle} is a rigged version of Ambler's Fock space bundle $FY$ \cite[Chapter 9]{Ambler2012}.
The action of the Clifford bundle $\pi^{*}\Cl^{\infty}(L\Spin(E))$ asserted in \cref{lem:lagfockmod} is \cite[Lemma 9.3]{Ambler2012}.

\item
The Lagrangian Gra\ss mannian gerbe $\mathscr{G}_{\mathcal{H}}$ of \cref{sec:LagrangianTwistedSpinorBundle} is a Fr\'echet version of the continuous bundle gerbe of \cite[Chapter 11]{Ambler2012}. The coincidence of the $\U(1)$-bundles in both constructions can be seen using \cref{re:intertwinerbundle}.

\item
The fact that the polarization-dependent Fock bundle is a $\mathscr{G}_{\mathcal{H}}$-twisted vector bundle (\cref{prop:lagfocktwist}) has not been discussed by Ambler. Instead, the implication  that a trivialization of $\mathscr{G}_{\mathcal{H}}$ lets the polarization-dependent Fock bundle descend to a Fock bundle on $LM$ (\cref{th:unwistfdfbLag}) is proved manually \cite[Theorem 13.10]{Ambler2012}. 
\end{itemize} 
One thing that has apparently not been seen in \cite{Ambler2012} is the relation to loop spin structures, or string structures on $E$. We have established this relation by showing that the Lagrangian Gra\ss mannian gerbe $\mathscr{G}_{\mathcal{H}}$ is canonically isomorphic to the spin lifting gerbe $\mathscr{L}_{\Spin(E)}$ (\cref{prop:refinement,prop:equivambler}), whose trivializations are precisely the loop spin structures on $E$.

\newcommand{\etalchar}[1]{$^{#1}$}

\def\kobiburl#1{
   \IfSubStr
     {#1}
     {://arxiv.org/abs/}
     {\kobibarxiv{#1}}
     {\kobiblink{#1}}}
\def\kobibarxiv#1{\href{#1}{\texttt{[arxiv:\StrGobbleLeft{#1}{21}]}}}
\def\kobiblink#1{
  \StrSubstitute{#1}{\~{}}{\string~}[\myurl]
  \StrSubstitute{#1}{_}{\underline{\;\;}}[\mylink]
  \StrSubstitute{\mylink}{&}{\&}[\mylink]
  \StrSubstitute{\mylink}{/}{/\allowbreak}[\mylink]
  \newline Available as: \mbox{\;}
  \href{\myurl}{\texttt{\mylink}}}

\raggedright
\addcontentsline{toc}{section}{\refname}
\small
\bibliographystyle{kobib}
\bibliography{bibfile}

Peter Kristel ({\it peter.kristel@uni-greifswald.de})

\smallskip

Konrad Waldorf ({\it konrad.waldorf@uni-greifswald.de})

\medskip

Universit\"at Greifswald\\
Institut f\"ur Mathematik und Informatik\\
Walther-Rathenau-Str. 47\\
17487 Greifswald\\
Germany

\end{document}